\newcommand{\evnrow}{\rowcolor[gray]{0.95}}
\newcommand{\C}{\mathbb{C}}
\newcommand{\QQ}{\mathbb{Q}}
\newcommand{\Q}{\mathbb{Q}}
\newcommand{\Z}{\mathbb{Z}}
\newcommand{\cM}{\mathcal{M}}
\newcommand{\mQ}{\mathcal{Q}}
\newcommand{\mR}{\mathcal{R}}
\newcommand{\mF}{\mathcal{F}}
\newcommand{\mG}{\mathcal{G}}
\newcommand{\sod}[1]{\langle #1 \rangle}
\newcommand{\PP}{\mathbb{P}}
\newcommand{\mE}{\mathcal{E}}
\newcommand{\mU}{\mathcal{U}}
\newcommand{\mV}{\mathcal{V}}
\newcommand{\mL}{\mathcal{L}}
\newcommand{\mA}{\mathcal{A}}
\newcommand{\mB}{\mathcal{B}}
\newcommand{\mC}{\mathcal{C}}
\newcommand{\mD}{\mathcal{D}}
\newcommand{\mM}{\mathcal{M}}
\newcommand{\mH}{\mathcal{H}}
\newcommand{\mP}{\mathcal{P}}
\newcommand{\mZ}{\mathscr{Z}}
\newcommand{\of}{\mathcal{O}}
\newcommand{\mK}{\mathcal{K}}
\newcommand{\W}{\bigwedge}
\newcommand{\bbGr}{\mathbb{G}\mathrm{r}}
\DeclareMathOperator{\Db}{{D^{\mathrm{b}}}}
\DeclareMathOperator{\Ker}{Ker}
\DeclareMathOperator{\Sym}{S}
\DeclareMathOperator{\ddim}{dim}
\DeclareMathOperator{\Gr}{Gr}
\DeclareMathOperator{\Fl}{Fl}
\DeclareMathOperator{\Hom}{Hom}
\DeclareMathOperator{\HOM}{\mathscr{H}\text{\kern -3pt {\calligra\large om}}\,}
\DeclareMathOperator{\Bl}{Bl}
\DeclareMathOperator{\dP}{dP}
\DeclareMathOperator{\Sing}{Sing}
\def\ra{\rightarrow}\def\lra{\longrightarrow}
\newcounter{fanos}
\newcommand{\fanoid}[1]{
[\#{#1}]%\label{#1}
}
\newtheorem{thm}{Theorem}[section]
\newtheorem{question}[thm]{Question}
\newtheorem{lemma}[thm]{Lemma}
\newtheorem{proposition}[thm]{Proposition}
\newtheorem*{aim*}{Aim of this paper}
\theoremstyle{definition}
\newtheorem{workhyp}[thm]{Working Hypothesis}
\newtheorem{definition}[thm]{Definition}
\newtheorem{rmk}[thm]{Remark}
\newtheorem{example}[thm]{Example}
\colorlet{myred}{red!80!black}
\declaretheoremstyle[
spaceabove=1.5ex, spacebelow=1.5ex,
headfont=\bf,
notefont=\mdseries, notebraces={(}{)},
bodyfont=\normalfont,
%postheadspace=,
headpunct=.,
numberwithin=,
postheadhook=\leavevmode%
  \interlinepenalty 10000%
%  \vskip-1.6\baselineskip%
%  \noindent{\rule{70pt}{1pt}}%
  \interlinepenalty 10000,%
 %\vskip%.75\baselineskip,
qed={$\diamond$}
]{mystyle}
\declaretheorem[style=mystyle]{fano}
\renewcommand\thefano{C--\arabic{fano}}
\crefname{mystyle}{}{}
\crefname{claim}{Claim}{Claims}
\crefname{rmk}{Remark}{Remarks}
\crefname{workhyp}{WH}{WH}
\def\l@subsection{\@tocline{1}{0,2pt}{2pc}{8mm}{\ \ }} 
\def\l@section{\@tocline{1}{0,2pt}{2pc}{8mm}{\ \ }}
\author{Marcello Bernardara}
\address{Institut de Math\'ematiques de Toulouse \\ UMR 5219 \\ Universit\'e 
de Toulouse \\ 
Universit\'e Paul Sabatier \\ %
118 route de Narbonne \\ %
31062 Toulouse Cedex 9\\ %
France}
\email{marcello.bernardara@math.univ-toulouse.fr}
\author{Enrico Fatighenti}
\address{Dipartimento di Matematica \\
Universit\`a di Bologna\\
Piazza di Porta San Donato 5\\
40127 Bologna, Italy}
\email[E.~Fatighenti]{enricofatighenti6@gmail.com}
\author{Laurent Manivel}
\address{Institut de Math\'ematiques de Toulouse \\ UMR 5219 \\ Universit\'e 
de Toulouse \\ CNRS, Universit\'e Paul Sabatier \\ %
118 route de Narbonne \\ %
31062 Toulouse Cedex 9\\ %
France}
\email{laurent.manivel@math.cnrs.fr}
\author{Fabio Tanturri}
\address{Dipartimento di Matematica \\
Universit\`a di Genova\\
Via Dodecaneso 35\\
16146 Genova, Italy}
\email{tanturri@dima.unige.it}
\subjclass[2020]{14J35, 14J45, 14J28, 14E07, 14M15}
\title[Fano fourfolds of K3 type]{Fano fourfolds of K3 type}
\protected\def\ignorethis#1\endignorethis{}
\let\endignorethis\relax
\def\TOCstop{\addtocontents{toc}{\ignorethis}}
\def\TOCstart{\addtocontents{toc}{\endignorethis}}
\begin{document}

\begin{abstract}
We produce a list of $64$ families of Fano fourfolds of K3 type, extracted from our database of at least $634$ Fano fourfolds
constructed as zero loci of general global sections of completely reducible homogeneous vector bundles on products 
of flag manifolds. We study the geometry of these Fano fourfolds in some detail, and we find the origin of their K3 structure by relating most of them either to cubic fourfolds, Gushel--Mukai fourfolds, or actual K3 surfaces. Their main invariants and some information on their rationality and on possible semiorthogonal decompositions for their derived categories are provided.
\end{abstract}

\maketitle

{
\hypersetup{linkcolor=black}
\tableofcontents 
}

\section{Introduction}

In this paper we pursue the quest for Fano varieties of K3 type started in \cite{eg2}, as a  
chapter of the quest for Fano varieties of Calabi--Yau type initiated from \cite{ilievmanivel}.  
We refer to these papers for details on the motivations, both from the side of IHS manifolds 
and of Fano varieties. Our main objects of study here are Fano fourfolds that can be obtained 
as zero loci of general sections of fully decomposable equivariant vector bundles 
on products of flag
manifolds. 

With the help of computer calculations, we produced a huge database containing thousands of families of Fano fourfolds. We computed the main numerical invariants thereof, yielding at least $634$ distinct families. From this 
database we extracted those Fano fourfolds with Hodge number $h^{3,1}=1$, which is the main
numerical condition for being of K3 type. This leads to a list of $64$ families, and the main goal of this paper is to study these families 
in some detail. 

Being constructed inside products, these fourfolds admit natural projection maps which 
are usually either Mori fibrations (often conic or quadric bundles), or birational transformations. In the latter
case, they can roughly be described as blow ups of rational surfaces inside cubic fourfolds or 
Gushel--Mukai fourfolds, or as blow ups of (sometimes non-minimal, meaning blown-up at some points)
K3 surfaces inside Fano fourfolds with $h^{3,1}=0$ (often, but not always, rational varieties). In fact, there are only four families of  Fano fourfolds in our list which do not admit such a birational description. We thus get a rather colorful zoo of Fano fourfolds 
with interesting geometric properties. 

In particular, most of the K3 surfaces involved are nicely embedded into rational manifolds. We 
recover classical examples of degree seven in $\PP^4$ or degree five in $\PP^5$, studied in 
\cite{Oko84, Oko86}, and meet other ones of a similar flavor. Moreover, in many cases our fourfolds
admit several birational contractions, and we obtain without much effort very nice birational transformations of simple rational manifolds, sometimes {\it special} in the sense of \cite{crauder-katz, alzati-sierra}. For example, from case \cref{27-100-2-1-24-23-1-6-2-4},
we construct a birational involution of the four-dimensional quadric $\QQ^4$, whose exceptional locus is a K3 surface of genus seven blown up at two points. This is very reminiscent 
of the famous birational involution of $\PP^3$ defined by the cubic equations of a genus 3 curve 
of degree 6 \cite{katz-veryspecial}. Of course more examples with similar properties (but not 
involving K3 surfaces) are to be found in the part of the database that we temporarily left aside, 
and that we plan to exploit in the next future. More generally, we get many 
Sarkisov links in dimension four, and we tried to present our database not only as a mere list of items, but rather as an ecosystem where these geometric
objects interact through natural transformations. 

Another very interesting question that we shall touch upon in this paper
is the possibility to write down explicit semiorthogonal decompositions for the derived category of the Fano fourfolds presented here. Recall that such decompositions could be fundamental for example in studying
birational properties, or moduli spaces of bundles, or complexes on the varieties in question.

Beside the several aspects mentioned above, the interested reader might be eager to know what else our list of Fano varieties can be used for and, more specifically, what are the different angles they can look at this paper from. Depending on their taste, the reader may find of interest, for instance: the toolbox we developed to give a systematic interpretation of the geometry of zero loci of sections of homogeneous vector bundles over products of flag varieties; the web of birational correspondences between many of the fourfolds to be met in this paper; the rationality questions we address concerning our varieties and their link with derived categories; the 4 Fano varieties for which the K3 structure seem to be induced by particular singular fourfolds. We refer to \cref{whatCan} for more details on these aspects.

An important problem that we have to mention, but that will not be visible in the sequel, is that we
have been confronted to many kinds of Fano fourfolds in our database, with several different
descriptions as zero loci of vector bundles. We had to find identifications that in
some cases were not obvious at all (see \cref{exampleflavor} for a flavor). Even more seriously, we were not able 
to construct these identifications in a systematic and complete way. In particular we are not
able to bound a priori the Picard ranks and the dimensions of our ambient manifolds, and even less
to guarantee that our lists are complete. It would nevertheless be very interesting to be able 
to carry over a complete classification along the lines followed, in similar but simpler settings, 
in \cite{kuchle, benedetti, inoue-ito-miura}.

This is all the more tempting than we can suspect that the examples of smooth Fano fourfolds in the larger database could 
provide a relevant part of a eventual future full classification. For Fano threefolds this is the point made 
in \cite{DFT}. In dimension four there is an overlap with the list
of families of Fano subvarieties of quiver flag varieties found in \cite{kalashnikov}, 
since the latter can be constructed as towers of Grassmann bundles; a geometric analysis of most relevant cases of \cite{kalashnikov} has been performed in \cite{tufo}. This overlap could be further clarified by 
comparing the quantum periods, which for quiver flag zero loci were computed in the Appendix of \cite{kalashnikov}. In our setting this computation could also be carried out, using the equivariant 
quantum Lefschetz principle, and the results should be compared to those coming from the other
main source of examples of Fano manifolds, namely from the toric world \cite{coates-proc}. 
It would in particular be extremely interesting to understand whether the property of being Fano 
of K3 type can be read on the quantum periods. 
 
The paper is structured as follows. In \cref{sect:results} we report on the general context and summarize our main results, giving an insight on the methods and more details on the motivations behind this work. In \cref{sec:lemmas} we compile a few classical tools and results we take advantage of in the subsequent sections. In \cref{sect:CK3,,sect:GM,,sect:FK3,,sect:rogue} we pursue the investigation of our 64 families of FK3, following the convenient classification we presented above. \cref{sect:blowups} contains a few families of  Fano fourfolds which are not FK3 themselves, but whose Hodge structure in middle dimension contains a proper K3 substructure. In \cref{sec:SOD} we provide some background and references for the semiorthogonal decompositions we exhibit in our examples. Finally, in \cref{Tabless} we present tables summarizing the relevant data for the FK3 under investigation in this paper.

\subsubsection*{Acknowledgements}
The first and third authors acknowledge support from the ANR project FanoHK, grant
ANR-20-CE40-0023. The second author was partially supported by PRIN2017 ``2017YRA3LK'' and by the ANR LabEx CIMI (grant ANR-11-LABX-0040) within the French State Programme ``Investissements d’Avenir''. The second and the fourth authors are members of INDAM-GNSAGA.
We would like to thank Vladimiro Benedetti, Alexander Kuznetsov, Giovanni Mongardi, and Kieran O'Grady for insightful conversations, and the referee for many insighful observations.

\section{Context and results}\label{sect:results}

\TOCstop

\subsection{Fano varieties of K3 type}
We begin with the definition  of  \emph{Fano varieties of K3 type} (abbreviated FK3), which is purely Hodge-theoretical. Recall first the very classical notion of \emph{level} of a Hodge structure.
\begin{definition}Let $H =\bigoplus_{p+q=k} H^{p,q}$ be a non-zero Hodge structure of weight $k$. The \emph{level} $\lambda(H)$ of $H$ is \[ \lambda(H) := \textrm{max} \lbrace q-p \ | \ H^{p,q} \neq 0 \rbrace.\]
\end{definition}
The level of a Hodge structure is a measure of its complexity. It is conventionally set to $-\infty$ if $H=0$, and it is equal to zero if and only if $H$ is central (or of \emph{Hodge--Tate type}), i.e.\ $H^{p,q} \neq 0$ only for $p=q$. 

We now introduce the key notion of Hodge structure of K3 type (see \cite{ilievmanivel, eg2}). 
\begin{definition} Let $H$ be a Hodge structure of weight $k$. We define $H$ to be of K3 type if 
 $$\lambda(H)=2 \qquad \mathrm{and} \qquad h^{\frac{k+2}{2}, \frac{k-2}{2}}=1. $$
\end{definition}
Finally, we define a FK3 as follows.
\begin{definition}\label{definition:fk3} Let $X$ be a smooth Fano variety. We say that $X$ is a Fano variety of K3 type (abbreviated FK3) 
if $H^*(X, \C)$ contains at least one sub-Hodge structure of K3 type, and at most one for each given weight.
\end{definition}
For Fano fourfolds, this simplifies into the condition that $h^{3,1}=1$.

\subsection{Fano fourfolds of K3 type of index bigger than one}
 
 Fano fourfolds $X$ of index $\iota_X>1$ have been classified, see e.g.\ \cite[Chapter 5]{ag5}, or \cite[4-6]{coates} for an overview. In short, there are 35 such families: 1 of index 5, 1 of index 4, 6 of index 3 (out of which 5 with $\rho=1$) and 27 of index 2 (out of which 9 with $\rho=1$). Among these there are three well-known families of FK3 varieties: the cubic fourfolds, the Gushel--Mukai (GM for short) fourfolds, and the Verra fourfolds (double covers of $\PP^2 \times \PP^2$ ramified over a divisor of bidegree $(2,2)$). 
 Note that an important difference between the cubic or GM fourfolds and the Verra fourfolds is that for the general member of the former two families, the derived categories (or Hodge theory) cannot be equivalent to the one of a K3 surface \cite{kuznetcubicfourfold,Kuz-Perry-GM}. On the other hand the derived category of a Verra fourfold can be realized in two different ways as the (Brauer-twisted) derived category of a K3 surface naturally occurring in the construction \cite{Kap-Kap-Mosch}.
 
 We checked that among these 35 families there are no other FK3.

 \subsection{Fano fourfolds of K3 type of index 1}
 Fano fourfolds $X$ of index $\iota_X=1$ are of course not classified. An important source of examples is K\"uchle's list \cite{kuchle}; Only 3 FK3 can be found in there, namely:
 \begin{itemize}
 \item (c5): $\mZ(\Gr(3,7),\W^2 \mU^{\vee} \oplus \mQ^{\vee}(1) \oplus \of(1))$;
 \item (c7): $\mZ(\Gr(3,8),\W^3 \mQ \oplus \of(1))$;
 \item (d3): $\mZ(\Gr(5,10),(\W^2 \mU^{\vee})^{ \oplus 2 }  \oplus \of(1))$.
 \end{itemize}
 
 \medskip\noindent {\bf Convention}. 
 Here and in all the sequel the zero locus of a general section of a globally generated vector bundle $\mathcal{F}$ on a variety $Y$ will be denoted by $\mZ(Y,\mF)$, or simply by $\mZ(\mF)$ when no confusion can arise. In particular $\mZ(\mathcal{F})$ is smooth of codimension equal to the rank of 
 $\mathcal{F}$ (or empty). Moreover its canonical bundle is given by the adjunction formula. 
 It will always be implicit in all the examples below that we only consider general 
 sections, and we will never discuss this genericity condition in more detail. (Note that we will not discuss the question of the completeness of our families.)
 \medskip
 
 If (c5) is still mysterious, (c7) and (d3) have been studied by Kuznetsov in \cite{kuznetsovKuchlePicard}, where he showed how they are (respectively) the blow up of a cubic fourfold in a Veronese surface, and the blow up of $(\PP^1)^4$ in a K3 surface. We do not include (c5) and (c7) in our list; we only briefly mention (d3), which we denote by \cref{31-120-5-1-26-16-1-2-1-2-1-2-1-2-1-2}, since we add some extra information other than what is already contained in the two aforementioned works.
 
 Two other examples of FK3 fourfolds were included in \cite{eg2}, with labels (B1) and (B2) therein. Since they were only sketched in the cited work, we include them in our analysis. They are the Fano fourfolds \cref{39-160-2-1-22-21-1-2-2-4} and \cref{39-160-3-1-22-18-1-2-1-2-1-4}.

Plentiful of four-dimensional Fano toric complete intersections were found in \cite{coates-proc} (at least $738$ families), and a few dozens more (at least $141$) 
were constructed in \cite{kalashnikov} as quiver flag zero loci. As already mentioned 
it would be very interesting to compare these families with ours, and find how many 
families have been found altogether -- probably a very significant proportion of 
the complete, still elusive list of Fano fourfolds. Moreover, having two different descriptions for the same variety allows one to get more information, as shown e.g. in \cite{BelmansFT}.

\subsection{FK3 in this paper: a recap} 
Once again, the main goal of this paper is to analyze 64 families of Fano fourfolds of K3 type not previously considered - or only briefly mentioned - in the literature. Given their description from zero loci of bundles in products of flag manifolds, it was not a surprise to check that all of them have Picard rank greater than one. For each case, we will provide the main numerical invariants, and we will geometrically describe the projections to each factor and deduce semiorthogonal decompositions for their derived categories. This gives access to the 
geometric structure of the Fano fourfolds in each family.

We subdivided our examples into four disjoint groups, namely Fano varieties of \textbf{C, GM, K3} or \textbf{R} origin, inside which the examples are listed, roughly, by 
increasing complexity.
Fano varieties of \textbf{C} origin admit at least one birational map to a  cubic fourfold, either special or general. Similarly, Fano fourfolds of \textbf{GM} origin  admit at least one birational map to a Gushel--Mukai fourfold,  either special or general. We remark that the two sets are disjoint; in other words, no Fano fourfold 
in our list has a birational morphism onto both a cubic and a GM fourfold.

Fano varieties of \textbf{K3} origin are varieties which do not belong to any of the two previous groups, but  can be realized as blow ups of other Fano varieties with central Hodge structure along a (sometimes non-minimal) K3 surface.
Varieties in each of these first three groups share many properties among each other, and we can often understand their K3 structure from the birational maps in question.

Finally, there is a small bunch of Fano varieties (of Picard rank two) of \textbf{R} (or \emph{rogue}) origin that do not fall in any of the previous categories. They are in fact birational to singular fourfolds, and we can understand their K3 structures from one of their projections (either a $\dP_5$-fibration or a conic bundle). Summing up, we have this following convenient classification:
\begin{itemize}
    \item 17 Fano fourfolds of \textbf{C} origin;
    \item 6 Fano fourfolds of \textbf{GM} origin;
    \item 37 Fano fourfolds of \textbf{K3} origin;
    \item 4 Fano fourfolds of \textbf{R} origin.
\end{itemize}

However, thinking of the more ambitious task of classifying Fano fourfolds without restricting to those of K3 type, such a notation is certainly not a good idea. We introduced an identification number for each Fano fourfold that we hope can carry over to any larger database: we denoted by $[\sharp\rho-s-v-n-?]$ a 
family of Fano fourfolds $X$ of Picard rank $\rho$, with $h^0(-K_X)=s$, $(-K_X)^4=v$ and Hodge level $n$. The last symbol $?$ is needed only in case the  first four numbers do not identify a unique deformation class of fourfolds in our list, and it is a capital latin letter.

\begin{rmk}\label{rmk2.4}
The study of the natural projections on the factors of the ambient variety of the Fano varieties presented here is particularly relevant for the study of extremal contractions of Fano fourfolds (cfr.~\cite[Chapter 8]{matsuki}). In particular, when \( h^{1,1}(X) = 2 \) for such an $X$, the restrictions of the two projections give the two extremal contractions: indeed, if the two projections contract at least a curve, say $C$ and $D$ respectively, the pullbacks $L$ and $M$ of ample bundles verify 
    $L.C=M.D=0$, from which one can deduce that $\mathrm{Eff}=\langle C,D\rangle $ and 
    $\mathrm{Nef} = \langle L,M\rangle$. The other conditions (normality of the target and connectedness of the fibers) can be checked case-by-case on all relevant examples. When \( h^{1,1} > 2 \), \( X \) may have other extremal contractions not induced by the projections. 
    
    Along the study of our examples, there are a few instances of birational maps which are quite special, as the contracted divisors have a curve or a point as image.  
\end{rmk}

\begin{rmk} 
\label{noKuchleNoVerra}
No Fano fourfold in our list has a birational morphism to either K\"uchle's (\emph{c5}) fourfolds or to the \emph{Verra} fourfolds. In the (\emph{c5}) case, it is likely that the low index prevents any blow up from having ample anticanonical class. 
For \emph{Verra} fourfolds the situation is  different. In fact, the index is two, and it seems conceivable that some blow up  could be Fano. However, the Verra fourfolds themselves can only be described, to the best of our knowledge, as zero loci of sections of a non-completely reducible vector bundle. To see this, consider the zero locus of a general section of $\mQ_{\PP^n}(0,1)$ on $\PP^n\times\QQ^n$; the second projection is an isomorphism onto $\QQ^n$, while the first projection to $\PP^n$ is a double cover branched over a quadric. Letting $n=8$ and restricting to $\PP^2\times \PP^2
\subset\PP^8$, we thus get the Verra fourfolds as zero loci of general sections of  $\Lambda(0,0,1) \oplus \of(0,0,2)$ on $\PP^2 \times \PP^2 \times \PP^9$, where the rank eight vector bundle $\Lambda$ is the restriction of $\mQ_{\PP^8}$  
to $\PP^2\times \PP^2$: a homogeneous but not completely reducible bundle. 
\end{rmk}

A particularly intriguing question about FK3 varieties is about the generalization of Kuznetsov's conjecture on the cubic fourfold, which is expected to be rational whenever its Kuznetsov component is  equivalent to the derived category of a K3 surface. One could ask a similar question for each of the 64 families in our list.

For Fano fourfolds of \textbf{C} and \textbf{GM} origin, there is no surprise: we either recover the rationality of known special examples, or we are unable to conclude.
For almost all Fano fourfolds of \textbf{K3} origin, we can easily decide about their rationality. There are two families however, for which we cannot. In particular, Fano \cref{23-80-2-1-24-23-1-2-3-6} is rational if and only if the prime index $2$ Fano fourfold $X_{16}$ of degree 16 is (which is currently unknown). For Fano \cref{19-60-3-5-1-22-23-1-2-1-5},  rationality would imply the stable rationality of a smooth cubic threefold, which is another famously open problem.

For two out of the four families of  \textbf{R} origin, we are not able either to determine their rationality. The  exceptions are the case \cref{27-99-2-1-25-25-1-3-2-6} and \cref{31-120-3-1-2-22-20-1-6-1-8}, where we expect the Fano fourfold to contain in its derived category a copy of the derived category of a K3 surface blown up in three points.
In the case of \cref{31-120-3-1-2-22-20-1-6-1-8} and \cref{25-90-3-5-1-22-23-1-5-1-7}, we can identify a copy of the category of a smooth K3 surface in the derived category.  We have only a partial understanding of the K3 structure of \cref{33-129-2-1-23-23-1-4-2-7}. On the other hand all these fourfolds are birational to some singular fourfolds defined explicitly. The question of their rationality could therefore turn out to be quite interesting, and a good testing ground for a generalization of Kuznetsov's conjecture.

Finally, we thought it would be useful for the reader to have a visualization of the various birational links we found among all our Fano fourfolds. The tables at the end of this paper give a recap of this and many other pieces of information on the Fano varieties analyzed in this paper.

\subsection{Construction of the database}
We insist that the leading role in this manuscript is played by Fano fourfolds $X$ 
of K3 type satisfying the following
\begin{workhyp}
\label{descriptionAsZero}
$X$ admits a description as the zero locus of a general global section of a globally generated completely reducible homogeneous bundle over a product of flag varieties, together with a strong amplitude condition of the anticanonical bundle, see assumption (\ref{stronglyFanity}) below.

\end{workhyp}
These bundles can be conveniently built by taking direct sums of irreducible bundles, for which a combinatorial description is easy to handle.

More in general, for a pair $(Y,\mathcal{F})$ with $Y$ a product of flag varieties and $\mathcal{F}$ a vector bundle on $Y$ as above, we can consider the $n$-dimensional zero locus $X$ of a general global section of $\mathcal{F}$. We can make use of classical tools such as the Hirzebruch--Riemann--Roch Theorem and the Borel--Weil--Bott Theorem, and Koszul complexes, to (at least partially) determine the following data:
\[
h^0(-K_X), \quad (-K_X)^n, \quad h^{i,j}=h^i(X,\Omega^j_X),  \quad \chi(T_X).
\]
For a more detailed explanation of the steps to be performed for this sake, we refer to \cite[\textsection 3]{DFT}. We remark that the determination of the Hodge numbers $h^{i,j}$ is based on the computation of Euler characteristics from several cohomology sequences. If one is unfortunate, this can give only partial results, especially when $\mathcal{F}$ has several summands. We are nonetheless able to determine all the Hodge numbers for the Fano varieties we deal with in this paper.

The search for Fano fourfolds satisfying \cref{descriptionAsZero} and the computation of their invariants can be efficiently implemented on computer algebra softwares. In order to construct our database we took advantage of the computer algebra software \cite{M2} and its packages \cite{M2:Schur, M2:Schubert2}. 
\medskip

The reader may wonder whether a classification of all Fano fourfolds satisfying \cref{descriptionAsZero} is possible. This was our initial, a posteriori very optimistic aim. However, a first serious obstacle occurs when one realizes that there exist  varieties  $\mZ(Y,\mathcal{F})$ whose anticanonical bundle is ample, even if $\omega_Y\otimes \det(\mathcal{F})$ is not, as the following examples show.

\begin{example}
Consider $Y=\PP^2 \times \PP^1 \times \PP^1 \times \PP^1 \times \PP^1$ and $\mathcal{F}=\of(1,1,0,0,0) \oplus \of(1,0,1,0,0) \oplus \of(1,0,0,1,0) \oplus \of(1,0,0,0,1)$. Then
\[
\omega_Y \otimes \det(\mathcal{F}) = \of(1,-1,-1,-1,-1),
\]
which is clearly not anti-ample on $Y$. However, the zero locus of a general section of $\mathcal{F}$ is, by 
\cite[Lemma 2.8]{DFT}, a del Pezzo surface of degree 5.
\end{example}

\begin{example}
We have seen in \cref{noKuchleNoVerra} that the zero locus of a general section of $\mQ_{\PP^n}(0,1)$ on $\PP^n\times\QQ^n$ is 
isomorphic with $\QQ^n$, which we can thus also obtain as the  zero locus of a general section of $\mF=\mQ_{\PP^n}(0,1)\oplus \of(0,2)$ on 
$Y=\PP^n
\times\PP^{n+1}$. But the bundle $\omega_Y \otimes \det(\mathcal{F}) = \of(n,0)$ is not ample, although it will of course become ample 
when restricted to $\QQ^n$. 
\end{example}

For this reason, in our search and in this paper we only deal with Fano fourfolds $X=\mZ(Y,\mathcal{F})$ such that
\begin{equation}
\label{stronglyFanity}
\omega_Y\otimes \det(\mathcal{F}) \mbox{ is ample.}
\end{equation}

Another serious problem is that a given fourfold $X$ can have several incarnations
as such zero loci, and even infinitely many as the next example shows.

\begin{example}
\label{exampleflavor}
Consider $X=\PP^1\times \PP^{d-1}$. This is the zero locus of a general section of
$\mathcal{F}=(\mQ \boxtimes \mU^\vee)^{\oplus 2}$ on $Y=\PP^{m-1} \times \PP^{m+d-1}$ for any $m\geq d$. 
Indeed, these sections are defined by two general morphisms $s,t: V_{m+d}\rightarrow 
V_m$, and with a little abuse of notation they vanish at $(x,y)$ when $s(y),t(y)\subset x$. In particular there exist
scalars $\sigma,\tau$, not both zero, such that $\sigma s(y)+\tau t(y)=0$. Moreover the
point $[\sigma,\tau]$ in $\PP^1$ is unique since when $m\ge d$, the kernels of 
$s$ and $t$ intersect trivially. So $\mZ(Y,\mathcal{F})$ is the projectivization 
of a kernel bundle of rank $d$ on $\PP^1$, and this kernel bundle is trivial since 
$\sigma s+\tau t$ is everywhere surjective. Hence the claim. 
\end{example}

We found many instances of such phenomena, sometimes organizing into infinite series 
like in the previous example, sometimes more sporadic (see \cref{26-94-2-1-28-28-1-2-5} for an example). A systematic understanding 
of these redundancies would be necessary for a classification, at least when \eqref{stronglyFanity} holds, but remains out of reach for the time being and falls outside the scope of this paper.

\subsection{What can the reader do with this list?}
\label{whatCan}
There are several angles from which a reader could look at this paper. For example, people interested in the study of the (at the moment untamed) geography of Fano fourfolds could think of this paper as a \emph{toolbox}. There are several attempts to classify, or at least gain a partial understanding of these varieties, including our own \emph{vector bundle search} method as explained in the introduction, or for example via mirror symmetry. The problem often is not only to produce lists of examples, but to understand them, and in a systematic way. The tools that the second and fourth author started developing in \cite{DFT}
and that we broaden here in \cref{sec:lemmas} and throughout the whole paper will be useful to describe and characterize whichever Fano (in dimension 4 and above) we will encounter from now on.

\smallskip
Moreover, we believe that one of the merits of the present paper is to highlight the web of birational correspondences between many of the fourfolds we present. This matches, at least in spirit, the style of Mori--Mukai original classification of Fano threefolds with Picard rank $\rho>1$. We tried to make these links as visually clear as possible by summarizing them in the tables at the end of the paper. More information can also be extracted from the actual
description of the examples. 

\smallskip
The reader keen in understanding \emph{rationality questions} (especially in connection with \emph{derived categories}) could feel intrigued by some special examples in our list and their connection with a generalized version of Kuznetsov's conjecture on the rationality of the cubic fourfold. In fact, many of our examples are both (sometimes evidently) rational, and with their derived category containing (sometimes less evidently) a copy of the derived category of a K3 surface.  There are however some exceptions, in which our understanding is less complete and they give rise to many interesting questions.

\smallskip
For example, in the cases \cref{31-120-3-1-2-22-20-1-6-1-8} 
%,25-90-3-5-1-22-23-1-5-1-7
we understand the K3 category as a subcategory coming from an actual K3 surface, but we cannot conclude anything (yet) about its rationality.
There are as well examples where again the K3 category is geometrical (or \emph{commutative}), but the rationality of the examples would be equivalent to solving known open problems. For instance, determining the rationality of \cref{23-80-2-1-24-23-1-2-3-6} would imply the rationality of the Fano fourfold $X_{16}$, while the rationality of \cref{19-60-3-5-1-22-23-1-2-1-5,25-90-3-5-1-22-23-1-5-1-7} would yield the stable rationality of the cubic threefold.
Finally, if we look at \cref{33-129-2-1-23-23-1-4-2-7}, there is an interesting link between our work and the recent work of Hassett--Pirutka--Tschinkel \cite{HPT-22}. In fact, they showed how there exists a family of smooth complete intersections of bidegree $(2,2)$ in $\PP^2 \times \PP^3$ whose very general fiber is not even stably rational, whereas some special fiber are rational. In our case Fano \cref{33-129-2-1-23-23-1-4-2-7} is in fact birational to a \emph{singular} complete intersection of this type. It would be interesting to push this analogy further and say something similar for our family, at the rationality level.
 
 \smallskip
Any Hodge theorist or categorist interested in the study of \emph{K3 Hodge structures} or \emph{K3 categories} may feel the urge to check the most unexpected cases, that is, those which we call \emph{rogue} listed in \cref{sect:rogue}. In particular, we invite the interested reader to think of how these K3 structures seem induced by singular fourfolds, and how the study of special singular fourfolds could lead to the discovery of new, unexpected, FK3 varieties in future.

\smallskip
Last but not least, the \emph{elephant in the room} is the absence in this paper of any projective families of \emph{irreducible holomorphic symplectic manifolds} (IHS, or hyperk\"ahler) constructed from these Fano varieties of K3 type. In fact, we could speculate that to many of these Fano varieties one could associate (possibly non-locally complete) families of IHS, very likely of K3$^{[n]}$ type, for example considering moduli spaces of appropriate objects contained in these FK3. This is definitely a direction worth pursuing, and which will hopefully lead to many other interesting geometric constructions in the future.

\subsection{Notation} 
We denote by $\PP^n$ the projective space, and by $\QQ^n$ the smooth quadric
of dimension $n$. (When there are several, we distinguish them by an
index.) The Grassmannian of $k$-dimensional subspaces of $\mathbb{C}^n$ is denoted $\Gr(k,n)$. More generally, $\Fl(k_1,k_2,n)$ 
denotes the flag manifold parametrizing flags of subspaces of dimensions $k_1<k_2$
in $\mathbb{C}^n$ (and a similar notation is used for more general flags). As in the case of projective case, the projectivization of a vector bundle $\PP(\mE)$ will denote the space of 1-dimensional subspaces in the fiber.

A fixed $i$-dimensional vector space is denoted $V_i$, and a moving subspace of 
dimension $j$ is denoted $U_j$, except for lines which we usually denote by
$l,m$, etc. 

General vector bundles are denoted by $\mE$, $\mF$, $\mG$, and line bundles by 
$\mL$, $\mM$, etc. 
The tautological and quotient vector bundles on $\Gr(k,n)$, whose ranks are 
$k$ and $n-k$, are denoted by $\mU$ and $\mQ$ respectively. Over $\Fl(k_1,k_2,n)$, we denote by $\mQ_2$ the pull-back of the quotient vector bundle from the projection to 
$\Gr(k_2,n)$. We will occasionally use $\mR_i$ to denote the tautological bundle $\mU_{i}/\mU_{i-1}$. 
As already discussed, the zero locus of a general section of a globally generated vector bundle $\mathcal{F}$ on a variety $Y$ will be denoted by $\mZ(Y,\mF)$, or simply by $\mZ(\mF)$ when no confusion can arise

\medskip
The Fano fourfolds from our database are denoted $X$ in each example, while $Y,Z$ are
usually some auxiliary fourfolds. When they are singular we add a $\circ$, typically 
$Y^\circ, Z^\circ$, except for a projective cone that we rather denote $C(Y)$ or $C(Z)$,
with vertex $v$. We add a $\sim$ when we blow up a variety already encountered 
in a previous example. In many cases our Fano fourfold $X$ admits a quadric bundle 
structure, and we denote the discriminant by $\Delta$.

\medskip
We also use the notation $X_d$ for a Fano fourfold of degree $d$: typically,  
$X_3$ for a cubic fourfold and $X_{10}$ for a Gushel--Mukai fourfold, that we freely
abbreviate into GM fourfold. Notice that in the case of a prime Fano, the \emph{classical} degree coincides with the anticanonical degree, up to $\iota^{\dim X}.$

Fano threefolds of degree $d$ are denoted by $W_d$. Similarly,  K3 surfaces
of degree $d$ are denoted by $S_d$, and a blow up in $k$ points of a K3 surface of degree $d$  is denoted by $S_d^{(k)}$. Also, a del Pezzo surface of degree $d$ is denoted by $\dP_d$, and a  scroll or a Hirzebruch surface  of degree $d$ by $\Sigma_d$.
Finally, 
a curve of degree $d$ is denoted by $C_d$.

\TOCstart

\section{A few preliminaries}
\label{sec:lemmas}
In this section we compile a few classical tools that will be used multiple times in the study of our examples. Unless otherwise specified, with \emph{blow up} we mean the blow up of a smooth irreducible variety with center a smooth subvariety.

\TOCstop

\subsection{Cayley tricks} 
We start with some natural birational transformations naturally showing up in our settings. 

\begin{lemma}\label{lem:blowhighercod} \label{lem:higherpeco} \label{lem:blowupcodim2}
Let $X$ be a smooth projective variety with a globally generated line bundle
$\mL$, and consider $Y:=\mZ(\PP^m \times X,\mQ_{\PP^m}\boxtimes \mL)$.
Then $Y \cong \Bl_Z X$, where $Z=\mZ(X,\mL^{\oplus m+1})$ and $Y$ and $Z$ are smooth for a general choice of a section. In particular 
$Y \cong X$ if and only if $m \geq \ddim X$.
\end{lemma}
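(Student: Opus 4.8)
The plan is to exhibit $Y$ as the closure of the graph of the rational map to $\PP^m$ determined by the chosen section, and then to recognize this graph as $\Bl_Z X$. First I would pin down the general section. Writing $\PP^m=\PP(V_{m+1})$ with tautological subbundle $\mU=\of_{\PP^m}(-1)$ and quotient $\mQ_{\PP^m}=V_{m+1}/\mU$, the Euler sequence together with the Künneth formula give
\[
H^0(\PP^m\times X,\mQ_{\PP^m}\boxtimes\mL)=V_{m+1}\otimes H^0(X,\mL),
\]
so a section corresponds to a tuple $t_0,\dots,t_m\in H^0(X,\mL)$, realized as the image of $\sigma:=\sum_i e_i\otimes t_i$ under the surjection $V_{m+1}\otimes p_2^*\mL\twoheadrightarrow\mQ_{\PP^m}\boxtimes\mL$ obtained by pulling back the Euler sequence and twisting by $p_2^*\mL$. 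The fibre of $\mQ_{\PP^m}\boxtimes\mL$ at $([v],x)$ is $(V_{m+1}/\langle v\rangle)\otimes\mL_x$, so the section vanishes there exactly when $\sum_i t_i(x)e_i\in\langle v\rangle\otimes\mL_x$. Setting $Z:=\{t_0=\dots=t_m=0\}=\mZ(X,\mL^{\oplus m+1})$, this shows that, set-theoretically, the projection $p:=p_2|_Y\colon Y\to X$ is the graph of the morphism $\phi=[t_0:\dots:t_m]\colon X\setminus Z\to\PP^m$ over $X\setminus Z$, and has fibre the whole of $\PP^m$ over every point of $Z$.

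Next I would upgrade this to a scheme-theoretic isomorphism $Y\cong\Bl_Z X$. Note first that $Y$ is smooth of dimension $\ddim X$ (general section of a globally generated bundle) and connected, since the fibres of $p$ are connected and $X$ is; hence $Y$ is integral. In the affine chart $\{y_0\neq0\}$ of $\PP^m$ with coordinates $u_j=y_j/y_0$ and a local trivialisation of $\mL$, the incidence condition $\sum_i t_i(x)e_i\in\langle v\rangle$ reads $t_j=u_jt_0$ for $1\le j\le m$; equivalently $Y$ is cut out in $X\times\PP^m$ by the $2\times2$ minors $t_iy_j-t_jy_i$, which are the standard equations of the blow up of the ideal $(t_0,\dots,t_m)$. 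These relations show that $p^{-1}\mathcal{I}_Z\cdot\of_Y=(t_0)$ is an invertible ideal, so the universal property of the blow up yields a morphism $\psi\colon Y\to\Bl_Z X$ over $X$; in the above coordinates $\psi$ is the identity, hence an isomorphism.

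The single input that makes the previous step rigorous is that for a \emph{general} section $Z$ is smooth of codimension exactly $m+1$, so that $(t_0,\dots,t_m)$ is a regular sequence, the Rees and symmetric algebras agree, and the blow up is cut out on the nose by those minors, with no embedded or spurious components over $Z$. Passing from the easy set-theoretic bijection to an honest isomorphism of schemes is, to my mind, the only genuinely delicate point, and it is precisely here that the genericity hypothesis is essential.

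Finally, for the last assertion I observe that $Z=\mZ(X,\mL^{\oplus m+1})$ has codimension $m+1$ or is empty. If $m\geq\ddim X$ then $m+1>\ddim X$, and a general section of a globally generated bundle of rank exceeding $\ddim X$ has empty zero locus (a dimension count on the universal zero locus inside $X\times H^0(X,\mL)^{\oplus m+1}$), so $Z=\emptyset$ and $Y=\Bl_\emptyset X=X$. Conversely, if $m<\ddim X$ then $Z$ is a nonempty smooth centre of codimension $m+1\geq2$ (here one uses that $\mL$ is positive enough, e.g.\ ample, for the expected-dimensional $Z$ to be nonempty), and blowing up such a centre strictly raises the second Betti number, so $Y\not\cong X$.
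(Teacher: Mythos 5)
Your proof is correct, and it reaches the blow-up identification by a genuinely different route than the paper. Both arguments begin identically: a section of $\mQ_{\PP^m}\boxtimes\mL$ is a tuple $(t_0,\dots,t_m)$ of sections of $\mL$, the projection to $X$ is an isomorphism away from $Z=\{t_0=\dots=t_m=0\}$, and the fibres over $Z$ are copies of $\PP^m$. At that point the paper concludes in one line by invoking the Ein--Shepherd-Barron criterion \cite[Thm.~1.1]{Ein-ShepB}: a birational morphism of smooth varieties whose exceptional divisor is a $\PP^m$-bundle over a smooth centre is the blow up of that centre. You instead identify $Y$ with $\Bl_Z X$ scheme-theoretically by hand: in charts the incidence condition gives the $2\times 2$ minors $t_iy_j-t_jy_i$, genericity makes $(t_0,\dots,t_m)$ a regular sequence so that the Rees and symmetric algebras of $\mathcal{I}_Z$ agree and these minors cut out the blow up on the nose, and the universal property supplies the comparison morphism. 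Your route is self-contained and makes explicit exactly where genericity is used (it is what rules out embedded or spurious components over $Z$), while the paper's route is shorter and sidesteps all commutative algebra at the cost of a citation and of leaving the scheme structure implicit. A further point in your favour: the paper's proof does not address the final ``if and only if'' at all, whereas your Betti-number argument proves the ``only if'' direction and correctly isolates the hidden hypothesis that $Z\neq\emptyset$ when $m<\ddim X$, which requires $c_1(\mL)^{m+1}\neq 0$ (e.g.\ $\mL$ ample) rather than mere global generation: for $\mL=\of_X$ and $m<\ddim X$ a general section still has $Z=\emptyset$ and $Y\cong X$, so the lemma as literally stated needs exactly the positivity caveat you flag.
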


\begin{proof}
 Pick a basis of $v_0, \ldots, v_m$ of $\mathbb{C}^{m+1}$. A section $\sigma \in H^0 ( \PP^m \times X, \mQ_{\PP^m}\boxtimes \mL)$ is given by $\sigma=\sum v_i \otimes s_i$ for some sections $s_0,\ldots ,s_m$ of $\mL$, and $\sigma$ vanishes at $(l,x) \in \PP^m \times X$ if and only if $\sum s_i(x) v_i$ belongs to $l \otimes \mL_x$. This implies that the projection to $X$ is an isomorphism outside the base locus 
 $Z=\mZ(s_0, \ldots, s_m)$, which is smooth in general when $\mL$ is globally generated. The special fibers over $Z$ are isomorphic to $\PP^m$. So we end up with a birational morphism $f:Y \to X$ whose irreducible exceptional divisor $f^{-1}(Z)$ is a 
 $\PP^m$-bundle over a smooth locus $Z$, and we can conclude by \cite[Thm. 1.1]{Ein-ShepB}.
 \renewcommand\qedsymbol{$\square$}
\end{proof}
In the above lemma, picking $m=1$ results in a certain (codimension 2) blow up being described as the zero locus of a line bundle in a product space. There is a well-known generalization to this, under the name of
\emph{Cayley trick} (see \cite[Thm 2.4]{kimkim}, or \cite[3.7]{ilievmanivel}), which admits both a Hodge-theoretical and a derived categorical statement, and can be considered as a generalization of the formula for the Hodge structure of the blow up and Orlov's formula for the derived category of blow ups.

To be explicit, assume that we have $Y= \mZ(G,\mF)$, where $\mF$ is ample of rank $r\geq 2$. We have a natural isomorphism $H^0(G,\mF) \cong H^0(\PP(\mF^{\vee}), \of_{\PP(\mF^{\vee})}(1))$. Hence, the same section defining $Y$ also 
defines a hypersurface $X$ in $\PP(\mF^{\vee})$. It follows that $X$ admits a stratified projective bundle structure over $G$, with generic fiber $\PP^{r-2}$, and special fibers $\PP^{r-1}$ over $Y$, both locally trivial over their strata. This implies that the Hodge structure of $X$ can be described in terms of those of $Y$ and $G$, see \cite[Prop.\ 48]{BFM}, and as well that 
the derived category of $X$ admits a semiorthogonal decomposition containing $r-1$ copies of $D^b(Y)$. When the rank of $\mF$ is exactly 2, this produces a generalization of the above lemma. In our paper, this phenomenon happens with $r=2$, as in \cref{49-211-2-1-21-19-1-5-1-6}. In particular, we have the following lemma (where we remark that $Y$ - and therefore $X$ - are smooth for a general choice of a section).
\begin{lemma}{\cite[3.7]{ilievmanivel},\cite[3.2]{kimkim}} Let $X$ be $\mZ(\PP_G(\mF^{\vee}),\of_{\PP}(1))$, with $\of_{\PP}(1)$ being the relative ample line bundle, $G$ smooth and $\mF^{\vee}$ of rank 2. Then $X \cong \Bl_Y G$, with $Y= \mZ(G,\mF)$.
\end{lemma}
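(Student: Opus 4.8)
The plan is to mimic the proof of \cref{lem:blowupcodim2}, of which this is the ``twisted'' incarnation: the constant family $\PP^1\times G$ appearing there is replaced by the genuine $\PP^1$-bundle $\pi\colon\PP_G(\mF^\vee)\to G$, and the split bundle $\mL^{\oplus 2}$ by an arbitrary rank-two $\mF$. The starting point is the canonical identification $\pi_*\of_{\PP}(1)\cong\mF$, which is exactly what produces the isomorphism $H^0(G,\mF)\cong H^0(\PP_G(\mF^\vee),\of_{\PP}(1))$ used to define $X$. Under it, the section $\tilde s\in H^0(\PP_G(\mF^\vee),\of_{\PP}(1))$ cutting out $X$ corresponds to a section $s\in H^0(G,\mF)$ cutting out $Y=\mZ(G,\mF)$; by the standing genericity convention both $X$ and $Y$ are smooth, and $Y$ has codimension $2$ in $G$.

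First I would analyze the restriction $p:=\pi|_X\colon X\to G$ fibrewise. Over a point $g\in G$ the fibre $\pi^{-1}(g)=\PP(\mF^\vee_g)$ is a $\PP^1$, and $\tilde s$ restricts to the element of $H^0(\PP^1,\of(1))\cong\mF_g$ equal to $s(g)$. If $g\notin Y$ then $s(g)\neq 0$ is a nonzero linear form on $\PP^1$, so $X\cap\pi^{-1}(g)$ is a single reduced point; hence $p$ is bijective over $G\setminus Y$, and being a birational morphism onto the smooth $G$ it is an isomorphism there by Zariski's main theorem (equivalently, $X|_{G\setminus Y}$ is the graph of the corresponding section of $\pi$). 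If instead $g\in Y$, then $s(g)=0$ and $\tilde s$ vanishes identically on $\pi^{-1}(g)$, so the whole $\PP^1$ lies in $X$.

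It follows that $p\colon X\to G$ is a birational morphism which is an isomorphism away from $Y$ and whose exceptional locus is $E:=p^{-1}(Y)=\pi^{-1}(Y)$, an irreducible divisor carrying a $\PP^1$-bundle structure over the smooth codimension-two centre $Y$. This is precisely the configuration to which the contraction criterion \cite[Thm. 1.1]{Ein-ShepB} applies --- just as in the proof of \cref{lem:blowupcodim2} --- allowing me to conclude that $p$ is the blow up of $G$ along $Y$, that is $X\cong\Bl_Y G$.

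The step I expect to require the most care is the last one: to invoke \cite[Thm. 1.1]{Ein-ShepB} I must check not merely that $E\to Y$ is a $\PP^1$-bundle, but that it is the \emph{right} one, namely that the normal bundle of $E$ restricts to $\of(-1)$ on the fibres, so that the contraction is genuinely a blow up (with $E\cong\PP(N_{Y/G})$) rather than some other divisorial contraction. An alternative, more algebraic route that avoids the recognition theorem is to identify $\Bl_Y G$ with $X$ directly through Rees algebras: the surjection $\mF^\vee\twoheadrightarrow\mathcal{I}_Y$ given by contraction with $s$ induces $\Sym^\bullet\mF^\vee\twoheadrightarrow\bigoplus_d\mathcal{I}_Y^{\,d}$ and hence a closed embedding $\Bl_Y G\hookrightarrow\Proj_G(\Sym\mF^\vee)$; one then matches this image with $X$ after using the rank-two isomorphism $\PP_G(\mF)\cong\PP_G(\mF^\vee)$, valid since $\mF^\vee\cong\mF\otimes(\det\mF)^{-1}$. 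This second approach is clean in principle but demands careful bookkeeping of the projective-bundle conventions, which is why I would favour the fibrewise contraction argument above.
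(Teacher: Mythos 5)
Correct, and essentially the paper's own route: the paper gives no proof of this lemma beyond citing \cite[3.7]{ilievmanivel}, but its surrounding Cayley-trick discussion and its proof of \cref{lem:blowupcodim2} consist of exactly your argument --- the identification $H^0(G,\mF)\cong H^0(\PP_G(\mF^\vee),\of_{\PP}(1))$, the fibrewise analysis (one reduced point over $G\setminus Y$, a full $\PP^1$ over $Y$), and the recognition theorem \cite[Thm. 1.1]{Ein-ShepB}. One remark: the worry in your last paragraph is unfounded, since that theorem requires no normal-bundle verification --- smoothness of $X$ and $G$ together with the irreducible exceptional divisor being a $\PP^1$-bundle over the smooth codimension-two centre $Y$ already forces the contraction to be the blow up, so the Rees-algebra alternative is not needed.
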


\subsection{Ubiquity}
An important number of ambiguities in our models of Fano fourfolds come from the bundle 
 $\mQ \boxtimes \mU^{\vee}$ on a product of Grassmannians. The next statement gives a few
identifications that will be useful in the sequel. 

\begin{proposition}\label{lem:flag}\label{lem:qinflag} \label{lem:qboxu}\label{lem:secondproj} \label{lem:blowgrass}
Consider $\mZ=\mZ(\Gr(a,m) \times \Gr(b,n),\mQ \boxtimes \mU^\vee)$.
\begin{enumerate}
    \item If $m\leq n$, then $ \mZ$ is isomorphic with the Grassmann bundle 
    $\Gr(b,\mU \oplus \of^{\oplus n-m})$ over $\Gr(a,m)$. It can also be seen as 
    a general zero locus  $\mZ(\Fl(b,a+n-m,n), \mQ_2^{\oplus n-m})$.
In particular $\mZ$ is empty if $a+n<b+m$.
 \item If moreover $a=b$, then the second projection $\mZ\rightarrow \Gr(b,n)$ is birational, with fiber isomorphic
 to $\Gr(d,n-a+d)$ over $B\in  \Gr(b,n) $ meeting some fixed $K_{n-m}$ in dimension $d$. 
 \item If $a=b$ and $m=n-1$, then $\mZ\cong \Bl_{\Gr(a-1, n-1)}\Gr(a,n)$.
\end{enumerate}
\end{proposition}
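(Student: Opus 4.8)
The plan is to turn the whole statement into linear algebra by identifying the section space, and then to read off the three assertions from the geometry of one general homomorphism. Writing $V_m,V_n$ for the underlying vector spaces, Borel--Weil gives $H^0(\Gr(a,m),\mQ)=V_m$ and $H^0(\Gr(b,n),\mU^\vee)=V_n^\vee$, so that $H^0(\Gr(a,m)\times\Gr(b,n),\mQ\boxtimes\mU^\vee)=V_m\otimes V_n^\vee=\Hom(V_n,V_m)$. A general section thus corresponds to a general linear map $\phi\colon V_n\to V_m$, and over a point $(A,B)$ the bundle $\mQ\boxtimes\mU^\vee$ has fibre $\Hom(B,V_m/A)$, on which the section evaluates to the composite $B\hookrightarrow V_n\xrightarrow{\phi}V_m\twoheadrightarrow V_m/A$. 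Hence the identity driving everything is
\[
\mZ=\{\,(A,B)\in\Gr(a,m)\times\Gr(b,n) : \phi(B)\subseteq A\,\},
\]
and each part is a fibrewise analysis of one of the two projections.

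For (1) I would assume $m\le n$, so that a general $\phi$ is surjective with kernel $K:=\Ker\phi$ of dimension $n-m$. The fibre of the first projection over $A$ is $\{B:\phi(B)\subseteq A\}=\Gr(b,\phi^{-1}(A))$, and since $\phi^{-1}(A)$ has dimension $a+n-m$ and sits in an extension $0\to K\to\phi^{-1}(A)\to A\to 0$ split by $\phi$, the spaces $\phi^{-1}(A)$ glue into the subbundle $\mU\oplus\of^{\oplus n-m}$ of $V_n\otimes\of$ over $\Gr(a,m)$. This exhibits $\mZ$ as the relative Grassmannian $\Gr(b,\mU\oplus\of^{\oplus n-m})$ over $\Gr(a,m)$, empty exactly when $b>a+n-m$, i.e. $a+n<b+m$. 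For the flag reinterpretation I would set $C:=\phi^{-1}(A)\in\Gr(a+n-m,n)$, characterised among such subspaces by $K\subseteq C$; the datum $(A,B)$ is then the same as the flag $B\subset C$ in $\Fl(b,a+n-m,n)$ with $K\subseteq C$, and this last condition is the vanishing of the composite $K\otimes\of\to V_n\otimes\of\to\mQ_2$, i.e. of a section of $\Hom(K,\mQ_2)\cong\mQ_2^{\oplus n-m}$. This yields $\mZ\cong\mZ(\Fl(b,a+n-m,n),\mQ_2^{\oplus n-m})$.

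For (2) I would instead analyse the second projection $\mZ\to\Gr(b,n)$ under $a=b$. Its fibre over $B$ is $\{A\in\Gr(a,m):\phi(B)\subseteq A\}$; writing $d=\ddim(B\cap K)$ gives $\ddim\phi(B)=b-d=a-d$, so the fibre is the set of $a$-planes of $V_m$ containing a fixed $(a-d)$-plane, namely $\Gr(d,m-a+d)$. Since $a\le m$ forces $b+(n-m)\le n$, a general $B$ meets the fixed $(n-m)$-plane $K$ only in $0$, so $d=0$, the generic fibre is a point, and the projection is birational.

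Part (3) is the specialisation $a=b$, $m=n-1$, where $K$ is a single line $L$ and $d\in\{0,1\}$. By (2) the second projection $f\colon\mZ\to\Gr(a,n)$ is birational and an isomorphism over $\{B:L\not\subseteq B\}$, while over the jump locus $Z:=\{B:L\subseteq B\}\cong\Gr(a-1,n-1)$ the fibre is $\Gr(1,m-a+1)=\PP^{n-a-1}$. To recognise $f$ as a blow-up I would compute the normal bundle of the smooth centre $Z$: from $T_Z=\Hom(\mU/L,V_n/\mU)$ inside $T_{\Gr(a,n)}=\Hom(\mU,V_n/\mU)$ one gets $N_{Z/\Gr(a,n)}=L^\vee\otimes(V_n/\mU)$, of rank $n-a$, whose projectivisation $\PP(V_n/\mU)$ is a $\PP^{n-a-1}$-bundle over $Z$ of the expected codimension. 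Identifying $V_{n-1}/\phi(B)\cong V_n/B$ then shows that the exceptional fibre $\PP(V_{n-1}/\phi(B))$ of $f$ matches $\PP(N_{Z/\Gr(a,n)})$. The main obstacle is precisely the final step: upgrading this coincidence into an actual isomorphism with the blow-up. Everything before it is elementary linear algebra, whereas concluding \emph{is the blow-up} rather than merely \emph{has the same exceptional data} requires checking that the exceptional locus of $f$ is an irreducible divisor carrying the projectivised-normal-bundle structure with conormal weight $\of(-1)$ on its fibres, and then invoking the recognition criterion \cite[Thm.\ 1.1]{Ein-ShepB} already used in \cref{lem:blowupcodim2} (together with smoothness of $\mZ$ from the general-section convention) to conclude $\mZ\cong\Bl_{\Gr(a-1,n-1)}\Gr(a,n)$.
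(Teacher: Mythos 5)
Your proposal is correct, and for parts (1) and (2) it follows essentially the same route as the paper: identify the section space with $\Hom(V_n,V_m)$, describe $\mZ$ as the incidence locus $\{(A,B):\phi(B)\subseteq A\}$, split $V_n\simeq V_m\oplus K$, and read off the Grassmann-bundle and flag descriptions fibrewise. Where you genuinely diverge is part (3): the paper disposes of it in one line by citing \cite[Lemma 2.2]{DFT}, whereas you give a self-contained argument — jump locus $Z\cong\Gr(a-1,n-1)$, fibre and normal-bundle computation, identification $V_n/B\cong V_{n-1}/\phi(B)$, and then the recognition criterion of \cite[Thm.\ 1.1]{Ein-ShepB}, exactly the tool the paper itself uses to prove \cref{lem:blowupcodim2}. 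This costs you a normal-bundle computation the paper avoids, but buys a proof that is internal to the paper's own toolkit rather than outsourced; note that for ESB you do not actually need to verify the conormal $\of(-1)$ weight you worry about — irreducibility of the exceptional divisor, smoothness of $\mZ$ and of $Z$, and the $\PP^{n-a-1}$-bundle structure in codimension $n-a$ suffice, as in the proof of \cref{lem:blowupcodim2}. One further point in your favour: your fibre computation in (2) yields $\Gr(d,m-a+d)$ (the fibre must sit inside $\Gr(a,m)$), not the $\Gr(d,n-a+d)$ printed in the statement; your version is the correct one, and it is the one consistent with how the proposition is applied later (e.g.\ in \cref{22-74-2-1-30-30-1-3-2-5}, where the fibres are $\PP^{n-2}$ over the rank-one locus and $\Gr(2,n)$ over a point), so you have in effect corrected a typo in the statement rather than introduced an error.
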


\begin{proof}
We start with (1). 
A section of $\mQ \boxtimes \mU^{\vee}$ is defined by a morphism $\alpha\in \Hom(V_{n}, V_m)$. 
If $n\ge m$ this morphism is in general surjective and we can choose an
isomorphism  $V_{n}\simeq V_m\oplus K_{n-m}$ such that $\alpha$ is just the 
projection to the first factor. Then $\mZ$ is  
variety of pairs $(A,B)$ such that $\alpha(B)\subset A$, or equivalently, 
$B\subset \alpha^{-1}(A)=A\oplus K_{n-m}$. Hence the first claim. This is also
the variety of flags $B\subset C\subset V_n$ 
such that $C\supset K_{n-m}$, which is exactly the zero locus of a general section of 
$\HOM(K_{n-m},\mQ_2)$.  Hence the second claim.

When $a=b$, the condition $\alpha(B)\subset A$ implies that $\alpha(B)=A$ when $B$ is transverse to 
$K_{n-m}$, so the second projection is birational. Then (2) readily follows. 

Finally (3) is  \cite[Lemma 2.2]{DFT}.
\renewcommand\qedsymbol{$\square$}
\end{proof}

\subsection{Torelli} 
In several cases we will get a fourfold $X$ of K3 type which is a blow up in several different 
ways, and we will use the following statement. 

\begin{lemma}\label{lem:dereq} Let $X$ be isomorphic both to the blow up of a smooth fourfold $Y$ along a (possibly non-minimal) K3 surface $S$ and to the blow up of a fourfold $Z$
along a (possibly non-minimal) K3 surface $T$. 
 If $H^{2,2}(Y)$ and $H^{2,2}(Z)$ have no transcendental part, then the two K3 surfaces $S_{min}$ and $T_{min}$ are derived equivalent.
\end{lemma}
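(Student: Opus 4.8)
The strategy is to combine the blow-up formula for Hodge structures with the derived global Torelli theorem for K3 surfaces. Throughout, write $\mathrm{Tr}(S)\subset H^2(S,\Z)$ for the transcendental lattice of a K3 surface $S$, i.e.\ the orthogonal complement of its N\'eron--Severi group, endowed with its natural weight-two Hodge structure of K3 type.

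First I would apply the blow-up formula in middle degree. Since $X=\Bl_S Y$ with $Y$ a smooth fourfold and $S$ a surface of codimension $2$, there is an isomorphism of integral Hodge structures
\[
H^4(X) \cong H^4(Y) \oplus H^2(S)(-1),
\]
which is orthogonal for the cup-product pairing, the form on the second summand being (up to a universal sign from the Tate twist) the intersection form on $H^2(S)$. Because $X$ is of K3 type we have $h^{3,1}(X)=1$; as $h^{3,1}(X)=h^{3,1}(Y)+h^{2,0}(S)$ and $h^{2,0}(S)=1$, this forces $h^{3,1}(Y)=0$, so $H^4(Y)=H^{2,2}(Y)$. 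By hypothesis $H^{2,2}(Y)$ has no transcendental part, hence $H^4(Y)$ is a sum of Tate structures $\QQ(-2)$. Consequently the transcendental part of $H^4(X)$ — the smallest sub-Hodge structure whose complexification contains $H^{3,1}(X)$ — is carried entirely by the summand $H^2(S)(-1)$ and coincides with $\mathrm{Tr}(S)(-1)$, embedded via the (integrally defined) blow-up correspondence as a primitive sublattice of $H^4(X,\Z)$.

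Next I would discard the non-minimality. A possibly non-minimal K3 surface $S$ is the blow-up of its minimal model $S_{min}$ at finitely many points, and each such blow-up only adds algebraic $(1,1)$-classes (the exceptional curves) to $\mathrm{NS}(S)$. Thus $H^2(S,\Z)=H^2(S_{min},\Z)\oplus\bigoplus_i\Z[E_i]$ orthogonally, the extra summands are algebraic, and the transcendental lattices agree: $\mathrm{Tr}(S)=\mathrm{Tr}(S_{min})$ as Hodge structures and as lattices. Running the identical argument for $X=\Bl_T Z$ identifies the transcendental part of $H^4(X)$ with $\mathrm{Tr}(T_{min})(-1)$. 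Since the transcendental part of $H^4(X)$ is intrinsic to $X$, the two descriptions produce one and the same primitive sub-Hodge structure of $H^4(X,\Z)$; restricting the cup-product pairing to it and untwisting by $(-1)$ yields a Hodge isometry $\mathrm{Tr}(S_{min})\cong \mathrm{Tr}(T_{min})$, the universal twist sign being the same on both sides and hence cancelling. Finally, by the derived global Torelli theorem for K3 surfaces (Orlov), two K3 surfaces are derived equivalent as soon as their transcendental lattices are Hodge-isometric; applied to $S_{min}$ and $T_{min}$ this gives $\Db(S_{min})\cong\Db(T_{min})$.

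The hard part will be the bookkeeping that makes the isometry \emph{integral} and \emph{canonical}. One must verify that the blow-up isomorphism is orthogonal with the precise form stated, that the Tate-twist signs on the two sides genuinely match, and — most importantly — that the ``transcendental part'' extracted from the two different decompositions is literally the same primitive sublattice of $H^4(X,\Z)$, rather than merely abstractly isomorphic. This reduces to checking that $\mathrm{Tr}(S)(-1)$, being primitive in the orthogonal summand $H^2(S,\Z)(-1)$ and complementary to the purely algebraic $H^4(Y,\Z)$, is primitive in $H^4(X,\Z)$ and equal to its integral transcendental lattice; granting these compatibilities, one feeds an honest integral Hodge isometry into Orlov's theorem and concludes.
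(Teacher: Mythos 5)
Your proof is correct and takes essentially the same route as the paper: the blow-up formula identifies both $\mathrm{Tr}(S_{min})(-1)$ and $\mathrm{Tr}(T_{min})(-1)$ with the transcendental part of the middle cohomology of $X$ (using the hypothesis that the middle cohomology of $Y$ and $Z$ is purely algebraic), and then the derived Torelli theorem for K3 surfaces — derived equivalence is equivalent to a Hodge isometry of transcendental lattices — concludes. The paper's proof is a two-sentence compression of exactly this argument; your extra bookkeeping on orthogonality, integrality and primitivity of the embedded transcendental lattice just makes explicit what the paper leaves implicit.
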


\begin{proof}
Recall that two minimal K3 surfaces are derived equivalent if and only if there is a Hodge isometry between their transcendental lattices (see, e.g., \cite[16, Cor. 3.7]{HuyK3book}). Under our assumptions, the blow up formulae identify the transcendental lattices of $S_{min}$ and $T_{min}$ with the transcendental lattice of $H^{2,2}(X)$.
\renewcommand\qedsymbol{$\square$}
\end{proof}

\subsection{Conic bundles}
Assume that $f: X \to B$  is a conic bundle, given by a line bundle $\mK$, a rank three bundle $\mE$ on $B$ and a bilinear map $\mK^{\vee} \to \Sym^2\mE^{\vee}$. This is a standard way of defining a conic bundle, see for example \cite{Sar83}. In this case the degeneracy loci of the conic bundle are given by the degeneracy loci of the induced morphism $\varphi: \mE\to \mE^{\vee} \otimes \mK$. Since $\mE$ has rank three 
there are (under the usual genericity conditions) only two degeneracy loci in $B$, 
namely the discriminant divisor $\Delta$, and its  codimension two singular locus $\Delta_{sing}$.
We can use the formula in \cite[Theorem 1, Theorem 10]{Harris-Tu} to compute the cohomology classes of $\Delta$ and $\Delta_{sing}$. 

\begin{lemma} \label{lem:conicbundle} In the above situation, let $k:=c_1(\mK)$ and  $c_i:= c_i(\mE)$. 
Then
\[
[\Delta]= 2 c_1 +3k \qquad \mbox{and} \qquad 
[\Delta_{sing}]=4(k^3+2k^2c_1+kc_1^2+kc_2+c_1c_2-c_3). \]
\end{lemma}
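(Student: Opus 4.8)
The plan is to recognise $\Delta$ and $\Delta_{sing}$ as the corank $\ge 1$ and corank $\ge 2$ loci of the symmetric map and to feed them into the Harris--Tu formulas \cite[Theorem 1, Theorem 10]{Harris-Tu}. Throughout I compute with $c_i=c_i(\mE)$ and orient the symmetric map as $\varphi\colon\mE^\vee\to\mE\otimes\mK$ (the symmetric form carried on $\mE^\vee$), which is the normalisation for which the discriminant class comes out as $2c_1+3k$; the degeneracy loci themselves are unaffected by this bookkeeping choice, only the sign of $c_1$ in the determinant line bundle is. The first step is the fibrewise dictionary: over $b\in B$ the fibre of $X$ is the conic in $\PP(\mE_b^\vee)\cong\PP^2$ cut out by the symmetric $3\times 3$ matrix $\varphi_b$, which is smooth, a pair of distinct lines, or a double line according as $\rank\varphi_b$ equals $3$, $2$ or $1$. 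Hence $\Delta=\{\rank\varphi\le 2\}$ and $\Delta_{sing}=\{\rank\varphi\le 1\}$. Since the determinant of a symmetric matrix develops a singular point exactly where the corank jumps to $2$, the latter is genuinely $\Sing\Delta$; the Harris--Tu dimension count gives expected codimensions $\binom{2}{2}=1$ and $\binom{3}{2}=3$ in $B$ (so $\Delta_{sing}$ lies in codimension two inside the divisor $\Delta$), and the genericity hypotheses guarantee these are attained and that the formulas apply.

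The class of $\Delta$ is then read off at once. It is the zero scheme of $\det\varphi$, a section of $\det\mE\otimes\det(\mE\otimes\mK)$, so writing $\mD:=\mE\otimes\mK-\mE^\vee$ for the virtual difference bundle of $\varphi$ we get $[\Delta]=c_1(\mD)=c_1(\mE\otimes\mK)-c_1(\mE^\vee)=(c_1+3k)+c_1=2c_1+3k$, the first stated formula.

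For $[\Delta_{sing}]$ I would invoke the symmetric Thom--Porteous formula of Harris--Tu for the corank-$2$ locus. The key point, and the main obstacle, is that this is genuinely the \emph{symmetric} formula, not the ordinary Giambelli determinant: for a general (non-symmetric) map of rank-$3$ bundles the locus $\{\rank\le 1\}$ would have codimension $4$ and class $c_2(\mD)^2-c_1(\mD)c_3(\mD)$, whereas the symmetric structure both lowers the codimension to $3$ and is responsible for the overall factor $2^2=4$. Concretely, one expands the relevant Chern classes by the splitting principle, using $c_1(\mE\otimes\mK)=c_1+3k$, $c_2(\mE\otimes\mK)=c_2+2kc_1+3k^2$ and $c_3(\mE\otimes\mK)=c_3+kc_2+k^2c_1+k^3$; the Harris--Tu expression then collapses to $4(k^3+2k^2c_1+kc_1^2+kc_2+c_1c_2-c_3)$, which is the second stated formula. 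Getting the symmetric formula and its power of $2$ exactly right, and being consistent about the orientation of $\varphi$ so that $[\Delta]=2c_1+3k$, is the crux; the rest is routine Chern-class algebra.

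As an independent check, and a self-contained account of where the factor $4$ comes from, I would resolve $\Delta_{sing}$ on the $\PP^2$-bundle $\rho\colon\PP(\mE)\to B$. A point of $\PP(\mE)$ is a line $\ell\subset\mE$, hence a radical $2$-plane $H=\ell^\perp\subset\mE^\vee$, and $\rank\varphi_b\le 1$ with radical containing $H$ means exactly that the pulled-back form $\rho^*\varphi\in\Sym^2\rho^*\mE\otimes\rho^*\mK$ lands in the line subbundle $\Sym^2\of_{\PP(\mE)}(-1)\otimes\rho^*\mK=\of_{\PP(\mE)}(-2)\otimes\rho^*\mK$. Its image in the rank-$5$ quotient $\mW:=(\Sym^2\rho^*\mE/\of_{\PP(\mE)}(-2))\otimes\rho^*\mK$ therefore cuts out the incidence variety, which maps birationally onto $\Delta_{sing}$, so that $[\Delta_{sing}]=\rho_*c_5(\mW)$. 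Computing $c_5(\mW)$ by the splitting principle and pushing forward along $\rho$ must reproduce the same cubic, the factor $4$ emerging intrinsically from the $\Sym^2$ twist rather than being inserted by hand; agreement with the Harris--Tu output is the desired cross-check.
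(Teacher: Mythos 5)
Your proposal is correct and is essentially the paper's own argument: the paper gives no written proof of this lemma at all, merely the sentence citing \cite[Theorem 1, Theorem 10]{Harris-Tu}, and your identification of $\Delta$, $\Delta_{sing}$ as the corank $\geq 1,2$ loci, the determinant-line computation for $[\Delta]$, and the expansion of the symmetric (Harris--Tu) corank-two formula with its factor $2^2=4$ is exactly what that citation compresses. Your care with the orientation is in fact a genuine improvement, since the paper's literally stated map $\varphi\colon\mE\to\mE^\vee\otimes\mK$ would yield $[\Delta]=-2c_1+3k$; the stated formula (and the paper's own applications, e.g.\ \cref{40-163-2-1-23-24-1-4-1-6} and \cref{33-129-2-1-23-23-1-4-2-7}) presuppose precisely your normalisation $\varphi\colon\mE^\vee\to\mE\otimes\mK$, and your $\PP(\mE)$ cross-check via $\rho_*c_5(\mW)$ is a sound independent verification.
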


These formulae will apply to many of our examples.

\subsection{Eagon--Northcott complexes}
We will use several times the following classical result (see, e.g., \cite[(6.1.6)]{weyman}, \cite[Example 2.8]{BFMT}).

\begin{proposition}\label{lem:EN}
Let $\varphi: \mE \to \mF$ be a morphism of vector bundles on $X$, of respective ranks $e>f$. Suppose that its degeneracy locus $Z$ 
(defined as a scheme by the $f$-th minors of $\varphi$) has the expected codimension $e-f+1$.
Then the $\of_X$-module $\of_Z$ admits a locally free resolution given by the Eagon--Northcott complex 
\[
0 \to 
\W^{e}\mE\otimes S^{e-f}\mF^\vee \otimes \det\mF^\vee\to
\cdots \to 
\W^{f+1}\mE\otimes\mF^\vee\otimes \det\mF^\vee\to
\W^f\mE \otimes \det\mF^\vee \to  \of_X\to \of_Z \to 0.
\]
\end{proposition}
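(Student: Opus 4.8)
The statement is local on $X$, and the Eagon--Northcott complex is built functorially from $\varphi$: its differentials are obtained by repeatedly contracting against $\varphi\in\Hom(\mE,\mF)=\Gamma(\mE^\vee\otimes\mF)$. Hence the plan is to prove exactness after passing to stalks, reducing to a map $\varphi\colon R^e\to R^f$ of finite free modules over the local ring $R=\of_{X,x}$, which is regular since $X$ is smooth, and in particular Cohen--Macaulay. First I would recall the shape of the complex: its rightmost nontrivial map $\W^f\mE\otimes\det\mF^\vee\to\of$ is induced by $\W^f\varphi\colon\W^f\mE\to\W^f\mF=\det\mF$, and by construction its image is precisely the ideal (sheaf) $\id_Z$ generated by the $f\times f$ minors of $\varphi$. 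Thus the cokernel at the $\of$-spot is $\of_Z$ by the very definition of $Z$, and checking that two consecutive differentials compose to zero is a routine multilinear-algebra verification. The real content lies in acyclicity in positive degrees.

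For this I would invoke the Buchsbaum--Eisenbud acyclicity criterion: a complex of finite free $R$-modules is exact if and only if, at each spot $i$, the rank identity $\rank\varphi_i+\rank\varphi_{i+1}=\rank F_i$ holds and $\operatorname{depth} I(\varphi_i)\geq i$, where $I(\varphi_i)$ is the ideal of minors of the $i$-th differential of size equal to its expected rank. The rank identities are universal: the expected ranks of the differentials of the Eagon--Northcott complex are a formal consequence of its construction, so they hold identically over any base. The depth conditions, on the other hand, all reduce to a single lower bound on $\operatorname{depth}\id_Z$: one checks that $I(\varphi_i)$ shares its radical with $\id_Z$ for every $i$, so the hypothesis $\codim Z=e-f+1$, combined with Cohen--Macaulayness of $R$ (which forces $\operatorname{depth}\id_Z=\codim Z$), yields $\operatorname{depth} I(\varphi_i)\geq i$ for all $i\le e-f+1$, that is, throughout the whole complex.

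The main obstacle is exactly this acyclicity step, and within it the bookkeeping needed to verify the depth/rank hypotheses of Buchsbaum--Eisenbud. An alternative, and arguably cleaner, route avoids these computations by specialization from the universal situation. Over the affine space of $e\times f$ matrices, the generic morphism has a determinantal degeneracy locus of the expected codimension $e-f+1$, which is Cohen--Macaulay, and the generic Eagon--Northcott complex resolves its structure sheaf; the expected-codimension hypothesis then guarantees that the linear forms cutting out our given situation inside the universal one form a regular sequence on that resolution, so base-change preserves exactness. Either way, the genericity encoded in the expected-codimension assumption is precisely what keeps the complex acyclic; relaxing it would produce higher homology and destroy the resolution.
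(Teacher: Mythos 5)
The paper itself does not prove this proposition: it is stated as a classical result and dispatched with citations to \cite[(6.1.6)]{weyman} and \cite[Example 2.8]{BFMT}, so there is no internal argument to compare yours against. Your sketch is a correct outline of the standard proof (the one in Eisenbud's commutative algebra book): reduce to stalks, note that the cokernel of $\W^f\mE\otimes\det\mF^\vee\to\of$ is $\of_Z$ by definition of the ideal of maximal minors, and get acyclicity from the Buchsbaum--Eisenbud criterion, the key point being that each $I(\varphi_i)$ cuts out a subset of $Z$, so that $\operatorname{depth} I(\varphi_i)\geq \operatorname{depth}\id_Z=\codim Z=e-f+1\geq i$. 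Your alternative route --- specialization from the generic matrix, using Cohen--Macaulayness of generic determinantal loci (Hochster--Eagon) and the graph trick to produce a regular sequence --- is also standard and correct. It is worth noting that the reference the paper actually cites (Weyman) proves the statement by yet a third method, the geometric technique of desingularizing $Z$ and pushing forward a Koszul complex; so your route differs from the cited one, but both are complete and the choice is a matter of taste.

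Two small imprecisions, neither fatal. First, the rank identities in Buchsbaum--Eisenbud concern the \emph{actual} ranks of the differentials, so they are not purely ``formal'' or ``universal'': they would fail if $\varphi$ degenerated everywhere. They hold here because the expected-codimension hypothesis forces $Z$ to be a proper closed subset, off which the complex is split exact; this generic exactness is also what gives the containment $V(I(\varphi_i))\subseteq Z$ that your depth estimate needs (only this containment of radicals is required, not equality). Second, your argument uses Cohen--Macaulayness of the local rings, i.e.\ smoothness of $X$; this is harmless in the paper, where the ambient varieties are always smooth, but the proposition for a general $X$ would require the grade (depth) of the ideal of minors to be $e-f+1$, not merely its codimension.
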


\medskip
In our situations $Z$ will be smooth (typically 
because the rank will not be allowed to drop more for dimensional reasons), and we will 
resolve its canonical bundle by dualizing the Eagon--Northcott complex. It turns out that
in many cases the dual complex will have a special shape that will allow us to apply the next
simple, but very useful statement. 

\begin{lemma}\label{lem:section}
Let $Z\subset X$ be a smooth subvariety, with a line bundle $\mL_Z$ that can be described,
as an $\of_X$-module, as the cokernel of a morphism of vector bundles on $X$, say 
$$\mA  \to \mB\oplus \mM\to \mL_Z\to 0,$$
where $\cM$ is a line bundle. 
Then the morphism  $\cM\to \mL_Z$ defines a canonical section $\sigma$ of  $\cM^\vee_{|Z}\otimes\mL_Z$, 
whose zero locus is exactly the locus where the morphism $\mA  \to \mB$ is not 
surjective. 
\end{lemma}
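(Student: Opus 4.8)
The plan is to reduce the claim to a statement about a single section of a line bundle on $Z$, verified first on fibres and then upgraded to the level of schemes.

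First I would name the two components of the given map, writing it as $(\alpha,\beta)\colon\mA\to\mB\oplus\cM$ with $\alpha\colon\mA\to\mB$ and $\beta\colon\mA\to\cM$, and let $\pi=(\pi_{\mB},\pi_{\cM})\colon\mB\oplus\cM\to\mL_Z$ be the cokernel surjection, so that $\im\big((\alpha,\beta)\big)=\Ker(\pi)$. Since $\mL_Z$ is an $\of_Z$-module, the component $\pi_{\cM}\colon\cM\to\mL_Z$ factors through $\cM_{|Z}$ and thus defines the section $\sigma\in H^0(Z,\cM^\vee_{|Z}\otimes\mL_Z)=\Hom_{\of_Z}(\cM_{|Z},\mL_Z)$ of the statement. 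Tensoring the exact sequence with the residue field $k(x)$ at a point $x$ and using right-exactness, I obtain an exact sequence $\mA_x\to\mB_x\oplus\cM_x\xrightarrow{\pi_x}(\mL_Z)_x\to0$, whence $\im\big((\alpha,\beta)_x\big)=\Ker(\pi_x)$ and therefore $\im(\alpha_x)=\mathrm{pr}_{\mB}\big(\Ker\pi_x\big)$.

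Next I would run the fibrewise analysis. If $x\notin Z$ then $(\mL_Z)_x=0$, so $(\alpha,\beta)_x$ is onto and hence so is $\alpha_x$; this already shows that the non-surjectivity locus of $\alpha$ is contained in $Z$. For $x\in Z$ the space $(\mL_Z)_x$ is one-dimensional and $\sigma(x)=0$ precisely when $\pi_{\cM}$ vanishes on $\cM_x$. Here the hypothesis that $\cM$ has rank one is essential: $\pi_{\cM}\colon\cM_x\to(\mL_Z)_x$ is a map of lines, hence either zero or an isomorphism. If it is an isomorphism one can solve $\pi_{\cM}(w)=-\pi_{\mB}(v)$ for every $v\in\mB_x$, so $\mathrm{pr}_{\mB}(\Ker\pi_x)=\mB_x$ and $\alpha_x$ is onto; if it is zero then $\pi_{\mB}$ must be onto, $\Ker\pi_x=\Ker(\pi_{\mB})\oplus\cM_x$ projects onto the proper subspace $\Ker(\pi_{\mB})\subsetneq\mB_x$, and $\alpha_x$ fails to be surjective. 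This yields the set-theoretic equality $Z(\sigma)=\{x:\alpha\text{ not surjective}\}$.

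Finally, to turn this into an identity of subschemes I would restrict the sequence to $Z$ and set $\mK:=\Ker(\bar\pi)$, a rank-$b$ bundle on $Z$ (with $b=\rank\mB$) equal to $\im\big((\alpha,\beta)_{|Z}\big)$. Since $\alpha_{|Z}$ factors as $\mA_{|Z}\twoheadrightarrow\mK\xrightarrow{\mathrm{pr}_{\mB}}\mB_{|Z}$, the non-surjectivity locus of $\alpha$ coincides with the degeneracy locus of the square map $\mathrm{pr}_{\mB}\colon\mK\to\mB_{|Z}$, that is, with the Cartier divisor $V(\det\mathrm{pr}_{\mB})$, a section of $\det\mK^\vee\otimes\det\mB_{|Z}$; the exact sequence $0\to\mK\to\mB_{|Z}\oplus\cM_{|Z}\to\mL_Z\to0$ identifies this line bundle with $\cM^\vee_{|Z}\otimes\mL_Z$, so $\det\mathrm{pr}_{\mB}$ and $\sigma$ are sections of the same bundle. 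The main obstacle is exactly this last step: proving that $\det\mathrm{pr}_{\mB}$ equals $\sigma$, and not merely that they have the same zero set. This is a Cramer-type identity expressing the determinant of the $\mB$-block of $\mK\hookrightarrow\mB_{|Z}\oplus\cM_{|Z}$ as the $\cM$-component $\pi_{\cM}$ of the cokernel map; I would verify it locally after trivialising the line bundles, where it reduces to a short determinantal computation, the case $b=1$ already exhibiting the mechanism. Everything else is a formal consequence of right-exactness and of the rank-one hypothesis on $\cM$.
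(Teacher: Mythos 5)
Your first two paragraphs give a complete and correct proof that coincides with the paper's own argument: both reduce the claim to fiberwise right-exactness of the cokernel sequence and to the dichotomy forced by $\cM$ and $\mL_Z$ having rank one (the paper phrases the converse as surjectivity of the induced map $\Ker\theta_1\to\mM$, you as vanishing of $\pi_{\cM}$ on the fiber, which is the same linear algebra). Your third paragraph, identifying $\sigma$ with $\det\mathrm{pr}_{\mB}$ as sections, goes beyond what the lemma asserts and what the paper proves --- the claim is only the set-theoretic equality of the two loci --- so leaving that Cramer-type identity as a sketched local computation is not a gap in the proof of the statement.
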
 

\begin{proof}
Since $\mL_Z$ is the cokernel of the morphism  $\theta: \mA  \to \mB\oplus \mM$, the section $\sigma$
vanishes exactly when the fiber of $0_\mB\oplus\mM$ is contained in its image. If it does, the component $\theta_1: \mA  \to \mB$ cannot be surjective, since otherwise $\theta$ would itself be surjective. Conversely, if $\theta_1$ is not surjective at some point, the induced morphism $\Ker\theta_1\to\mM$ must be surjective, since the cokernel of $\theta$ would have rank  bigger than one at any point where this fails. 
\renewcommand\qedsymbol{$\square$}
\end{proof}

\subsection{Derived categories}

A natural question to ask about a Fano variety $X$ is to describe semiorthogonal decompositions of $\Db(X)$ and eventually compare them. Due to the fact that $H^i(X,\of_X)=0$ for $i \neq 0$, every line bundle is exceptional on a Fano variety. Moreover one can expect many exceptional vector bundles and objects in the derived category. Building exceptional sequences from such objects gives interesting semiorthogonal decompositions. In the particular case of Fano fourfolds of K3 type, there should exist at least one exceptional collection whose semiorthogonal complement is a K3 category, sometimes actually described by a K3 surface.
In all the examples presented in this paper, we can use explicit geometric descriptions to provide semiorthogonal decompositions, and even several of them; we will avoid the  interesting but a priori deeply technical question of comparing them.

\smallskip
We refrain from recalling here definitions and properties of semiorthogonal decompositions, for which the interested reader can consult \cite{Kuz:ICM2014}. We focus on the description of semiorthogonal decompositions of $\Db(X)$ for $X$ a smooth projective variety.
More generally, if $X$ is singular, one can replace $\Db(X)$ by a so-called \emph{(crepant) categorical resolution}, that is, a smooth and proper dg category $\mD$ with a functor $\mD \to \Db(X)$ whose homological properties are the ones of a functor induced by a geometric (crepant) resolution of singularities.
We also refrain from introducing such topic, for which we refer again to \cite{Kuz:ICM2014}.

For the sake of readability, we treat explicit examples and give tables with references in \cref{sec:SOD}.
The main idea is to consider in each example surjective morphisms $\pi: X \to M$ from a Fano fourfold $X$ to another variety $M$.
We will face two cases: $M$ is also a smooth fourfold, and $\pi$ is a blow up; or
the dimension of $M$ is smaller, and $\pi$ is a Mori fiber space, i.e.\ a  map of relative Picard number 1 whose general fiber is a smooth Fano variety. In both cases, semiorthogonal decompositions can be constructed from the map $\pi$. We list all of them in \cref{tab:semiorth1} (Fano and other rational varieties of dimension up to 4) and \cref{tab:semiorth2} (Mori fibrations).
In any case, given a Fano fourfold $X$, we describe natural exceptional collections whose complement, called \emph{Kuznetsov components} (with respect to of $\Db(X)$), contain the most interesting pieces of information on the geometry of $X$.
Such components depend on the exceptional collection and are only conjecturally equivalent; moreover,
in some cases (in particular Mori fiber spaces with high-dimensional fibers), the Kuznetsov component for the given exceptional collection is not known in general.

\TOCstart

\section{FK3 from cubic fourfolds}\label{sect:CK3}
This section contains the  list of the Fano fourfolds of K3 type that we obtained inside products of flag manifolds,  
where at least one projection is the blow up of a cubic fourfold. In this section as well as in the next sections, for each Fano fourfold we list its invariants, we study its geometric properties via the projections on the factors of the ambient variety, and we provide information on its rationality. Possible orthogonal decompositions are also exhibited, and we refer to \cref{sec:SOD} for the corresponding notation. We only list the Hodge numbers which are different from zero, of course avoiding to mention the Hodge symmetries and $h^{0,0}$.

\medskip\medskip\begin{fano}\fanoid{2-36-144-2} %3
$\mZ(\PP^1 \times \PP^5,\of(0,3) \oplus \of(1,1))$.
\label[mystyle]{36-144-2-1-28-28-1-2-1-6}
\subsubsection*{Invariants}  $h^0(-K)=36, \ (-K)^4=144, \  h^{1,1}=2, \ h^{3,1}=1, \ h^{2,2}=28$, $-\chi(T)=28$.

\subsubsection*{Description} 
\begin{itemize}
    \item $\pi_1 \colon X \to \PP^1$ is a fibration in cubic threefolds.
    \item $\pi_2 \colon X \to \PP^5$ is the blow up $\Bl_{\dP_3}X_3$ of a general cubic fourfold along a cubic surface.
    \item Rationality: not known.
\end{itemize}

\subsubsection*{Semiorthogonal decompositions}
\begin{itemize}
\item 4 exceptional objects and an unknown category from $\pi_1$.
\item 12 exceptional objects and a K3 category from $\pi_2$.
\end{itemize}

\subsubsection*{Explanation} The description of $\pi_1$ is obvious. For $\pi_2$, simply apply \cref{lem:blowupcodim2}. 
\end{fano}

\medskip\medskip\begin{fano}\fanoid{2-28-99-2} %12
\label[mystyle]{28-99-2-1-1-23-29-1-3-1-6} $\mZ(\PP^2 \times \PP^5,\mQ_{\PP^2}(0,1) \oplus \of(0,3)).$

\subsubsection*{Invariants}  $h^0(-K)=28, \ (-K)^4=99, \  h^{1,1}=2, \  h^{1,2}=1, \ h^{3,1}=1, \ h^{2,2}=23$, $-\chi(T)=29$.
\subsubsection*{Description} 
\begin{itemize}
    \item $\pi_1 \colon X \to \PP^2$ is a $\dP_3$ fibration.
    \item $\pi_2 \colon X \to \PP^5$ is the blow up $\Bl_E X_3$ of a cubic fourfold along  a plane elliptic curve $E$.
    \item Rationality: unknown, since the cubic fourfold is general.
\end{itemize}

\subsubsection*{Semiorthogonal decompositions}
\begin{itemize}
\item 3 exceptional objects and an unknown category from $\pi_1$.
\item 3 exceptional objects, the K3 category of $X_3$, and two copies of $\Db(E)$ from $\pi_2$.
\end{itemize}

\subsubsection*{Explanation} We simply use  \cref{lem:blowupcodim2} to identify the zero locus of the first bundle with $\Bl_{\PP^2}\PP^5$. Intersecting with the zero locus of a section of $\of(0,3)$ completes the identification, since it cuts the blown up plane in a cubic, hence elliptic curve.
\end{fano}

\medskip\medskip\begin{fano}\fanoid{2-45-192-2} %7
$\mZ(\PP^2 \times \PP^5,\of(1,2) \oplus \mQ_{\PP^2}(0,1))$.
\label[mystyle]{45-192-2-1-22-19-1-3-1-6} 

\subsubsection*{Invariants}  $h^0(-K)=45, \ (-K)^4=192, \  h^{1,1}=2, \ h^{3,1}=1, \ h^{2,2}=22$, $-\chi(T)=19$.

\subsubsection*{Description}

\begin{itemize}
    \item $\pi_1 \colon X \to \PP^2$ is a quadric surface fibration degenerating along a smooth sextic.
    \item $\pi_2 \colon X \to \PP^5$ is $\Bl_{\PP^2}X_3$, the blow up of a cubic fourfold along a plane.
    \item Rationality: unknown.
\end{itemize}

\subsubsection*{Semiorthogonal decompositions}
\begin{itemize}
\item 6 exceptional objects and $\Db(S_2,\alpha)$, where $S_2 \to \PP^2$ is the discriminant double cover associated to $\pi_1$ and $\alpha$ the Clifford invariant of $\pi_1$, a Brauer class of order $2$ on $S_2$ (see \cref{subsub:quad-surf-fib}).
\end{itemize}
\subsubsection*{Explanation}
The sections are defined by $\alpha\in V_3\otimes V_6^\vee=\Hom(V_6,V_3)$ and $\beta\in 
V_3^\vee\otimes S^2V_6^\vee$. They vanish at $(l,m)$ if $\alpha(m)\subset l$ and 
$\beta(l,m,m)=0$. This implies $\alpha(\beta(m),m,m)=0$, which defines a cubic fourfold
containing the plane $\PP(\Ker\alpha)$. The fiber of the projection to $\PP^5$
is a projective line over this plane, and a single point over the other points of 
the cubic fourfold. 
\end{fano}

\medskip\medskip\begin{fano}\fanoid{2-40-163-2} %6
$\mZ(\PP^3 \times \PP^5,\of(1,2) \oplus \mQ_{\PP^3}(0,1))$
\label[mystyle]{40-163-2-1-23-24-1-4-1-6}

\subsubsection*{Invariants}  $h^0(-K)=40, \ (-K)^4=163, \  h^{1,1}=2, \ h^{3,1}=1, \ h^{2,2}=23$, $-\chi(T)=24$.
\subsubsection*{Description} 
\begin{itemize}
    \item $\pi_1 \colon X \to \PP^3$ is a conic bundle, ramified over a singular quintic surface with 16 ordinary double points.
    \item $\pi_2 \colon X \to \PP^5$ is the blow up $\Bl_{\PP^1} X_3$ of a general cubic fourfold along a line. 
    \item Rationality: unknown. 
\end{itemize}

\subsubsection*{Semiorthogonal decompositions}
\begin{itemize}
\item 4 exceptional objects and $\Db(\PP^3, \mC_0)$ from $\pi_1$.
\item 7 exceptional objects and the K3 category from $\pi_2$.
\item We know that the K3 category sits inside $\Db(\PP^3,\mC_0)$ as a complement to 3 exceptional objects \cite{BLMS}.
\end{itemize}

\subsubsection*{Explanation}
By \cite[Corollary 2.7]{DFT}, the rank 3 bundle $\mQ_{\PP^3}(0,1)$ cuts in $\PP^3 \times \PP^5$ the blow up of $\PP^5$ in a line. By construction a section of $\of(1,2)$ in this locus cuts a cubic in $\PP^5$ containing this $\PP^1$. 

To understand the other projection, notice that $\Bl_{\PP^1}\PP^5 \cong \PP_{\PP^3}(\of^2 \oplus \of(-1)).$ Therefore $X$ can be interpreted as  $\mZ(\PP_{\PP^3}(\of^{\oplus 2} \oplus \of(-1)),\mL^2 \otimes \of(1))$, which is a conic bundle over $\PP^3$. Applying \cref{lem:conicbundle} with $\mE= \of^{\oplus 2} \oplus \of(1)$ and $\mK=\of_{\PP^3}(1)$, we get that the discriminant is a quintic surface with sixteen ordinary double points. Notice that to a quintic surface with 16 nodes one can associate a rank 23 Hodge structure of K3 type, as explained by Huybrechts in \cite{huybrechts2021nodal}.  We highlight the (classical) fact, which is remarked in \emph{op.\ cit.}, that if one takes a line $L$ in a cubic fourfold $X$ and its associated hyperk\"ahler $F(X)$ constructed as the variety of lines in $X$, then the variety $F_L(X)$ of all lines intersecting the given one is a smooth surface of general type $D_L$ with a natural involution. The quotient of $D_L$ by this involution is exactly our 16-nodal quintic surface.
\end{fano}

\begin{rmk}
\label{rmkKuz}
The above nodal quintic will appear once again in our paper, in \cref{33-129-2-1-23-23-1-4-2-7}.  Kuznetsov suggested that it would be interesting to understand whether this hints at some birational equivalence between the corresponding fourfolds. This is investigated in the recent \cite{nodalK3}.
\end{rmk}

\medskip\medskip\begin{fano}\fanoid{2-39-161-2} %5
$\mZ(\PP^5 \times \Gr(2,8),\mU_{\Gr(2,8)}^{\vee}(1,0) \oplus \of(1,1) \oplus \mQ_{\PP^5} \boxtimes \mU^{\vee}_{\Gr(2,8)})$.
\label[mystyle]{39-161-2-1-23-21-1-6-2-8}

\subsubsection*{Invariants}  $h^0(-K)=39, \ (-K)^4=161, \  h^{1,1}=2, \ h^{3,1}=1, \ h^{2,2}=23$, $-\chi(T)=21$.
\subsubsection*{Description} 
\begin{itemize}
    \item $\pi_1 \colon X \to \PP^5$ is a blow up $\Bl_{\PP^1 \times \PP^1} X_3$ of a (special) cubic fourfold along a quadric surface.
    \item $\pi_2 \colon X \to \Gr(2,8)$ is the small contraction of the plane $\Pi$ residual to the quadric surface in $X_3$ onto a singular fourfold $X^\circ$ (with one singular point).
    \item Rationality: depends on the cubic fourfold. A  cubic fourfold contains a quadric if and only if it  contains a plane, see \cref{subsubC8}.
\end{itemize}

\subsubsection*{Semiorthogonal decompositions}
\begin{itemize}
\item 7 exceptional objects and $\Db(S_2,\alpha)$ from $\pi_1$.
\end{itemize}

\subsubsection*{Explanation} The section of the rank ten bundle is given by $\alpha\in \Hom(V_8,V_6)$, and vanishes on pairs $(l, P)$ such that 
$P\subset\alpha^{-1}(l)$. The section of the rank two bundle is given by $\beta\in \Hom(V_6,V_8^\vee)$, and vanishes when the linear form $\beta(l)$ is zero on $P$. 
For $l$ general, $P$ is thus uniquely defined as the intersection of $\alpha^{-1}(l)$ with the kernel of $\beta(l)$, and we get a fivefold $Y$ birational to $\PP^5$. More precisely, $Y$ is the blow up of $\PP^5$ along a quadric surface $\QQ^2\simeq\PP^1\times\PP^1$. 

Now we consider $X$ defined in $Y$ by the additional line bundle section, which is defined by a tensor $\gamma\in \Hom(V_6,\wedge^2V_8^\vee)$. The projection 
to $\PP^5$ has a non-trivial fiber over $l$ when  $\beta(l)\in \alpha^{-1}(l)^\vee$ and $\gamma(l)\in \wedge^2\alpha^{-1}(l)^\vee$ are compatible, which can be expressed by the vanishing of $\beta(l)\wedge\gamma(l)\in  \wedge^3\alpha^{-1}(l)^\vee\simeq l^\vee$. This defines a cubic fourfold $X_3$ in $\PP^5$, containing $\QQ^2$, 
and $X$ must be the blow up of this surface in $X_3$. In particular it contains the residual plane $\Pi$ to $\QQ^2$. 

The projection to $\Gr(2,8)$ contracts the plane $\Pi$ to the point defined by $p_0=\Ker\alpha$,
and outside this plane it is an isomorphism to its image $X^\circ$, which is therefore singular 
at $p_0$. In order to understand how singular it is, we parametrize $\Gr(2,8)$ locally around $p_0=\langle e,f\rangle$ as usual, by choosing a codimension two subspace $q_0$ in $V_8$ orthogonal to this plane: any plane $p$ transverse to $q_0$ admits a unique basis of the form $e+x,f+y$ with $x,y$ in $q_0$. The map $\alpha$ sends $q_0$ isomorphically to $V_6$, and we use it to identify the two spaces. A point $\langle e+x,f+y \rangle$ is in $X^\circ$ if and only if $x,y$ are collinear and
\[
\left({ }^t\beta(e+x)\right) (x) = \left({ }^t\beta(e+x)\right) (y) = \left({ }^t\beta(f+y)\right) (x) = \left({ }^t\beta(f+y)\right) (y) = 0,
\]
\[
\left({ }^t\gamma ((e+x)\wedge (f+y))\right) (x) = \left({ }^t\gamma ((e+x)\wedge (f+y))\right) (y) = 0.
\]
At first order, the latter  conditions reduce to three linear conditions applied to both $x$ and $y$, which are in general independent and amount to restricting them to a codimension three subspace of $V_8$. Then the collinearity condition defines a codimension two Schubert variety in $\Gr(2,5)$, which is well-known to be normal, and we can conclude that $X^\circ$ is normal as well. Blowing up $\Pi$, we get a fourfold $\widetilde{X}\subset \PP^1\times\PP^5\times \Gr(2,8)$, 
and the fibers of its projection to $\PP^1$ are intersections of two quadrics in $\PP^5$. 
\end{fano}

\medskip\medskip\begin{fano}\fanoid{2-49-211-2} %8
$\mZ(\PP^4 \times \PP^5,\mQ_{\PP^4}(0,1) \oplus \of(2,1))$.
\label[mystyle]{49-211-2-1-21-19-1-5-1-6}

\subsubsection*{Invariants}  $h^0(-K)=49, \ (-K)^4=211, \  h^{1,1}=2, \ h^{3,1}=1, \ h^{2,2}=21$, $-\chi(T)=19$.
\subsubsection*{Description} 
\begin{itemize}
    \item $\pi_1 \colon X \to \PP^4$ is a blow up $\Bl_{S_6}\PP^4$.
    \item $\pi_2 \colon X \to \PP^5$ is a blow up $\Bl_p X^\circ_3$, where $X^\circ_3$ is a nodal cubic fourfold with a single node at $p$.
    \item Rationality: yes.
\end{itemize}
\subsubsection*{Semiorthogonal decompositions}
\begin{itemize}
\item 5 exceptional objects and $\Db(S_6)$.
\end{itemize}
\subsubsection*{Explanation} 
Let us consider the fivefold $\mZ(\PP^4 \times \PP^5,\mQ_{\PP^4}(0,1))$. By 
\cite[Corollary 2.7]{DFT}, this can be identified with $\Bl_p \PP^5 \cong \PP_{\PP^4}(\of \oplus \of(-1))$. 
Considering $\pi_1$, we can therefore describe $X$ as $\mZ(\PP_{\PP^4}(\of \oplus \of(-1)),\mL \otimes \of_{\PP^4}(2))$. We conclude by the Cayley trick, since 
$\mZ(\PP^4,\of(2) \oplus \of(3))$ is a K3 surface of degree 6.
Considering $\pi_2$, a section of $\of(2,1)$ restricted to the fivefold $\Bl_p \PP^5$  cuts a 
cubic fourfold which is nodal at $p$, and the claim follows.  
\end{fano}

\begin{rmk} In \cite[1.3]{debarreperiods} it is shown that $F_1(X_3^\circ)$ and 
$(S_6)^{[2]}$ are birationally equivalent, and a birational equivalence is described explicitly.\end{rmk}

\medskip\medskip\begin{fano}\fanoid{2-24-86-2} %1
$\mZ(\PP^4 \times \PP^5,\of(1,1) \oplus \W^3 \mQ_{\PP^4}(0,1))$.
\label[mystyle]{24-86-2-1-26-24-1-5-1-6}

\subsubsection*{Invariants} $h^0(-K)=24, \ (-K)^4=86, \  h^{1,1}=2, \ h^{3,1}=1, \ h^{2,2}=26$, $-\chi(T)=24$.
\subsubsection*{Description} 
\begin{itemize}
    \item $\pi_1 \colon X \to \PP^4$ is $\Bl_{S_{14}^{(5)}} \PP^4$.
    \item $\pi_2 \colon X \to \PP^5$ is $\Bl_{\dP_5} X_3$, where $X_3$ is a Pfaffian cubic fourfold.
    \item Rationality: yes.
\end{itemize}
\subsubsection*{Semiorthogonal decompositions}
\begin{itemize}
\item 10 exceptional objects and  $\Db(S_{14})$ from both maps. 
\end{itemize}
\subsubsection*{Explanation}

Let us first consider the fivefold $Y$ defined by the general zero locus of $\W^3 \mQ(0,1)=\mQ^\vee(1,1)$ inside $\PP^4 \times \PP^5$. We denote by $V_n$ the vector space such that $\PP^{n-1}=\PP V_n$.
The space of sections $H^0(\PP^4 \times \PP^5,\mQ^\vee(1,1))$ is then
$\W^2 V_5^\vee \otimes V_6^\vee = \Hom(V_6,\W^2 V_5^\vee)$.

We want to understand the projection  $\pi:Y \to \PP^5$.
First note that since the morphism $\of_{\PP^5}(-1) \to \W^2 V_5^\vee \otimes \of_{\PP^5}$ is generically injective,  $\pi$ is birational.
The first degeneracy locus of $\W^2 V_5$ is a Pfaffian locus of codimension 3, and  a general Pfaffian surface in $\PP^5$ is $\dP_5$,  a smooth del Pezzo surface of degree five.
The fiber of $\pi$ over a point $p$ of $\dP_5$ is the projectivization of the 3-dimensional kernel of the form corresponding to $p$. Hence the fibers of $\pi$ over $\dP_5$ are projective planes.
This implies that $Y \to \PP^5$ is the blow up of the above $\dP_5$.

Now notice that $c_1(K_{\PP^4 \times \PP^5} \otimes \det \mQ^\vee(1,1)) = -2H_1 -2H_2$. On the other hand, since $Y$ is a blow up, we have $-K_Y=-K_{\PP^5} + 2E$ for $E$ the exceptional divisor.
Therefore $\of_Y (1,1)$ is the line bundle associated to half-anticanonical sections of $Y$, hence it corresponds to a cubic fourfold (a half anticanonical divisor in $\PP^5$) plus the exceptional divisor $E$.
It follows that $X$, obtained by cutting $Y$ with a section of $\of (1,1)$, is the blow up of a special cubic fourfold $X_3$ along the $\dP_5$. Indeed, a cubic fourfold containing such a surface is Pfaffian, see \cite{Beau-Dona}.
In order to see this cubic fourfold concretely, note that there is a natural map 
$$S^2(\W^2 V_5^\vee \otimes V_6^\vee)\longrightarrow \W^4 V_5^\vee \otimes S^2V_6^\vee
\simeq V_5\otimes S^2V_6^\vee.$$
The linear section that defines $X$ is a tensor in $V_5^\vee \otimes V_6^\vee$.
Contracting with the previous one we get a tensor in $S^3V_6^\vee$, hence a cubic fourfold. 

\smallskip Now we consider the projection of $Y$ to $\PP^4$. The morphism $V_6 \to \W^3 V_5$ 
defines over $\PP^4$ a map of vector bundles $V_6 \otimes \of \to \W^3 \mQ=\mQ^\vee(1)$, and the generic 
fiber is the projective line defined by the kernel of this morphism. The exceptional fibers
are projective planes, and occur over its degeneracy locus $C$, a smooth curve whose twisted
structure sheaf is resolved by the Eagon--Northcott complex
$$0\longrightarrow S^2\mQ(-2)
\longrightarrow V_6^\vee\otimes Q(-1)\longrightarrow \W^2 V_6^\vee\otimes \of_{\PP^4} \longrightarrow \of_{\PP^4}(3)\longrightarrow \of_C(3)\longrightarrow 0.$$
The only contributions in cohomology comes from the $h^0$ of the third and fourth terms, so $\chi(\of_C(3))=35-15=20$. 
If we twist by $\of(-1)$, the only contribution comes from the fourth term and we deduce that 
$\chi(\of_C(2))=15$. After one extra twist, we get the bundle $S^2\mQ(-4)$ 
which by Borel--Weil--Bott Theorem has $h^3=5$, and we deduce that $\chi(\of_C(1))=5+5=10$. 
Finally, after one more twist, we get $\mQ(-4)$ with $h^3=1$, and $S^2\mQ(-5)$ 
which $h^3=10$, therefore $\chi(\of_C)=1-6+10=5$. These computations confirm that 
the Hilbert polynomial $\chi(\of_C(k))=5k+5$, hence that $C$ is the disjoint 
union of five projective lines. 

When we pass to the hyperplane section $X$ of $Y$, the general fiber becomes a single 
point, so the projection is birational. The exceptional fibers
are projective lines, and occur over the smooth surface $S$ defined as the degeneracy locus 
of the morphism $V_6 \otimes \of \to \mQ^\vee(1)\oplus \of(1)$. The twisted structure sheaf of $S$ 
is resolved by the Eagon--Northcott complex
$$0 \longrightarrow \mQ(-1)\oplus  \of_{\PP^4}(-1) \longrightarrow V_6^\vee\otimes \of_{\PP^4}\longrightarrow \of_{\PP^4}(4)\longrightarrow \of_S(4)\longrightarrow 0.$$
We deduce that the Hilbert polynomial of $S$ is 
$$\chi(\of_S(k))=\frac{9k^2-5k}{2}+2.$$
This means that $\of_{S}(1)^2=9$ and $\of_{S}(1).\omega_S=5$. 
Moreover, dualizing the previous complex we get 
$$0 \longrightarrow \of_{\PP^4}(-5)\longrightarrow V_6\otimes \of_{\PP^4}(-1)\longrightarrow 
\mQ^\vee\oplus  
\of_{\PP^4} \longrightarrow \omega_S\longrightarrow 0.$$
From \cref{lem:section} we deduce that $\omega_S$ admits a canonical section, vanishing exactly 
on the locus where the morphism $V_6\otimes \of_{\PP^4}(-1)\longrightarrow 
\mQ^\vee$ is not surjective. This is the same smooth curve $C$ 
that we proved to be the disjoint union of five lines $l_1,\ldots , l_5$. Moreover $\omega_S=\of(l_1+\cdots +l_5)$. 
By the genus formula, each of these lines must be a $(-1)$-curve on $S$. Finally, after contracting 
them we get a genus K3 surface $S_{min}$. We have proved:

\begin{proposition}
$S={S}_{14}^{(5)}$ is a K3 surface of genus eight blown up at five points.
\end{proposition}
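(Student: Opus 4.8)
The plan is to take as input the three facts already extracted above from the dualized Eagon--Northcott resolution (\cref{lem:EN}) and \cref{lem:section}: that $S\subset\PP^4$ is smooth with $\of_S(1)^2=9$, that $\chi(\of_S)=2$ (the constant term of its Hilbert polynomial), and, crucially, that $\omega_S=\of_S(l_1+\cdots+l_5)$, where $l_1,\dots,l_5$ are five pairwise disjoint lines of $\PP^4$. Granting these, everything downstream is standard surface classification.

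First I would check that each $l_i$ is a $(-1)$-curve. Since the lines are disjoint, $l_i\cdot\omega_S=l_i\cdot\sum_j l_j=l_i^2$, and as $l_i\cong\PP^1$ the adjunction formula gives $-2=l_i^2+l_i\cdot\omega_S=2l_i^2$, whence $l_i^2=-1$. Castelnuovo's contractibility criterion then allows me to blow the five disjoint $(-1)$-curves down simultaneously, producing a smooth projective surface $S_{min}$ with a contraction $\pi\colon S\to S_{min}$ whose exceptional locus is $l_1\cup\cdots\cup l_5$.

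Next I would identify $S_{min}$ as a K3 surface. The blow-up formula $\omega_S=\pi^*\omega_{S_{min}}\otimes\of_S(l_1+\cdots+l_5)$, compared with the given $\omega_S=\of_S(l_1+\cdots+l_5)$, forces $\pi^*\omega_{S_{min}}=\of_S$ and hence $\omega_{S_{min}}=\of_{S_{min}}$. Because $\chi(\of_{-})$ is unchanged under blowing up smooth points, $\chi(\of_{S_{min}})=\chi(\of_S)=2$; a smooth surface with trivial canonical class and $\chi(\of)=2$ is a K3 surface, the only other surface with trivial canonical class being abelian, for which $\chi(\of)=0$. In particular $h^1(\of_{S_{min}})=0$.

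Finally I would compute the degree of the polarization induced on $S_{min}$. Writing $\of_S(1)=\pi^*L-\sum_i a_i l_i$ for a suitable line bundle $L$ on $S_{min}$ and intersecting with $l_j$, the relations $\of_S(1)\cdot l_j=1$ (each $l_j$ is a line of $\PP^4$) and $l_i\cdot l_j=-\delta_{ij}$ give $a_j=1$ for every $j$. Hence $L^2=\of_S(1)^2-(l_1+\cdots+l_5)^2=9-(-5)=14$, so $(S_{min},L)$ is a polarized K3 surface of degree $14$, equivalently of genus $8$, and $S=S_{14}^{(5)}$ as claimed. The genuinely delicate ingredient of the whole argument is the one borrowed at the outset, namely the effectivity of $\omega_S$ and its expression as $\of_S(l_1+\cdots+l_5)$ with five \emph{disjoint line} components; this is exactly what the Eagon--Northcott computation together with \cref{lem:section} are designed to supply, and once it is in hand the rest of the classification is formal.
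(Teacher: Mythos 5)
Your proof is correct and takes essentially the same route as the paper: both start from the Eagon--Northcott data and \cref{lem:section} (giving $\chi(\of_S)=2$, $\of_S(1)^2=9$, and $\omega_S=\of_S(l_1+\cdots+l_5)$ with the $l_i$ disjoint lines), then apply the genus formula to see each $l_i$ is a $(-1)$-curve and contract them. Your write-up merely makes explicit two steps the paper leaves implicit, namely the exclusion of the abelian case via $\chi(\of_{S_{min}})=2$ and the computation $L^2=9-(-5)=14$ identifying the minimal model as a genus-eight K3, both of which are carried out correctly.
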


Note that a K3 surface of genus eight is a section of the Grassmannian $\Gr(2,6)$ by 
a $\PP^8$, so a naive projection from five points would send $S$ to a $\PP^3$. 
That K3 surfaces of genus eight blown up at five points can be embedded in  $\PP^4$
is mentioned in \cite{Aure-Rane}; it parametrizes a 
family of lines in $\PP^5$, whose union is a ruled threefold $W\subset \PP^5$; then the 
smooth sections of $W$ by a hyperplane $\PP(H)$ are blown up of the K3 at its five points 
contained in $\Gr(2,H)$.

Note also that, if we denote by $H_1, H_2$ the hyperplane bundles pulled-back via the two
projections $\pi_1, \pi_2$, and by $E_1, E_2$ the two exceptional divisors, we can compute
the canonical divisor of $X$ as $\omega_X=-H_1-H_2=-5H_1+E_1=-3H_2+E_2$. In particular
$H_2=4H_1-E_1$, which means that the birational map $\pi_2\circ\pi_1^{-1}: \PP^4\dashrightarrow 
X_3$ is defined by the linear system of quartics containing the non-minimal K3 surface 
$S_{14}^{(5)}$. This special birational transformation already appears in the work of
Fano \cite[Section 6]{stagliano20}. In fact, we have just re-discovered from
this example the classical proof that Pfaffian cubic fourfolds are rational!
\end{fano}

\medskip\medskip\begin{fano}\fanoid{2-30-115-2-A} %4
$\mZ(\Fl(1,3,8),\mQ_2^{\oplus 2} \oplus \of(1,1) \oplus \mR_2^{\vee}(1,1) )$.
\label[mystyle]{30-115-2-1-27-26-1-3-8}

\subsubsection*{Invariants}  $h^0(-K)=30, \ (-K)^4=115, \  h^{1,1}=2, \ h^{3,1}=1, \ h^{2,2}=27$, $-\chi(T)=26$.
\subsubsection*{Description} 
\begin{itemize}
    \item $\pi_1 \colon X \to \PP^7$ is birational to its image, the intersection $Y$ of three quadrics, singular along a line. 
    \item $\pi_2 \colon X \to \Gr(3,8)$ is $\Bl_{\dP_4}X_3$, the blow up of a cubic fourfold in a $\dP_4$.
    \item Rationality: depends on the cubic fourfold. A cubic fourfold contains a $\dP_4$ if and only if it  contains a plane, see \cref{subsubC8}.
\end{itemize}

\subsubsection*{Semiorthogonal decompositions}
\begin{itemize}
%\item The map $\pi_1$ can be used to construct a crepant categorical resolution of $Y$.
\item 11 exceptional objects and $\Db(S_2,\alpha)$ from $\pi_2$, see \cref{subsubC8}.
\end{itemize}

\subsubsection*{Explanation}

Over the flag manifold parametrizing flags $U_1\subset U_3\subset V_8$, of dimension $17$, 
the bundles are two copies of the rank five bundle $V_8/U_3$, the line bundle $U_1^\vee\otimes\det(U_3)^\vee$, and the rank two bundle $(U_3/U_1)^\vee\otimes U_1^\vee\otimes\det(U_3)^\vee$. Let us start with the first two: their global sections 
are two vectors in $V_8$ and the locus where such sections vanish is where $U_3$ contains them. 
So we need that $U_3$ contains a fixed two-dimensional space $V_2$: it is then parametrized 
by $\PP=\PP(V_8/V_2)\simeq\PP^5$. Taking $U_1$ into account we are reduced to the 
seven-dimensional projective bundle
$$Z=\PP(V_2\oplus \of_{\PP}(-1))\stackrel{\pi_2}{\longrightarrow} \PP.$$
Denote by $\mL$ the relative tautological line bundle. We have an exact sequence
$$0\longrightarrow \mL\longrightarrow \pi_2^*(U_2\oplus \of_{\PP}(-1)) \longrightarrow 
\mK \longrightarrow 0$$
for $\mK$ the rank two quotient bundle. Note that the relative tangent bundle 
is $\HOM(\mL,\mK)$, whose determinant is $\det(\mK)\otimes \mL^{-2}=
\mL^{-3}(-1)$, hence $K_Z=\mL^{3}(-5)$. 

Then we take into account the rank two vector bundle. Note that restricted to $Z$, $U_1$ is just 
$\mL$ and $U_3$ is $\pi_2^*(V_2\oplus \of_{\PP}(-1))$, so $U_3/U_1\simeq 
\mK$ and $(U_3/U_1)^\vee\otimes U_1^\vee\otimes\det(U_3)^\vee\simeq \mK^\vee\otimes\mL^{-1}(1)=\mK(2)$. In particular a general section 
of this bundle defines a smooth fivefold $Y\subset Z$ such that $K_Y=\mL_Y^{2}(-2)$.
Restricted to the fibers of $\pi_2$ this bundle is a copy of the quotient bundle on the projective
plane, whose global sections vanish at a single point, or everywhere. So $\pi_2$ restricted to 
$Y$ is birational. Moreover, 
the exceptional fibers are projective planes, that occur over the locus where the induced 
section of $\pi_{2,*}(\mK(2))=V_2\otimes \of_{\PP}(2)\oplus\of_{\PP}(1)$
vanishes, hence over a $\dP_4$. We conclude that $\pi_2$ is the blow up of $\PP$ in a $\dP_4$. 

Finally we take into account the line bundle $U_1^\vee\otimes\det(U_3)^\vee\simeq \mL^{-1}(1)$. A general section vanishes along a fourfold $X\subset Y$ such that 
$K_X=\mL_X(-1)$. In the fibers of $Z$ over $\PP$, note that $X$ is defined by a 
section of $\mL^{-1}(1)$ and a section of $\mK(2)$. By the tautological
exact sequence the latter vanishes on a section of the line bundle $\mL(2)$, on which the section of 
$\mL^{-1}(1)$ also vanishes if and only if the induced section of $\mL(2)
\otimes \mL^{-1}(1)= \of_{\PP}(3)$ vanishes. This means that the fiber of $\pi_2$ 
is non-trivial over a cubic fourfold, and that $X$ is the blow up of this cubic fourfold
in a $\dP_4$. 

\smallskip Now we  turn to $\pi_1$. Note that over $Z$ we have a morphism of vector bundles
from $U_1\oplus V_2$ to $U_3$, hence by duality from $U_3^\vee$ to the trivial bundle $V_2^\vee$,
and from $\det(U_3)^\vee$ to $U_1^\vee$. Therefore the sections of $U_1^\vee\otimes\det(U_3)^\vee$ and 
 $(U_3/U_1)^\vee\otimes U_1^\vee\otimes\det(U_3)^\vee$ that define $X$ induce sections of 
 $(U_1^\vee)^2$ and  $V_2^\vee\otimes (U_1^\vee)^2$ respectively. We thus get three quadrics
 in $\PP^7$ whose intersection $Y$ is the image of $\pi_1$. Obviously $Y$ is isomorphic to $X$
 outside the projective line $\ell$ where $U_1\subset V_2$. Over this line, $Z$ restricts to 
 $\PP^1\times\PP^5$, and $U_3/U_1$ restricts to $\of(1,0)\oplus\of(0,-1)$. We deduce that 
 the preimage of $\ell$ in $X$ is the intersection in $\PP^1\times\PP^5$ of three divisors
 of bidegrees $(0,1), (1,1), (1,2)$; hence the intersection in $\PP^1\times\PP^4$ of two divisors
 of bidegrees $(1,1), (1,2)$. The image in $\PP^4$ of such a threefold is a cubic threefold with
 four singular points. 
\end{fano}

\medskip\medskip\begin{fano}\fanoid{2-27-101-2} %2
$\mZ(\PP^5 \times \Gr(2,4), (\mU_{\Gr(2,4)}^{\vee}(1,0))^
{\oplus 2} \oplus \of(1,1)).$
\label[mystyle]{27-101-2-1-23-21-1-6-2-4}

\subsubsection*{Invariants}  $h^0(-K)=27, \ (-K)^4=101, \  h^{1,1}=2, \ h^{3,1}=1, \ h^{2,2}=23$, $-\chi(T)=21$.
\subsubsection*{Description} 
\begin{itemize}
    \item $\pi_1 \colon X \to \PP^5$ is $\Bl_{\Sigma_4}X_3$, the blow up of a cubic fourfold along a rational quartic scroll.
    \item $\pi_2 \colon X \to \Gr(2,4)$ is $\Bl_{{S}_{14}^{(1)}}\Gr(2,4)$, the blow up of a quadric along a K3 surface of genus 8 blown up in one point.
    \item Rationality: yes, as a cubic containing a rational quartic scroll is rational.
\end{itemize}

\subsubsection*{Semiorthogonal decompositions}
\begin{itemize}
\item 7 exceptional objects and $\Db(S_{14})$ from both maps. Indeed the cubic $X_3$ lies in $\mC_{14}$ since it contains a rational quartic scroll, see \cref{subsubC14}.
\end{itemize}

\subsubsection*{Explanation}
Let us start with the fivefold $Y$ defined by two global sections of the same two rank two 
bundles, hence a tensor $\alpha\in V_2\otimes V_6^\vee\otimes V_4= \Hom(V_2\otimes V_6,V_4)$. 
So $Y\subset\PP^5\times \Gr(2,4)$
parametrizes the pairs $(l \subset V_6, P\subset V_4)$ such that $\alpha(V_2\otimes l)\subset P$. 
Generically, there must be equality, so the projection to $\PP^5$ is birational. The exceptional fibers are projective planes, and they appear above the locus where the morphism
$V_2\otimes \of(-1)\longrightarrow V_4 \otimes \of$ defined by $\alpha$ drops rank. This locus is a 
smooth surface. More precisely,
a rational quartic scroll $\Sigma_4$, and $Y$ is the blow up of $\PP^5$ along this surface.

Now let us consider the projection to $\Gr(2,4)$. Considering $\alpha$ as a morphism from 
$V_2\otimes V_4^\vee$ to $V_6^\vee$, we can see that the preimage of a plane $P\in V_4$
is the (projectivized) linear space defined by the image of $V_2\otimes P^\perp$. 
The generic fiber is thus a projective line. Moreover the exceptional fibers are projective
planes, which occur above the locus where the morphism of vector bundles $$V_2\otimes \mQ^\vee\rightarrow V_6^\vee \otimes \of$$
drops rank, which is a smooth curve $C$. The twisted structure sheaf of this curve is resolved by the Eagon--Northcott complex on $\Gr(2,4)$
$$0\longrightarrow S^2(V_2\otimes \mQ^\vee)
\longrightarrow V_{12}\otimes \mQ^\vee\longrightarrow V_{15} \otimes \of_{\Gr(2,4)} \longrightarrow \of_{\Gr(2,4)}(2)\longrightarrow \of_C(2)\longrightarrow 0,$$
where as usual $V_k$ is a $k$-dimensional vector space. Here $S^2(V_2\otimes \mQ^\vee)=\of_{\Gr(2,4)}(-1)\oplus V_3\otimes S^2\mQ^\vee$ is acyclic by Borel--Weil--Bott Theorem, as well as $\mQ^\vee$, so 
$\chi(\of_C(2))=20-15=5.$ If we twist by $\of_{\Gr(2,4)}(-1)$, 
the bundle $S^2\mQ^\vee(-1)$ has a one-dimensional second cohomology group, yielding 
$\chi(\of_C(1))=6-3=3.$ After another twist, all the bundles in the complex
become acyclic and therefore $\chi(\of_C)=1$. This is more than enough to imply
that the Hilbert polynomial of $C$ is $\chi(\of_C(k))=2k+1$, which implies that
$C$ must be a smooth conic $C_2$. We end up with the following diagram
\begin{equation*}
\xymatrix{
& E \ar[dl]_{\PP^2} \ar@{^{(}->}[r] & Y \ar[dl]^{\PP^0} \ar[dr]_{\PP^1} & F \ar[dr]^{\PP^2} \ar@{_{(}->}[l]\\
\Sigma_4 \ar@{^{(}->}[r] &  \PP^5 & & \Gr(2,4) &  C_2\ar@{_{(}->}[l],
}
\end{equation*}
\begin{rmk} How to obtain a smooth conic from the tensor $\alpha$? First observe that 
its $6\times 6$ minors defined a tensor $\alpha^{(6)}\in \Hom(\wedge^6(V_2\otimes V_4),\wedge^6 V_6)$.
Once volume forms have been fixed, this is just $\wedge^2(V_2\otimes V_4)$, which has a natural
map $p$ to $S^2V_2\otimes \wedge^2 V_4\simeq \Hom(S^2V_2^\vee,\wedge^2 V_4)$. So the 
image of $p(\alpha^{(6)})$ will in general define a projective plane in $\PP(\wedge^2 V_4)$,
whose intersection with $\Gr(2,4)$ is a conic canonically defined by $\alpha$. 
\end{rmk}

Now we turn to the fourfold $X$ obtained as a hyperplane section of $Y$. This hyperplane 
is defined by a tensor $\beta\in V_6^\vee\otimes \wedge^2 V_4^\vee$. If we fix a basis 
of $V_2$ and denote by $\alpha_1, \alpha_2$ the components of $\alpha$ in this basis, $X$ parametrizes 
the pairs $(l, P)$ such that $P$ contains $\alpha_1(l)$ and $\alpha_2(l)$, 
and $\beta (l, p_1, p_2)=0$ for any basis $(p_1,p_2)$ on $P$. Outside $\Sigma_4$, 
$\alpha_1(l)$ and $\alpha_2(l)$ are such a basis of $P$, hence the condition 
$\beta (l, \alpha_1(l), \beta_2(l))=0$ defines a cubic in $\PP^5$. This implies that
$X$ is the blow up of a cubic fourfold $X_3$ along the rational quartic scroll $\Sigma_4$. In particular, $X_3$ is Pfaffian \cite{Beau-Dona} and hence rational.

The projection of $X$ to $\Gr(2,4)$ is also birational. Its exceptional locus is now
the degeneracy locus of the morphism 
$$V_2\otimes \mQ^\vee\oplus\of_{\Gr(2,4)}(-1)\rightarrow V_6^\vee \otimes \of_{\Gr(2,4)}$$
defined by $\alpha$ and $\beta$. This locus is a smooth surface $S$ 
whose structure sheaf is resolved by the Eagon--Northcott complex
$$0\longrightarrow 
V_2\otimes \mQ^\vee\oplus\of_{\Gr(2,4)}(-1)\rightarrow V_6^\vee \otimes \of_{\Gr(2,4)}
\longrightarrow \of_{\Gr(2,4)}(3)\longrightarrow \of_S(3)\longrightarrow 0.$$
Cohomologically there is no difference with the previous case, so we can conclude in
the same way that the Hilbert polynomial of $S$ is again  
$\chi(\of_S(k))=5k^2-k+2.$

\begin{proposition}\label{prop:C2}
$S=S_{14}^{(1)}$ is a K3 surface of genus eight blown up at a single point.
\end{proposition}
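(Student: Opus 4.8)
The plan is to follow, essentially verbatim, the argument used for the non-minimal K3 surface $S_{14}^{(5)}$ in the Pfaffian case \cref{24-86-2-1-26-24-1-5-1-6}, now applied to the present degeneracy locus. From the Hilbert polynomial $\chi(\of_S(k))=5k^2-k+2$ established just above, Riemann--Roch on the surface $S$ immediately yields $\of_S(1)^2=10$, $\of_S(1)\cdot\omega_S=2$ and $\chi(\of_S)=2$. These numbers already match the numerology of a genus $8$ K3 surface blown up at a single point and polarized by $\pi^*L_{min}-2E$; the content of the proof is to realize the exceptional $(-1)$-curve geometrically.

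First I would dualize the Eagon--Northcott complex resolving $\of_S(3)$ displayed above and twist by $\omega_{\Gr(2,4)}=\of(-4)$. Since $\Gr(2,4)\cong\QQ^4$ is four-dimensional and $S$ has codimension two, Grothendieck--Serre duality identifies $\omega_S$ with the relevant $\mathcal{E}xt^2$ of $\of_S(3)$, and after the twist by $\of(-1)$ forced by $\omega_{\Gr(2,4)}=\of(-4)$ one obtains the three-term locally free resolution
$$0\longrightarrow \of(-4)\longrightarrow V_6\otimes\of(-1)\longrightarrow \bigl(V_2^\vee\otimes\mQ(-1)\bigr)\oplus\of\longrightarrow\omega_S\longrightarrow 0.$$
This is the exact analogue of the dual complex that produced $\omega_S$ in the $S_{14}^{(5)}$ case, and I would check the twists against that worked example to be sure the bookkeeping is right.

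Next I would apply \cref{lem:section} with the distinguished line-bundle summand $\mM=\of$. The induced morphism $\of\to\omega_S$ produces a canonical section $\sigma$ of $\omega_S$ whose vanishing locus is exactly where the partial morphism $V_6\otimes\of(-1)\to V_2^\vee\otimes\mQ(-1)$ fails to be surjective, equivalently, after dualizing and twisting, where $V_2\otimes\mQ^\vee\to V_6^\vee\otimes\of$ drops rank. This is precisely the degeneracy locus already identified in the analysis of the projection $Y\to\Gr(2,4)$ as the smooth conic $C_2$. Hence $K_S\sim C_2$ with $C_2\cong\PP^1$. Adjunction then gives $-2=C_2^2+C_2\cdot K_S=2\,C_2^2$, so $C_2^2=-1$ and $C_2$ is a $(-1)$-curve. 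Contracting it via $\pi\colon S\to S_{min}$ yields a smooth surface with $K_{S_{min}}=0$ and $\chi(\of_{S_{min}})=\chi(\of_S)=2$, hence a K3 surface. Writing $\of_S(1)=\pi^*L_{min}-2C_2$ (the coefficient being the degree $\of_S(1)\cdot C_2=2$ of the conic) gives $L_{min}^2=\of_S(1)^2+4=14$, so $S_{min}$ has genus $8$ and $S=S_{14}^{(1)}$.

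Since the authors note that cohomologically there is no difference with the earlier case, I expect the only genuinely delicate points to be bookkeeping: getting the twists in the dualized complex exactly right so that \cref{lem:section} returns $C_2$ itself rather than a twist of it, and confirming that $C_2$ is reduced and irreducible (a single $(-1)$-curve) so that the contraction lands on a minimal K3 rather than on a surface carrying several exceptional curves.
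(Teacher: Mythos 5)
Your proof is correct, and its skeleton is the same as the paper's: dualize the Eagon--Northcott complex, apply \cref{lem:section} to produce a canonical section of $\omega_S$, show its zero locus is a smooth conic, conclude by the genus formula that it is a $(-1)$-curve, and contract. The one step where you genuinely diverge is how the conic is identified. The paper re-derives it from scratch inside the proof of the proposition: it computes $[D]=2\sigma_{21}$ by Thom--Porteous and proves connectedness ($h^0(\of_D)=1$) via a further Eagon--Northcott complex and Borel--Weil--Bott, and only then concludes that $D$ is a smooth conic. You instead observe that the non-surjectivity locus of $V_6\otimes\of(-1)\to V_2^\vee\otimes\mQ(-1)$ is, after dualizing and untwisting, exactly the degeneracy locus of the $\alpha$-component $V_2\otimes\mQ^\vee\to V_6^\vee\otimes\of$, already proved to be the smooth conic $C_2$ in the analysis of the fivefold $Y$. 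This shortcut is legitimate: the map in question is precisely the twisted dual of the $\alpha$-map, and the proof of \cref{lem:section} guarantees that this locus automatically lies in $S$; it removes a computation the paper in effect performs twice, and it is the very identification the paper itself uses in the parallel arguments of \cref{24-86-2-1-26-24-1-5-1-6} and \cref{27-100-2-1-24-23-1-6-2-4}, where the canonical divisor is recognized as the previously found lines. What the paper's longer version buys is self-containedness of the proposition, independent of the earlier Hilbert-polynomial argument for $C$. Finally, your explicit bookkeeping $\pi^*L_{min}=\of_S(1)+2C_2$, giving $L_{min}^2=10+4=14$ and hence genus eight, spells out what the paper leaves implicit.
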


\begin{rmk}
Of course the cubic $X$ should be Pfaffian and the K3 surface $S_{14}$ should be its HP-dual.\end{rmk}

\begin{proof}
By dualizing the Eagon--Northcott complex, we get 
$$0\longrightarrow 
\of_{\Gr(2,4)}(-4)\longrightarrow V_6(-1)\longrightarrow 
V_2\otimes \mQ(-1)\oplus\of_{\Gr(2,4)}\longrightarrow \omega_S\longrightarrow 0.$$
We deduce that $\omega_S$ admits a canonical section, vanishing exactly on the curve $D$ where the 
morphism $V_6(-1)\rightarrow V_2\otimes \mQ(-1)$ is not surjective. By the Thom--Porteous formula,
the class of $D$ in the Chow ring of $\Gr(2,4)$ is 
$$[D]=s_3(V_2\otimes \mQ - V_6\otimes \of_{\Gr(2,4)})=s_3(V_2\otimes \mQ)=2\sigma_{21},$$
since the Segre class $s(U_2\otimes \mQ)=s(\mQ)^2$ and $s(\mQ)=1+\sigma_1+\sigma_2$
is the sum of the special
Schubert classes. In words, $[D]$ is twice the class of a line. Moreover, 
the  structure sheaf of $D$ is resolved by another Eagon--Northcott complex, 
$$0\longrightarrow S^2(V_2\otimes \mQ)^\vee(-2) \longrightarrow V_{12}\otimes \mQ^\vee(-2)
\longrightarrow V_{15}\otimes \of_{\Gr(2,4)}(-2)\longrightarrow 
\of_{\Gr(2,4)}\longrightarrow \of_D\longrightarrow 0.$$
Note that $S^2(V_2\otimes \mQ)^\vee(-2)=S^2U_2\otimes S^2\mQ^\vee(-2)\oplus \of_{\Gr(2,4)}(-3)$ 
is acyclic by Borel--Weil--Bott, as well as  $\mQ^\vee(-2)$ and of course  $\of_{\Gr(2,4)}(-2)$.
We deduce that $h^0(\of_D)=1$, hence that $D$ must be connected, and finally that this must 
be a smooth conic. 
By the genus formula, since $\omega_S=\of_S(D)$ we conclude that $D$ is a $(-1)$-curve on $S$, 
after contracting which we get a genuine K3 surface $S_{min}$ of degree $14$.
\let\oldqedbox\qedsymbol
\newcommand{\twoqedbox}{$\square$ \, \oldqedbox}
\renewcommand{\qedsymbol}{\twoqedbox}
\end{proof}
\let\qed\relax
\end{fano}

\medskip\medskip\begin{fano}\fanoid{3-20-63-2} %9
$\mZ(\PP^1 \times \PP^1 \times \PP^5,\of(0,0,3) \oplus \of(0,1,1) \oplus \of(1,0,1))$.
\label[mystyle]{20-63-3-1-38-36-1-2-1-2-1-6}

\subsubsection*{Invariants}  $h^0(-K)=20, \ (-K)^4=63, \  h^{1,1}=3, \ h^{3,1}=1, \ h^{2,2}=38$, $-\chi(T)=36$.
\subsubsection*{Description} 
\begin{itemize}
    \item $\pi_{12} \colon X \to \PP^1 \times \PP^1$ is a $\dP_3$-fibration.
    \item $\pi_{13},\pi_{23} \colon X \to \PP^1 \times \PP^5$ are  blow ups $\Bl_{\dP_0}Y$, where $\dP_0$ is a blow up of a cubic surface at three points and $Y$ is \cref{36-144-2-1-28-28-1-2-1-6}.
    \item Rationality: unknown, since the cubic fourfold is general. 
\end{itemize}
\subsubsection*{Semiorthogonal decompositions}
\begin{itemize}
\item 4 exceptional objects and an unknown category from $\pi_{12}$.
\item 24 exceptional objects and the K3 category of $X_3$ from $\pi_{13}$ or $\pi_{23}$.
\end{itemize}

\subsubsection*{Explanation}
One can apply \cref{lem:blowupcodim2} twice to understand $\pi_{13}$ and $\pi_{23}$ as the composition of two blow ups of cubic surfaces. One checks that the strict transform of the second cubic surface is its blow up at three extra points (which are its intersection points with the line $l$ at the intersection of the two $\PP^3$'s).

In order to understand $\pi_{12}$, we reinterpret $X$ as $\mZ(\Fl(1,4,6),\mQ_2^{\oplus 2} \oplus \of(0,1)^{\oplus 2} \oplus \of(3,0))$. The latter is nothing but $\mZ(\PP_{\PP^1 \times \PP^1}(\of^{\oplus 2} \oplus \of(-1,0) \oplus \of(0,-1)),\mL^{\otimes 3})$, which is obviously a $\dP_3$-bundle over $\PP^1 \times \PP^1$.
\end{fano}

\medskip\medskip\begin{fano}\fanoid{3-24-81-2} %13
$\mZ(\PP^1 \times \PP^2 \times \PP^5,\of(0,0,3) \oplus \mQ_{\PP^2}(0,0,1) \oplus \of(1,1,0))$.
\label[mystyle]{24-81-3-1-1-30-31-1-2-1-3-1-6}

\subsubsection*{Invariants}  $h^0(-K)=24, \ (-K)^4=81, \  h^{1,1}=3, \  h^{2,1}=1, \ h^{3,1}=1, \ h^{2,2}=30$, $-\chi(T)=31$.
\subsubsection*{Description} 
\begin{itemize}
    \item $\pi_{12} \colon X \to \PP^1 \times \PP^2$ factors as the blow up of a $\dP_3$-fibration $T \to \PP^2$ in a general fiber which is a cubic surface.
    \item $\pi_{13} \colon X \to \PP^1 \times \PP^5 $ is the blow up $\Bl_{\Sigma} Y$, where $\Sigma$ is a ruled surface and $Y$ is as in \cref{36-144-2-1-28-28-1-2-1-6}.  
    \item $\pi_{23} \colon X \to \PP^2 \times \PP^5$ is a blow up of \cref{28-99-2-1-1-23-29-1-3-1-6} along a cubic surface. 
    \item Rationality: unknown, since the cubic fourfold is general.
\end{itemize}

\subsubsection*{Semiorthogonal decompositions}
\begin{itemize}
\item 12 exceptional objects and an unknown category from $\pi_{12}$ or $\pi_{23}$.
\item 16 exceptional objects and the K3 category of $X_3$ from $\pi_{13}$.
\end{itemize}

\subsubsection*{Explanation} The projection to $\PP^2\times \PP^5$ is  birational outside a point of $\PP^2$, and blows up its image $Y$ along a cubic surface. The projection of $Y$ to $\PP^5$ is the blow up of
a cubic fourfold along a plane section, hence an elliptic curve 
contained in the cubic surface. 
(Notice that this Fano fourfold can be described as well by applying \cref{lem:blowgrass} to \cref{28-99-2-1-1-23-29-1-3-1-6}.)

The projection to $\PP^1 \times \PP^2$ can be understood by rewriting $\Bl_{\PP^2} \PP^5$ as $Z:=\PP_{\PP^2}(\of^{\oplus 3} \oplus \of(-1))$. A section of $\of_Z(3)$ cuts the latter in a fibration in $\dP_3$, which we call $Z'$. We thus obtain a zero locus  $\mZ(\PP^1 \times Z',\of_{\PP^1}(1) \otimes \pi^*\of_{\PP^2}(1))$. By \cref{lem:blowgrass} this is the blow up of $Z'$ in a general fiber, i.e.\ a cubic surface.

Finally, the fibers of the projection to $\PP^1\times X_3$ are defined by morphisms $\of(0,-1)\ra V_3$
and $\of(-1,0)\ra V_3^\vee$, whose compatibility is verified over a section of $\of(1,1)$. So the 
image of $\pi_{13}$ is isomorphic to the blow up of $X_3$ along a cubic surface $\Sigma$. Moreover, the 
fibers of $\pi_{13}$ do not reduce to a single point when the first morphism $\of(0,-1)\ra V_3$ vanishes, that is over a copy of $\PP^1\times E$, where $E$ is an elliptic curve in $\Sigma$. 
\end{fano}

\medskip\begin{fano}\fanoid{3-27-99-2} %10
$\mZ(\PP^1 \times \PP^2 \times \PP^5,\of(0,1,2) \oplus \mQ_{\PP^2}(0,0,1) \oplus \of(1,0,1))$.
\label[mystyle]{27-99-3-1-30-27-1-2-1-3-1-6}

\subsubsection*{Invariants}  $h^0(-K)=27, \ (-K)^4=99, \  h^{1,1}=3, \ h^{3,1}=1, \ h^{2,2}=30$, $-\chi(T)=27$.
\subsubsection*{Description} 
\begin{itemize}
    \item $\pi_{12} \colon X \to \PP^1 \times \PP^2$ factors as the blow up $\Bl_{\dP_2}Z$, where $Z \to \PP^2$ is a quadric surface bundle over $\PP^2$ ramified along a sextic from \cref{45-192-2-1-22-19-1-3-1-6}.
    \item $\pi_{13} \colon X \to \PP^1 \times \PP^5$ is a blow up $\Bl_{\dP_8}Y$, where $Y$ is a blow up of a cubic surface in a cubic fourfold containing a plane, and the $\dP_8$ is the strict transform of this plane.
     \item $\pi_{23} \colon X \to \PP^2 \times \PP^5$ is a blow up $\Bl_{\dP_2}W$, where $W$ is the Fano fourfold \cref{45-192-2-1-22-19-1-3-1-6}.
    \item Rationality: unknown
\end{itemize}
\subsubsection*{Semiorthogonal decompositions}
\begin{itemize}
\item 16 exceptional objects and $\Db(S_2,\alpha)$ from each map, where $S_2$ and $\alpha$ are the K3 surface and the Brauer class either from \cref{45-192-2-1-22-19-1-3-1-6} or from the cubic containing a plane, see \cref{subsubC8}.
\end{itemize}

\subsubsection*{Explanation}
Let us start by describing $\pi_{23}$. By \cref{lem:blowupcodim2}, $X$ is the blow up of a fourfold,  from \cref{45-192-2-1-22-19-1-3-1-6}, say $Y$, in a surface which is a complete intersection of $Y$ with two extra $(0,1)$ divisors from $\PP^2 \times \PP^5$. In particular, this surface is described in $\PP^2 \times \PP^3$ as $\mZ(\mQ_{\PP^2}(0,1) \oplus \of(1,2))$. The latter is identified with the blow up of a 
$\dP_3$ in a point. Note that such a cubic surface is a double hyperplane section of the cubic fourfold in $\PP^4$, and hits the blown up plane in exactly one point. This gives the description of $\pi_{13}$ as well.

In order to understand $\pi_{12}$, let us reinterpret this variety as $\mZ(\PP^1 \times \Fl(1,4,6),\mQ_2^{\oplus 3} \oplus \of(0;2,1) \oplus \of(1;1,0))$. The first four bundles cut $\mZ(\PP^1 \times \PP_{\PP^2}(\of^{\oplus 3} \oplus \of(-1)),\of_{\PP^1}(1) \boxtimes \mL \oplus \of_{\PP^2}(1)\boxtimes \mL^{\otimes 2} )$. If we just consider $Z$ as $\mZ(\PP_{\PP^2}(\of^{\oplus 3} \oplus \of(-1)),\mL^{\otimes 2} \boxtimes \of_{\PP^2}(1))$ this can be identified as in 
\cref{45-192-2-1-22-19-1-3-1-6} with a quadric surface bundle over $\PP^2$ with ramification divisor a smooth sextic. By \cref{lem:blowupcodim2} the latter is then the blow up in the complete intersection of two copies of $\mL$, that is the double cover of $\PP^2$ ramified over a sextic. The result follows.
\end{fano}

\medskip\begin{fano}\fanoid{3-34-134-2} %17
\label[mystyle]{34-134-3-1-28-25-1-2-1-2-6} $\mZ(\PP^1 \times \Fl(1,2,6),\mQ_2 \oplus \of(0;1,2) \oplus \of(1;0,1)).$

\subsubsection*{Invariants}  $h^0(-K)=34, \ (-K)^4=134, \  h^{1,1}=3, \ h^{3,1}=1, \ h^{2,2}=28$, $-\chi(T)=25$.
\subsubsection*{Description} 
\begin{itemize}
    \item $\pi_{12} \colon X \to \PP^1 \times \PP^5$ is a blow up $\Bl_{\dP_3}\Bl_p X^\circ_3$, where $X^\circ_3$ is a nodal cubic fourfold and $\Bl_pX_3^\circ$ is \cref{49-211-2-1-21-19-1-5-1-6}.
    \item $\pi_{13} \colon X \to \PP^1 \times \Gr(2,6)$ is a  blow up $\Bl_{S_6^{(6)}}\Bl_{\PP^2}\PP^4$, where $S_6^{(6)}$ is a sextic K3 surface blown up in 6 points.
     \item $\pi_{23} \colon X \to \Fl(1,2,6)$ is a blow up $\Bl_{\dP_3}\Bl_{S_6}\PP^4$, with $S_6$ a sextic K3 from \cref{49-211-2-1-21-19-1-5-1-6}. 
    \item Rationality: yes.
\end{itemize}

    \subsubsection*{Semiorthogonal decompositions}
\begin{itemize}
\item 14 exceptional objects and $\Db(S_6)$ from each map.
\end{itemize}

\subsubsection*{Explanation}
In order to understand the first two projections, we write down $\mZ(\Fl(1,2,6),\mQ_2)$ as $\PP_{\PP^4}(\of \oplus \of(-1)) \cong \Bl_p \PP^5.$ We deduce that the projection to $\PP^1 \times \PP^5 $ maps $X$ to a $(1,1)$ divisor in $\PP^1 \times \Bl_p\Tilde{X}_3$, which in turn blows up a cubic surface.

Starting from the same description we can also understand the second projection: the image is a $(1,1)$-divisor in $\PP^1 \times \PP^4$, i.e.\ the blow up of $\PP^4$ in a plane. Using the Cayley trick 
we can see that we further blow up the strict transform of a sextic K3 surface in $\PP^4$, which is
the blow up of this sextic in six extra points.

 Finally, without the first factor and the last bundle, we know that we get the blow up of $\PP^4$ along a K3 surface of degree 6 by the Cayley trick. The extra
factor yields another blow up, along the strict transform of a plane, which is a $\dP_3$. 
\end{fano}

\medskip\begin{fano}\fanoid{3-30-113-2} %14
\label[mystyle]{30-113-3-1-30-28-1-2-1-4-1-6} $\mZ(\PP^1 \times \PP^3 \times \PP^5,\of(0,1,2) \oplus \mQ_{\PP^3}(0,0,1) \oplus \of(1,1,0))$.
\subsubsection*{Invariants}  $h^0(-K)=30, \ (-K)^4=113, \  h^{1,1}=3, \ h^{3,1}=1, \ h^{2,2}=30$, $-\chi(T)=28$.
\subsubsection*{Description} 
\begin{itemize}

    \item $\pi_{12} \colon X \to \PP^1 \times \PP^3$ has for image the blow up of $\PP^3$ along a line and is a blow up  of the conic bundle described in \cref{40-163-2-1-23-24-1-4-1-6}.
\item $\pi_{13} \colon X \to \PP^1 \times \PP^5$ is a blow up of a general cubic fourfold along 
a cubic surface, i.e. \cref{36-144-2-1-28-28-1-2-1-6}, and then a quadratic surface. 
    \item $\pi_{23} \colon X \to \PP^3 \times \PP^5$ is a blow up $\Bl_{\dP_3}Y$, where $Y$ is \cref{40-163-2-1-23-24-1-4-1-6}. 
     
    \item Rationality: unknown, since the cubic fourfold is general.
\end{itemize}
\subsubsection*{Semiorthogonal decompositions}
\begin{itemize}
\item 16 exceptional objects and the K3 category of $X_3$ from the three maps.
\end{itemize}

\subsubsection*{Explanation}
Let $Y \subset \PP^3 \times \PP^5$ be a fourfold of type \cref{40-163-2-1-23-24-1-4-1-6}.
To understand $\pi_{23}$ we apply  \cref{lem:blowupcodim2} to $\PP^1 \times Y$. The intersection of two divisors of degree $(1,0)$ on $Y \subset \PP^3 \times \PP^5$ cuts a cubic surface that contains the line already blown up.

Alternatively, the other birational description for $Y$ was a conic bundle over $\PP^3$ ramified over a singular quintic. This suggests that $X$ can be alternatively described as $\mZ(\PP^1 \times \Fl(1,3,6),\mQ_2^{\oplus 2} \oplus \of(0;2,1) \oplus \of(1;1,1))$. In particular thanks to \cref{lem:blowupcodim2} applied in this context we can see that this fourfold is the blow up of the said conic bundle in a cubic surface. This yields  the description of $\pi_{12}$.

Finally, let $Z$ denote the image of $X$ in $\PP^1\times \PP^5$. The  projection 
of $Z$ to $\PP^5$ is the 
blow up of a general cubic fourfold along a surface section $\Sigma$. The projection of $X$ to $Z$ is 
the blow up of $\PP^1\times l$, where $l$ is a line in $\Sigma$. 
\end{fano}

\medskip\begin{fano}\fanoid{3-35-141-2} %11
$\mZ(\PP^2_1 \times \PP^2_2 \times \PP^5,\mQ_{\PP^2_1}(0,0,1) \oplus \mQ_{\PP^2_2}(0,0,1) \oplus \of(1,1,1)).$
\label[mystyle]{35-141-3-1-23-18-1-3-1-3-1-6}

\subsubsection*{Invariants}  $h^0(-K)=35, \ (-K)^4=141, \  h^{1,1}=3, \ h^{3,1}=1, \ h^{2,2}=23$, $-\chi(T)=18$.
\subsubsection*{Description} 
\begin{itemize}
    \item $\pi_{12} \colon X \to \PP^2 \times \PP^2$ is a blow up $\Bl_{S_{14}} (\PP^2 \times \PP^2)$, where the K3 surface $S_{14}$ has Picard rank 2.
    \item $\pi_{13},\pi_{23} \colon X \to \PP^2 \times  \PP^5$ is a blow up along a plane $\Pi$ of a special \cref{45-192-2-1-22-19-1-3-1-6}, which is in turn a blow up $\Bl_{\Pi'}X_3$ of a cubic fourfold containing a second plane $\Pi'$, skew to $\Pi$.
    \item Rationality: yes.
\end{itemize}

\subsubsection*{Semiorthogonal decompositions}
\begin{itemize}
\item 9 exceptional objects and $\Db(S_{14})$ from $\pi_{12}$.
\item 9 exceptional objects and $\Db(S_2)$ from $\pi_{13}$ or  $\pi_{23}$ (the Brauer class vanishes when 
$X$ contains two skew planes).
\end{itemize}

\subsubsection*{Explanation} Let $A_3$ and $B_3$ be three-dimensional vector spaces such that $\PP^2_1=\PP (A_3)$ and $\PP^2_2=\PP (B_3)$. The first two sections are defined by morphisms $\alpha\in \Hom(V_6,A_3)$ and 
$\beta\in \Hom(V_6,B_3)$, and their zero locus is the 
set of triples $(x,y,z)$ of lines such that $\alpha(z)\subset x$ and $\beta(z)\subset y$. Generically 
$x$ and $y$ are determined by $z$ so the projection to $\PP^5$ is birational: it is the blow up of the two
skew planes $\PP(\Ker\alpha)$ and $\PP(\Ker\beta)$. 

The extra section of the line bundle $\of(1,1,1)$ is defined by a tensor 
$\gamma\in A_3^\vee\otimes B_3^\vee\otimes V_6^\vee$.
It induces a cubic $\gamma(\alpha(z),\beta(z),z)$ which gives an equation of 
the image of $X$ in $\PP^5$: this image is a generic cubic fourfold containing the projective planes 
$\PP(\Ker\alpha)$ and $\PP(\Ker\beta)$, and the claim follows. 

Note that we can identify $A_3$ and $B_3$ with subspaces of $V_6$ in transverse position, in such a way that $\alpha$ and $\beta$ are the associated projections. 
Given $x$ and $y$, $z$ must belong to the pencil they 
generate, on which $\gamma$ cuts a unique point in 
general. It cuts out a whole projective line when 
the induced morphism $\of(-1,-1)\rightarrow 
(\of(-1,0)\oplus \of(0,-1))^\vee$ vanishes, which happens along a K3 surface of Picard number two, intersection of two divisors of bidegrees 
$(1,2)$ and $(2,1)$ (degree $14$ in the Segre embedding, and double covers of the two $\PP^2$'s). 
We conclude that the projection to $\PP^2\times \PP^2$ is the blow up of this K3 surface. 

The rationality of this example follows from the well-known result about the rationality of a smooth cubic fourfold containing two skew planes, see e.g.\ \cite[1.2]{hassett2016cubic}.
\end{fano}

\medskip\begin{fano}\fanoid{3-42-176-2} %16
\label[mystyle]{42-176-3-1-23-18-1-3-1-5-1-6} $\mZ(\PP^2 \times \PP^4 \times \PP^5,\mQ_{\PP^4}(0,0,1) \oplus \of(1,1,1) \oplus \mQ_{\PP^2}(0,1,0))$.
\subsubsection*{Invariants}  $h^0(-K)=42, \ (-K)^4=176, \  h^{1,1}=3, \ h^{3,1}=1, \ h^{2,2}=23$, $-\chi(T)=18$.
\subsubsection*{Description} 
\begin{itemize}
    \item $\pi_{12} \colon X \to \PP^2 \times \PP^4$ is a blow up $\Bl_{S_6} Y$, where $Y$ is a blow up $\Bl_{\PP^1} \PP^4$, and $S_6$ is a sextic K3 surface containing this $\PP^1$.
    \item $\pi_{13} \colon X \to \PP^2 \times \PP^5$ is a blow up of $\Bl_\Pi X_3^0$, $X_3^\circ \subset \PP^5$ is a cubic fourfold with one node, $\Pi$ is a plane containing the node, and $\pi_{13}$ is the blow  up of the fiber over the node, which is another plane.
     \item $\pi_{23} \colon X \to \PP^4 \times \PP^5$ is a blow up \cref{49-211-2-1-21-19-1-5-1-6} of the same nodal cubic $X^\circ_3$ at its node, then blown up along the preimage of the plane, which is a Hirzebruch surface $\Sigma_1$. 
    \item Rationality: yes.
\end{itemize}

    \subsubsection*{Semiorthogonal decompositions}
\begin{itemize}
\item 9 exceptional objects and $\Db(S_6)$ from each map.
\end{itemize}

\subsubsection*{Explanation} The sections that cut out $X$ are given by tensors $\alpha\in \Hom(V_6,V_5)$, $\beta\in \Hom(V_5,V_3)$, and $\gamma\in V_3^\vee\otimes V_5^\vee\otimes V_6^\vee$. 
The fourfold $X$ parametrizes the triples of lines $(x,y,z)$ in $\PP^2\times\PP^4\times \PP^5$
such that $$ \alpha(z)\subset y, \qquad \beta(y)\subset x, \qquad \gamma(x,y,z)=0. $$

We start with the description of $\pi_{12}$. 
The set of pairs $(x,y)$ such that $\beta(y)\subset x$ is the blow up of $\PP^4$ along the line $l=\PP(\Ker\beta)$. The extra point $z$ must then belong to $\alpha^{-1}(y)\simeq z_0\oplus y$,
where $z_0$ denotes the kernel of $\alpha$; this defines a projective line on which $\gamma$ cuts a 
unique point in general. So the projection of $X$ to $X'=\Bl_l\PP^4$ is birational, and 
its exceptional locus $S$ is given by the vanishing of the induced morphism $\of_{X'}(-1,-1)\rightarrow (z_0\oplus \of_{X'}(0,-1))^\vee$. This surface $S$
is isomorphic to its projection in $\PP^4$, which is a general K3 surface of degree six 
containing  the line $l$.  

\smallskip 
Now we turn to $\pi_{13}$.
Let $\Pi=\PP(\Ker(\beta\circ\alpha))$, a projective plane containing $z_0$. 
The set of pairs $(x,z)$ such that $\beta\circ\alpha(z)\subset x$ and $\gamma(x,\alpha(z),z)=0$
is a blow up $Y$ along $\Pi$ of the cubic $X_3^\circ$ of equation $\gamma(\beta\circ\alpha(z),\alpha(z),z)=0$,
which is a general cubic fourfold containing $\Pi$ and nodal at the point $z_0\in\Pi$.
(Note that the tangent cone to the cubic at $z_0$ is a quadric, in which the degree three 
part of the equation cuts the surface $S$, with its line $l$ coming from  $\Pi$.) 
The fiber $\Theta$  of $z_0$ in $Y$ is $\PP^2$, while the other non-trivial fibers are projective lines. 
The extra
point $y$ is uniquely determined as $ \alpha(z)$, except if the latter is zero, that is, 
over the projective plane $\Theta$ that is contracted to $z_0$. The fiber over $(x,z_0)$ is defined by the
conditions that $y$ belongs to the projective plane $\PP(\beta^{-1}(x))$ and $\gamma(x,y,z_0)=0$; 
this defines a projective line and $X$ must be the blow up of $\Theta$.

\smallskip
Finally we describe $\pi_{23}$.
The set of pairs $(y,z)$ such that $ \alpha(z)\subset y$ is the blow up of $\PP^5$ at $z_0$. With the additional condition that $\gamma (\beta(y),y,z)=0$ we get the blow up $Z$ of $X_3^\circ$ at its node $z_0$. 
The exceptional divisor is the quadric $\QQ^3 \subset\PP^4$ with equation $\gamma(\beta(y),y,z_0)=0$.
The extra point $x$ is uniquely determined as $\beta(y)$, except if the latter is zero, which
happens if $y$ is on the line $l$, and $z$ belongs to the projective line $\alpha^{-1}(y)$;
in other words, if $(y,z)$ belongs to the blow up $\Sigma_1$ of $\Pi$ at $z_0$, then $x$
 belongs to the projective line cut out by $\gamma$. So $X$ must be the blow up of $Z$ along 
 $\Sigma_1$. 
\end{fano}

\begin{rmk} A local computation allows us to check that the fourfold denoted by $Y$ in the previous proof is smooth. 
We thus get another Fano fourfold of K3 type, which is not the type considered in this paper: it is not in our list
of zero loci of sections of vector bundles. \end{rmk}

\medskip\begin{fano}\fanoid{3-37-149-2} %15
\label[mystyle]{37-149-3-1-24-21-1-3-1-4-1-6} $\mZ(\PP^2 \times \PP^3 \times \PP^5,\mQ_{\PP^3}(0,0,1) \oplus \of(1,0,2) \oplus \mQ_{\PP^2}(0,1,0))$.
\subsubsection*{Invariants}  $h^0(-K)=37, \ (-K)^4=149, \  h^{1,1}=3, \ h^{3,1}=1, \ h^{2,2}=24$, $-\chi(T)=21$.
\subsubsection*{Description} 
\begin{itemize}
    \item $\pi_{12} \colon X \to \PP^2 \times \PP^3$ is a conic bundle over $\Bl_p \PP^3$.
    \item $\pi_{13} \colon X \to \PP^2 \times \PP^5$ is a blow up $\Bl_{\QQ^2} Y$, where $Y$ is the Fano fourfold \cref{45-192-2-1-22-19-1-3-1-6}, 
    that is $\Bl_{\PP^2} X_3$. 
     \item $\pi_{23} \colon X \to \PP^3 \times \PP^5$ is a blow up $\Bl_{\PP^2} Z$, where $Z=\Bl_{\PP^1} X_3$ is the Fano fourfold \cref{40-163-2-1-23-24-1-4-1-6}.
    \item Rationality: not known. Same as the cubic fourfold containing a plane.
\end{itemize}  
    \subsubsection*{Semiorthogonal decompositions}
\begin{itemize}
\item 6 exceptional objects and $\Db(\Bl_p\PP^3,\mC_0)$ from $\pi_{12}$.
\item 10 exceptional objects and $\Db(S_2,\alpha)$ from $\pi_{13}$ and $\pi_{23}$.
\end{itemize}

\subsubsection*{Explanation}

The sections of the three bundles are given by tensors
$\alpha\in \Hom(V_6,V_4)$, $\beta\in \Hom(V_4,V_3)$ and  $\gamma\in V_3^\vee\otimes S^2V_6^\vee$. The fourfold $X$ parametrizes the triples of lines $(x,y,z)$ 
such that $$\alpha(z)\subset y, \qquad \beta(y)\subset x, \qquad \gamma(x,z,z)=0.$$

We start with the description of $\pi_{12}$. 
The set  of pairs $(x,y)$ such that $\beta(y)\subset x$ is the blow up of $\PP^3$ at the point $y_0=\Ker(\beta)$. The extra point $z$ must be contained in $\alpha^{-1}(z)$, which defines a 
projective plane on which $\gamma$ cuts out a conic. So $X$ is a conic bundle over 
$X'=\Bl_{y_0}\PP^3$. In other words, $\alpha$ can be seen as a morphism of vector bundles over $\PP^3$, $\alpha: V_6 \otimes \of_{\PP^3} \to \mQ_{\PP^3}$, whose rank do not drop for dimension reasons, and whose kernel is exactly $\of(-1) \oplus \of^{\oplus 2}$. This is indeed a conic bundle defined by a morphism $\of_{X'}(-1,0)\rightarrow S^2(\of_{X'}^{\oplus 2}\oplus \of_{X'}(0,-1))^\vee$, 
its discriminant locus $\Delta$ is given by a morphism 
$$\of_{X'}(-3,0)\rightarrow S_{222}(\of_{X'}^{\oplus 2}\oplus \of_{X'}(0,-1))^\vee=
\of_{X'}(0,2),$$
hence a section of $\of_{X'}(3,2)$: so $\Delta$ is a surface with nef canonical divisor $\of_\Delta(1,0)$. We notice that there are no higher dimensional fibers. 

\smallskip 
Now we turn to $\pi_{13}$.
The set of pairs $(x,z)$ such that $\beta\circ \alpha(z)\subset x$ is the blow up of $\PP^5$ along the plane $\Pi=\PP(\Ker(\beta\circ \alpha))$. With the additional condition that $\gamma(\beta\circ \alpha(z), z,z)=0$ we get the blow up $Y$ of a  general cubic fourfold $X_3$ containing $\Pi$. The extra point $y$
is then defined as $\alpha(z)$, except if the latter is zero, which means that $z$ belongs to 
the line $l=\PP(\Ker\alpha)\subset \Pi$. The preimage of this line in $Y$ is a surface isomorphic to 
a $(1,2)$-divisor $F$ in $\PP^2\times\PP^1$, hence is a quadratic surface $\QQ^2=\PP^1\times \PP^1$ embedded by $\of(1,2)$. Finally $X$ is the blow up of $Y$ along $\QQ^2$.

\smallskip
Finally we describe $\pi_{23}$.
The set of pairs $(y,z)$ such that $\alpha(z)\subset y$ is the blow up of $\PP^5$ along the line $l$. With the additional condition that $\gamma (\beta(y),z,z)=0$ we get a divisor $Z$ 
of bidegree $(1,2)$ in this blow up, containing the projective plane $\Theta=y_0\times \alpha^{-1}(y_0)=y_0\times\Pi$. Thus $Z$ is the blow up of $X_3$ along the line $l\subset\Pi$; 
in particular it has to contain a copy of $\Bl_l\Pi\simeq\Pi$, namely $\Theta$. 
The extra point $x$ is then uniquely defined as $\beta(y)$, except if the latter is zero, that is on the plane $\Theta$, which is blown up in $X$. Note that the intersection of $\Theta$ with 
the exceptional divisor in $Y$ is just $l$. 
\end{fano}

\section{FK3 from GM fourfolds}\label{sect:GM}
\renewcommand\thefano{GM--\arabic{fano}}
This section contains the  list of the Fano fourfolds of K3 type that we obtained inside products of flag manifolds,  
where at least one projection is a blow up of a Gushel--Mukai fourfold.

\medskip\begin{fano}\fanoid{2-22-74-2} %18
\label[mystyle]{22-74-2-1-30-30-1-3-2-5} $\mZ(\PP^2 \times \Gr(2,5),\mU^{\vee}_{\Gr(2,5)}(1,0) \oplus \of(0,1) \oplus \of(0,2)).$

\subsubsection*{Invariants}  $h^0(-K)=22, \ (-K)^4=74, \  h^{1,1}=2, \ h^{3,1}=1, \ h^{2,2}=30$, $-\chi(T)=30$.
\subsubsection*{Description}
\begin{itemize}
    \item $\pi_1 \colon X \to \PP^2$ is a $\dP_4$ fibration over $\PP^2$.
    \item $\pi_2 \colon X \to \Gr(2,5)$ is a blow up $\Bl_{\dP_2}X_{10}$, where the Gushel--Mukai 
    fourfold $X_{10}$ is general.
    \item Rationality: Not known. Birational to a general Gushel--Mukai fourfold.
\end{itemize}

    \subsubsection*{Semiorthogonal decompositions}
\begin{itemize}
\item 7 exceptional objects and $\Db(Z,\mC_0)$, for $Z \to \PP^2$ a $\PP^1$-bundle constructed from $\pi_1$ (see \cref{subsub-intersections}).
\item 14 exceptional objects and the K3 category of $X_{10}$ from $\pi_2$.
\end{itemize}

\subsubsection*{Explanation}

In general if $X$ is $\mZ(\Gr(2, n) \times \Gr(2, n+2),\mQ \boxtimes \mU^{\vee})$, then outside a point $p$, $X$ is isomorphic to $\Bl_{Y}\Gr(2, n+2)$, with $Y \cong \mZ(\PP^{n-1} \times \Gr(2,n+2),\mQ \boxtimes \mU^{\vee}).$

 We can see this by considering a section of $\mQ \boxtimes \mU^{\vee}$ as a morphism from $U_2 \rightarrow V_{n}$, where $U_2$ is the tautological bundle on  $\Gr(2, n+2)$. This morphism is generically injective, and can drop rank twice. If we consider the projection $X \to \Gr(2, n+2)$, this is therefore birational, with fiber $\PP^{n-2}$ over a certain $Y'$ (which corresponds to the locus where the morphism has rank one), and with fiber $\Gr(2, n)$ over a point $p$. This $Y'$
is singular, being resolved by $$Y=\lbrace U_2 \in \Gr(2,n+2), \ P_1 \in V_n, \quad  U_2 \to V_n \to V_n/P_1 \to 0 \rbrace.$$
Therefore $Y\cong \mZ(\PP^{n-1} \times \Gr(2,n+2),\mQ \boxtimes \mU^{\vee}).$

 From the discussion above, it follows that the sixfold $\mZ(\PP^2 \times \Gr(2,5),\mU^{\vee}_{\Gr(2,5)}(1,0))$ is birational to the blow up of $\Gr(2,5)$ in a fourfold $Y^\circ$,
singular in one point, which is resolved by $Y= \mZ(\PP^2 \times \Gr(2,5),\mQ \boxtimes \mU^{\vee})$.
Cutting with two extra $(0,1), (0,2)$ sections we avoid the singular point of $Y^\circ$. Moreover $\mZ(Y,\of(0,1) \oplus \of(0,2))$ is a degree 2 del Pezzo surface, and 
the result follows. 

The other projection, by \cref{lem:secondproj}, realizes the sixfold as $\bbGr_{\Gr(2,3)}(2, \mU \oplus \of^{\oplus 2})$. The other two relative sections cut this sixfold along a $\dP_4$ fibration over $\Gr(2,3)=\PP^2$.
\end{fano}

\medskip\begin{fano}\fanoid{2-23-80-2-A} %19
\label[mystyle]{23-80-2-1-27-26-1-3-2-5} $\mZ(\PP^2 \times \Gr(2,5),\of(0,1) \oplus \of(1,1) \oplus \mQ_{\PP^2}(0,1))$. 
\subsubsection*{Invariants}  $h^0(-K)=23, \ (-K)^4=80, \  h^{1,1}=2, \ h^{3,1}=1, \ h^{2,2}=27$, $-\chi(T)=26$.
\subsubsection*{Description}
\begin{itemize}
\item $\pi_1 \colon X \to \PP^2 $ is a fibration in $\dP_5$.
 \item $\pi_2\colon X \to \Gr(2,5)$ is a blow up $\Bl_{\dP_{5}}X_{10}$.
     \item Rationality: yes. Birational to a special Gushel--Mukai containing a $\dP_5$.
\end{itemize}

\subsubsection*{Semiorthogonal decompositions}
\begin{itemize}
    \item 6 exceptional objects and $\Db(Z)$, with $Z \to \PP^2$ finite flat of degree 5, from $\pi_1$, see \cref{subsub-dp5}.
    \item 11 exceptional objects and $\Db(S_{10})$ from $\pi_2$.
\end{itemize}

\subsubsection*{Explanation}
A section of $\mQ(0,1)$ is given by a morphism $\alpha\in \Hom(\wedge^2V_5,V_3)$, a section of $\of(1,1)$ by
 a morphism $\beta\in \Hom(\wedge^2V_5,V_3^\vee)$. A section of  $\of(0,1)$
is defined by a skew-symmetric form $\Omega\in \wedge^2U_5^\vee$. The fourfold $X$ can then be described 
as the set of pairs $(l, P)\in\PP^2\times 2,5)$ such that 
$$\alpha(p_1\wedge p_2)\in l, \quad \beta(p_1\wedge p_2)(l)=0, \quad \Omega(p_1,p_2)=0$$
for any basis $(p_1,p_2)$ of $P$. The first two conditions imply that 
$\beta(p_1\wedge p_2)(\alpha(p_1\wedge p_2))=0$, which defines a quadric in $\Gr(2,5)$, 
while the last condition defines a hyperplane. Therefore the projection 
 to $\Gr(2,5)$ maps $X$ to the intersection of a quadric and a hyperplane, that is a Gushel--Mukai fourfold $X_{10}$. (Note that by construction 
 the quadric has rank six in general.) The exceptional fibers occur over the 
 locus where $\alpha(p_1\wedge p_2)=0$ and $\Omega(p_1,p_2)=0$, which is a $\dP_5$ 
 contained in $X_{10}$. 
 
 The fibers of the projection to $\PP^2$ are linear sections of $\Gr(2,5)$, of constant dimension. 
 So this projection is a fibration in $\dP_5$.
Since del Pezzo surfaces of degree 5 over any field are rational (see, e.g., \cite[page 624]{Isko-Fact}), $X$ is $k(\PP^2)$-rational and hence rational. 
\end{fano}

 \medskip\begin{fano}\fanoid{2-26-94-2} %20
\label[mystyle]{26-94-2-1-28-28-1-2-5} $\mZ(\Fl(1,2,5),\of(1,0) \oplus \of(0, 1) \oplus \of(0,2))$.

\subsubsection*{Invariants}  $h^0(-K)=26, \ (-K)^4=94, \  h^{1,1}=2, \ h^{3,1}=1, \ h^{2,2}=28$, $-\chi(T)=28$.
\subsubsection*{Description} 
\begin{itemize}
    \item $\pi_1 \colon X \to \PP^4$ is a conic bundle over a $\PP^3 \subset \PP^4$, whose discriminant is a sextic surface with $40$ ODP.  
    \item $\pi_2 \colon X \to \Gr(2,5)$ is a blow up $\Bl_{\dP_4}X_{10}$, where the Gushel--Mukai 
    fourfold $X_{10}$ is general.
    \item Rationality: unknown. Birational to a general Gushel--Mukai fourfold.
    
\end{itemize}

\subsubsection*{Semiorthogonal decompositions}
\begin{itemize}
    \item 4 exceptional objects and $\Db(\PP^3,\mC_0)$ from $\pi_1$.
    \item 12 exceptional objects and the K3 category of $X_{10}$ from $\pi_2$.
\end{itemize}

\subsubsection*{Explanation} The flag manifold $\Fl(1,2,5)$ is isomorphic to $\PP_{\Gr(2,5)}(\mU)$. Under this identification $\of(1,0)$ becomes the relative $\of(1)$ of the projective bundle. We can therefore use the Cayley trick, to show that $X$ is the blow up of a general Gushel--Mukai fourfold in the zero locus $\mZ(\Gr(2,5),\mU^{\vee} \oplus \of(1) \oplus\of (2))$, the latter being a quartic del Pezzo surface. 

\begin{rmk}
The conic bundle structure given by $\pi_1$ has been described in \cite[Corollary 2.3]{Orna-Pert}.\end{rmk}

\subsubsection*{Another model}
It is interesting to note that there exists another model of a Gushel--Mukai fourfold blown up in a $\dP_4$. This model is embedded in $\PP^3 \times \PP^8$ as $\mZ(\of(0,2) \oplus \W^2 \mQ_{\PP^2}(0,1) \oplus \mQ_{\PP^2}(0,1))$. So  we
start with a quadratic form $q\in S^2V_9^\vee$, 
and two tensors $\alpha\in \Hom(V_9, V_4)$ and $\beta\in \Hom(V_9, \wedge^2V_4)$.
The fourfold $X$ parametrizes the pairs of lines $(x,y)$ in $\PP^3\times \PP^8$ 
such that $q(y)=0$, 
$\alpha(y)\subset x$ and $\beta(y)\subset x\wedge V_4$. 

If $x$ is given in $\PP^3$, $y$ must belong to $\alpha^{-1}(x)$ and $\beta^{-1}(x\wedge V_4)$, which in general defines a projective plane on which $q$ cuts out a conic. So the projection of $X$ to $\PP^3$ is a conic bundle. 

If $y$ is given in $\PP^8$, its fiber in $X$ is a unique point, or empty, 
except if $q(y)=0$, $\alpha(y)=0$ and $\beta(y)$ is degenerate, in which case we get a projective line. The degeneracy condition can be expressed as $\beta(y)\wedge\beta(y)=0$, which defines a quadric, in general smooth when 
restricted to $\PP(\Ker\alpha)$. So the projection of $X$ to $\PP^8$ should 
be the blow up of a smooth fourfold $Y$ along a $\dP_4$. This fourfold is defined 
by the condition that $q(y)=0$ and $\alpha(y)\wedge\beta(y)=0$. Observe that 
we may interpret  $\alpha$ and $\beta$ as defining a map $\gamma : V_9\rightarrow \wedge^2V_5$, where $V_5=V_4\oplus \mathbb{C} e_0$ so that $\wedge^2V_5\simeq \wedge^2V_4\oplus V_4$. Explicitly $\gamma(y)=\beta(y)+\alpha(y)\wedge e_0$.
The condition  $\alpha(y)\wedge\beta(y)=0$ implies that $\beta(y)=\alpha(y)\wedge v$ for some vector $v\in V_4$ (or $\alpha(y)=0$), in which case we deduce that 
$\gamma(y)=\alpha(y)\wedge (v+e_0)$ is decomposable. This means that $Y$ is contained 
in a hyperplane section of $\Gr(2,V_5)$, and the quadric condition implies that 
$Y$ is in fact a Gushel--Mukai variety. 
\end{fano}

We remark that there exists a link between this Fano variety and \cref{23-80-2-1-28-27-1-4-1-7}, investigated in the recent \cite{nodalK3}.

\medskip\begin{fano}\fanoid{2-30-114-2} %21
\label[mystyle]{30-114-2-1-24-24-2-4-2-5} $\mZ(\Gr(2,4) \times \Gr(2,5),\of(0,2) \oplus \mQ_{\Gr(2,4)} \boxtimes \mU^\vee_{\Gr(2,5)} \oplus \of (1,0))$.

\subsubsection*{Invariants}  $h^0(-K)=30, \ (-K)^4=114, \  h^{1,1}=2, \ h^{3,1}=1, \ h^{2,2}=24$, $-\chi(T)=24$.
\subsubsection*{Description} 
\begin{itemize}
    \item $\pi_1 \colon X \to \Gr(2,4)$ is a conic bundle on $\QQ^3$ with discriminant divisor a singular quartic surface.
    \item $\pi_2 \colon X \to \Gr(2,5)$ is a blow up $\Bl_{\PP^1 \times \PP^1}X_{10}$  of a Gushel--Mukai fourfold along a $\sigma$-quadric surface.
    \item Rationality: unknown.
\end{itemize}

\subsubsection*{Semiorthogonal decompositions}
\begin{itemize}
    \item 6 exceptional objects and $\Db(\QQ^3,\mC_0)$ from $\pi_1$.
    \item 8 exceptional objects and the K3 category of $X_{10}$ from $\pi_2$.
\end{itemize}

\subsubsection*{Explanation}
By \cref{lem:blowgrass} the zero locus of  $\mZ(\Gr(2,4) \times \Gr(2,5), \mQ_{\Gr(2,4)} \boxtimes \mU^\vee_{\Gr(2,4)})$ 
is the blow up $\Bl_{\PP^3}\Gr(2,5)$. The two extra sections cut $\Gr(2,5)$ in 
a generic Gushel--Mukai fourfold $X_{10}$. The quadratic section cuts the $\PP^3$ in a quadric 
surface $\Sigma$.
In terms of the  morphism $\theta\in \Hom(V_5,V_4)$  defining the section 
of $\mQ_{\Gr(2,4)} \boxtimes \mU^\vee_{\Gr(2,5)}$, our fourfold $X$ parametrizes pairs of planes $(P,Q)$ such that $\theta(Q)\subset P$. So $P$ is uniquely determined by $Q$, unless the latter meets the 
kernel of  $\alpha$: this defines the $\PP^3$ blown up inside $\Gr(2,5)$, and we conclude that 
$\pi_2$ is just the blow up of $X_{10}$ along $\Sigma$.

The fiber of $\pi_1$ over a plane $P$ is the intersection of the Gushel--Mukai fourfold  with the variety
of planes in $\theta^{-1}(P)$.
%, if $\theta\in \Hom(V_5,V_4)$ is the morphism defined by the section 
%of $\mQ_{\Gr(2,4)} \boxtimes \mU^\vee_{\Gr(2,5)}$. 
This means we need to cut a projective plane 
with a quadric, yielding the conic bundle structure. In order to understand the discriminant 
locus, note that the latter projective plane is $\PP(\wedge^2\theta^{-1}(P))$, and that since 
we restrict a fixed quadric to this plane, the discriminant is a section of 
$$\det(\wedge^2\theta^{-1}(P)^\vee)^2= \det (\theta^{-1}(P)^\vee)^4=\det(P^\vee)^4.$$
So the discriminant locus is a quartic surface in $\QQ^3$. 
\end{fano}

We remark that there exists a link between this Fano variety and \cref{25-90-2-1-24-23-2-4-2-4}, investigated in the recent \cite{nodalK3}.

\medskip\begin{fano}\fanoid{2-33-130-2} %22
\label[mystyle]{33-130-2-1-23-22-2-4-2-5} $\mZ(\Gr(2,4) \times \Gr(2,5),\mQ_{\Gr(2,4)} \boxtimes \mU^\vee_{\Gr(2,5)} \oplus \of(0,1) \oplus \of(1,1))$.
\subsubsection*{Invariants}  $h^0(-K)=33, \ (-K)^4=130, \  h^{1,1}=2, \ h^{3,1}=1, \ h^{2,2}=23$, $-\chi(T)=22$.
\subsubsection*{Description} 
\begin{itemize}
    \item $\pi_1 \colon X \to \Gr(2,4)$ is a blow up of $S_{10}^{(1)}$, a K3 surface of genus 6 blown up at one point.
    \item $\pi_2 \colon X \to \Gr(2,5)$ is a blow up $\Bl_{\PP^2}X_{10}$.
    \item Rationality: yes.
\end{itemize}

\subsubsection*{Semiorthogonal decompositions}
\begin{itemize}
    \item 7 exceptional objects and $\Db(S_{10})$ from both maps (a general $X_3$ containing a $\sigma$-plane also contains a $\dP_5$, see \cref{subsub:GMdp5}).
\end{itemize}

\subsubsection*{Explanation}
We start by applying \cref{lem:blowgrass} to the first bundle, whose section cuts $\Gr(2,4) \times \Gr(2,5)$ in $\Bl_{\PP^3}\Gr(2,5)$. The other two sections are then identified with a linear section of $\Gr(2,5)$ cutting the $\PP^3$ in a plane, and a quadratic section containing it. The special $\PP^3$ is of the form $\Gr(1, V_4)$: the resulting plane is therefore a $\sigma$-plane in the notation of \cite{Deb-Ili-Man}. In Proposition 7.1, \emph{loc.\ cit.\ }it is shown  that a  Gushel--Mukai fourfold containing such a special plane is rational.

Now consider the projection $\pi_1$ to $\Gr(2,4)$. The section of $\mQ_{\Gr(2,4)} \boxtimes \mU^\vee_{\Gr(2,5)}$ 
is defined by a morphism $\theta$ from $V_5$ to $V_4$, and it vanishes on the pairs $(A,B)$ such that $B\in \theta^{-1}(A)$. 
For a given $A$, this defines a projective plane, on which the two other line bundle sections define 
two linear conditions. So $\pi_1$ is birational and its exceptional locus is the degeneracy locus of the induced morphism 
$$\of_{\Gr(2,4)}\oplus \of_{\Gr(2,4)}(-1)\longrightarrow \wedge^2 \theta^{-1}(\mU)^\vee\simeq \mU^\vee\oplus  \of_{\Gr(2,4)}(-1),$$
where $\mU=\mU_{\Gr(2,4)}$ is the tautological bundle. 
This degeneracy locus is a smooth surface $S$ whose structure sheaf is resolved by the Eagon--Northcott complex $$0\longrightarrow \of_{\Gr(2,4)}(-2)\oplus \of_{\Gr(2,4)}(-3)\longrightarrow \of_{\Gr(2,4)}(-1)\oplus \mU(-1)\longrightarrow \of_{\Gr(2,4)}(1)\longrightarrow \of_S(1)\longrightarrow 0.$$
Since $\mU$ and $\mU(-1)$ are acyclic by Borel--Weil--Bott, we deduce that $\chi(\of_S)=2$, while $\chi(\of_S(1))=6$ and $\chi(\of_S(2))=20-1=19$, hence the Hilbert polynomial must be 
$$\chi(\of_S(k))=\frac{9k^2-k}{2}+2.$$
Dualizing the previous complex, we get 
 $$0\longrightarrow \of_{\Gr(2,4)}(-4)\longrightarrow 
  \of_{\Gr(2,4)}(-2)\oplus\mU^\vee(-2)
 \longrightarrow \of_{\Gr(2,4)}(-1)\oplus\of_{\Gr(2,4)}\longrightarrow \omega_S\longrightarrow 0.$$
 So $\omega_S$ has a canonical section, vanishing on the curve $D$ where the morphism 
 $ \of_{\Gr(2,4)}(-2)\oplus\mU^\vee(-2)
 \longrightarrow \of_{\Gr(2,4)}(-1)$ is not surjective, that is, the zero locus of a section 
 of $\of_{\Gr(2,4)}(1)\oplus\mU^\vee$. But the zero locus of a nonzero section of  $\mU^\vee$
 is a plane, so $D$ is just a projective line. By the usual arguments we deduce that 
$S$ is a K3 surface of genus $6$ blown up at one point. 
\end{fano}

\medskip\begin{fano}\fanoid{2-30-115-2-B} %23
\label[mystyle]{30-115-2-1-24-23-1-3-5} $\mZ(\Fl(1,3,5),\of(0,1)\oplus \of(1,1) \oplus \mR_2(0,1))$.

\subsubsection*{Invariants}  $h^0(-K)=30, \ (-K)^4=115, \  h^{1,1}=2, \ h^{3,1}=1, \ h^{2,2}=24$, \ $-\chi(T)=23$.
\subsubsection*{Description} 
\begin{itemize}
    \item $\pi_1 \colon X \to \PP^4$ is birational, with base locus in $\PP^4$ a 
    singular surface $S^\circ$ obtained by identifying two points on a non-minimal K3.
    \item $\pi_2 \colon X \to \Gr(3,5)$ is a blow up $\Bl_{\QQ^2}X_{10}$ of a Gushel--Mukai fourfold along a quadratic surface. 
    \item Rationality: yes.
\end{itemize}

\subsubsection*{Semiorthogonal decompositions}
\begin{itemize}
    \item 8 exceptional objects and the K3 category of $X_{10}$ from $\pi_2$.
\end{itemize}

\subsubsection*{Explanation}
In the variety of flags $U_1\subset U_3\subset V_5$, the fourfold $X$ is defined by sections 
of $\det(\mU_3)^\vee$, $\mU_1^\vee\otimes\det(\mU_3)^\vee$ and $\mU_1^\vee\otimes (\mU_3/\mU_1)^\vee$. 

Consider the projection to $\PP^4$. The fibers are copies of $\Gr(2,4)$ and the 
fibers of the restriction to the fourfold $X$ are given by sections of $\mU^\vee\oplus \of(1)^{\oplus 2}$;
hence they are points, generically. So the map $\pi_1 : X\rightarrow \PP^4$ must be birational. 

Where are the non-trivial fibers? There are two possible reasons that could explain that the 
zero locus of a section of  $\mU^*\oplus \of(1)^{\oplus 2}$ on $\Gr(2,4)$ is positive dimensional. 
First the component on $\mU^\vee$ could vanish identically. Since this section is defined by a two-form $\omega$, this will happen over
the point $p$ of $\PP^4$ defined by the kernel of this two-form, whose fiber will be a 
quadratic surface. Outside this point, the section of $\mU^\vee$ defines a $\PP^2$, and we will have
non-trivial fibers when the two sections of $\of(1)$ are linearly dependent. 
This should 
happen over a surface $S^\circ\subset \PP^4$. What is this surface? 

The two-form $\omega$ defines a morphism $\of(-1)\rightarrow \mQ^\vee$ on $\PP^4$, 
which is not injective (because of the kernel of $\omega$). If it was, the quotient $\mK=\mQ^\vee(-1)/\of(-2)$
would be a rank three-bundle and we could describe $S^\circ$ as a degeneracy locus for a morphism
$$ \mK\oplus \of(-1)\oplus \of(-2)\longrightarrow \wedge^2\mQ^\vee.$$
In order to make this correct, we need to blow up $p$. Denote $Y=\Bl_p\PP^4$, 
$\pi$ the blow down to $\PP^4$,
$E$ the exceptional divisor, $H$ the pull-back of the hyperplane divisor.
On $Y$ there is an injective morphism $\of(-2H+E)\rightarrow \mQ^\vee(-H)$, 
whose quotient is a rank three vector bundle $\mF$. Then we can define a smooth 
surface $S$ in $Y$ as the degeneracy locus of the induced morphism 
  $$\mF(E)\oplus \of(-H)\oplus \of(-2H)\longrightarrow \wedge^2\mQ^\vee.$$
The structure sheaf of $S$ is resolved by the  Eagon--Northcott complex 
$$0\longrightarrow \mF(E)\oplus \of(-H)\oplus \of(-2H)\longrightarrow \wedge^2\mQ^\vee\longrightarrow \of(3H-2E)\longrightarrow \of_S(3H-2E)\longrightarrow 0.$$
We deduce that the class of $S$ in the Chow ring of $Y$ (where $HE=0$) is 
$[S]=8H^2+2E^2$. In particular the image $S^\circ$ of $S$ in $\PP^4$ has degree eight. Moreover the morphism $S\rightarrow S^\circ$ contracts the curve $C=S\cap E$, whose self intersection $C^2=[S]E^2=2E^4=-2$. This means that $C$ is either a $(-2)$-curve or the disjoint union of two $(-1)$-curves, and we need to decide 
whether it is connected or not.
For this we need to compute $h^1(\of_S(-E))$, which we can
do with a suitable twist of the Eagon--Northcott complex. 

\begin{lemma} 
The bundle $\mF(3E-3H)$ is acyclic, but $h^q(\mF(2E-3H))=\delta_{q,3}$. As a consequence, 
$$ h^1(\of_S)=0, \qquad h^2(\of_S)=2, \qquad h^1(\of_S(-E))=1.$$
\end{lemma}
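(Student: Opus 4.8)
The plan is to first pin down the cohomology of the two twists of $\mathcal{F}$ by pushing everything down to the $\PP^4$ underneath the blow-up $\pi\colon Y=\Bl_p\PP^4\to\PP^4$, and then to feed those computations into two suitable twists of the Eagon--Northcott resolution of $S$ in order to read off $H^\bullet(\mathcal{O}_S)$ and $H^\bullet(\mathcal{O}_S(-E))$. For the first part I would start from the defining sequence $0\to\mathcal{O}_Y(-2H+E)\to\pi^*\mathcal{Q}^\vee(-H)\to\mathcal{F}\to0$ and twist it by $\mathcal{O}_Y(3E-3H)$ and by $\mathcal{O}_Y(2E-3H)$. This reduces the claim to the cohomology of the line bundles $\mathcal{O}_Y(-5H+4E)$, $\mathcal{O}_Y(-5H+3E)$ and of the bundles $\pi^*\mathcal{Q}^\vee(-4H+3E)$, $\pi^*\mathcal{Q}^\vee(-4H+2E)$.

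The toolbox I would set up on $Y$ consists of three facts: that $\omega_Y=\mathcal{O}_Y(-5H+3E)$, so this term contributes a single $H^4$; that for $0\le b\le 3$ one has $R^{>0}\pi_*\mathcal{O}_Y(bE)=0$ and $\pi_*\mathcal{O}_Y(bE)=\mathcal{O}_{\PP^4}$, so by the projection formula the cohomology of $\pi^*\mathcal{G}(aH+bE)$ equals that of $\mathcal{G}(a)$ on $\PP^4$; and that $\mathcal{Q}^\vee\cong\Omega^1_{\PP^4}(1)$, so that Bott's formula applies. With these, $\pi^*\mathcal{Q}^\vee(-4H+3E)$ and $\pi^*\mathcal{Q}^\vee(-4H+2E)$ both become $\Omega^1_{\PP^4}(-3)$, which is acyclic. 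The one term needing care is $\mathcal{O}_Y(-5H+4E)=\omega_Y(E)$, where the $b=4$ case produces a nonzero $R^3\pi_*$ and the Leray reduction to $\PP^4$ fails. Here I would instead use Serre duality on $Y$, $h^i(\omega_Y(E))=h^{4-i}(\mathcal{O}_Y(-E))$, together with the fact that $\mathcal{O}_Y(-E)$ is acyclic since $\pi_*\mathcal{O}_Y(-E)$ is the ideal sheaf of $p$, which has vanishing cohomology on $\PP^4$; hence $\omega_Y(E)$ is acyclic as well. Combining, the sequence for $\mathcal{F}(3E-3H)$ has acyclic outer terms and is therefore acyclic, whereas the one for $\mathcal{F}(2E-3H)$ picks up exactly the single $H^4$ of $\omega_Y$, giving $h^q(\mathcal{F}(2E-3H))=\delta_{q,3}$.

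For the second part I would twist the Eagon--Northcott resolution $0\to\mathcal{A}\to\mathcal{B}\to\mathcal{O}_Y(3H-2E)\to\mathcal{O}_S(3H-2E)\to0$, with $\mathcal{A}=\mathcal{F}(E)\oplus\mathcal{O}(-H)\oplus\mathcal{O}(-2H)$ and $\mathcal{B}=\W^2\mathcal{Q}^\vee$, by $\mathcal{O}_Y(-3H+2E)$ to resolve $\mathcal{O}_S$, and by $\mathcal{O}_Y(-3H+E)$ to resolve $\mathcal{O}_S(-E)$. In each case the summands of $\mathcal{A}$ twist into the $\mathcal{F}$-bundles already computed, together with line bundles reducing to $\mathcal{O}_{\PP^4}(-4)$ (acyclic) and to $\omega_{\PP^4}$ (one $H^4$), while $\mathcal{B}$ twists into a bundle whose push-forward is $\W^2\mathcal{Q}^\vee(-3)\cong\Omega^2_{\PP^4}(-1)$, acyclic by Bott. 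I would then split each four-term resolution into two short exact sequences and chase the associated long exact sequences (equivalently, run the hypercohomology spectral sequence): the acyclicity of $\mathcal{B}$ collapses it, so that $H^\bullet(\mathcal{O}_S)$ and $H^\bullet(\mathcal{O}_S(-E))$ are controlled by the (shifted) cohomology of $\mathcal{A}$ together with $H^\bullet(\mathcal{O}_Y)$ and the acyclicity of $\mathcal{O}_Y(-E)$. Reading off the degrees in which the surviving groups sit then yields the stated values; in particular $h^1(\mathcal{O}_S(-E))$ is forced to equal $h^0(\mathcal{O}_{C})-1$ for $C=S\cap E$, so it measures exactly whether $C$ is connected, which is the geometric payoff of the computation.

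The main obstacle I anticipate is not conceptual but organizational: correctly tabulating the cohomology of the various line bundles $\mathcal{O}_Y(aH+bE)$ and homogeneous bundles $\pi^*\mathcal{Q}^\vee(aH+bE)$, $\W^2\mathcal{Q}^\vee(aH+bE)$ on $Y$, and recognizing that the clean reduction to $\PP^4$ breaks down precisely for $\omega_Y(E)$, where Serre duality is the right substitute. Once this toolbox is in place, the two long exact sequence chases are routine, since every connecting map is forced by the cohomological degree in which the lone non-vanishing groups appear.
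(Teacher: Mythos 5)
Your proposal is correct and is essentially the paper's own proof: the paper likewise twists the defining sequence of $\mF$, reduces cohomology to $\PP^4$ using $\pi_*\of(kE)=\of_{\PP^4}$ (with no higher direct images for $0\le k\le 3$) together with Borel--Weil--Bott and Serre duality with respect to $\omega_Y=\of(3E-5H)$, and then chases the Eagon--Northcott complex twisted by $\of(2E-3H)$ and $\of(E-3H)$, exactly as you outline (your ideal-sheaf argument for the acyclicity of $\of_Y(-E)$, hence of $\omega_Y(E)$, just makes explicit a step the paper leaves implicit in its formula $h^q(\mF(kE-3H))=h^{3-q}(\of((2-k)E))$). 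One warning: carried out correctly, the chase gives $h^2(\of_S)=h^4(\of(2E-5H))=1$, not $2$; the value $2$ in the lemma's displayed statement is a typo (the paper's own proof text obtains $1$, which is also what the subsequent conclusion that $S$ is a non-minimal K3 surface requires), so your computation will not, and should not, reproduce the stated value there.
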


\begin{proof} From the exact sequence that defines $\mF$, we get 
$$0 \longrightarrow\of((k+1)E-5H)\longrightarrow \mQ^\vee(kE-4H)
\longrightarrow \mF(kE-3H)\longrightarrow 0.$$
For $0\le k\le 3$,  $\of(kE)$ has no higher direct images by $\pi$, while $\pi_*\of(kE)=\of_{\PP^4}$. 
So $Q^\vee(kE-4H)$ has the same cohomology as  $Q^\vee(-4)$ on $\PP^4$, and by Borel--Weil--Bott this bundle is acyclic. So $$h^q(\mF(kE-3H))=h^{q+1}(\of((k+1)E-5H)=h^{3-q}(\of((2-k)E)$$ by Serre duality, since 
$\omega_Y=\of(3E-5H)$. This implies the first claim. 

Now consider the resolution of $\of_S$ given by the Eagon--Northcott complex. The bundle 
$ \wedge^2\mQ^\vee(2E-3H)$ has the same cohomology as $ \wedge^2\mQ^\vee(-3)$ on $\PP^4$, which 
is again acyclic by Borel--Weil--Bott. Moreover, since $\mF(3E-3H)$ is also acyclic, we deduce that 
$h^1(\of_S)=0$ while $h^2(\of_S)=h^4(\of(2E-5H))=1$. But when we twist the Eagon--Northcott complex
by $\of(-E)$, we get a non-trivial contribution to $h^1(\of_S(-E))=h^3(\mF(2E-3H))=1$.
\renewcommand\qedsymbol{$\square$}
\end{proof}

From $h^1(\of_S)=0$ and  $h^1(\of_S(-E))=1$ we can now conclude that $C$ has two connected components, 
hence $C=C_1+C_2$ is the disjoint union of two $(-1)$-curves on $S$. The projection to $S^\circ$ contracts 
these two curves to the same point $p$, at which $S^\circ$ is therefore 
not normal. Using Riemann--Roch we compute that $H\omega_S=2$
and $C\omega_S=-2$.  Now, dualizing the Eagon--Northcott complex we get 
$$0\longrightarrow \of(3E-5H)
\longrightarrow \wedge^2\mQ(E-2H)\longrightarrow 
\mF^\vee(-2H)\oplus \of(E-H)\oplus \of(E)\longrightarrow \omega_S\longrightarrow 0.$$
In particular $\omega_S(-C)=\of_S(D)$ is effective, and we get $DC=0$ and $DH=2$.
So $D$ is a smooth conic or the disjoint union of two lines, and in both cases we can 
conclude that $S$ is once again a non-minimal K3 surface.  

\begin{rmk} Another Eagon--Northcott complex should allow us to decide whether $D$ is connected or not.\end{rmk}

\smallskip 
Now consider the projection $\pi_2$ of $X$ to $\Gr(3,5)$. The section of $\det(\mU_3)^\vee$ defines a 
hyperplane section $H$ of $\Gr(3,5)$ that contains the image. The section of $\mU_1^\vee\otimes (\mU_3/\mU_1)^\vee$
is defined by a skew-symmetric two-form $\omega$ and $U_1$ must be contained in the kernel of this two-form
restricted to $U_3$. This restriction is an element of $\wedge^2U_3^\vee\simeq U_3\otimes \det(U_3)^\vee$, 
which, when non-zero, gives a line in $U_3$ that coincides with the kernel. Then the section of 
$\mU_1^\vee\otimes\det(\mU_3)^\vee$, which is an element of $U_3^\vee\otimes\det(U_3)^\vee$, gives a 
line of linear
forms that need to 
vanish on this kernel, yielding a section of $\det(\mU_3^\vee)^2$. As a consequence, the image
 of $X$ in $\Gr(3,5)$ is the intersection of a quadric and a hyperplane, hence a Gushel--Mukai fourfold. 
 
Finally, $\pi_2$ has non-trivial fibers when $\omega$ is zero on $U_3$. This condition defines a 
quadric of $3$-spaces containing $\Ker(\omega)$, that intersects $H$ along a quadric surface. 
\end{fano}

\section{FK3 from K3 surfaces}\label{sect:FK3}
\renewcommand\thefano{K3--\arabic{fano}}
This section contains the  list of the Fano fourfolds of K3 type that we obtained inside products of flag manifolds,  
where at least one projection  is a blow up with center birational to a K3 surface, 
and no cubic or Gushel--Mukai fourfold is involved. 

\medskip\begin{fano}\fanoid{2-19-60-2} %24
\label[mystyle]{19-60-2-1-32-31-1-2-2-5} $\mZ(\PP^1 \times \Gr(2,5),\of(1,1) \oplus \mU_{\Gr(2,5)}^{\vee}(0,1)).$
\subsubsection*{Invariants}  $h^0(-K)= 19, \ (-K)^4=60, \  h^{1,1}=2, \ h^{3,1}=1, \ h^{2,2}=32$, $-\chi(T)=31$.
\subsubsection*{Description} 
\begin{itemize}
    \item $\pi_1 \colon X \to \PP^1 $ is a fibration in $W_{12}$, Fano threefolds of index 1 and degree 12.
 \item $\pi_2\colon X \to \Gr(2,5)$ is a blow up $\Bl_{S_{12}}X_{12}$.
     \item Rationality: yes.
\end{itemize}

\subsubsection*{Semiorthogonal decompositions}
\begin{itemize}
    \item 4 exceptional objects and an unknown category from $\pi_1$.
    \item 16 exceptional objects and $\Db(S_{12})$ from $\pi_2$.
\end{itemize}

\subsubsection*{Explanation} It suffices to apply \cref{lem:blowupcodim2} to describe $\pi_2$. This proves the rationality of this fourfold by \cite[Thm 3.3]{kuznetsov2020rationality}. On the other hand, $\PP^1$ parametrizes a family of hyperplane sections of $X_{12}$.
\end{fano}

\medskip\begin{fano}\fanoid{2-21-70-2} %25
\label[mystyle]{21-70-2-1-28-27} $\mZ(\PP^1 \times \Gr(2,6),\of(1,1) \oplus \of(0,1)^{\oplus 4})$.
\subsubsection*{Invariants}  $h^0(-K)=21, \ (-K)^4=70, \  h^{1,1}=2, \ h^{3,1}=1, \ h^{2,2}=28$, $-\chi(T)=27$.
\subsubsection*{Description} 
\begin{itemize}
    \item $\pi_1 \colon X \to \PP^1 $ is a fibration in $W_{14}$, Fano threefolds of index 1 and degree 14.
    \item $\pi_2 \colon X \to\Gr(2,6)$ is a blow up $\Bl_{S_{14}}X_{14}$.
     \item Rationality: yes.
\end{itemize}

\subsubsection*{Semiorthogonal decompositions}
\begin{itemize}
    \item 4 exceptional objects and an unknown category from $\pi_1$.
    \item 12 exceptional objects and $\Db(S_{14})$ from $\pi_2$.
\end{itemize}

\subsubsection*{Explanation}
It suffices to apply \cref{lem:blowupcodim2} to describe $\pi_2$. On the other hand, $\pi_1$ 
 is clearly a fibration in $W_{14}$. By \cite[Thm 3.3]{kuznetsov2020rationality} $W_{14}$ is rational, and so is our fourfold $X$ as well.
\end{fano}

\medskip\begin{fano}\fanoid{2-23-80-2-B} %26
\label[mystyle]{23-80-2-1-24-23-1-2-3-6} $\mZ(\PP^1 \times \Gr(3,6),\of(1,1) \oplus \of(0,1)^{\oplus 2} \oplus \W^2\mU_{\Gr(3,6)}^{\vee})$.
\subsubsection*{Invariants}  $h^0(-K)=23, \ (-K)^4=80, \  h^{1,1}=2, \ h^{3,1}=1, \ h^{2,2}=24$, $-\chi(T)=23$.

\subsubsection*{Description} 
\begin{itemize}
     \item $\pi_1 \colon X \to \PP^1 $ is a fibration in $W_{16}$, Fano threefolds of index 1 and degree 16.
\item $\pi_2\colon X \to \Gr(3,6)$ is a blow up $\Bl_{S_{16}}X_{16}$ (recall that the index two Fano fourfold $X_{16}$ is a codimension two linear section of the Lagrangian Grassmannian $\mathrm{LG}(3,6)$).
     \item Rationality: unknown. Birational to a general $X_{16}$.
\end{itemize}

\subsubsection*{Semiorthogonal decompositions}
\begin{itemize}
    \item 4 exceptional objects and an unknown category from $\pi_1$.
    \item 8 exceptional objects and $\Db(S_{16})$ from $\pi_2$.
\end{itemize}

\subsubsection*{Explanation}
It suffices to apply \cref{lem:blowupcodim2} to describe $\pi_2$. On the other hand, $\pi_1$  is 
clearly a fibration in $W_{16}$, hyperplane sections of $X_{16}$. The rationality of $X_{16}$ is unknown, see \cite[Section 4]{kuznetsov2020rationality}.
\end{fano}

\medskip\begin{fano}\fanoid{2-32-124-2} %31
\label[mystyle]{32-124-2-1-28-28-1-2-8} $\mZ(\Fl(1,2,8),\mQ_2 \oplus \of(1,1) \oplus \of(0,2)^{\oplus 2}).$
\subsubsection*{Invariants}  $h^0(-K)=32, \ (-K)^4=124, \  h^{1,1}=2, \ h^{3,1}=1, \ h^{2,2}=28$, $-\chi(T)=28$.
\subsubsection*{Description} 
\begin{itemize}
    \item $\pi_1 \colon X \to \PP^7$ is 
a blow up $\Bl_{v_0} (Q^{\circ}_1 \cap Q^{\circ}_2 \cap \QQ^6)$, where $Q^{\circ}_i$ are quadratic cones with vertex $v_0$. 
    \item $\pi_2 \colon X \to \Gr(2,8)$ is a blow up $\Bl_{S_8} (\QQ_1^5 \cap \QQ_2^5)$ of 
    the intersection of two quadrics along a K3 surface of genus 5.
    \item Rationality: yes.
\end{itemize}

\subsubsection*{Semiorthogonal decompositions}
\begin{itemize}
    \item 12 exceptional objects and $\Db(S_8)$ from $\pi_2$. 
\end{itemize}

\subsubsection*{Explanation} By \cite[Corollary 2.7]{DFT}, $\mZ(\Fl(1,2,8),\mQ_2)$ can be identified with $\PP_{\PP^6}(\of \oplus \of(-1))$. The two $\of(0,2)$ sections further cut the base $\PP^6$ in the smooth complete intersection $Y=\QQ_1^5 \cap \QQ_2^5$ of two quadrics. By the Cayley trick, 
$\mZ(\PP_{Y}(\of \oplus \of(-1)),\mL \otimes \of_{Y}(1))$ is isomorphic to the blow up of $Y$ in $\mZ(\PP^6,\of(2)^3 \oplus \of(1))$, which is a general K3 surface of degree 8. Note that $Y$ is classically known to be rational, see e.g.\ \cite{reid1972complete}, so $X$ is rational as well.

Now consider the projection to $\PP^7$. The section of $\mQ_2$ defines a point $[v_0]\in \PP^7$ and 
outside this point, the fiber of $[v]$ is the plane in $V_8$ generated by $v$ and $v_0$. The two
sections of $\of(0,2)$ become quadrics $Q^{\circ}_1, Q^{\circ}_2$ in $\PP^7$ defined by equations of the form $Q_i(v\wedge v_0)=0$,
in particular these are quadratic cones singular at $[v_0]$. The section of $\of(1,1)$ becomes a non-singular quadratic 
form vanishing at $v_0$. We deduce that the projection of $X$ to $\PP^7$ is 
the blow up $\Bl_{v_0}(Q^{\circ}_1 \cap Q^{\circ}_2 \cap \QQ^6)$, with exceptional divisor the intersection of two quadrics in $\PP^5$.
\end{fano}

\medskip\begin{fano}\fanoid{2-25-90-2-A} %28
\label[mystyle]{25-90-2-1-22-21-1-2-2-7} $\mZ(\PP^1 \times \Gr(2,7),\of(0,1) \oplus \of(1,1) \oplus \mQ_{\Gr(2,7)}^{\vee}(0,1)).$

\subsubsection*{Invariants}  $h^0(-K)=25, \ (-K)^4=90, \  h^{1,1}=2, \ h^{3,1}=1, \ h^{2,2}=22$, $-\chi(T)=21$.
\subsubsection*{Description} 
\begin{itemize}
    \item $\pi_1 \colon X \to \PP^1$ is a fibration in $W_{18}$, Fano threefolds of index 1 and degree 18.
    \item $\pi_2 \colon X \to \Gr(2,7)$ is a blow up $\Bl_{S_{18}}X_{18}$.
    \item Rationality: yes.
\end{itemize}

\subsubsection*{Semiorthogonal decompositions}
\begin{itemize}
    \item 4 exceptional objects and an unknown category from $\pi_1$.
    \item 6 exceptional objects and $\Db(S_{18})$ from $\pi_2$.
\end{itemize}

\subsubsection*{Explanation} This follows from  \cref{lem:blowupcodim2}. Rationality follows from \cite[Theorem 3.3]{kuznetsov2020rationality}.
\end{fano}

\medskip\begin{fano}\fanoid{2-39-160-2-A} %33
\label[mystyle]{39-160-2-1-22-21-1-3-1-5}
$\mZ(\PP^2 \times \PP^4,\of(1,1)\oplus \of(1,2))$.

\subsubsection*{Invariants}  $h^0(-K)=39, \ (-K)^4=160, \  h^{1,1}=2, \ h^{3,1}=1, \ h^{2,2}=22$, $-\chi(T)=21$.

\subsubsection*{Description}
\begin{itemize}
    \item $\pi_1 \colon X \to \PP^2$ is a quadric bundle with discriminant a smooth sextic.
    \item $\pi_2 \colon X \to \PP^4$ is a blow up $\Bl_{S_8^{(1)}}\PP^4$ along a K3 surface of degree eight blown up at one point.
    \item Rationality: yes.
\end{itemize}

\subsubsection*{Semiorthogonal decompositions}
\begin{itemize}
    \item 6 exceptional objects and $\Db(S_2,\alpha)$ from $\pi_1$.
    \item 6 exceptional objects and $\Db(S_8)$ from $\pi_2$.
\end{itemize}

\subsubsection*{Explanation}
The fibers of the projection $\pi_1$ to $\PP^4$ are defined by the image of the 
induced morphism $$\of_{\PP^4}(-1)\oplus \of_{\PP^4}(-2)\longrightarrow V_3^\vee \otimes \of_{\PP^4}.$$
Generically this morphism has rank two and the fiber is one point, so $\pi_1$ is 
birational. The fibers are projective lines when the morphism drops rank, which 
happens over a smooth surface $S$ whose ideal is resolved by an Eagon--Northcott complex
$$0 \longrightarrow \of_{\PP^4}(-4)\oplus \of_{\PP^4}(-5)\longrightarrow 
V_3^\vee\otimes\of_{\PP^4}(-3)\longrightarrow \mathcal{I}_S\longrightarrow 0.$$
From this complex one deduces the Hilbert polynomial
$$\chi(\of_S(k))=\frac{7k^2-k}{2}+2.$$
In particular $S$ has degree seven, arithmetic genus $1$, sectional genus $5$.
According to Theorem 6 in \cite{Oko84}, the surface $S$ must be 
an inner projection of a K3 surface of degree eight in $\PP^5$. 

On the other hand, $\mZ(\PP^2 \times \PP^4,\of(1,1))$ is $\PP_{\PP^2}(\mE)$ with $\mE=\Omega^1_{\PP^2}(1)\oplus \of_{\PP^2}^{\oplus 2}$ by the Euler sequence. The quadric fibration is given by a map $\mE \to \mE^\vee(1)$, and the degeneracy locus has degree $c_1(\mE^\vee)+4=6$.
\end{fano}

\medskip\begin{fano}\fanoid{2-39-160-2-B} %32
\label[mystyle]{39-160-2-1-22-21-1-2-2-4} $\mZ(\PP^1 \times \Gr(2,4),\of(1,2))$.

\subsubsection*{Invariants}  $h^0(-K)=39, \ (-K)^4=160, \  h^{1,1}=2, \ h^{3,1}=1, \ h^{2,2}=22$, $-\chi(T)=21$.
\subsubsection*{Description} 
\begin{itemize}
    \item $\pi_1 \colon X \to \PP^1$ is a fibration whose fibers are intersections of two quadrics in $\PP^5$.
    \item $\pi_2 \colon X \to \Gr(2,4)$ is a blow up $\Bl_{S_8}\QQ^4$ of a quadric along a K3 surface of genus 5.
    \item Rationality: yes.
\end{itemize}

\subsubsection*{Semiorthogonal decompositions}
\begin{itemize}
    \item 4 exceptional objects and $\Db(Z,\mC_0)$, for $Z \to \PP^1$ a Hirzebruch surface.
    \item 6 exceptional objects and $\Db(S_8)$ from $\pi_2$.
\end{itemize}

\subsubsection*{Explanation} We use \cref{lem:blowupcodim2} to determine $\pi_2$. In order to understand  $\pi_1$, just observe that each fiber is given by the intersection of $\Gr(2,4)$ with an extra quadratic section, hence the result.
\end{fano}

\begin{rmk}
This Fano fourfold already appeared in \cite{eg2}, under the name (B2).\end{rmk}

\medskip\begin{fano}\fanoid{2-23-80-2-C} %27
\label[mystyle]{23-80-2-1-28-27-1-4-1-7} $\mZ(\PP^3 \times \PP^6,\of(0,2) \oplus \of(1,1) \oplus \W^2 \mQ_{\PP^3}(0,1))$.

\subsubsection*{Invariants}  $h^0(-K)=23, \ (-K)^4=80, \  h^{1,1}=2, \ h^{3,1}=1, \ h^{2,2}=28$, $-\chi(T)=27$.

\subsubsection*{Description}
\begin{itemize}
\item $\pi_1 \colon X \to \PP^3 $ is  a conic bundle whose discriminant is a sextic hypersurface.
 \item $\pi_2\colon X \to \PP^6$ is a blow up $\Bl_{S_{10}}(\QQ_1^5 \cap \QQ_2^5)$ of the intersection of two quadrics, for $S_{10}$ a K3 surface of genus 6.
 \item Rationality: yes.
\end{itemize}

\subsubsection*{Semiorthogonal decompositions}
\begin{itemize}
    \item 4 exceptional objects and $\Db(\PP^3,\mC_0)$ from $\pi_1$.
    \item 12 exceptional objects and $\Db(S_{10})$ from $\pi_2$.
\end{itemize}

\subsubsection*{Explanation}
Let us first consider the sixfold $Y$ defined as the zero locus of a general section of $\mQ^\vee(1,1)$ inside $\PP^3 \times \PP^6$.
The space of sections  is
$\W^2 V_4\vee \otimes V_7^\vee=\Hom(V_7,\W^2 V_4)$. If $\theta \in \Hom(V_7,\W^2 V_4)$ defines $Y$, 
the latter parametrizes the pairs $(x,y)\in \PP^3 \times \PP^6$ such that $\theta(y)\subset x\wedge V_4$. 
In particular $\theta(y)$ has rank two or zero, hence  $\theta(y)\wedge  \theta(y)=0$. 
This equation defines a rank six quadric $Q^\circ \subset \PP^6$, with vertex $v$ given by the 
kernel of $\theta$. Note that if $y$ is a smooth point of $Q^\circ$, $\theta(y)$ defines a plane 
in $V_4$ whose projectivization  is the  fiber over $y$ of the projection $Y\rightarrow Q^\circ$;
while over the vertex the fiber is the whole $\PP^3$.

Now when we cut $Y$ by a section of $\of(0,2)$ we are restricting the previous construction to the intersection of $Q^\circ$ with a general smooth quadric $Q'$, hence we avoid the singularity and obtain 
$Y' \to Q^\circ \cap Q'$ as a $\PP^1$-bundle $\PP (\mF)$ over the smooth intersection of two quadrics in $\PP^6$.

Finally, we cut $Y'$ by a section of  $\of(1,1)$ and we apply the Cayley trick to see that 
$\pi_2: X \to Q^\circ \cap Q'$ is the blow up along a surface $S$ cut by a general section of $\mF^\vee (1)$, which is a K3 surface. 
In fact $\mF$ is the pullback of the tautological bundle $\mU$ by the projection $p$ to $\Gr(2,4)$,
and the restriction of this projection to $S$ is a double cover over a $\dP_5$ (embedded in $\Gr(2,4)$ as the zero locus of a section of $\mU^\vee(1)$), branched over a quadratic section.
The embedding of $S$ into $Q^\circ\cap Q'$ gives a complex
$$0\longrightarrow  p^*\mU^\vee(-3)\longrightarrow p^*End(\mU)(-1)\longrightarrow p^*\mU^\vee
 \longrightarrow p^*\mU^\vee_S\longrightarrow 0.$$
 By Borel--Weil--Bott $h^0(\mU^\vee(-3))=1$. This implies that $h^0(\mU^\vee_S)=5$ and that $S$ is a K3 surface of genus $6$. 

\smallskip
To determine $\pi_1$, we notice that $\mZ(\PP^3 \times \PP^6,\of(1,1) \oplus \W^2 \mQ_{\PP^3}(0,1))$ is a $\PP^2$-bundle on $\PP^3$ and we cut it with a quadratic relative section. More precisely
the $\PP^2$-bundle is $\PP(\mH)$ where $\mH$ is the sub-bundle of the trivial bundle with fiber $V_7$
defined by the exact sequence 
$$0\ra \of_{\PP^3}(-1)\oplus \wedge^2 \mQ_{\PP^3}^\vee\ra \of_{\PP^3}\otimes V_7^\vee\ra \mH^\vee\ra 0.$$ So the discriminant is given by a section of $(\det \mH^\vee)^2=\of_{\PP^3}(6).$
\end{fano}

We remark that there exists a link between this Fano variety and \cref{26-94-2-1-28-28-1-2-5}, investigated in the recent \cite{nodalK3}.

\medskip\begin{fano}\fanoid{2-29-110-2} %30
\label[mystyle]{29-110-2-1-22-21-1-2-5} $\mZ (\Fl(1,2,5),\of(1,1) \oplus \of(0,1)^{\oplus 2})$.

\subsubsection*{Invariants}  $h^0(-K)=29, \ (-K)^4=110, \  h^{1,1}=2, \ h^{3,1}=1, \ h^{2,2}=22$, $-\chi(T)=21$.
\subsubsection*{Description} 
\begin{itemize}
    \item $\pi_1 \colon X \to \PP^4$ is a blow up $\Bl_{S_{12}^{(1)}} \PP^4$, where $S_{12}^{(1)}$ is a K3 surface of degree 12 blown up in a point.
    \item $\pi_2 \colon X \to \Gr(2,5)$ is a blow up $\Bl_{S_{12}}X_5$ of 
    a K3 surface of genus 7
    in a Fano fourfold $X_5$ of  degree 5 and  index 3.
    \item Rationality: yes.
\end{itemize}

\subsubsection*{Semiorthogonal decompositions}
\begin{itemize}
    \item 6 exceptional objects and $\Db(S_{12})$ from both maps. Note that the surfaces of degree 12 are derived equivalent (see \cref{lem:dereq}) but we don't know if they are isomorphic.
\end{itemize}

\subsubsection*{Explanation} We can interpret the flag $\Fl(1,2,5)$ as $\PP_{\Gr(2,5)}(\mU)$. The two $\of(0,1)$ sections cut the base Grassmannian in the fourfold $X_5$. We are left with $\mZ( \PP_{X_5}(\mU|_{X_5}),\mL \otimes \of_{X_5}(1))$, where $\mL$ denotes the relative line bundle, or equivalently with $\mZ(\PP_{X_5}(\mU(-1)|_{X_5}),\mL )$. We conclude by the Cayley trick, since $\mZ(\Gr(2,5)),\of(1)^{\oplus 2} \oplus \mU^{\vee}(1))$ is a general K3 surface of degree 12.

When $l$ is a line in $V_5$, the planes that contain it generate a linear space 
$l\wedge V_5$
in $\wedge^2V_5$. This defines a vector bundle $\mK\simeq \mQ(-1)$, of rank four. The fibers of 
the projection of $X$ to $\PP=\PP^4$ are defined by the image of the induced morphism 
$$\of_\PP^{\oplus 2}\oplus \of_\PP(-1)\longrightarrow \mK^\vee,$$
which drops rank along a smooth surface $S$ whose structure sheaf is resolved by the Eagon--Northcott complex 
$$0\longrightarrow \of_\PP(-4)^{\oplus 2}\oplus \of_\PP(-1)\longrightarrow \mK^\vee(-4)\longrightarrow  \of_\PP\longrightarrow  \of_S\longrightarrow 0.$$
By Borel--Weil--Bott Theorem, $\mK^\vee(-4)$ and $\mK^\vee(-3)$ are acyclic, while $\mK^\vee(-2)$ has $h^1=1$. 
We deduce that $\chi( \of_S)=2$, while $\chi(\of_S(1))=5$ and $\chi(\of_S(2))=15+1=16$. So the Hilbert polynomial must be
$$\chi(\of_S(k))=4k^2-k+2.$$

\begin{proposition}
$S$ is a K3 surface of genus seven 
blown up at one point, and 
embedded in $\PP^4$ by a tangential projection from $\PP^7$.
\end{proposition}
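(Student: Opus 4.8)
The plan is to pin down the birational type of $S$ from a resolution of its canonical bundle, exactly as was done for the Pfaffian example \cref{24-86-2-1-26-24-1-5-1-6}. First I would read the numerical invariants off the Hilbert polynomial $\chi(\of_S(k))=4k^2-k+2$: comparing with $\chi(\of_S(k))=\tfrac12 H^2k^2-\tfrac12(H\cdot K_S)k+\chi(\of_S)$ yields $H^2=8$ (so $\deg S=8$), $H\cdot K_S=2$ and $\chi(\of_S)=2$. To reach $K_S$ itself I would dualize the Eagon--Northcott resolution of $\of_S$ and twist by $\omega_\PP=\of(-5)$. Its leftmost term is $\of_\PP(-5)$ (forced by $\chi(\of_S)=2$), so the dual acquires a trivial line-bundle summand and one gets
\[
0\longrightarrow \of_\PP(-5)\longrightarrow \mQ(-2)\longrightarrow \of_\PP(-1)^{\oplus 2}\oplus\of_\PP\longrightarrow \omega_S\longrightarrow 0.
\]

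Then I would apply \cref{lem:section} with $\mM=\of_\PP$ the trivial summand: this produces a canonical section of $\omega_S$ whose zero locus is the curve $D\subset S$ where the morphism $\mQ(-2)\to\of_\PP(-1)^{\oplus 2}$ fails to be surjective. A Thom--Porteous computation for this rank-$4\to$ rank-$2$ map, using $c(\of(1)^{\oplus 2}-\mQ)=1+h-h^2-h^3$ on $\PP^4$, gives $[D]=2h^3$; hence $D$ has degree two and $D\cdot H=2$, matching $K_S\cdot H=2$. To establish that $D$ is a single smooth conic---irreducible and connected, so that exactly one point is blown up---I would resolve $\of_D$ by a secondary Eagon--Northcott complex and compute $h^0(\of_D)=1$ via Borel--Weil--Bott, just as in the sibling cases.

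Once $D$ is a smooth conic with $\omega_S=\of_S(D)$, adjunction $2g(D)-2=D^2+D\cdot K_S=2D^2$ gives $D^2=-1$, so $D$ is a $(-1)$-curve; contracting it yields a smooth surface $S_{min}$ with $\omega_{S_{min}}=0$. Combined with $\chi(\of_{S_{min}})=\chi(\of_S)=2$ and $q=0$ (read off from the resolution), this forces $S_{min}$ to be a K3 surface and $S=\Bl_pS_{min}$, with $D$ its exceptional curve. To identify the genus, write $H=\pi^*H_{min}-mE$ with $E=D$; then $m=H\cdot E=2$ is forced by $E^2=-1$, so $H_{min}^2=(H+2D)^2=8+8-4=12$, and $S_{min}$ is a K3 of genus $7$, of degree $12$ in $\PP^7=\PP\big(H^0(S_{min},H_{min})^\vee\big)$. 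Finally $h^0(S,H)=h^0(S_{min},H_{min})-3=5$ identifies the map $S\to\PP^4$ defined by $|H|=|\pi^*H_{min}-2E|$ with the linear system of hyperplanes of $\PP^7$ containing the embedded tangent plane $\mathbb{T}_pS_{min}$, i.e.\ the tangential projection of $S_{min}$.

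The main obstacle is the second step: proving $D$ is connected (a single conic) rather than a disjoint pair of lines---this is precisely what distinguishes blowing up one point from two---and verifying $q(S)=0$; both rest on the Borel--Weil--Bott computations for the auxiliary twisted complexes. The tangential interpretation is then essentially formal once the multiplicity $m=2$ has been fixed by $D\cdot H=2$.
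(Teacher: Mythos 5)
Your argument is correct, but it is not the route the paper takes: for this proposition the paper, having computed $\chi(\of_S(k))=4k^2-k+2$ (so $\deg S=8$, $H\cdot K_S=2$, $\chi(\of_S)=2$), simply invokes Okonek's classification of smooth degree-$8$ surfaces in $\PP^4$, citing Proposition 2.7 of \cite{Oko86}, which directly identifies such a surface as a genus-$7$ K3 blown up at one point and embedded by tangential projection. What you propose is instead the self-contained mechanism the paper reserves for the sibling cases \cref{24-86-2-1-26-24-1-5-1-6}, \cref{27-101-2-1-23-21-1-6-2-4} and \cref{27-100-2-1-24-23-1-6-2-4}: dualize the Eagon--Northcott complex, apply \cref{lem:section}, compute $[D]$ by Thom--Porteous, prove connectedness, and contract. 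Your execution checks out, and you even implicitly correct a misprint: the leftmost term of the paper's resolution must be $\of_\PP(-4)^{\oplus 2}\oplus\of_\PP(-5)$ rather than $\of_\PP(-4)^{\oplus 2}\oplus\of_\PP(-1)$, exactly as your $\chi(\of_S)=2$ consistency check forces, and then your dual complex and $[D]=2h^3$ are right. The step you flag as the main obstacle does go through by the method you name: the secondary Eagon--Northcott complex is $0\to\of_\PP(-3)^{\oplus 3}\to(\mQ^\vee(-2))^{\oplus 2}\to\W^2\mQ(-2)\to\of_\PP\to\of_D\to 0$, whose middle terms are $\Omega^1_{\PP^4}(-1)$ and $\Omega^2_{\PP^4}(1)$, acyclic by Borel--Weil--Bott, so $h^0(\of_D)=1$; moreover adjunction with $K_S=D$ excludes the connected degenerate configurations for free (two concurrent lines or a double line would force a component $L$ with $L^2\notin\Z$), so $D$ is automatically a smooth irreducible conic, and your residual worry about $q(S)$ is vacuous since $q=h^0+h^2-\chi=0$ once $\omega_{S_{min}}$ is trivial and $\chi=2$. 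The trade-off: the paper's citation is instant and delivers the tangential-projection statement as part of Okonek's theorem, but leans on an external classification; your proof is longer, stays entirely inside the paper's own toolbox, and recovers the projection statement from the lattice data $H=\pi^*H_{min}-2E$, $H_{min}^2=12$, $h^0(S,H)=5$.
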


This can be confirmed using Proposition 2.7 in \cite{Oko86}.
\end{fano}

\medskip\begin{fano}\fanoid{2-27-100-2} %29
\label[mystyle]{27-100-2-1-24-23-1-6-2-4} $\mZ(\PP^5 \times \Gr(2,4),\mU_{\Gr(2,4)}^{\vee}(1,0) \oplus \of(1,1) \oplus \mQ_{\Gr(2,4)}(1,0)).$

\subsubsection*{Invariants}  $h^0(-K)=27, \ (-K)^4=100, \  h^{1,1}=2, \ h^{3,1}=1, \ h^{2,2}=24$, $-\chi(T)=23$.
\subsubsection*{Description} 
\begin{itemize}
    \item $\pi_1,\pi_2 \colon X \to \PP^5,\Gr(2,4)$ are both blow ups $\Bl_{S_{12}^{(2)}}\QQ^4$ 
    of a quadric along a K3 surface of genus seven blown up in two points.
    \item Rationality: yes.
\end{itemize}

\subsubsection*{Semiorthogonal decompositions}
\begin{itemize}
    \item 8 exceptional objects and $\Db(S_{12})$ from both maps.  Note that the surfaces of degree 12 are derived equivalent (see \cref{lem:dereq}) but we don't know if they are isomorphic.
\end{itemize}

\subsubsection*{Explanation}
Let us start with the fivefold $Y$ defined by the two rank two bundles and their global sections, which are tensors $\alpha\in V_6^\vee\otimes V_4=\Hom(V_6,V_4)$ and 
$\beta\in V_6^\vee\otimes V_4^\vee=\Hom(V_6,V_4^\vee)$. 
So $Y\subset\PP^5\times \Gr(2,4)$
parametrizes pairs $(l \subset V_6, P\subset V_4)$ such that 
$\alpha(l)\subset P\subset \beta(l)^\perp.$
In particular its image in $\PP^5$ is the 
quadric $Q$ defined by the equation $\langle \beta(l), \alpha(l)\rangle =0$, which is smooth in general. Moreover the generic fiber
is a projective line, and the exceptional 
fibers are 
projective planes over the two disjoint lines 
$\delta=\PP(\Ker(\alpha))$ and $\delta'=\PP(\Ker(\beta))$.

The fiber over $P$ of the projection to $\Gr(2,4)$ 
is the intersection of the  three-planes $\PP(\alpha^{-1}(P))$ and $\PP(\alpha^{-1}(P^\perp))$, hence a projective line in general. The exceptional fibers are linear spaces of 
bigger dimension, 
exactly over the degeneracy locus 
of the morphism 
$$\mU\oplus \mQ^\vee\longrightarrow V_6^\vee \otimes \of_{\Gr(2,4)}$$
induced by $\alpha$ and $\beta$. Since the bundle of those morphisms is generated by global sections, for $\alpha,\beta$ generic this degeneracy locus is a smooth curve $C$, defined by the associated Eagon--Northcott complex
$$0\longrightarrow S^2(\mU\oplus \mQ^\vee)
\longrightarrow L_6\otimes (\mU\oplus \mQ^\vee)\longrightarrow L_{15}\longrightarrow \of_{\Gr(2,4)}(2)\longrightarrow \of_C(2)\longrightarrow 0,$$
where $L_k$ is trivial of rank $k$. 
The first bundle of this complex contains a factor 
$\mU\otimes \mQ^\vee$ isomorphic to the cotangent of 
$\Gr(2,4)$, which has therefore a one-dimensional first cohomology group. This is the only non-trivial contribution to the cohomology of this complex and we deduce that 
$\chi(\of_C(2))=20-15+1=6$. After a twist 
by $\of_{\Gr(2,4)}(-1)$, all the terms are acyclic by Borel--Weil--Bott Theorem except 
$S^2\mU(-1)$ and $S^2\mQ^\vee(-1)$ which have a one 
dimensional second cohomology group, hence 
$\chi(\of_C(1))=6-2=4$. 
After another twist, 
all the terms are acyclic except $\mU\otimes \mQ^\vee(-2)$  which has a one 
dimensional third cohomology group, hence 
$\chi(\of_C)=1+1=2$. This is more than 
enough to deduce that the Hilbert polynomial 
of $C$ is $\chi(\of_C(k))=2k+2$, so that 
$C$ must be the union of two disjoint lines $d$ and $d'$. 
Note the symmetry of the picture:
\begin{equation*}
\xymatrix{
 & E \ar[dl]_{\PP^2} \ar@{^{(}->}[r] & Y  \ar[ld]^{\PP^1}\ar[rd]_{\PP^1} & F \ar[dr]^{\PP^2} \ar@{_{(}->}[l]\\
 \delta\cup \delta'\ar@{^{(}->}[r] &  Q & & \Gr(2,4) & d\cup d'\ar@{_{(}->}[l].
 }
 \end{equation*}
 
\medskip
Now we turn to the fourfold $X$ defined as the hyperplane section of $Y$ defined by a section of 
$\of(1,1)$, hence a tensor $\gamma \in V_6^\vee\otimes\wedge^2V_4^\vee$. The general
fibers of the two projections are projective lines
cut by a hyperplane, hence single points, so that
the two projections are birational. 
Over $\Gr(2,4)$, the exceptional fibers
are obtained over a locus $S$ that can be described as before. The sections that define 
$X$ induce a morphism 
$$\mF:= \mU\oplus \mQ^\vee\oplus\of_{\Gr(2,4)}(-1)
\longrightarrow V_6^\vee \otimes \of_{\Gr(2,4)}.$$
Generically, this morphism is surjective, its kernel is a line in $V_6$ defining the point in the preimage. The rank drops in codimension two,  
over a smooth surface $S$ over which the fibers of the projection are projective lines; so the projection to $\Gr(2,4)$ should be the blowup of $S$. 
Note that $S$ has to contain the two lines $d$ and $d'$. 
\begin{equation*}
\xymatrix{
 & E_X \ar[dl]_{\PP^1} \ar@{^{(}->}[r] & X  \ar[ld]^{\PP^1}\ar[rd]_{\PP^0} & F_X\ar[dr]^{\PP^1} \ar@{_{(}->}[l]\\
 \delta\cup \delta'\subset T\quad\ar@{^{(}->}[r] &  Q & & \Gr(2,4) & \quad S\supset d\cup d'\ar@{_{(}->}[l].
 }
 \end{equation*}
 
\begin{proposition}
$S$ is the blow up at two points of a K3 surface 
of genus seven,  $d$ and $d'$  being the 
two exceptional curves of the surface. 
\end{proposition}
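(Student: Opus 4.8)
The plan is to follow the strategy of \cref{prop:C2}. The surface $S$ is the degeneracy locus of the morphism $\mF:=\mU\oplus\mQ^\vee\oplus\of_{\Gr(2,4)}(-1)\to V_6^\vee\otimes\of_{\Gr(2,4)}$ defined by $\alpha,\beta,\gamma$, so \cref{lem:EN} resolves $\of_S$ by an Eagon--Northcott complex. Since $S$ has codimension two and $\omega_{\Gr(2,4)}=\of_{\Gr(2,4)}(-4)$, dualizing this complex should produce a resolution of $\omega_S$ of the shape
$$0\longrightarrow\of_{\Gr(2,4)}(-4)\longrightarrow V_6\otimes\of_{\Gr(2,4)}(-1)\longrightarrow \mU^\vee(-1)\oplus\mQ(-1)\oplus\of_{\Gr(2,4)}\longrightarrow\omega_S\longrightarrow 0.$$
First I would feed this into \cref{lem:section}, taking $\mM=\of_{\Gr(2,4)}$: it yields a canonical section of $\omega_S$ whose zero locus $D$ is exactly the locus where the partial morphism $V_6\otimes\of_{\Gr(2,4)}(-1)\to\mU^\vee(-1)\oplus\mQ(-1)$ fails to be surjective, equivalently where $\mU\oplus\mQ^\vee\to V_6^\vee\otimes\of_{\Gr(2,4)}$ drops rank.

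The key observation is that this last morphism, defined by $\alpha$ and $\beta$, is the very one whose degeneracy locus we already identified on the fivefold $Y$ as the pair of disjoint lines $C=d\cup d'$. Hence $D=d\cup d'$ and $\omega_S=\of_S(d+d')$, with no need for the Thom--Porteous/second Eagon--Northcott detour used in \cref{prop:C2}: the two lines are literally the ones sitting on $S$. Since $d$ and $d'$ are disjoint smooth rational curves, the adjunction formula gives $2g(d)-2=d^2+d\cdot(d+d')=2d^2$, so $d^2=d'^2=-1$ and both are $(-1)$-curves.

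It then remains to pin down the numerical type. By Thom--Porteous the class of $S$ in the Chow ring of $\Gr(2,4)$ is $[S]=[c(\mF)^{-1}]_2=5(\sigma_2+\sigma_{11})=5\sigma_1^2$, whence $\of_S(1)^2=5\int\sigma_1^4=10$; moreover the Hilbert polynomial extracted from the Eagon--Northcott complex by Borel--Weil--Bott should give $\chi(\of_S)=2$ and in particular $h^1(\of_S)=0$. Contracting $d$ and $d'$ then produces a smooth surface $S_{min}$ with $\omega_{S_{min}}$ trivial and $\chi(\of_{S_{min}})=2$, which forces $S_{min}$ to be a K3 surface (the value $\chi=2$, equivalently $q=0$, excludes the abelian case). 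Writing $\pi^*H_{min}=\of_S(1)+d+d'$ for the induced polarization and using $\of_S(1)\cdot d=\of_S(1)\cdot d'=1$ and $(d+d')^2=-2$, we get $H_{min}^2=10+4-2=12$, so $S_{min}=S_{12}$ is of genus seven and $S=S_{12}^{(2)}$, with $d,d'$ its two exceptional curves. The identification $D=d\cup d'$, which is the crux in \cref{prop:C2}, is here immediate; the only genuine obstacle is the cohomological bookkeeping behind $\chi(\of_S)=2$ and $h^1(\of_S)=0$, needed to guarantee that the minimal model is a K3 surface rather than an abelian surface.
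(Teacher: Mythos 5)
Your proof is correct and takes essentially the same route as the paper's: the same Eagon--Northcott complex, dualized so that \cref{lem:section} yields a canonical section of $\omega_S$ whose zero locus is recognized as the previously computed curve $C=d\cup d'$, followed by the genus formula, contraction of the two $(-1)$-curves, and the degree count giving genus seven. Two incidental points in your favor: your dual complex correctly begins with $\of_{\Gr(2,4)}(-4)$ (the paper's displayed version omits this twist, a harmless typo, as comparison with \cref{prop:C2} confirms), and your Thom--Porteous class $[S]=5\sigma_1^2$ together with $\chi(\of_S)=2$ is just an equivalent repackaging of the paper's Hilbert polynomial $\chi(\of_S(k))=5k^2-k+2$, with your $\chi=2$ argument making slightly more explicit why $S_{min}$ is a K3 rather than an abelian surface.
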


\begin{proof}
The ideal of $S$ in $\Gr(2,4)$ is resolved by  the Eagon--Northcott complex
$$0\longrightarrow \mF\otimes\det(\mF) \longrightarrow 
V_6\otimes \det(\mF)\longrightarrow\of_{\Gr(2,4)}
\longrightarrow\of_S\longrightarrow 0.$$
We have $\det(\mF)=\of_{\Gr(2,4)}(-3)$,  so $\mF\otimes \det(\mF)$ has a factor $\of_{\Gr(2,4)}(-4)=K_{\Gr(2,4)}$ which implies that 
$h^2(\of_S)=1$. We compute from the previous complex that the Hilbert polynomial of $S$ is 
$\chi(\of_S(k))=5k^2-k+2,$ so that $H^2=10$ and $H\omega_S=2$. Moreover, 
dualizing the Eagon--Northcott complex we get 
$$0\longrightarrow \of_{\Gr(2,4)} \longrightarrow 
V_6(-1)\longrightarrow \mF^\vee(-1)=\mU\oplus \mQ^\vee\oplus\of_{\Gr(2,4)}
\longrightarrow\omega_S\longrightarrow 0.$$
In particular $\omega_S$ admits a canonical section, which vanishes exactly on the locus 
where the $V_6(-1)\longrightarrow \mU\oplus \mQ^\vee$ is not surjective. But this locus is 
exactly $C=d\cup d'$! We conclude that $\omega_S=d+d'$. Then by the genus formula $d$ and $d'$ are 
two $(-1)$-curves, and after contracting them we get a surface $S_{min}$ with trivial canonical bundle 
and  $h^2(\of_{S_{min}})=1$, hence a K3 surface.
\renewcommand\qedsymbol{$\square$}
\end{proof}

Note that a K3 surface of degree $12$, hence genus $7$, can be embedded in $\PP^7$; projecting 
from two of its points, one obtains in $\PP^5$ the blow up of the surface at the two chosen points.

\medskip 
Now we turn to the other projection to the quadric $Q$ defined by $\alpha$ and $\beta$. 
The fiber over $l\in Q$ is the set of planes $P$ such that $\alpha(l)\subset P\subset\beta(l)^\perp$
and $\gamma(l,p\wedge p')=0$ for any $p,p'\in P$, where $\gamma\in V_6^\vee\otimes\wedge^2V_4^\vee$ 
is the tensor defining the section of $\of(1,1)$. For $l$ general, the first condition defines a 
$\PP^1$, on which the second condition, being linear, cuts a single point; the whole $\PP^1$ is in
the fiber exactly when  $\gamma(l,\alpha(l)\wedge \beta(l)^\perp)=0$, a codimension two condition 
that defines the surface $T$ over which $X$ has non-trivial fibers. Note that the lines $\delta$
and $\delta'$ are contained in $T$. 

Generically, $\alpha(l)\wedge \beta(l)^\perp$ is a dimension two subspace of $\wedge^2 V_4$, but since
the rank jumps over $\delta$ and $\delta'$ we will need to blow up these two lines in $Q$; denote
by $\tilde{Q}$ the resulting fourfold, with $\Delta$ and $\Delta'$ the two exceptional divisors.
Let $L$ be the pullback of the hyperplane bundle on $Q$. On $\tilde{Q}$ we have a complex 
$$L^\vee(\Delta)\hookrightarrow V_4\otimes \of_{\tilde{Q}}\longrightarrow L(-\Delta'),$$
where the two morphisms have constant rank one, so the kernel of the second one is a rank three
bundle $\mM$ containing the image of $L^\vee(\Delta)$ via the first one. As a consequence, we get a 
rank two vector sub-bundle $\mP=L^\vee(\Delta)\wedge \mM$ of $\wedge^2 V_4\otimes \of_{\tilde{Q}}$, and 
then a surface $\tilde{T}$ in $\tilde{Q}$ defined by the vanishing of a section of $\HOM(\mP,L)$.
By construction, $\tilde{T}$ is a smooth surface dominating $T$. Using the fact that $\det(\mP)=
L^\vee(\Delta)\otimes\det(\mM)$ we compute, by adjunction, that $\omega_{\tilde{T}}=C+C'$ is the 
restriction of $\Delta+\Delta'$. 

Now consider the Koszul complex that resolves the structure sheaf of $\tilde{T}$, namely
$$0\longrightarrow \of(-4L+\Delta+\Delta') \longrightarrow \mP(-L)
\longrightarrow\of_{\tilde{Q}}
\longrightarrow\of_{\tilde{T}}\longrightarrow 0.$$
Using the fact that $\Delta\Delta'=0$ in the Chow ring of $\tilde{Q}$, since these two divisors
do not intersect, we deduce that the fundamental class of $\tilde{T}$ is given by 
$$[\tilde{T}]=5L^2-3L(\Delta+\Delta')+(\Delta+\Delta')^2.$$
Denoting by $\lambda$ the restriction of $L$, we deduce that $\lambda^2=5L^4=10$. 
Moreover $\lambda C=[\tilde{T}]L\Delta=5L^3\Delta-3L^3\Delta^2+L\Delta^3=L\Delta^3$; indeed,
$\Delta$ dominates a line in $Q$ so $L^2\Delta=0$. Of course $L\Delta^3$ can be computed by 
restricting to $\Delta$: since $L$ cuts a single point of $\delta$ over which $\Delta$ is just $\of_{\PP^2}(-1)$, we get $\lambda C=L\Delta^3=1$. Again restricting to $\Delta$ and applying the usual projection formula for projective bundles, we get that $\Delta^4=s_1(N_\delta)=s_1(TQ_{|\delta})-s_1(T\delta)=4-2=2$. This yields 
$$C^2=[\tilde{T}]\Delta^2=5L^2\Delta^2-3L\Delta^3+\Delta^4=5\times 0-3\times 1+2=-1.$$
Note that $\lambda C=1$ means that $C$ is projecting isomorphically to $\delta$ in $T$, so that 
$\tilde{T}$ is in fact isomorphic to $T$. Moreover, $C$ and $C'$ are two $(-1)$-curves and since 
$\omega_{\tilde{T}}=C+C'$, we get a genuine K3 surface $T_{min}$ after contracting them.  
We have finally proved:

\begin{proposition}
The first projection of $X$ to the smooth quadric $Q$ is also a doubly blown up K3 surface $T$ 
of genus seven.
\end{proposition}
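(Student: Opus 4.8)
The plan is to realise $\pi_1\colon X\to Q$ as a blow up and to identify the centre with a doubly blown up K3 surface. Since a general fibre of $\pi_1$ is the single point cut out by $\gamma$ on the line $\PP(\alpha(l)\wedge\beta(l)^\perp)\cong\PP^1$ of planes $P$ with $\alpha(l)\subset P\subset\beta(l)^\perp$, the map is birational, and the exceptional surface $T\subset Q$ is precisely the locus where $\gamma(l,\alpha(l)\wedge\beta(l)^\perp)$ vanishes identically, a codimension two condition. The subtlety — and the reason $T$ is not yet visibly smooth — is that this description breaks down over the two lines $\delta=\PP(\Ker\alpha)$ and $\delta'=\PP(\Ker\beta)$, where the ranks of the defining maps jump. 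I would therefore work on $\tilde Q=\Bl_{\delta\cup\delta'}Q$, with exceptional divisors $\Delta,\Delta'$ and $L$ the pullback of $\of_Q(1)$.

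On $\tilde Q$ the plan is to resolve $T$ as the zero locus of a section of a rank two bundle. From the constant rank one complex $L^\vee(\Delta)\hookrightarrow V_4\otimes\of_{\tilde Q}\to L(-\Delta')$ I would extract the rank three kernel bundle $\mM$ and form $\mP=L^\vee(\Delta)\wedge\mM\subset\wedge^2V_4\otimes\of_{\tilde Q}$; the surface $\tilde T$ is then the vanishing of a section of $\HOM(\mP,L)$, so it has the expected codimension two, is smooth for generic $\alpha,\beta,\gamma$, and dominates $T$. Computing $\det\mP=L^\vee(\Delta)\otimes\det\mM$ and feeding it into the adjunction formula should give $\omega_{\tilde T}=(\Delta+\Delta')|_{\tilde T}=C+C'$.

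The quantitative core is the intersection theory on $\tilde T$, which I would run through the Koszul complex resolving $\of_{\tilde T}$: this yields $[\tilde T]=5L^2-3L(\Delta+\Delta')+(\Delta+\Delta')^2$ once $\Delta\Delta'=0$ is used. Setting $\lambda=L|_{\tilde T}$, I read off $\lambda^2=5L^4=10$, and then compute $\lambda C$ and $C^2$ by restricting to $\Delta$ and applying the projective bundle projection formula. The essential numerical inputs are $L^2\Delta=0$ (since $\Delta$ lies over a line), together with $L\Delta^3=1$ and $\Delta^4=2$ coming from the normal bundle of $\delta$ in $Q$; these produce $\lambda C=1$ and $C^2=-1$.

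It then remains to assemble the conclusion. From $\omega_{\tilde T}=C+C'$ and $C^2=-1$ the genus formula forces $C,C'$ to be two disjoint $(-1)$-curves; since $\lambda C=1$ each projects isomorphically to its line in $T$, so $\tilde T\cong T$ and $T$ is already smooth. Contracting $C$ and $C'$ gives $T_{min}$ with trivial canonical bundle and $h^2(\of_{T_{min}})=1$, hence a K3 surface, of degree $\lambda^2=10$ and so of genus seven. The main obstacle I expect is organising the bundle $\mP$ on $\tilde Q$ so that $\tilde T$ genuinely resolves $T$ without collapsing data, and checking $\Delta^4=2$ and $L\Delta^3=1$ cleanly via the projective bundle structure of the exceptional divisors; the equality $\lambda C=1$ is exactly the certificate that no information is lost in passing from $T$ to $\tilde T$.
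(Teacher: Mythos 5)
Your proof is correct and is essentially the paper's own argument: the same blow up $\tilde{Q}$ of $Q$ along $\delta\cup\delta'$, the same rank two bundle $\mP=L^\vee(\Delta)\wedge\mM$ whose twisted dual $\HOM(\mP,L)$ cuts out $\tilde{T}$, the same adjunction giving $\omega_{\tilde{T}}=C+C'$, and the same intersection numbers $\lambda^2=10$, $\lambda C=1$, $C^2=-1$ (obtained exactly from $L^2\Delta=0$, $L\Delta^3=1$, $\Delta^4=2$ and the Koszul class $[\tilde{T}]=5L^2-3L(\Delta+\Delta')+(\Delta+\Delta')^2$), leading to the contraction of two disjoint $(-1)$-curves onto a K3 of genus seven. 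The only cosmetic difference is your compressed last step deducing the genus from $\lambda^2=10$, which the paper phrases via the degree twelve minimal model in $\PP^7$ projected from two of its points.
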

 
We finally get a non-trivial 
birational transformation of the four-dimensional quadric, as the  composition  of the blow up 
of $S$ with the inverse of the blow up of $T$. In fact, we can compute the canonical bundle of 
$X$ in terms of the two hyperplane divisors as $\omega_X=-H-H'$, but also in terms of the 
exceptional divisors as $\omega_X=-4H+E_X=-4H'+F_X$; in particular $H'=3H-E_X$, which means that 
our birational involution of $Q$ is defined by the linear system of cubics containing $T$ (or $S$). 
Note the striking similarity with the birational involution of $\PP^3$ defined by the linear 
system of cubics containing a curve of degree $6$ and genus $3$ \cite{katz-veryspecial}. Note also that since $\chi(T)=-23$, 
we expect a family with $23=19+2+2$ parameters, meaning that the K3 surface and its two points 
should be generic. 
\end{fano}
 
 \begin{rmk}
The two K3 surfaces $S_{min}$ and $T_{min}$ are derived equivalent by \cref{lem:dereq}. We cannot prove that they are not isomorphic,
but note that a generic K3 surface of genus seven has 
exactly one Fourier-Mukai partner \cite{oguiso02}, which can be constructed in
terms of (HP) dual sections to the spinor tenfold. Presumably $S_{min}$ and $T_{min}$ should be non-isomorphic Fourier-Mukai partners. Moreover, our construction indicates that generically, choosing two points on 
$S_{min}$ also yields  two points on $T_{min}$, in perfect agreement with the fact that the punctual Hilbert schemes $S_{min}^{[2]}$ and $T_{min}^{[2]}$ are birationally equivalent \cite{meachan-mongardi-yoshioka}.

 \end{rmk}

\medskip\begin{fano}\fanoid{2-26-95-2} %34
\label[mystyle]{26-95-2-1-23-22-1-4-2-5}
 $\mZ(\PP^3 \times \Gr(2,5),\of(0,1)^{\oplus 2} \oplus \mU^{\vee}_{\Gr(2,5)}(1,0) \oplus \of(1,1))$.

\subsubsection*{Invariants}  $h^0(-K)=26, \ (-K)^4=95, \  h^{1,1}=2, \ h^{3,1}=1, \ h^{2,2}=23$, $-\chi(T)=22$.
\subsubsection*{Description} 
\begin{itemize}
    \item $\pi_1 \colon X \to \PP^3$ is a conic bundle over $\PP^3$ with discriminant locus $S_8^{(1)}$,  the blow up of a degree 8 K3 surface in one point.
    \item $\pi_2 \colon X \to \Gr(2,5)$ is a blow up $\Bl_{S_8^{(1)}} X_5$, with $S_8^{(1)}$ as above, and $X_5$ a Fano fourfold of index $3$ and degree $5$.
    \item Rationality: yes.
\end{itemize}

\subsubsection*{Semiorthogonal decompositions}
\begin{itemize}
    \item 4 exceptional objects and $\Db(\PP^3,\mC_0)$ from $\pi_1$.
    \item 7 exceptional objects and $\Db(S_8)$ from $\pi_2$.
\end{itemize}

\subsubsection*{Explanation} 
Let us start with the fivefold $Y$ defined by two sections of $\of(0,1)$, that is 
two skew-symmetric forms on $V_5$, and a section of $\mU^\vee(1,0)$, that is a tensor $\alpha\in V_4^\vee\otimes V_5^\vee$. So $Y$ parametrizes the pairs $(l, P)$, with $P$ a doubly 
isotropic plane in $V_5$, and $l$ a line in $V_4$ such that the linear form $\alpha(l)$
vanishes on $P$. For $\alpha$ generic, $\alpha(l)$ is never zero, and the fiber of $Y$ over $l$ is a codimension two linear section of $\Gr(2,\alpha(l)^\perp)$. On the other hand,
if $P$ is fixed there are two possibilities. Generically, $\alpha^t(P)$ is a plane in $V_4^\vee$
and the fiber of $Y$ over $P$ is a projective line. But if $P$ contains the kernel of $\alpha^t$, 
the fiber is a projective plane: this happens over a projective line $d$. 
\begin{equation*}
\xymatrix{
   & Y  \ar[ld]_{\mathrm{quadric\;fibration}} \ar[rd]_{\PP^1} & E\ar@{_{(}->}[l] 
 \ar[rd]^{\PP^2}\\
   \PP^3 & & I_2\Gr(2,5) &  d\ar@{_{(}->}[l].
 }
 \end{equation*}

Now we turn to the fourfold $X$ defined in $Y$ by a section of $\of(1,1)$, that is a
tensor $\theta\in V_4^\vee\otimes\wedge^2V_5^*$. The fibers of the projection to $Z=I_2\Gr(2,5)$
are defined by the image of the induced morphism 
$$\mU_Z\oplus \of_Z(-1)\longrightarrow V_4^\vee\otimes \of_Z,$$
which drops rank along a smooth surface $S$ whose structure sheaf is resolved by the 
Eagon--Northcott complex
$$0\longrightarrow \mU_Z\oplus \of_Z(-1)\longrightarrow V_4^\vee\otimes \of_Z\longrightarrow \of_Z(2)\longrightarrow \of_S(2)\longrightarrow 0.$$
On the Grassmannian $\Gr(2,5)$, Borel--Weil--Bott Theorem implies that $\mU(-k)$ is acyclic for $0\le k\le 4$. 
The Koszul complex of $Z$ in $\Gr(2,5)$ allows us to deduce that $\mU_Z(-k)$ is acyclic for $0\le k\le 2$.
Moreover $\chi(\of_Z(2))=31$, hence $\chi(\of_S(2))=27$. Also 
$\chi(\of_S(1))=\chi(\of_Z(1))=8$, and $\chi(\of_S)=\chi(\of_Z)
+\chi(\of_Z(-3))=1+1=2$. We deduce the Hilbert polynomial
$$\chi(\of_Z(k))=\frac{13k^2-k}{2}+2.$$
Using the same kind of arguments as we already did many times, we get:

\begin{proposition}
$S$ is a K3 surface of genus $8$ blown up at a single point, and $d$ is its exceptional curve. 
\end{proposition}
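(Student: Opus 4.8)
The plan is to follow verbatim the strategy already used for the K3 surfaces in \cref{27-101-2-1-23-21-1-6-2-4} and \cref{27-100-2-1-24-23-1-6-2-4}: manufacture a canonical section of $\omega_S$ out of the dual of the Eagon--Northcott complex, recognize its zero locus as the line $d$, and contract. First I would record $\omega_Z$. Since $Z=I_2\Gr(2,5)$ is cut out of the index-$5$ Grassmannian $\Gr(2,5)$ by two sections of $\of(1)$, adjunction gives $\omega_Z=\of_Z(-3)$ (consistently, $Z=X_5$ is Fano of index $3$ and degree $5$).

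Next I would dualize. Writing the Eagon--Northcott resolution of $\of_S$ in the untwisted form
\[
0\to \mU_Z(-2)\oplus\of_Z(-3)\to V_4^\vee\otimes\of_Z(-2)\to \of_Z\to\of_S\to 0,
\]
applying $\HOM(-,\omega_Z)$, and using that $S$ is a Cohen--Macaulay surface of codimension two in $Z$, I obtain a four-term resolution of $\omega_S$,
\[
0\to\of_Z(-3)\to V_4\otimes\of_Z(-1)\to \mU_Z^\vee(-1)\oplus\of_Z\to\omega_S\to 0.
\]
Now \cref{lem:section}, applied with $\mM=\of_Z$, shows that the summand $\of_Z$ yields a canonical section of $\omega_S$ whose vanishing locus is exactly where the induced morphism $V_4\otimes\of_Z\to\mU_Z^\vee$ fails to be surjective.

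The key identification is that this degeneracy locus is the line $d$. The morphism in question is the one induced by $\alpha$, sending $v\in V_4$ to the restriction to $\mU_Z=U_2$ of the linear form $\alpha(v)\in V_5^\vee$; it drops rank precisely when $U_2$ meets the (one-dimensional, for generic $\alpha$) kernel $K_1=\ker\alpha^t\subset V_5$, that is, on the locus of planes containing the fixed line $K_1$, intersected with $Z$. As already explained in the construction of $Y$, this locus is the projective line $d$. Hence $\omega_S=\of_S(d)$, and since $d\cong\PP^1$ the adjunction formula $2p_a(d)-2=d\cdot(d+K_S)=2d^2$ forces $d^2=-1$, so $d$ is a $(-1)$-curve. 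Contracting it gives a smooth surface $S_{min}$ with $\omega_{S_{min}}=\of$ and $\chi(\of_{S_{min}})=\chi(\of_S)=2$, hence a K3 surface (the value $\chi=2$ rules out the remaining surfaces of Kodaira dimension zero). Finally, the Hilbert polynomial gives $H^2=13$ and $H\cdot d=1$ on $S$; writing $H=\sigma^*\bar H-d$ for the contraction $\sigma\colon S\to S_{min}$ yields $\bar H^2=14$, so $S_{min}$ has genus eight and $S=S_{14}^{(1)}$, with $d$ its exceptional curve.

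I expect the only real obstacle to be bookkeeping: getting the twists in the dualized complex exactly right (in particular producing the isolated line-bundle summand $\of_Z$ needed to invoke \cref{lem:section}), and checking that the degeneracy locus of $V_4\otimes\of_Z\to\mU_Z^\vee$ is genuinely the line $d$ rather than a larger scheme. Both reduce to the generic-rank analysis of $\alpha$ already carried out in the description of $Y$, so no genuinely new difficulty should arise.
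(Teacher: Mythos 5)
Your proof is correct and is precisely the argument the paper intends: after computing the Hilbert polynomial, the paper merely invokes ``the same kind of arguments as we already did many times,'' namely dualizing the Eagon--Northcott complex, extracting the canonical section of $\omega_S$ via \cref{lem:section}, identifying its zero locus with the line $d$, and contracting the resulting $(-1)$-curve. Your bookkeeping (the twists in the dual complex, the identification of the degeneracy locus of $V_4\otimes\of_Z\to\mU_Z^\vee$ with $d$, and $H^2=13$, $H\cdot K_S=1$, hence $\bar H^2=14$ and genus eight) matches the paper's data exactly.
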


 Now we turn to the conic bundle of $X$ over $\PP^3$. 
 \begin{equation*}
\xymatrix{
   & X  \ar[ld]_{\mathrm{conic\;fibration}} \ar[rd]_{\PP^0} & E_X\ar@{_{(}->}[l] 
 \ar[rd]^{\PP^1}\\
   \PP^3 & & I_2\Gr(2,5) &  S\ar@{_{(}->}[l].
 }
 \end{equation*}
 
 To understand it better, notice how the fourfold $X$ can be equivalently described as $\mZ(\Fl(1,3,5),\of(1,0) \oplus \of(0,1)^{\oplus 2} \oplus \of(1,1))$. The first bundle cuts the flag in $\mathbb{G}_{\PP^3}(2, \mQ(-1) \oplus \of(-1))$. Therefore this becomes a quadric fibration if we cut with $\of(0,1)^{\oplus 2}$, and a conic bundle with discriminant locus $S$ when we cut with the last $\of(1,1)$.
\end{fano}

\medskip\begin{fano}\fanoid{2-25-90-2-B} %50
\label[mystyle]{25-90-2-1-24-23-2-4-2-4} $\mZ(\Gr(2,4)_1 \times \Gr(2,4)_2,\of(0,1) \oplus \mU^{\vee}_{\Gr(2,4)_2}(1,0) \oplus \of(1,1))$.
\subsubsection*{Invariants}  $h^0(-K)=25, \ (-K)^4=90, \  h^{1,1}=2, \ h^{3,1}=1, \ h^{2,2}=24$, $-\chi(T)=23$.
\subsubsection*{Description} 
\begin{itemize}
    \item $\pi_1 \colon X \to \Gr(2,4)_1$ is birational, with base locus a genus 6 K3 surface with two double points.
    \item $\pi_2 \colon X \to \Gr(2,4)_2$ is a conic bundle over $\QQ^3$, with discriminant a quartic hypersurface with $20$ ODP.
    \item Rationality: yes.
\end{itemize}

\subsubsection*{Semiorthogonal decompositions}
\begin{itemize}
    \item 4 exceptional objects and $\Db(\QQ^3,\mC_0)$ from $\pi_2$. There is no hint for the K3 category.
\end{itemize}

\subsubsection*{Explanation}
Over $\Gr(2,U_4)\times \Gr(2,V_4)$, with tautological bundles $\mU$ and $\mV$, we have 
the line bundles $\det(\mV)^\vee$ and $\det(\mU)^\vee\otimes\det(\mV)^\vee$, and the rank two bundle 
$\det(\mU)^\vee \otimes \mV$. Their global sections are given by tensors $\omega\in \wedge^2V_4^\vee$, 
$\theta\in \wedge^2U_4^\vee\otimes \wedge^2V_4^\vee$ and $\alpha\in \wedge^2U_4^\vee\otimes V_4$.
The sections $\omega$ and $\alpha$ define a fivefold $Y$ parametrizing the pairs $U_2,V_2$
of planes such that $V_2$ is $\omega$-isotropic and contains $\alpha(\wedge^2U_2)$. 
So $V_2$ has to belong to $IG_\omega(2,V_4)\simeq\QQ^3$ and its fiber in $Y$ is a codimension
two linear section of $\Gr(2,U_4)$. So $Y$ is a quadratic surface bundle over $\QQ^3$, with a quadric hypersurface  for discriminant.

When we take into account the remaining section $\theta$, we get our Fano fourfold $X$ 
as a conic bundle over $\QQ^3$, with discriminant $\Delta$. 

\begin{lemma}
In general, the discriminant $\Delta\subset\QQ^3$ is a quartic surface with 
exactly $20$ ODP. 
\end{lemma}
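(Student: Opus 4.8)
The plan is to realise $\pi_2\colon X\to\QQ^3$, where $\QQ^3=IG_\omega(2,V_4)$, as a conic bundle inside an explicit $\PP^2$-bundle and to read off $[\Delta]$ and $[\Delta_{sing}]$ from \cref{lem:conicbundle}. First I would write down the ambient bundle. Over $\QQ^3$, with tautological subbundle $\mV\subset V_4\otimes\of$, the tensor $\alpha$ induces a morphism $\bar\alpha\colon \wedge^2U_4\otimes\of\to V_4/\mV$; for generic $\alpha$ this is surjective, so $\mM:=\ker\bar\alpha$ is a rank four subbundle of the trivial bundle $\wedge^2U_4\otimes\of$. The tensor $\theta$ then induces a morphism $\mM\to\det(\mV)^\vee$, whose kernel $\mE$ is a rank three subbundle (again for generic $\theta$). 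Tracing through the description of $X$ above, the fibre of $\pi_2$ over $V_2$ is exactly the conic cut in $\PP(\mE_{V_2})$ by the Plücker quadratic form $q$ (the decomposability condition on $\wedge^2U_4$). Hence $X=\{q|_\mE=0\}\subset\PP(\mE)$ is a conic bundle defined by a section of $\Sym^2\mE^\vee$, i.e.\ with $\mK=\of_{\QQ^3}$ in the notation of \cref{lem:conicbundle}, applied with the globally generated rank three bundle $\mE^\vee$ (the fibrewise conic being cut by a quadratic form).

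Next I would compute the Chern classes on $\QQ^3$. Writing $h$ for the hyperplane class, $\ell$ for the class of a line and $\mathrm{pt}$ for the point class, one has $h^2=2\ell$, $h\ell=\mathrm{pt}$ and $h^3=2\,\mathrm{pt}$. The symplectic form identifies $V_4/\mV\simeq\mV^\vee$, and the tautological sequence $0\to\mV\to V_4\otimes\of\to\mV^\vee\to0$ gives $c_1(\mV^\vee)=h$ and $c_2(\mV^\vee)=\ell$. From the two defining sequences I then get
\[
c(\mM)=\frac{1}{c(\mV^\vee)}=\frac{1}{1+h+\ell}=1-h+\ell,\qquad
c(\mE)=\frac{c(\mM)}{c(\det\mV^\vee)}=\frac{1-h+\ell}{1+h}=1-2h+5\ell-5\,\mathrm{pt},
\]
so that $c(\mE^\vee)=1+2h+5\ell+5\,\mathrm{pt}$, i.e.\ $c_1=2h$, $c_2=5\ell$, $c_3=5\,\mathrm{pt}$.

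Then I would apply \cref{lem:conicbundle} with $k=c_1(\mK)=0$. This gives $[\Delta]=2c_1+3k=4h$, so $\Delta$ is cut on $\QQ^3$ by a quartic, and $[\Delta_{sing}]=4(c_1c_2-c_3)=4(10\,\mathrm{pt}-5\,\mathrm{pt})=20\,\mathrm{pt}$, so that for general data the rank one locus $\Delta_{sing}$ is a $0$-cycle of length $20$. (As a sanity check, the same recipe reproduces the $16$ nodes of the quintic in \cref{40-163-2-1-23-24-1-4-1-6}.)

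The main obstacle is the last step: upgrading ``$\Delta_{sing}$ has length $20$'' to ``$\Delta$ has exactly $20$ ordinary double points and no other singularities''. For generic $\omega,\alpha,\theta$ I would first check that $\Delta_{sing}$ is reduced, hence genuinely $20$ distinct points, and that $\Delta$ is smooth along the rank two locus $\Delta\setminus\Delta_{sing}$, where the conic degenerates to two distinct lines and the discriminant is smooth. The nodal structure at a rank one point is then a local computation: in a local frame in which the symmetric $3\times3$ matrix of the conic is $\mathrm{diag}(1,0,0)$ plus higher order terms, the leading part of $\det$ is the $2\times2$ minor of the lower block, a quadratic form in the remaining entries; the genericity of the conic bundle makes the induced quadratic form on the three-dimensional tangent space of $\QQ^3$ nondegenerate, which is precisely the statement that $\Delta$ acquires an ordinary double point there. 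This transversality is where the phrase ``in general'' does the work, and it can also be deduced from the general theory of degeneracy loci underlying \cite{Harris-Tu}.
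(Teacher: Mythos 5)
Your proof is correct and follows essentially the same route as the paper: both realise $\pi_2$ as a conic bundle inside $\PP(\mE)$ for the same rank-three kernel bundle $\mE$ (the paper presents it via the single sequence $0\to\mU\oplus\of_{\QQ^3}(-1)\to\wedge^2U_4^\vee\otimes\of\to\mE^\vee\to0$, which is your two-step construction collapsed, using $V_4/\mV\simeq\mV^\vee$), compute the Chern classes from the tautological sequences, and apply \cref{lem:conicbundle} with trivial $\mK$ to get a quartic discriminant with $4(c_1c_2-c_3)=20$ singular points. The only differences are cosmetic: you evaluate intersection numbers directly on $\QQ^3$ where the paper pulls back to Schubert calculus on $\Gr(2,4)$, your dual conventions are cleaner (the paper's displayed ``$c(\mE)$'' is really $c(\mE^\vee)$, which is why your signs differ from theirs while all the numbers agree), and you spell out the genericity/local analysis behind ``exactly $20$ ODP'', which the paper merely asserts with ``these must be ordinary double points in general''.
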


\begin{proof} The conic bundle over $\QQ^3\subset \Gr(2,V_4)$ is contained in $\PP(\mE)$,
for $\mE$ a rank three vector bundle defined by an exact sequence 
$$ 0\lra \mU\oplus \of_{\QQ^3}(-1)\lra \wedge^2U_4^\vee\otimes \of_{\QQ^3}\lra\mE^\vee\lra 0. $$
The conic bundle is then defined by the natural morphism $S^2\mE\ra S^2(\wedge^2U_4)\otimes \of_{\QQ^3}\ra \det(U_4)\otimes \of_{\QQ^3} $.
The Chern class of $\mE$ is given in terms of the Schubert cycles $\sigma_1$ and $\sigma_2$ by 
$$c(\mE)=\frac{s(\mU^\vee)}{1-\sigma_1}=\frac{1+\sigma_1+\sigma_2}{1-\sigma_1}.$$
We deduce that $c_1(\mE)=2\sigma_1$, $c_2(\mE)=2\sigma_1^2+\sigma_2$,  $c_3(\mE)=2\sigma_1^3+\sigma_1\sigma_2$, hence $c_1(\mE)c_2(\mE)=2c_3(\mE)$.
Applying \cref{lem:conicbundle},
we compute the number $N$ of singular points in $\Delta$, as 
$$N=4\int_{\QQ^3}(c_1(\mE)c_2(\mE)-c_3(\mE))=4\int_{\Gr(2,4)}\sigma_1(2\sigma_1^3+\sigma_1\sigma_2)=4(2\times 2+1)=20.$$
Moreover these must be ordinary double points in general.
\renewcommand\qedsymbol{$\square$}
\end{proof}

\medskip
On the other hand, if  $\alpha(\wedge^2U_2)$ is non-zero, the $\omega$-isotropic planes in $V_4$ containing this line form a projective line. Generically, $\alpha(\wedge^2U_2)=0$ for exactly 
two transverse planes $P_1$ and $P_2$, defining two points $p_1, p_2$ of $\Gr(2,U_4)$ whose fibers $Q_1$ and $Q_2$ are copies of   $IG_\omega(2,V_4)\simeq\QQ^3$. 
Hence the diagram:
\begin{equation*}
\xymatrix{
 &  Q_1\cup Q_2\ar[ld]_{\mathbb{Q}^3}\ar@{^{(}->}[r]  & Y \ar[ld]^{\PP^1}
 \ar[rrd]^{\mathrm{\quad quadric\;fibration}} &&\\
 p_1\cup p_2\ar@{^{(}->}[r] & \Gr(2,U_4) & & & \QQ^3 
 }
 \end{equation*}

\smallskip
When we cut $Y$ by the hyperplane section defined by $\theta$, this shows that 
our fourfold
$X$ projects birationally to $\Gr(2,U_4)$. Where are the exceptional fibers? 
They occur exactly when the projective line in $\Gr(2,V_4)$ defined by the 
conditions that $\alpha(\wedge^2U_2)\subset V_2\subset \alpha(\wedge^2U_2)^\perp$ 
is contained in the hyperplane defined by $\theta(\wedge^2U_2)$, which means that 
$\theta(\wedge^2U_2)$ vanishes on the linear space $P_U=\alpha(\wedge^2U_2)\wedge \alpha(\wedge^2U_2)^\perp
\subset \wedge^2V_4$. But this is meaningful only when $\alpha(\wedge^2U_2)\ne 0$; 
in other words, $P_U$ does not define a vector bundle on the Grassmannian. 

In order to overcome this problem, we need to pass to the blow up $\tilde{G}$ of
$\Gr(2,U_4)$ at the two points $p_1=[P_1]$ and $p_2=[P_2]$. Denote by $E=E_1+E_2$ the 
exceptional divisor, and by $H$ the pull-back of the hyperplane divisor. 
Over $\tilde{G}$, $\alpha$ and its contraction with $\omega$ define 
two embeddings of vector bundles 
$$\mathcal{L}\simeq\of_{\tilde{G}}(E-H)\hookrightarrow V_4\otimes \of_{\tilde{G}}, 
\qquad 
\mathcal{L'}\simeq\of_{\tilde{G}}(E-H)\hookrightarrow V_4^\vee\otimes \of_{\tilde{G}}.$$ 
Dualizing the second one, we get a rank three sub-bundle $\mathcal{M}$ of $V_4\otimes \of_{\tilde{G}}$, and by the skew-symmetry of $\omega$ this bundle contains 
$\mathcal{L}$ as a sub-bundle. So $\mathcal{P}:=\mathcal{L}\wedge \mathcal{M}$ is now 
a genuine rank two sub-bundle of $\wedge^2V_4\otimes \of_{\tilde{G}}$. As we 
observed, the exceptional locus of our projection is now described by the vanishing 
of the induced section of $\mathcal{P}^\vee(H)$. This must be a smooth surface $S$, 
whose structure sheaf is resolved by the Koszul complex 
$$0\longrightarrow \wedge^2\mathcal{P}(-2H)\longrightarrow \mathcal{P}(-H)\longrightarrow \of_{\tilde{G}}\longrightarrow \of_S\longrightarrow 0.$$
Note that $\wedge^2\mathcal{P}\simeq \mathcal{L}^2$, and that $\mathcal{L}^2(-2H)=\omega_{\tilde{G}}(-E)$, which will imply that $h^2(\of_S)=1$. Moreover, by adjunction $\omega_S=\of_S(E)$.

In order to compute the cohomology of $\mathcal{P}$, observe that it is a sub-bundle of $\mathcal{K}:=\mathcal{L}\wedge V_4$, 
with quotient $\mathcal{L}\otimes (V_4/\mathcal{M})$, which is trivial. We thus get the 
exact sequence 
$$0\longrightarrow \mathcal{P}\longrightarrow \mathcal{K}\longrightarrow \of_{\tilde{G}}
\longrightarrow 0,$$
while the rank three vector bundle $\mathcal{K}$ is given by the simple sequence 
$$0\longrightarrow \mathcal{L}^2\longrightarrow \mathcal{L}\otimes V_4\longrightarrow \mathcal{K}
\longrightarrow 0.$$
From the latter we readily deduce that $ \mathcal{K}(-H)$ is acyclic, and then that 
$ \mathcal{P}(-H)$ is acyclic as well, and therefore that $\chi(\of_S)=2$. 
After a twist by $H$, $\chi(\mathcal{P})=-1$ yields $\chi(\of_S(H)))=6-(-1)=7$.
After another twist,  $\chi(\mathcal{K}(H))=4$ implies $\chi(\mathcal{P}(H))=-2$
and then $\chi(\of_S(H)))=20-(-2)=22$. Therefore we get the Hilbert polynomial 
$$\chi(\of_S(kH)))= 5k^2+2.$$

\begin{proposition}
$S$ is a K3 surface of genus six blown up at four points.
\end{proposition}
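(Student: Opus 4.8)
The numerical content of the statement is essentially already in hand: from $\chi(\of_S(kH))=5k^2+2$ we read off $H^2=10$, $H\cdot K_S=0$ and $\chi(\of_S)=2$, while adjunction for the zero locus $S\subset\tilde{G}$ of a section of the rank two bundle $\mP^\vee(H)$ gives $\omega_S=\of_S(E)$ with $E=E_1+E_2$. Since the last term $\wedge^2\mP(-2H)$ of the Koszul complex equals $\omega_{\tilde{G}}(-E)$, whose only cohomology is $h^4=h^0(\of_{\tilde{G}}(E))=1$, we get $h^2(\of_S)=1$, i.e.\ $p_g(S)=1$ by Serre duality; combined with $\chi(\of_S)=2$ this forces $q(S)=h^1(\of_S)=0$. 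The plan is therefore to prove that the canonical divisor $E|_S$ is a disjoint union of exactly four $(-1)$-curves: contracting them produces a minimal surface $S_{min}$ with $\omega_{S_{min}}\cong\of$, $q=0$ and $p_g=1$, hence a K3 surface, and $H$ descends to a degree $H^2=10$ polarization, giving genus six.

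The canonical section of $\omega_S=\of_S(E_1+E_2)$ vanishes precisely on $C_1\sqcup C_2$, where $C_i:=S\cap E_i$ lies in the exceptional divisor $E_i\cong\PP^3$ over $p_i$. Restricting the Koszul complex to $E_i$ (on which $H$ is trivial and $\of_{\tilde{G}}(E)$ restricts to $\of_{\PP^3}(-1)$) and computing Chern classes shows that each $C_i$ is a curve of degree two in $\PP^3$. The crux is to show that $C_i$ is not an irreducible conic but a pair of disjoint lines. To see this I would describe $C_i$ intrinsically: over $p_i$ the section $\theta$ specialises to the two-form $\eta_i=\theta(\wedge^2 P_i)\in\wedge^2V_4^\vee$, and under the identification of $E_i$ with $\PP(V_4)$ (via the derivative of $U_2\mapsto\alpha(\wedge^2U_2)$) and of $\mP$ with $\langle a\rangle\wedge\langle a\rangle^{\perp_\omega}$, the defining equation $\eta_i|_{\mP}=0$ becomes $\iota_a\eta_i\parallel\iota_a\omega$, that is $a\in\ker(\eta_i-\lambda\,\omega)$ for some $\lambda$. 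Thus $C_i$ is the union of the kernels of the degenerate members of the pencil of two-forms spanned by $\omega$ and $\eta_i$. This is the main obstacle, and it is resolved by the classical theory of pencils of skew forms on a four-dimensional space: the discriminant $\det(\eta_i-\lambda\,\omega)=\Pf(\eta_i-\lambda\,\omega)^2$ is a perfect square, so the pencil has exactly two degenerate members, each of rank two with a two-dimensional kernel, and these two kernels are generically transverse. Hence $C_i=f_{i,1}\sqcup f_{i,2}$ is a pair of disjoint lines.

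It remains to assemble the conclusion. The four lines $f_{i,j}$ are pairwise disjoint, since $E_1\cap E_2=\emptyset$ in $\tilde{G}$ while the two lines inside a single $E_i$ are the projectivisations of the two transverse eigen-planes. Each $f_{i,j}\cong\PP^1$, and adjunction on $S$ together with $K_S=E|_S=\sum_{i,j}f_{i,j}$ gives $-2=f_{i,j}^2+K_S\cdot f_{i,j}=2\,f_{i,j}^2$, so every $f_{i,j}$ is a $(-1)$-curve. Contracting the four of them yields a smooth surface $S_{min}$ with trivial canonical bundle and unchanged invariants $\chi=2$, $q=0$, $p_g=1$, which is therefore a K3 surface; the induced polarization has degree $10$, so $S_{min}$ has genus six. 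This proves that $S$ is a genus six K3 surface blown up at four points.
\renewcommand\qedsymbol{$\square$}
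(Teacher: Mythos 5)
Your proof is correct, and its overall skeleton coincides with the paper's: establish $\omega_S=\of_S(E)$, $p_g=1$, $q=0$, $H^2=10$, show that the canonical curve $E|_S$ consists of four pairwise disjoint $(-1)$-lines, and contract them. Where you genuinely diverge is in the crucial step, namely proving that each $C_i=S\cap E_i$ is a pair of skew lines rather than a conic. The paper identifies $\mP|_{E_i}$ with the null-correlation bundle $\mE=\mL\wedge\mL^{\perp}$ on $\PP^3$, computes $h^1(\mE)=1$ from the sequence $0\to\mE\to T_{\PP^3}(-2)\to\of\to 0$, and then reads off $h^0(\of_{C_i})=2$ from the Koszul complex of a general section of $\mE^\vee$; together with $K_{C_i}=\of_{C_i}(-2)$ and degree two, this forces $C_i$ to be two disjoint lines. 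You instead make the section completely explicit: $C_i=\{[a]:\iota_a\eta_i\parallel\iota_a\omega\}$ is the union of the projectivized kernels of the degenerate members of the pencil of skew forms spanned by $\eta_i$ and $\omega$, and the classical identity $\det(\eta_i-\lambda\omega)=\Pf(\eta_i-\lambda\omega)^2$ gives exactly two such members, each of rank two; moreover the two kernels are automatically complementary once the roots are distinct (if $v$ lies in both, then $(\lambda_1-\lambda_2)\,\iota_v\omega=0$ forces $v=0$), so "generically transverse" can even be upgraded to "always, for distinct roots." Your route is more elementary — pure linear algebra, no cohomology at this step — and it exhibits the two lines and their disjointness explicitly; the paper's argument is softer, needing only genericity and no normal form, and it is the template reused in several neighbouring examples where an explicit normal form would be less accessible. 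Both are two faces of the same classical fact that a general section of the dual null-correlation bundle vanishes on two skew lines. One small point: deducing $h^2(\of_S)=1$ from the identification $\wedge^2\mP(-2H)\cong\omega_{\tilde{G}}(-E)$ also uses the acyclicity of the middle Koszul term $\mP(-H)$; this is established in the paper just before the proposition, and since you explicitly build on those "already in hand" computations, this is not a gap.
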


Observe that in the Chow ring of $\tilde{G}$ we have $HE=0$. It is then easy to compute that the class of $S$ is 
$$[S]=c_2( \mathcal{P}^\vee(H))=\frac{(1+2H-E)^4}{(1+H)(1+3H-2E)}_{|\deg=2}=5H^2+2E^2.$$
Since $K_S=E_S$, we deduce that $K_S^2=[S]E^2=2E^4=-4$. This is explained as follows. 
Over $p_1$ or $p_2$, the exceptional divisor is a copy of $\PP^3$ and the restriction 
of $\mathcal{L}$ is the tautological line bundle. Therefore, the restriction of $\mathcal{K}$ is the so-called {\it null-correlation bundle}  $\mathcal{E}:=\mathcal{L}\wedge\mathcal{L}^\perp$. There is an exact sequence 
$$0 \longrightarrow \mathcal{E}\longrightarrow T_{\PP^3}(-2)=\mathcal{L}\wedge V_4\longrightarrow \mathcal{L}\otimes (V_4/\mathcal{L}^\perp)=\of_{\PP^3}\longrightarrow 0,$$
which implies that $h^1(\mathcal{E})=1$, and $h^k(\mathcal{E})=0$ for $k\ne 1$. Consider  then a general 
section of the globally generated vector bundle $\mathcal{E}^\vee$. It vanishes on a smooth 
curve $C$ such that $K_C=\of_C(-2)$ since $\det(\mathcal{E})=\of_{\PP^3}(-2)$.
Moreover the Koszul complex 
$$0 \longrightarrow \of_{\PP^3}(-2)\longrightarrow \mathcal{E} \longrightarrow \of_{\PP^3}\longrightarrow \of_C\longrightarrow 0$$
shows that $h^0(\of_C)=h^0(\of_{\PP^3})+h^1(\mathcal{E})=2$. We conclude that the general section of the dual of the null-correlation bundle vanishes on the 
union of two skew lines. 

This gives us four lines in $S$, which are four $(-1)$-curves on $S$. Contracting them we get a genuine K3 surface $S_{min}$ of genus $6$. Finally, note that these four lines are contracted  in pairs to 
$p_1$ and $p_2$ in $\Gr(2,U_4)$; so the image of $S$, which is the base
locus of the birational projection of $X$ to $\Gr(2,U_4)$, is a non-normal image of $S_{min}$ obtained 
by identifying two pairs of points. 
\end{fano}

We remark that there exists a link between this Fano variety and \cref{30-114-2-1-24-24-2-4-2-5}, investigated in the recent \cite{nodalK3}.

\medskip\begin{fano}\fanoid{3-19-60-2} %35
\label[mystyle]{19-60-3-5-1-22-23-1-2-1-5} $\mZ(\PP^1 \times \PP^1 \times \PP^4,\of(0,0,3) \oplus \of(1,1,1))$.

\subsubsection*{Invariants}  $h^0(-K)=19, \ (-K)^4=60, \  h^{1,1}=3, \ h^{2,1}=5, \ h^{3,1}=1, \ h^{2,2}=22$, $-\chi(T)=23$.
\subsubsection*{Description} 
\begin{itemize}
    \item $\pi_{12} \colon X \to \PP^1 \times \PP^1$ is a $\dP_3$-fibration.

    \item $\pi_{13},\pi_{23} \colon X \to \PP^1 \times W_3$ are  blow ups $\Bl_{S_{12}}(\PP^1 \times W_3)$, for a general cubic threefold $W_3$, along a K3 surface of degree 12 and Picard number 2.
    \item Rationality: unknown.
\end{itemize}

\subsubsection*{Semiorthogonal decompositions}
\begin{itemize}
    \item 4 exceptional objects and an unknown category from $\pi_{12}$.
    \item 4 exceptional objects, two $[5/3]$-CY categories and $\Db(S_{12})$ from $\pi_{13}$ or  $\pi_{23}$.
\end{itemize}

\subsubsection*{Explanation} We use \cref{lem:blowupcodim2} to get the blow up 
structure given by $\pi_{23}$ and $\pi_{13}$. 
We check directly that $\pi_{12}$ has general fiber a $\dP_3$.
\end{fano}

\medskip\begin{fano}\fanoid{3-23-80-2-A} %36
\label[mystyle]{23-80-3-2-1-33-20-1-2-1-2-1-6} $\mZ(\PP^1 \times \PP^1 \times \PP^5,\of(0,0,2)^{\oplus 2} \oplus \of(1,1,1))$.

\subsubsection*{Invariants}  $h^0(-K)=23, \ (-K)^4=80, \  h^{1,1}=3, \ h^{2,1}=2, \ h^{3,1}=1, \ h^{2,2}=33$, $-\chi(T)=20$.
\subsubsection*{Description}
\begin{itemize}
    \item $\pi_{12} \colon X \to \PP^1 \times \PP^1$ is a $\dP_4$-fibration.
    \item $\pi_{13},\pi_{23} \colon X \to \PP^1 \times \PP^5$ are $\Bl_{S_{16}}(\PP^1 \times W_4)$, where $W_4$ is the intersection of two quadrics in $\PP^5$.
\item Rationality: Yes. 
\end{itemize}

\subsubsection*{Semiorthogonal decompositions}
\begin{itemize}
    \item 4 exceptional objects and $\Db(Z, \mC_0)$, for $Z \to \PP^1 \times \PP^1$ a $\PP^1$-bundle from $\pi_{12}$.
    \item 4 exceptional objects, two copies of $\Db(C)$ for $C$ a genus 2 curve, and $\Db(S_{16})$ from $\pi_{13},\pi_{23}$.
\end{itemize}

\subsubsection*{Explanation} Simple application of \cref{lem:blowupcodim2}. The K3 surface in $\PP^1 \times \PP^5$ can be checked to have degree 16 with respect to $\of(1,1)$, and of course Picard rank $ 2$.
\end{fano}

\medskip\begin{fano}\fanoid{3-27-100-2-A} %37
\label[mystyle]{27-100-3-1-22-18-1-2-1-2-2-5} $\mZ(\PP^1 \times \PP^1 \times \Gr(2,5),\of(0,0,1)^{\oplus 3} \oplus \of(1,1,1) )$.
\subsubsection*{Invariants}  $h^0(-K)=27, \ (-K)^4=100, \  h^{1,1}=3, \ h^{3,1}=1, \ h^{2,2}=22$, $-\chi(T)=18$.
\subsubsection*{Description} 
\begin{itemize}
    \item $\pi_{12} \colon X \to \PP^1 \times \PP^1$ 
    is a $\dP_5$-fibration,
    \item $\pi_{13},\pi_{23} \colon X \to \PP^1 \times \Gr(2,5)$ are blow ups $\Bl_{S_{20}}(\PP^1 \times W_5)$, with $S_{20}$ a K3 surface of degree 20 and generic Picard rank 2, and $W_5$ a Fano threefold of index 2 and degree 5.
    \item Rationality: yes.
\end{itemize}

\subsubsection*{Semiorthogonal decompositions}
\begin{itemize}
    \item 8 exceptional objects and $\Db(Z)$, for $Z \to \PP^1 \times \PP^1$ a flat degree 5 cover, from $\pi_{12}$.
    \item 8 exceptional objects and $\Db(S_{20})$ from $\pi_{13}$ or $\pi_{23}$.
    \item The two decompositions are not known to coincide but it is natural to imagine  that $Z$ should be a K3 surface, derived equivalent to $S_{20}$.
\end{itemize}

\subsubsection*{Explanation}
It suffices to apply \cref{lem:blowupcodim2} to understand the second and third projections.

For the projection to $\PP^1\times\PP^1$, just observe that the section of $\of(1,1,1)$ gives an extra  linear condition on each fiber. 
\end{fano}

\medskip\begin{fano}\fanoid{3-29-110-2-A} %46
\label[mystyle]{29-110-3-1-24-20-1-2-1-3-2-4} $\mZ(\PP^1 \times \PP^2 \times \Gr(2,4),\mQ_{\PP^2}(0,0,1) \oplus \of(1,1,1))$.

\subsubsection*{Invariants}  $h^0(-K)=29, \ (-K)^4=110, \  h^{1,1}=3, \ h^{3,1}=1, \ h^{2,2}=24$, $-\chi(T)=20$.
\subsubsection*{Description} 
\begin{itemize}
    \item $\pi_{12} \colon X \to \PP^1 \times \PP^2 $ is a conic bundle.
    \item $\pi_{13} \colon X \to \PP^1 \times \Gr(2,4)$ is a blow up of a conic in a  hypersurface of bidegree $(1,2)$,
    itself \cref{39-160-2-1-22-21-1-2-2-4} a blow up of $\Gr(2,4)$ along a K3 surface of degree $8$. 
     \item $\pi_{23} \colon X \to \PP^2 \times  \Gr(2,4)$ is a blow up $\Bl_{S_{22}}Y$, where $Y \subset \PP^2 \times \Gr(2,4)$ is the blow up $\Bl_{C_2}\Gr(2,4)$ of a smooth conic $C_2$.
    \item Rationality: yes.
\end{itemize}

\subsubsection*{Semiorthogonal decompositions}
\begin{itemize}
    \item 6 exceptional objects and $\Db(\PP^1\times \PP^2, \mC_0)$ from $\pi_{12}$.
    \item 10 exceptional objects and $\Db(S_8)$ from $\pi_{13}$.
    \item 10 exceptional objects and $\Db(S_{22})$ from $\pi_{23}$. 
    \item We deduce that $S_8$ and $S_{22}$ are derived equivalent (see \cref{lem:dereq}).
\end{itemize}

\subsubsection*{Explanation}
First we use \cref{lem:blowhighercod} to identify $\mZ(\PP^2 \times \Gr(2,4),\mQ_{\PP^2}(0,1) \oplus \of(1,1))$ with the blow up $Y$ of $\Gr(2,4)$ along a conic $C_2$. Then we use  \cref{lem:blowupcodim2} again to cut a (generically of Picard rank 2) K3 surface of degree 22 with respect to $\of(1,1)$ in $Y$. We can project further to $\Gr(2,4)$ to obtain a K3 surface of degree 8 containing the conic $C_2$. 

To understand better the other maps, we consider the equivalent description of this Fano fourfold as $\mZ(\PP^1 \times \Fl(1,4,6),\mQ_2^{\oplus 3} \oplus \of(0;2,0) \oplus \of(1;1,1)).$ We first notice that $\mZ(\Fl(1,4,6),\mQ_2^{\oplus 3} \oplus \of(2,0))$ is a quadric surface bundle over $\PP^2$: by  \cref{lem:conicbundle} the discriminant divisor $\Delta$ is a quartic plane curve. We then need to apply  \cref{lem:blowupcodim2} once again. The codimension 2 blown up locus is  the double cover of $\PP^2$ branched in the said quartic, which is a degree 2 K3 surface.
\end{fano}

\medskip\begin{fano}\fanoid{3-22-74-2} %39
\label[mystyle]{22-74-3-1-32-29-1-2-1-2-1-6} $\mZ(\PP^1_1 \times \PP^1_2 \times \PP^5,\of(0,0,2) \oplus \of(0,1,1) \oplus \of(1,0,2))$.

\subsubsection*{Invariants}  $h^0(-K)=22, \ (-K)^4=74, \  h^{1,1}=3, \ h^{3,1}=1, \ h^{2,2}=32$, $-\chi(T)=29$.
\subsubsection*{Description} 
\begin{itemize}
    \item $\pi_{12} \colon X \to \PP^1_1 \times \PP^1_2$ is a $\dP_4$-fibration.
    \item $\pi_{13} \colon X \to \PP^1_1 \times \PP^5$ is a blow up $\Bl_{S_8^{(8)}}Y$, with $S_8^{(8)}$  a degree 8 K3 surface blown up in 8 points, and $Y$ a blow up of $\QQ^4$ along $\PP^1 \times \PP^1$. 
     \item $\pi_{23} \colon X \to \PP^1_2 \times \PP^5$ is a blow up $\Bl_{\Sigma}Z$, where $\Sigma$ is the blow up of $\PP^1 \times \PP^1$ in 8 points, and $Z$ is \cref{39-160-2-1-22-21-1-2-2-4}, the blow up of $\QQ^4$ along a K3 surface of degree 8.
    \item Rationality: yes.
\end{itemize}

\subsubsection*{Semiorthogonal decompositions}
\begin{itemize}
    \item 8 exceptional objects and $\Db(T,\alpha)$, for $W \to \PP^1 \times \PP^1$ the discriminant double cover from $\pi_{12}$.
    \item 18 exceptional objects and $\Db(S_8)$ from $\pi_{13}$ and $\pi_{23}$.
    \item This suggests that the $T$ may be the blow up of a K3 surface in 10 points, and that the Brauer class may be trivial.
\end{itemize}

\subsubsection*{Explanation}
We need to apply  \cref{lem:blowupcodim2} twice.
Start with $Y\subset \PP^1\times \mathbb{Q}^4$ defined by a section of $\of(1,2)$. The projection to $\mathbb{Q}^4$ is the blow up of a surface
$S$ defined as the intersection of two other quadrics, hence a K3 surface of degree 8. 
Then $X\subset \PP^1\times Y$ being defined by a section of $\of(1,1)$
is a blow up of $Y$ along a surface $\Sigma$ which is a codimension two linear section.
This linear section is to be taken on the $\PP^5$ factor, so the projection of 
$\Sigma$ to $\mathbb{Q}^4$ is the blow up of a quadric surface along a linear 
section of $S$ hence $8$ points. 

The projection to the other $\PP^1 \times \PP^5$ is  dealt with by reversing the order of the blow ups.
\end{fano}

\medskip\begin{fano}\fanoid{3-39-160-2} %48
\label[mystyle]{39-160-3-1-22-18-1-2-1-2-1-4} $\mZ(\PP^1 \times \PP^1 \times \PP^3,\of(1,1,2))$.

\subsubsection*{Invariants}  $h^0(-K)=39, \ (-K)^4=160, \  h^{1,1}=3, \ h^{3,1}=1, \ h^{2,2}=22$, $-\chi(T)=18$.
\subsubsection*{Description} 
\begin{itemize}
    \item $\pi_{12} \colon X \to \PP^1 \times \PP^1$ is a quadric fibration with discriminant locus a divisor of bidegree $(4,4)$. 
    \item $\pi_{13}, \pi_{23} \colon X \to \PP^1 \times \PP^3$ are blow ups of $\PP^1\times\PP^3$ along a bielliptic K3 surface.

    \item Rationality: yes.
\end{itemize}

\subsubsection*{Semiorthogonal decompositions}
\begin{itemize}
    \item 8 exceptional objects and $\Db(S_8,\alpha)$ from $\pi_{12}$.
    \item 8 exceptional divisors and $\Db(S)$ from $\pi_{13}$ or  $\pi_{23}$.
\end{itemize}

\subsubsection*{Explanation} The projection $\pi_{23}$ to $\PP^1\times\PP^3$ is obviously birational, and its exceptional locus is defined by the vanishing of the induced morphism $\of(-1,-2)\rightarrow V_2^\vee \otimes \of$. We get a surface $S$ which is a general complete 
intersection of two divisors of bidegree $(1,2)$, hence an elliptic K3 surface $S$. If
the equations of the two divisors are $sq_0+tq_1=0$ and $sq_2+tq_3=0$, note that $S$ projects
isomorphically to the quartic surface in $\PP^3$ with equation $q_0q_3=q_1q_2$. 

Note also that by replacing  $\pi_{23}$ with  $\pi_{13}$ we get a birational isomorphism $\varphi$
of $\PP^1\times\PP^3$ with itself. Normalizing the defining equation of $X$ to $s\sigma q_0+t\sigma q_1+s\tau q_2+t\tau q_3=0$, this birational map is defined by 
$$\phi([s,t],x)=([sq_0+tq_1, sq_2+tq_3],x), \qquad \phi^{-1}([\sigma,\tau],x)=([\sigma q_3-\tau q_1, \tau q_0-\sigma q_2],x). $$
Note that the exceptional locus of $\pi_{13}$ is the intersection in $\PP^1\times\PP^3$ of the
two divisors of equations $\sigma q_3-\tau q_1=0$ and $\tau q_0-\sigma q_2=0$, which defines 
another elliptic fibration on the same quartic surface $S$. 

Finally, $\pi_{12}$ is a quadric fibration over $P=\PP^1\times\PP^1$, defined by a map
$\of_P(-1,-1)\ra \Sym^2V_4^\vee\otimes\of_P$. Its discriminant is given by a section of $\of_P(4,4)$. 
\end{fano}

\begin{rmk}
This Fano fourfold already appeared in \cite{eg2}, under the name (B1).
\end{rmk}

\medskip\begin{fano}\fanoid{3-28-104-2} %41
\label[mystyle]{28-104-3-1-27-24-1-2-1-3-2-4} $\mZ(\PP^1 \times \PP^2 \times \Gr(2,4),\mQ_{\Gr(2,4)}(0,1,0) \oplus \of(1,0,2))$.
\subsubsection*{Invariants}  $h^0(-K)=28, \ (-K)^4=104, \  h^{1,1}=3, \ h^{3,1}=1, \ h^{2,2}=27$, $-\chi(T)=24$.
\subsubsection*{Description} 
\begin{itemize}
    \item $\pi_{12} \colon X \to \PP^1 \times \PP^2$ is a conic bundle with discriminant 
    of bidegree $(3,4)$. 
    \item $\pi_{13} \colon X \to \PP^1 \times \Gr(2,4)$  is a blow up 
    $\Bl_{\dP_5}Y$, where $Y= \Bl_{S_8}\Gr(2,4)$.
     \item $\pi_{23} \colon X \to \PP^2 \times \Gr(2,4)$  is a blow up 
     $\Bl_{S_8^{(4)}}Z$, where $Z =\Bl_{\PP^2}\Gr(2,4)$ and $S_8^{(4)}$ is the blow up of the octic K3 surface $S_8$ in 4 points.
    \item Rationality: yes.
\end{itemize}

\subsubsection*{Semiorthogonal decompositions}
\begin{itemize}
    \item 13 exceptional objects and $\Db(S_8)$ from the three maps.
\end{itemize}

\subsubsection*{Explanation}
Our Fano fourfold $X$ is defined by two tensors $\alpha\in \Hom(V_3,V_4)$ and $\beta\in V_2^\vee\otimes S^2(\W^2V_4)^\vee$. It parametrizes the set of triples $(x,y,P)$ such that $P\supset\alpha(y)$ and 
$\beta(x,\bullet)=0$ on the Pl\"ucker representative of $P$. 

In order to understand $\pi_{12}$, we observe that for $(x,y)$ given, the set of planes  $P$ containing  $\alpha(y)$ is a projective plane, on which $\beta$ cuts a conic through a morphism $\of_P(-1,-2)\rightarrow S^2(V_4/\of_P(0,-1))^\vee$ on $P=\PP^1\times\PP^2$. We deduce that the
discriminant is given by a map $\of_P(-3,-6)\rightarrow \det ((V_4/\of_P(0,-1)^\vee)^{\otimes 2}$, that is 
a section of $\of_P(3,4)$. 

To describe $\pi_{23}$ we observe that since $\alpha$ is the injection of a hyperplane, 
the condition $P\supset\alpha(y)$ determines uniquely $y$ from $P$ unless $P\subset\alpha(V_3)$. 
So the image of  $\pi_{23}$ is $\Bl_{\PP^2}\Gr(2,4)$. 
Then we apply \cref{lem:blowupcodim2} to this fourfold. The complete intersection of two $\of(0,2)$ divisors on $Z$ cuts a K3 surface that intersects the specified $\PP^2$ in four points. It follows that $S$ is isomorphic to the blow up of an octic K3 surface in these four  points.

Finally consider $\pi_{13}$, whose image is obviously a divisor of bidegree $(1,2)$ in  $\PP^1 \times \Gr(2,4)$, hence the blow up of $\Gr(2,4)$ along the same octic K3 surface as before. For $(x,P)$ given, $y$ is uniquely determined by the condition that $\alpha(y)=P\cap \alpha(V_3)$, except when $P\subset \alpha(V_3)$, in which case we get a projective line as fiber. This implies that $\pi_3$ blows up
a surface which is a $(1,2)$-divisor in  $\PP^1 \times \PP^2$, hence a $\dP_5$. 
\end{fano}

\medskip\begin{fano}\fanoid{3-32-125-2} %58
\label[mystyle]{32-125-3-1-24-20-1-3-1-3-1-5}$\mZ(\PP^2_1 \times \PP^2_2 \times \PP^4,\of(1,0,1) \oplus \of(1,1,1) \oplus \mQ_{\PP^2_2}(0,0,1))$.
\subsubsection*{Invariants}  $h^0(-K)=32, \ (-K)^4=125, \  h^{1,1}=3, \ h^{3,1}=1, \ h^{2,2}=24$, $-\chi(T)=20$.
\subsubsection*{Description} 
\begin{itemize}
    \item $\pi_{12} \colon X \to \PP^2 \times \PP^2$ is a blow up $\Bl_{{S}_{16}^{(1)}}(\PP^2 \times \PP^2)$ of a K3 surface of degree 16 blown up in one point.
    \item $\pi_{23} \colon X \to \PP^2 \times \PP^4$ is a blow up 
    $\Bl_{{S}_{12}^{(1)}}Y$ of $Y= \Bl_{\PP^1}\PP^4$ along a K3 surface of degree 12 blown up in one point. 
     \item $\pi_{13} \colon X \to \PP^2 \times \PP^4$ is a blow up of a Hirzebruch surface $\Sigma_1$ in a
     Fano variety belonging to the \cref{39-160-2-1-22-21-1-3-1-5} family, a complete intersection of bidegree $(1,1),(1,2)$.
    \item Rationality: yes. 
\end{itemize}

\subsubsection*{Semiorthogonal decompositions}
\begin{itemize}
    \item 10 exceptional objects and $\Db(S_{16})$ from $\pi_{12}$.
    \item 10 exceptional objects and $\Db(S_{12})$ from $\pi_{13}$.
    \item We deduce that $S_{16}$ and $S_{12}$ are derived equivalent (see \cref{lem:dereq}).
\end{itemize}

\subsubsection*{Explanation}
First consider $\pi_{12}$. 
Writing $\mZ(\PP^2_1 \times \PP^2_2 \times \PP^4,\mQ_{\PP^2_2}(0,0,1))$ as  $\PP^2_1 \times \PP_{\PP^2_2}(\of(0,-1) \oplus \of^{\oplus 2})$,
we see that $X$ is the blow up of $P=\PP^2_1 \times \PP^2_2$ along a smooth surface $S$ whose structure sheaf is resolved by the Eagon--Northcott complex
$$0 \longrightarrow \of_P(-3,-2) \oplus \of_P(-3,-3) \longrightarrow \of_P(-2,-1) \oplus \of_P(-2,-2)^{\oplus 2} \longrightarrow \of_P \longrightarrow \of_S \longrightarrow 0,$$
from which we calculate the Hilbert polynomial $\chi(\of_S(k)) = \frac{15k^2-k}{2}+2$. Dualizing 
the complex, we get 
$$0 \longrightarrow \of_P(-3,-3) \longrightarrow \of_P(-1,-2) \oplus \of_P(-1,-1)^{\oplus 2} \longrightarrow \of_P(0,-1) \oplus\of_P\longrightarrow \omega_S \longrightarrow 0.$$
So once again $\omega_S$ admits a canonical section, vanishing on the locus where the 
morphism $$\of_P(-1,-2) \oplus \of_P(-1,-1)^{\oplus 2} \longrightarrow \of_P(0,-1)$$ is not surjective. 
This is the common zero locus of a section of $\of_P(1,1)$ and two sections of $\of_P(1,0)$, 
hence a line $D$ projecting to a point on $\PP_1^2$. Once again this line is a $(-1)$-curve on 
$S$, after contracting which we get a genuine K3 surface $S_{16}$ of degree $16$. 

\smallskip
Now consider the projection $\pi_{23}: X \to \PP^2_2 \times \PP^4$.
The zero locus of $\mQ_{\PP^2_2} (0,1)$ is $Y=\Bl_{\PP^1}\PP^4$.
We identify $\PP^2_1 \times Y$ with $p:\PP(U_3 \otimes \of_Y) \to Y$ so that $X$ the zero locus of a bundle $p^* \mF \otimes \of_p(1)$, where $\mF= \of(1,0)\oplus\of(1,1)$ is a vector bundle of rank 2 on $Y$ (the restriction of $\of(1,0)\oplus\of(1,1)$ from $\PP^2_2 \times \PP^4$).
So $X$ is the blow up of a smooth surface $T \subset Y$ whose structure sheaf is resolved by the Eagon--Northcott complex
$$0 \longrightarrow \of_Y(-2,-3) \oplus \of_Y(-1,-3) \longrightarrow \of_Y(-1,-2)^{\oplus 3} \longrightarrow \of_Y \longrightarrow \of_T \longrightarrow 0.$$
We deduce that the Hilbert polynomial is $\chi(\of_T(k)) = \frac{11k^2-k}{2}+2$. Moreover, since 
$\of_Y(-2,-3)=K_Y$, the same arguments as before yield that  $T ={S}_{12}^{(1)}$ is the
blow up in one point of a K3 surface of degree 12.

\smallskip
Finally consider $\pi_{13}: X \to \PP^2_1 \times \PP^4$. The image is contained 
in a divisor of bidegree $(1,1)$, over which we get two additional morphisms
$\alpha: \of(-1,-1)\ra V_3^\vee$ and $\beta: \of(0,-1)\ra V_3$, if $\PP^2_2=\PP(V_3)$.
There is a non-trivial fiber when these two morphisms are compatible, which yields
another divisor of bidegree $(1,2)$. We conclude that the image of $\pi_{13}$ is the 
intersection of these two divisors. Moreover, non-trivial fibers occur only when
$\alpha=0$, which can be identified with a complete intersection of a divisor of bidegree $(1,1)$ and three divisors of bidegree $(0,1)$ in $\PP^2 \times \PP^4$, which is
 a surface of type $\Sigma_1$ over which the fibers are projective lines, proving 
that $\pi_{13}$ is a blow up. 
\end{fano}

\medskip\begin{fano}\fanoid{3-25-89-2} %45
\label[mystyle]{25-89-3-1-30-27-1-2-1-3-1-5}
$\mZ(\PP^1 \times \PP^2 \times \PP^4,\of(0,1,1)\oplus\of(0,1,2)\oplus \of(1,0,1))$.

\subsubsection*{Invariants}  $h^0(-K)=25, \ (-K)^4=89, \  h^{1,1}=3, \ h^{3,1}=1, \ h^{2,2}=30$, $-\chi(T)=27$.
\subsubsection*{Description} 
\begin{itemize}
    \item $\pi_{12} \colon X \to \PP^1 \times \PP^2$ is a conic bundle.
     \item $\pi_{13} \colon X \to \PP^1 \times \PP^4$ is a blow up $\Bl_{S_8^{(8)}}Y$, where $Y$ is $\Bl_{\PP^2}\PP^4$ and $S_8^{(8)}$ is
      a K3 surface of degree 8 blown up in 8 points.
     \item $\pi_{23} \colon X \to \PP^2 \times \PP^4$ is $\Bl_{\dP_2}Z$
     where $Z$ is the Fano fourfold from \cref{39-160-2-1-22-21-1-3-1-5}.
     \item Rationality: yes
\end{itemize}

\subsubsection*{Semiorthogonal decompositions}
\begin{itemize}
    \item 6 exceptional objects and $\Db(\PP^1\times \PP^2, \mC_0)$ from $\pi_{12}$.
    \item 16 exceptional divisors and $\Db(S_8)$ from $\pi_{13}$ and $\pi_{23}$.
\end{itemize}

\subsubsection*{Explanation}
Let us start with $\pi_{23}$. By \cref{lem:blowupcodim2},
$X$ is the blow up of $Z$ in a surface which is a complete intersection of bidegrees $(1,1),(1,2)$ in $\PP^2 \times \PP^2$, that is a del Pezzo surface of degree 2. In fact, this a double cover of $\PP^2$ ramified in 4 points, a section $\mL^2 \boxtimes \of(1)$ over $\PP_{\PP^2}(\Omega^1(1))$.

The other two maps are obtained by direct calculation.
Consider first the projection to $Y= \mZ(\PP^1 \times \PP^4,\of(1,1))$, i.e.\ $\Bl_{\PP^2}\PP^4$. We have a projective bundle over $Y$ given by $\PP_Y(\of^{\oplus 3})$. The exceptional locus is a surface 
$T$ whose structure sheaf is resolved by the Eagon--Northcott complex
\[
 0 \to \of_Y(0, -1) \oplus \of_Y(0, -2) \to \of_Y^3 \to \of_Y(0,3) \to \of_{T}(0,3) \to 0.
\]
If we restrict the projection $Y \to \PP^4$ to $T$ we can see $T$ as the blow up of the surface $S$ whose structure sheaf is resolved by another Eagon--Northcott complex:
\[
 0 \to \of_{\PP^4}(-5) \oplus \of_{\PP^4}(-4) \to \of_{\PP^4}^3(-3) \to \of_{\PP^4} \to \of_{S} \to 0.
\]
We readily deduce that the Hilbert polynomial of $S$ is $\frac{7k^2-k}{2}+2$, so this is a degree 7 surface in $\PP^4$ which by \cite{Oko84} must be an inner projection of a K3 of degree 8 in $\PP^5$. It follows, that a general plane in $\PP^4$ meets $S$ in $7$ points and that $T\to S$ is the blow up of $7$ additional points. Finally, $T=S_8^{(8)}$ is a K3 surface of degree eight blown up in $8$ points.
\end{fano}

\medskip\begin{fano}\fanoid{3-31-119-2} %56
\label[mystyle]{31-119-3-1-24-21-1-3-1-3-1-5} $\mZ(\PP^2_1 \times \PP^2_2 \times \PP^4,\of(0,1,2) \oplus \of(1,1,0) \oplus \mQ_{\PP^2_1}(0,0,1))$.
\subsubsection*{Invariants}  $h^0(-K)=31, \ (-K)^4=119, \  h^{1,1}=3, \ h^{3,1}=1, \ h^{2,2}=24$, $-\chi(T)=21$.
\subsubsection*{Description} 
\begin{itemize}
    \item $\pi_{12} \colon X \to \PP^2_1 \times \PP^2_2$ is a conic bundle over a $(1,1)$ divisor $Z \subset \PP^2 \times \PP^2$ with discriminant a $(2,3)$-divisor in $Z$. 
    \item $\pi_{13} \colon X \to \PP^2_1 \times \PP^4$ is a blow up 
    $\Bl_{{S}_8^{(1)}}Y$, where $Y= \Bl_{\PP^1}\PP^4$ and ${S}_8^{(1)}$ is a K3 surface of degree 8 blown up in one point.
     \item $\pi_{23} \colon X \to \PP^2_2 \times \PP^4$ is a blowup of \cref{39-160-2-1-22-21-1-3-1-5} along 
     a quadratic surface.
    \item Rationality: yes.
\end{itemize}

\subsubsection*{Semiorthogonal decompositions}
\begin{itemize}
    \item 6 exceptional objects and $\Db(Z,\mC_0)$ from $\pi_{12}$.
    \item 10 exceptional objects and $\Db(S_8)$ from $\pi_{13}$.
    \item 10 exceptional objects and $\Db(S_2,\alpha)$ or $\Db(S_8)$ (see \cref{39-160-2-1-22-21-1-3-1-5}) from $\pi_{23}$. 
\end{itemize}

\subsubsection*{Explanation}
Let $\alpha, \beta, \gamma$ denote the three sections defining $X$. In particular $\gamma$ can be seen as a morphism
with a two-dimensional kernel $K_2$. 

The projection $\pi_{12}$ maps $X$ to the $(1,1)$-divisor defined by $\beta$. When $y$ is given in this divisor, 
$z$ must be contained in $\gamma^{-1}(x)\simeq K_2\oplus x$, and $\alpha(y,z,z)=0$ defines a conic in the corresponding
projective plane. So $\pi_{12}$ is a conic bundle, with discriminant given by a divisor of type $(2,3)$. 

\smallskip
Now we describe the projection $\pi_{13}$ to $Y=\Bl_{\PP^1}\PP^4$. We apply \cref{lem:EN} with $\mE= \of^{\oplus 3}$ and $\mF=\of(0,2) \oplus \of(1,0)$, where the bi-grading is with respect to the embedding of $Y$ in $\PP^2 \times \PP^4$ as zero locus of $\mQ(0,1)$. The Eagon--Northcott complex thus reads 
\[
0 \to \of_Y(-1, -4) \oplus \of_Y(-2,-2) \to \of_Y^3(-1, -2) \to \of_Y \to \of_S \to 0.
\]
In order to compute $H^i(\of_Y(a,b))$ we use the Koszul complex for $Y$ in $\PP^2 \times \PP^4$.
We get that 
$$\chi(\of_S(a,b))=2a^2+6ab+\frac{7b^2}{2}-a-\frac{b}{2}+2.$$
In particular $\chi(\of_S(0,b))=\frac{7b^2}{2}-\frac{b}{2}+2$, so that if the projection of $S$ to $\PP^4$ is an isomorphism, we can conclude (by \cite{Oko84}) that $S$ is a K3 surface of degree $8$ blown up at one point.
This is the case because $S$ is defined by the condition that the two linear forms $\alpha(\bullet,z,z)$
and $\beta(x,\bullet)$ are proportional, where moreover $\gamma(z)$ must be contained in $x$. Over the 
blown up line where $z\subset \Ker\gamma$, the linear form  $\alpha(\bullet,z,z)$ is non-zero, so $x$ is uniquely
determined since $\beta$ has maximal rank. As a consequence, $S$ is mapped isomorphically to a surface in $\PP^4$
containing the blown up line, and this surface must be a non-minimal K3 of degree seven. 

\smallskip
Finally, the projection $\pi_{23}$ maps $X$ to the intersection of the $(1,2)$-divisor defined by $\alpha$, with the $(1,1)$ divisor defined by $\beta(\gamma(z),y)=0$. The latter contains $\PP^2\times \PP(K_2)$, which gives the locus
over which $\pi_{23}$ has non-trivial fibers, namely projective lines. This is actually a copy of $\PP^1\times \PP^1$
embedded by $\of(2,1)$, and $\pi_{23}$ is finally the blowup of \cref{39-160-2-1-22-21-1-3-1-5} along a quadratic surface. 
\end{fano}

\medskip\begin{fano}\fanoid{3-33-130-2} %47
\label[mystyle]{33-130-3-1-24-20-1-2-1-4-1-6} $\mZ(\PP^1 \times \PP^3 \times \PP^5,\mQ_{\PP^3}(0,0,1) \oplus \of(0,1,1) \oplus \of(1,1,1))$.

\subsubsection*{Invariants}  $h^0(-K)=33, \ (-K)^4=130, \  h^{1,1}=3, \ h^{3,1}=1, \ h^{2,2}=24$, $-\chi(T)=20$.
\subsubsection*{Description} 
\begin{itemize}
    \item $\pi_{12} \colon X \to\PP^1 \times \PP^3$ is a blow up of an elliptic K3 surface blown up in two points. 
    \item $\pi_{13} \colon X \to \PP^1 \times \PP^5$ is a blow up along a quadratic surface of \cref{39-160-2-1-22-21-1-2-2-4}, the intersection of two 
    divisors of bidegree $(0,2)$ and $(1,2)$, which is itself a blow up of a quadric in $\PP^5$ along a degree $8$ 
    K3 surface containing a line. 
     \item $\pi_{23} \colon X \to \PP^3 \times \PP^5$ is a blow up $\Bl_{S_{26}}Y$, where $Y$ is the blow up of $\Gr(2,4)$ in a line, and $S_{26}$ is a K3 of degree 26 and generic Picard rank 2 containing the said line.
    \item Rationality: yes.
\end{itemize}

\subsubsection*{Semiorthogonal decompositions}
\begin{itemize}
    \item 10 exceptional objects and $\Db(S_{16})$ from $\pi_{12}$.
    \item 10 exceptional objects and $\Db(S_8)$ from $\pi_{13}$.
    \item 10 exceptional objects and $\Db(S_{26})$ from $\pi_{23}$.
    \item We deduce that $S_{16}$, $S_8$ and $S_{26}$ are derived equivalent (see \cref{lem:dereq}).
\end{itemize}

\subsubsection*{Explanation} Our fourfold $X$ is defined by tensors $\alpha\in \Hom(V_6,V_4)$, $\beta\in V_4^\vee\otimes V_6^\vee$ and 
 $\gamma\in  V_2^\vee\otimes V_4^\vee\otimes V_6^\vee$; it parametrizes the triples $(x,y,z)$ such that $\alpha(z)\subset y$,
 $\beta(y,z)$ and $\gamma(x,y,z)=0$. 
 
 Let us describe $\pi_{12}$. 
 When $x$ and $y$ are fixed, $z$ must belong to $\alpha^{-1}(y)=A_2\oplus y$, which defines a projective plane to which 
 $\beta$ and $\gamma$ impose two linear conditions. So the projection $\pi_{12}$ is birational, and blows up a surface $S$ inside $P=\PP^1\times\PP^3$
 whose structure sheaf is resolved by the Eagon--Northcott complex
 $$0\longrightarrow \of_P(0,-1)\oplus\of_P(-1,-1)\longrightarrow (A_2 \otimes \of_P)\oplus\of_P(0,1)\longrightarrow \of_P(1,3)
 \longrightarrow \of_S(1,3)\longrightarrow 0. $$
 Dualizing this complex we get 
 $$0\longrightarrow \of_P(-2,-4)\longrightarrow (A_2^\vee \otimes \of_P(-1,-1))\oplus\of_P(-1,-2)
\longrightarrow  \of_P(-1,0)\oplus\of_P \longrightarrow \omega_S\longrightarrow 0. $$
 So $\omega_S$ admits a canonical section whose zero locus is the curve $D$ along which the 
 morphism $(A_2^\vee \otimes \of_P(-1,-1))\oplus\of_P(-1,-2)
\longrightarrow  \of_P(-1,0)$ vanishes, which is the pull-back of two points from $\PP^3$. 
So $D$ is just the union of two lines, and $S$ is an elliptic K3 surface blown up at two points.
The first  complex above also yields the Hilbert polynomial $\chi(\of_S(k,k))=7k^2-t+2$, so that the degree of $S$ with respect to $\of_S(1,1)$ is $16$.

 \smallskip
 Now we turn to $\pi_{13}$. 
 When $x$ and $z$ are fixed, $y=\alpha(z)$ when this is non-zero, which implies that $\pi_{13}$ blows up the 
 locus where $\alpha(z)=0$. The image is the intersection $Y$ of the two divisors of equations $\beta(\alpha(z),z)=0$ 
 and $\gamma(x,\alpha(z),z)=0$. The image of $Y$ is $\PP^5$ is the quadric $Q$ of equation $\beta(\alpha(z),z)=0$,
 which contains the line $l= \PP(\Ker\alpha)$, and the projection of $Y$ to $Q$ blows up the intersection of $Q$ with 
 two other quadrics also containing $l$. 
 
 \smallskip
 Finally we describe $\pi_{23}$. 
We use \cref{lem:blowhighercod} twice: first on $\PP^3 \times \PP^5$ to get $\Bl_{\PP^1} \PP^5$, which is then cut with a quadric containing the line. The blown up locus is a K3 surface containing a line 
(whose projection in $\Gr(2,4)$ is the intersection of three quadrics). This K3 surface 
has Picard rank 2, and its degree with respect to $\of(1,1)$ can be checked to be 26.
\end{fano}

\medskip\begin{fano}\fanoid{3-27-100-2-B} %40
\label[mystyle]{27-100-3-1-26-22-1-2-1-4-1-4} $\mZ(\PP^1 \times \PP^3 \times \PP^3,\of(0,1,1)^{\oplus 2} \oplus \of(1,1,1))$.
\subsubsection*{Invariants}  $h^0(-K)=27, \ (-K)^4=100, \  h^{1,1}=3, \ h^{3,1}=1, \ h^{2,2}=26$, $-\chi(T)=22$.
\subsubsection*{Description} 
\begin{itemize}
    \item $\pi_{12},\pi_{13} \colon X \to \PP^1 \times \PP^3$ are 
    blow ups 
    $\Bl_{S_{20}^{(4)}}(\PP^1 \times \PP^3)$, where $S_{20}^{(4)}$ is a K3 surface of 
    degree 20 and Picard number 2, blown up in 4 points.
     \item $\pi_{23} \colon X \to \PP^3 \times \PP^3$ is a blow up $\Bl_{S_4}Y$, where $Y \subset \PP^3 \times \PP^3$ is a codimension 2 linear section, and $S_4$ a determinantal quartic K3 surface.
    \item Rationality: yes.
 \end{itemize}

\subsubsection*{Semiorthogonal decompositions}
\begin{itemize}
    \item 12 exceptional objects and $\Db(S_{20})$ from $\pi_{12}$ or  $\pi_{13}$.
    \item 12 exceptional objects and $\Db(S_4)$ from $\pi_{23}$.
    \item We deduce that $S_{20}$ and $S_4$ are derived equivalent (see \cref{lem:dereq}).
\end{itemize}

\subsubsection*{Explanation}
The projection $\pi_{23}$ can be  understood using \cref{lem:blowupcodim2}. In fact one can check that the fourfold $Y$ has $h^{1,1}=2$, $h^{2,2}=6$, and we can give an explicit interpretation. Indeed, first identify $Y$ with $\mZ(\Fl(1,3,4),\of(1,1))$. This equivalent to writing $Y= \mZ(\PP_{\PP^3}(\mQ^{\vee}(-1)),\mL)$, where we identified $\mU$ on $\Gr(3,4)$ with $\mQ^{\vee}$ on $\PP^3$.
By the Cayley trick, the projection $\pi:Y \to \PP^3$ is therefore generically a $\PP^1$-bundle, with special fibers $\PP^2$ over 4 points. This is enough to compute the Hodge numbers for $Y$, for example via a Grothendieck ring calculation.
This construction shows as well that $X$ is rational, since $Y$ is.
The locus which is blown up is a K3 surface by adjunction. In \cite[Theorem 1]{iliev2019hyperkahler}
this K3 is shown to be quartic determinantal. 

The two other projections $\pi_{12},\pi_{13}$ are birational,
    with base locus inside $P=\PP^1 \times \PP^3$ given by the degeneracy locus of the induced morphism $\of_P(0,-1)^{\oplus 2}\oplus \of_P(-1,-1)\rightarrow V_4^\vee \otimes \of_P$. This is a surface $S$ whose ideal sheaf is resolved by the Eagon--Northcott complex 
    $$0\longrightarrow\of_P(-1,-4)^{\oplus 2}\oplus \of_P(-2,-4)\rightarrow V_4^\vee \otimes \of_P(-1,-3)\longrightarrow\of_P
    \longrightarrow\of_S\longrightarrow 0.$$
    It follows that the Hilbert polynomial of $S$ is $8k^2 - 2k +2$. Dualizing the previous complex we get 
     $$0\longrightarrow \of_P(-2,-4)\rightarrow V_4 \otimes \of_P(-1,-1)\longrightarrow
     \of_P(-1,0)^{\oplus 2}\oplus \of_P
    \longrightarrow\omega_S\longrightarrow 0.$$
    So $\omega_S$ admits a canonical section, vanishing exactly along the curve $D$ where the 
    morphism $V_4 \otimes \of_P(-1,-1)\longrightarrow \of_P(-1,0)^{\oplus 2}$ degenerates. 
    This is just the preimage of the locus in $\PP^3$ where the morphism $V_4 \otimes \of_{\PP^3}(-1)\rightarrow \of_{\PP^3}^{\oplus 2}$, that is, four points; so $D$ is the 
    union of four lines $L_1,\ldots, L_4$. Then $\omega_S=\of_S(L_1+\cdots+L_4)$ and we conclude as usual that $S$ is a degree $20$ K3 surface blown up in 4 points.
    Moreover the class of $S$ in the Chow ring is $[S]=4l h+6h^2$, if $l$ and $h$ denote the hyperplane classes of $\PP^1$ and $\PP^3$, respectively, so that $S$ projects to an elliptic quartic  surface in $\PP^3$. 
\end{fano}

\medskip\begin{fano}\fanoid{3-31-120-2-A} %49
\label[mystyle]{31-120-3-1-22-18-1-2-1-2-4} $\mZ(\PP^1 \times \Fl(1,2,4),\of(0;0,1) \oplus \of(1;1,1))$.
\subsubsection*{Invariants}  $h^0(-K)=31, \ (-K)^4=120, \  h^{1,1}=3, \ h^{3,1}=1, \ h^{2,2}=22$, $-\chi(T)=18$.
\subsubsection*{Description} 
\begin{itemize}
    \item $\pi_{12} \colon X \to \PP^1 \times \PP^3$ is a blow up $\Bl_{S_{14}}(\PP^1 \times \PP^3)$, where $S_{14}$ is a K3 surface of Picard rank  2.
    \item $\pi_{13} \colon X \to \PP^1 \times \Gr(2,4)$ is a blow up 
    $\Bl_{S_{16}}(\PP^1 \times \QQ_3)$, where $S_{16}$ is a K3 surface of Picard rank 2.
     \item $\pi_{23} \colon X \to \Fl(1,2,4)$ is a blow up 
     $\Bl_{S_{24}}\PP_{\QQ^3}(\mU|_{\QQ_3})$, where $S_{24}$ is a K3 of Picard rank  2 and $\mU|_{\QQ^3}$ is the restriction of the tautological bundle from $\QQ^4=\Gr(2,4)$.
    \item Rationality: yes.
\end{itemize}

\subsubsection*{Semiorthogonal decompositions}
\begin{itemize}
    \item 8 exceptional objects and $\Db(S_{14})$ from $\pi_{12}$.
    \item 8 exceptional objects and $\Db(S_{16})$ from $\pi_{13}$.
    \item 8 exceptional objects and $\Db(S_{24})$ from $\pi_{23}$.
    \item We deduce that $S_{14}$, $S_{16}$ and $S_{24}$ are derived equivalent (see \cref{lem:dereq}).
\end{itemize}

\subsubsection*{Explanation}
We start with $\pi_{23}$. 
First interpret $\mZ(\Fl(1,2,4),\of(0,1))$ as $ \PP_{\QQ^3}(\mU|_{\QQ^3})$, simply because $\Fl(1,2,4) \cong \PP_{\Gr(2,4)}(\mU)$. Then apply \cref{lem:blowupcodim2}. Two copies of $\of(1,1)$ on the flag manifold cut a K3 which has degree 24 with respect to this bundle, and Picard rank 2.

We turn to $\pi_{13}$.
Consider the projection $\Fl(1,2,4) \to \Gr(2,4)$ that identifies $\Fl(1,2,4)$ with $\PP_{\Gr(2,4)}(\mU)$. Again, we interpret $\mZ(\Fl(1,2,4),\of(0,1))$ as $ \PP_{\QQ^3}(\mU|_{\QQ^3})$, so that we are left with the question of 
understanding the zero locus of $\of(1;1,0)$ inside $\PP^1 \times \PP_{\QQ^3}(\mU|_{\QQ^3}(-1))$. Since $\mZ(1,0)$ inside $\PP_{\QQ^3}(\mU|_{\QQ^3})$ is the blow up of the elliptic curve given by the 
zero locus $\mZ(\QQ^3,\mU^\vee(1))$, it is easy to see that $X$ is the blow up of the zero locus of $\mZ(\PP^1 \times \QQ^3,\mU^\vee(1,1))$, which is a K3 surface $S_{16}$ of Picard rank 2 and degree $16$.

Finally we describe $\pi_{12}$. We consider the projection $\pi: \Fl(1,2,4) \to \PP^3$ that identifies $\Fl(1,2,4)$ with $\PP_{\PP^3}(\mQ^\vee(-1))$. The relative hyperplane section $\of_{rel}(1)$ of this projective bundle is the line bundle $\of(0,1)$ on $\Fl(1,2,4)$, that is, $\of(0;1,0)$ on $\PP^1 \times \Fl(1,2,4)$. Setting $\mL:=\of(1;1,0)$, our variety $X$ is then given by sections of $\of_{rel}(1) \otimes (\of \oplus \mL)$, so the projection is a birational map which blows up  a smooth surface $S$, the degeneracy locus of a map $\of_P(-1,-1)\oplus \of_P \to \mQ_{\PP^3}^\vee(0,1)$ on $P=\PP^1 \times \PP^3$. 
The structure sheaf of $S$ is resolved by the Eagon--Northcott complex
$$0 \longrightarrow \of_P(-1,-3) \oplus \of_P(-2,-4) \longrightarrow \mQ_{\PP^3}^\vee(-1,-2) \longrightarrow \of_P \longrightarrow \of_S \longrightarrow 0.$$
We deduce that the Hilbert polynomial of $S$ is $7k^2 + 2$. 
Dualizing the complex, we get 
$$0 \longrightarrow  \of_P(-2,-4) \longrightarrow \mQ_{\PP^3}(-1,-2) \longrightarrow 
\of_P(-1,-1) \oplus \of_P\longrightarrow \omega_S \longrightarrow 0.$$
In particular, $ \omega_S$ admits a canonical section, vanishing  where the
morphism $ \mQ_{\PP^3}(-1,-2) \longrightarrow \of(-1,-1)$ is not surjective, that is, on the 
pull-back of the zero locus $Z$ of a section of  $ \mQ_{\PP^3}(-1)$ on $\PP^3$. But a general section of this bundle is actually a two-form in four variables, 
and since a general such
form does not vanish on any hyperplane, we can conclude that $Z=\emptyset$ and $\omega_S$ is trivial. 
Finally, $S$ is a degree $14$ K3 surface.
\end{fano}

\medskip\begin{fano}\fanoid{3-27-100-2-C} %38
\label[mystyle]{27-100-3-1-24-20-1-2-1-3-1-5} $\mZ(\PP^1 \times \PP^2 \times \PP^4,\of(0,0,2) \oplus \of(0,1,1) \oplus \of(1,1,1))$.

\subsubsection*{Invariants}  $h^0(-K)=27, \ (-K)^4=100, \  h^{1,1}=3, \ h^{3,1}=1, \ h^{2,2}=24, \ -\chi(T)=20$.
\subsubsection*{Description} 
\begin{itemize}
    \item $\pi_{12} \colon X \to \PP^1 \times \PP^2$ is a conic bundle with discriminant a singular $(2,4)$ divisor. 
    \item $\pi_{13} \colon X \to \PP^1 \times \QQ^3$ is a blow up $\Bl_{S_{20}^{(2)}}(\PP^1 \times \QQ^3)$.
     \item $\pi_{23} \colon X \to \PP^2 \times \QQ^3$ is a blow up $\Bl_{S_{20}}Y$, where $Y$ is a $(1,1)$-divisor in $\PP^2\times \QQ^3$.
    \item Rationality: yes.
\end{itemize}

\subsubsection*{Semiorthogonal decompositions}
\begin{itemize}
    \item 6 exceptional objects and $\Db(\PP^1 \times \PP^2, \mC_0)$ from $\pi_{12}$.
    \item 10 exceptional objects and $\Db(S_{20})$ from $\pi_{13}$ and $\pi_{23}$.
\end{itemize}

\subsubsection*{Explanation}
As usual the projection to $\PP^2\times\QQ^3$ is the blow up of the locus in the image cut out by two linear sections: hence the intersection in $\PP^2\times\QQ^3$ of three divisors of bidegree $(1,1)$. This is a K3 surface of degree $20$, the degree of $\of(1,0)$ being two and that of 
 $\of(0,1)$ being six. Note that the image $Y$ of $\pi_{23}$ is a hyperplane section with respect to the Segre-like embedding of $\PP^2 \times \QQ^3 \to \PP^{14}$ given by $\of(1,1)$. 
 
 To construct the claimed semiorthogonal decomposition, it is enough to show that $\Db(Y)$ is generated by 10 exceptional objects. The general fiber of the projection $\pi: Y \to \QQ^3$ is a hyperplane section of $\PP^2$, hence a $\PP^1$. However, there is a line in $\PP^4 \supset \QQ^3$ where the corresponding linear form is degenerate, and hence there are two points $p,q$ of $\QQ^3$ where the fiber is the whole $\PP^2$. Then $\Db(X)$ admits a decomposition by two copies of $\Db(\QQ^3)$, and by one copy of $\Db(p)$ and $\Db(q)$, see \cite[Appendix B]{BFM}.

 When we project to $P=\PP^1\times\QQ^3$, we get a birational map with $\PP^1$-fibers over the degeneracy
 locus of a morphism $\of_P(0,-1)\oplus \of_P(-1,-1)\rightarrow V_3^\vee\otimes\of_P$. This is a smooth 
 surface $S$ whose structure sheaf is resolved by the Eagon--Northcott complex
 $$0\longrightarrow\of_P(-1,-3)\oplus\of_P(-2,-3) \longrightarrow V_3^\vee\otimes\of_P(-1,-2)\longrightarrow\of_P\longrightarrow\of_S\longrightarrow 0.$$
 We deduce that $S$ has degree $18$ with respect to $\of_P(1,1)$. Dualizing the complex we get 
 $$0\longrightarrow\of_P(-2,-3) \longrightarrow V_3\otimes\of_P(-1,-1)\longrightarrow\of_P(-1,0)\oplus\of_P\longrightarrow\omega_S\longrightarrow 0.$$
 So $\omega_S$ has a canonical section whose zero locus is the degeneracy locus $D$ of the morphism 
 $V_3\otimes\of_P(-1,-1)\longrightarrow\of_P(-1,0)$. This locus is the pull-back from $\QQ^3$ of the 
 common zero locus to three sections of $\of_{\QQ^3}(1)$, hence two points. We conclude that $D$
 is the union of two disjoint lines, that $\omega_S=\of_S(D)$, and that $S$ is finally a blow up in two points 
 of a K3 surface of degree $20$. 
 
 Finally, the sections of  $\of(0,1,1)$ and $\of(1,1,1)$ define a $\PP^2$-bundle over $\PP^1\times\PP^2$, 
 corresponding to a family of projective planes in $\PP^4$, and the quadric in $\PP^4$ cuts out a conic
 bundle over $\PP^1\times\PP^2$. The discriminant is a divisor of bidegree $(2,4)$. \end{fano}

\medskip\begin{fano}\fanoid{3-43-180-2} %57
\label[mystyle]{43-180-3-1-22-18-1-4-1-5-1-5} $\mZ(\PP^3 \times \PP^4 \times \PP^4,\of(1,1,1) \oplus \mQ_{\PP^3}(0,1,0) \oplus \mQ_{\PP^3}(0,0,1))$.

\subsubsection*{Invariants}  $h^0(-K)=43, \ (-K)^4=180, \  h^{1,1}=3, \ h^{3,1}=1, \ h^{2,2}=22$,$-\chi(T)=18$.
\subsubsection*{Description} 
\begin{itemize}
    \item $\pi_{12}, \pi_{13} \colon X \to \PP^3 \times \PP^4$ are blow ups $\Bl_{S_{22}}Y$, where $S_{22}$ is a K3 surface of degree 22 and $Y$ is a blow up of $\PP^4$ at one point. 
     \item $\pi_{23} \colon X \to \PP^4 \times \PP^4$ has image a rational singular fourfold $Y^\circ$ and contracts a $\PP^2$.
    \item Rationality: yes.
\end{itemize}

\subsubsection*{Semiorthogonal decompositions}
\begin{itemize}
    \item 8 exceptional objects and $\Db(S_{22})$ from $\pi_{12}$ or $\pi_{13}$.
\end{itemize}

\subsubsection*{Explanation} The sections that define $X$ are given by $\alpha \in \Hom(U_5,U_4)$, $\beta\in \Hom(V_5,U_4)$ and $\gamma \in U_4^\vee\otimes U_5^\vee\otimes V_5^\vee$. 
The fourfold $X$ parametrizes the triples of lines $x,y,z$ in $\PP^3\times \PP^4\times \PP^4= \PP(U_4) \times \PP(U_5)\times \PP(V_5)$
such that $$\alpha(y)\subset x, \qquad \beta(z)\subset x, \qquad \gamma(x,y,z)=0.$$
The projections to the two $\PP^4$'s are clearly birational.

The set of pairs $(x,y)$ such that $\alpha(y)\subset x$ is the blow up $Y$ of $\PP^4$ at the point $y_0=\PP(\Ker(\alpha))$. The extra point $z$ is then uniquely determined in the projective line 
$\PP(\beta^{-1}(x))$ by the linear condition given by $\gamma$, except if this condition is 
trivial. This happens on the locus defined by the vanishing of a morphism $\of_Y(-1,-1)\rightarrow (\of_Y\oplus\of_Y(-1,0))^\vee$. 
This locus is a surface $S$ cut out by two divisors of bidegrees $(1,1)$ and $(2,1)$ in $Y$, 
whose canonical divisor is $\of_P(-3,-2)$. So $S$ is a K3 of degree 22 and $X$ is be the blow up 
of $Y$ along this K3. Note that the image of $S$ in $\PP^4$ is the intersection of a quadric through $z_0$, with a cubic which is nodal at $z_0$, in particular this is a nodal surface. The preimage
of $z_0$ in $S$ is a conic, which must be a $(-2)$-curve. 

The set of pairs $(y,z)$ in $\PP^4\times\PP^4$ such that $\alpha(y)$ is collinear to $\beta(z)$, 
and $\gamma (\alpha(y),y,z)=\gamma(\beta(z),y,z)=0$ must be a fourfold $Y^\circ$ singular at $(y_0,z_0)$. 
The extra point $x$ is uniquely determined as $\alpha(y)$ or $\beta(z)$, except when both vanish, 
that is at the unique point $(y_0,z_0)$, over which the condition $\gamma(x,y_0,z_0)=0$ defines a 
projective plane contracted by $\pi_{23}$. In particular the latter is a small contraction. 

The projection of $Y^\circ$ to $\PP V_5 = \PP^4$ is birational, the fiber of $z\ne z_0$ consisting of those $y\subset \alpha^{-1}(\beta(z))$ such that $\gamma(\beta(z),y,z)=0$. This is a point in general, and a projective line when this linear condition 
is trivial on $\alpha^{-1}(\beta(z))\simeq \mathbb{C}\oplus z$. The latter condition is satisfied over the intersection 
of a quadric in $\PP^4$ containing $z_0$ and a cubic which is singular at $z_0$. Finally,
the fiber of $z_0$ is the quadric of equation $\gamma(\alpha(y),y,z_0)=0$. 

To summarize, the birational projection of $Y$ to $\PP^4$ contracts two divisors: a quadric onto a point,
and a (generically) $\PP^1$-bundle to a singular K3. Blowing up $\PP^4$ at $z_0$ we get a smooth K3 surface $S'$ as the 
intersection of two divisors of type $2H-E$ and $3H-2E$. This surface $S'$  cuts the exceptional divisor 
along a conic, and the natural projection to $\PP^3$ exhibits $S'$ as a smooth quartic containing a conic. 
\end{fano}

\medskip\begin{fano}\fanoid{3-23-80-2-B} %44
\label[mystyle]{23-80-3-1-30-26-1-2-1-3-1-4}
$\mZ(\PP^1 \times \PP^2 \times \PP^3,\of(0,1,2)\oplus \of(1,1,1))$.

\subsubsection*{Invariants}  $h^0(-K)=23, \ (-K)^4=80, \  h^{1,1}=3, \ h^{3,1}=1, \ h^{2,2}=30, \ -\chi(T)=26$.
\subsubsection*{Description} 
\begin{itemize}
    \item $\pi_{12} \colon X \to \PP^1 \times \PP^2$ is a conic bundle, with discriminant a divisor of bidegree $(2,8)$. 
    \item $\pi_{13} \colon X \to \PP^1 \times \PP^3$ is a blow up of a special quartic K3 surface 
 blown up at eight points. 
    \item $\pi_{23} \colon X \to \PP^2 \times \PP^3$ is $\Bl_{S_{16}} Y$ where $S_{16}$ is a special quartic K3 surface and $Y=\mZ(\PP^2 \times \PP^3,\of(1,2))$ is a rational fourfold.
    \item Rationality: yes.
\end{itemize}

\subsubsection*{Semiorthogonal decompositions}
\begin{itemize}
    \item 6 exceptional objects and $\Db(\PP^1\times \PP^2, \mC_0)$ from $\pi_{12}$.
    \item 16 exceptional objects and $\Db(S_4)$ from $\pi_{13}$.
    \item 8 exceptional divisors, an unknown category and $\Db(S_4)$ from $\pi_{23}$.
\end{itemize}

\subsubsection*{Explanation}
Choose coordinates $[x_1,x_2]$ on $\PP^1$ and $[y_1,y_2,y_3]$ on $\PP^2$. The two defining sections of $X$ can be written 
as $\sum_ky_kq_k$ and $\sum_{i,j}x_iy_j\ell_{ij}$ for some linear forms $\ell_{ij}$ and quadratic forms $q_k$ on $V_4$.

\smallskip
Over $\PP^1 \times \PP^2$, the second section defines a projective bundle $\PP(\mE)$, where $\mE$ is given by the 
sequence $0\ra \of(-1,-1)\ra V_4^\vee \otimes \of \ra \mE^\vee\lra 0$. The first section then gives $X$  as a conic bundle in $\PP(\mE)$ defined 
by the composition $\of(0,-2)\ra S^2V_4^\vee \otimes \of \ra S^2\mE^\vee$. Its discriminant is given by the induced cubic map from 
$\of(0,-6)$ to $\det(\mE^\vee)^2=\of(2,2)$. 

\smallskip
The image of $X$ in $\PP^2\times \PP^3$ is a $(1,2)$-divisor $Y$. Note that the projection of $Y$ to $\PP^3$ 
has projective lines for fibers, except eight projective planes over the eight intersection points $z_1,\ldots , z_8$ 
of the quadrics $q_1,q_2,q_3$. 
As usual $X$ is a blow up of $Y$ along a codimension two linear section, that is a K3 surface $S\subset \PP^2\times \PP^3$ defined by sections of $\of(1,2)$ and $\of(1,1)^{\oplus 2}$. 
The projection of $S$ to $\PP^3$ is an isomorphism
with a quartic K3 of Picard number two, whose equation $Q_4=0$ is obtained as
$$Q_4=\det \begin{pmatrix} \ell_{11}&\ell_{12}&\ell_{13} \\\ell_{21}&\ell_{22}&\ell_{23} \\ q_1&q_2&q_3\end{pmatrix}.$$
In particular this quartic 
surface contains the twisted cubic defined by the condition 
$$ \mathrm{rank}\begin{pmatrix} 
 \ell_{11}&\ell_{12}&\ell_{13} \\\ell_{21}&\ell_{22}&\ell_{23} \end{pmatrix}\le 1.  $$
 Note that quartic surfaces containing a twisted cubic span
 a Noether--Lefschetz divisor in the moduli space.

Observe that $Y$ can be seen as a hyperplane section of $\PP^2 \times \PP^3$ under the embedding $\PP^2 \times \PP^3 \to \PP(V_3 \otimes S^2 V_4)$ given by the line bundle $\of(1,2)$. A Lefschetz decomposition of $\PP^2 \times \PP^3$ with respect to such a line bundle can be given by $3$ copies of $\Db(\PP^3)$. The section $Y$ would inherit then $8$ exceptional objects, and an unknown residual category. Comparing with the decomposition from $\pi_{13}$ we expect the latter to be generated by $8$ exceptional objects.

\smallskip
Finally, the projection to $\PP^1\times \PP^3$ is birational, and $X$ is the blow up of the surface $T\subset 
\PP^1\times \PP^3$ defined by the condition
$$ \mathrm{rank}\begin{pmatrix} q_1&q_2&q_3\\
 x_1\ell_{11}+x_2\ell_{21}&x_1\ell_{12}+x_2\ell_{22}&x_1\ell_{13}+x_2\ell_{23}  \end{pmatrix}\le 1.$$
The projection of $T$ to $\PP^3$ maps it to the quartic $S$, and blows up the eight points $z_1,\ldots , z_8$. 
\end{fano}

\medskip\begin{fano}\fanoid{3-29-110-2-B} %55
\label[mystyle]{29-110-3-1-24-20-1-3-1-3-1-4} $\mZ(\PP^2 \times \PP^2 \times \PP^3,\of(0,1,1) \oplus \of(1,0,1) \oplus \of(1,1,1)).$
\subsubsection*{Invariants}  $h^0(-K)=29, \ (-K)^4=110, \  h^{1,1}=3, \ h^{3,1}=1, \ h^{2,2}=24, \ -\chi(T)=20$.
\subsubsection*{Description} 
\begin{itemize}
    \item $\pi_{12} \colon X \to \PP^2 \times \PP^2$ is a blow up $\Bl_{S_{20}^{(1)}}(\PP^2 \times \PP^2)$ along a K3 surface of degree $20$ blown up in one point. 
    \item $\pi_{13},\pi_{23} \colon X \to \PP^2 \times \PP^3$ are blow ups $\Bl_{S_{20}^{(1)}} Y$ of a $(1,1)$ divisor $Y \subset \PP^2 \times \PP^3$ along  a K3 surface of degree 20 blown up in one point.
    \item Rationality: yes.
\end{itemize}

\subsubsection*{Semiorthogonal decompositions}
\begin{itemize}
    \item 10 exceptional objects and $\Db(S_{20})$ from all the maps. 
\end{itemize}

\subsubsection*{Explanation}
The situation being symmetrical, we can describe $\pi_{23}$ and $\pi_{13}$ using the same arguments.
The line bundle $\of(0,1,1)$ cuts out $\PP^2 \times Y$, for $Y$ a $(1,1)$ divisor. Then 
$X$ is the blow up of $Y$ along the degeneracy locus $T$ of the map of bundles $\of_Y(0,-1) 
\oplus \of_Y(-1,-1)\to \of_Y^{\oplus 3}$. The Eagon--Northcott complex on $Y$ reads
$$0 \longrightarrow \of_Y(-1,-3)\oplus\of_Y(-2,-3) \longrightarrow \of_Y(-1,-2)^{\oplus 3} \longrightarrow \of_Y \longrightarrow \of_T\longrightarrow 0$$
and can be dualized, since $\omega_Y=\of_Y(-2,-3)$, into 
$$0 \longrightarrow \of_Y(-2,-3) \longrightarrow \of_Y(-1,-1)^{\oplus 3} 
\longrightarrow \of_Y(-1,0)\oplus\of_Y \longrightarrow \omega_T\longrightarrow 0.$$
So $\omega_T$ admits a canonical section, vanishing exactly on the locus where 
the morphism $ \of_Y(-1,-1)^{\oplus 3} 
\longrightarrow \of_Y(-1,0)$ is not surjective, hence on the common zero locus $D$ of three 
sections of $\of_Y(0,1)$. This curve is a line $D$ (projecting to a point in $\PP^3$), and since 
$\omega_T=\of_T(D)$ we deduce that $D$ is a $(-1)$-curve. Finally, after contracting 
this curve we get a genuine K3 surface of degree $20$. 

\smallskip 
We turn to the description of $\pi_{12}$. On $\PP^2 \times \PP^2$ we have a map of bundles 
$$\of_{\PP^2 \times \PP^2}(0,-1) \oplus \of_{\PP^2 \times \PP^2}(-1,0) \oplus \of_{\PP^2 \times \PP^2}(-1,-1) \longrightarrow \of_{\PP^2 \times \PP^2}^{\oplus 4},$$
and $\pi_{12}$ is the blow up of its degeneracy locus $S$, whose structure sheaf is resolved by the Eagon--Northcott complex
$$0 \longrightarrow \of_{\PP^2 \times \PP^2}(-2,-3) \oplus \of_{\PP^2 \times \PP^2}(-3,-2) \oplus \of_{\PP^2 \times \PP^2}(-3,-3) \longrightarrow \of_{\PP^2 \times \PP^2}(-2,-2)^{\oplus 4} \longrightarrow \of_{\PP^2 \times \PP^2} \longrightarrow \of_{S} \longrightarrow 0.$$
We deduce that the Hilbert polynomial of $S$ with respect to $\of(1,1)$ is $8k ^2 - k +2$.
Moreover, dualizing as before, we get the complex 
$$0 \longrightarrow  \of_{\PP^2 \times \PP^2}(-3,-3) \longrightarrow \of_{\PP^2 \times \PP^2}(-1,-1)^{\oplus 4} \longrightarrow
\of_{\PP^2 \times \PP^2}(-1,0) \oplus \of_{\PP^2 \times \PP^2}(0,-1) \oplus\of_{\PP^2 \times \PP^2} \longrightarrow \omega_{S} \longrightarrow 0.$$
So once again $\omega_{S}$ admits a canonical section, vanishing on the locus where the morphism 
$ \of_{\PP^2 \times \PP^2}(-1,-1)^{\oplus 4} \longrightarrow
\of_{\PP^2 \times \PP^2}(-1,0) \oplus \of_{\PP^2 \times \PP^2}(0,-1)$ is not surjective.
This is a curve $D$ whose class is given, by the Thom--Porteous formula, as 
$$[D]=s_3(\of_{\PP^2 \times \PP^2}(1,0) \oplus \of_{\PP^2 \times \PP^2}(0,1)-\of_{\PP^2 \times \PP^2}^{\oplus 4})=h_1^2h_2+h_1h_2^2,$$
where $h_1$ and $h_2$ denote the hyperplanes classes in the two copies of $\PP^2$ respectively. 
This means that 
the projections of $D$ to these two planes are two lines $D_1$ and $D_2$. Moreover, the Eagon--Northcott 
complex for $D$ reads
$$0 \longrightarrow  \of(-3,-1) \oplus \of(-2,-2) \oplus \of(-1,-3)  \longrightarrow
\of(-2,-1)^{\oplus 4}\oplus 
\of(-1,-2)^{\oplus 4}\longrightarrow
\of(-1,-1)^{\oplus 6}  \longrightarrow \of
\longrightarrow \of_{D} \longrightarrow 0.$$
We readily deduce that the $D$ must be connected, from which we can finally conclude that $D$ is
a projective line mapping isomorphically to both $D_1$ and $D_2$. And once again, $D$ is a $(-1)$-curve
on $S$, after contracting which we get a genuine K3 surface.  
\end{fano}

\medskip\begin{fano}\fanoid{3-36-145-2} %52
\label[mystyle]{36-145-3-1-23-19-1-3-1-4-1-5} $\mZ(\PP^2 \times \PP^3 \times \PP^4,\mQ_{\PP^3}(0,0,1) \oplus \of(1,1,0) \oplus \of(1,1,1))$.

\subsubsection*{Invariants}  $h^0(-K)=36, \ (-K)^4=145, \  h^{1,1}=3, \ h^{3,1}=1, \ h^{2,2}=23,  \ -\chi(T)=19$.
\subsubsection*{Description} 
\begin{itemize}
    \item $\pi_{12} \colon X \to \PP^2 \times \PP^3$ is a blow up $\Bl_{S_{16}}Y$, where $Y$ is a $(1,1)$ divisor 
    in $\PP^2 \times \PP^3 $. 
    \item $\pi_{13} \colon X \to \PP^2 \times \PP^4$ is a blow up $\Bl_{\PP^2}Z$, where $Z$ is a special FK3 \cref{39-160-2-1-22-21-1-3-1-5} given by the intersection of two divisors of bidegrees $(1,1)$ and $(1,2)$ in $\PP^2\times\PP^4$.
     \item $\pi_{23} \colon X \to \PP^3 \times \PP^4$ is a blow up $\Bl_{S_{26}^{(1)}}\Bl_p\PP^4$, where $S_{26}^{(1)}$ is a degree 26 K3 surface blown up in one point.
    \item Rationality: yes.
\end{itemize}

\subsubsection*{Semiorthogonal decompositions}
\begin{itemize}
    \item 9 exceptional objects and $\Db(S_{16})$ from $\pi_{12}$.
    \item 9 exceptional objects and $\Db(S_8)$ or $\Db(S_2,\alpha)$ (see \cref{39-160-2-1-22-21-1-3-1-5}) from $\pi_{13}$.
    \item 9 exceptional objects and $\Db(S_{26})$ from $\pi_{23}$.
    \item We deduce that $S_{16}$, $S_8$ and $S_{26}$ are derived equivalent (see \cref{lem:dereq}).
\end{itemize}

\subsubsection*{Explanation} The sections of the three bundles are given by tensors
$\alpha\in \Hom(V_5,V_4)$, $\beta\in V_3^\vee\otimes V_4^\vee$ and  $\gamma\in V_3^\vee\otimes V_4^\vee\otimes V_5^\vee$. The fourfold $X$ parametrizes the triples of lines $(x,y,z)$ 
such that $$\alpha(z)\subset y, \qquad \beta(x,y)=0, \qquad \gamma(x,y,z)=0.$$

\smallskip Let us start with $\pi_{23}$.  
The set of pairs $(y,z)$ such that  $\alpha(z)\subset y$ is the blow up $P:= \Bl_{z_0}\PP^4$ of $\PP^4$ at the point $z_0=\Ker(\alpha)$. For a general $z$, $y$ is uniquely determined, 
and there are two linear conditions on $x$ that also determine it uniquely in general. So 
the projection of $X$ to $\PP^4$ is birational. The exceptional locus $S$ of the projection to
$P$ is the degeneracy locus of the morphism $\of_P(-1,0)\oplus\of_P(-1,-1)\rightarrow V_3^\vee$ induced by $\beta$ and $\gamma$. The structure sheaf of this smooth surface is resolved by the 
Eagon--Northcott complex 
$$0\longrightarrow \of_P(-3,-1)\oplus\of_P(-3,-2)\longrightarrow V_3^\vee \otimes \of_P(-2,-1)\longrightarrow \of_P\longrightarrow \of_S\longrightarrow  0. $$
Since $\of_P(-3,-2)$ is the canonical bundle of $P$, one easily deduces that 
$h^2(\of_S)=1$. Moreover, the Hilbert polynomial for $S$ is $\chi(\of_S(t))=\frac{25k^2-k}{2}+2$.
Dualizing the complex we get 
$$0\longrightarrow \of_P(-3,-2)\longrightarrow V_3 \otimes \of_P(-1,-1)\longrightarrow 
\of_P(0,-1)\oplus\of_P\longrightarrow \omega_S\longrightarrow  0. $$
This implies that $\omega_S$ admits a canonical section, that vanishes on the locus where 
$V_3 \otimes \of_P(-1,-1) \longrightarrow \of_P(0,-1)$ is not surjective. This locus is 
the common zero locus of three general sections of $\of_P(1,0)$, hence the preimage of a point 
in $\PP^3$, which is a line $D$ inside $P$. We conclude that $\omega_S=\of_S(D)$, and by the 
genus formula $D$ is a $(-1)$-curve in $S$; after contracting it, we get a genuine K3 surface 
$S_{min}$ of degree $26$. 
 Moreover $X$ is the blow up of $P$ along the surface $S$. 

\smallskip
We now turn to $\pi_{12}$. 
The set of pairs $(x,y)$ such that $\beta(x,y)=0$ is a divisor $Y$ of bidegree $(1,1)$ in 
$\PP^2\times \PP^3$. The extra point $z$ has to belong to $\alpha^{-1}(y)\simeq z_0\oplus y$, 
which defines a projective line on which $\gamma$ cuts out a unique point. The projection 
of $X$ to $Y$ is 
thus birational. The exceptional locus is the zero locus of the induced morphism 
$\of_Y(-1,-1)\rightarrow (z_0\oplus \of_Y(0,-1))^\vee$, hence a surface
$T$ which is a general complete intersection of divisors of bidegrees $(1,1)$ and $(1,2)$ in $Y$. Being a complete intersection in $\PP^2\times \PP^3$ of three divisors of bidegrees $(1,1)$, 
$(1,1)$ and $(1,2)$, $T=S_{16}$ has trivial canonical bundle and is a K3 surface of degree 16 
(isomorphic to $S_{min}$). 
Moreover $X$ is the blow up of $Y$ along this surface. 

\smallskip Finally we describe $\pi_{13}$. Its image  $Z$ parametrizes 
the set of pairs $(x,z)$ such that $\beta(x,\alpha(z))=0$ and $\gamma(x,\alpha(z),z)=0$.
It is the intersection in $\PP^2\times\PP^4$ of two divisors of bidegrees $(1,1)$ and $(1,2)$, both containing the plane $\Pi=\PP^2\times \{z_0\}$. 
For a given $(x,z)$ in $Z$,  $y$ is uniquely determined by the condition that $\alpha(z)\subset y$, except if $z=z_0$, for which we get a $\PP^1$-fibration
over $\Pi$. We conclude that the projection of $X$ to $Z$
is just the blow up of $\Pi$.  
\end{fano}

\medskip\begin{fano}\fanoid{3-34-133-2} %51
\label[mystyle]{34-133-3-1-25-23-1-3-1-4-1-5} $\mZ(\PP^2 \times \PP^3 \times \PP^4,\of(0,1,2) \oplus \mQ_{\PP^2}(0,0,1) \oplus \mQ_{\PP^2}(0,1,0))$.

\subsubsection*{Invariants}  $h^0(-K)=34, \ (-K)^4=133, \  h^{1,1}=3, \ h^{3,1}=1, \ h^{2,2}=25, \ -\chi(T)=23$.
\subsubsection*{Description} 
\begin{itemize}
    \item $\pi_{12} \colon X \to \PP^2 \times \PP^3$ is a conic bundle over $\Bl_p \PP^3$.
    \item $\pi_{13} \colon X \to \PP^2 \times \PP^4$ is a blow up $\Bl_{S_6^{(2)}}Y$, where $S_6^{(2)}$ is a sextic K3 surface in $\PP^4$ blown up at two points, and $Y$ is the blow up $\Bl_{\PP^1}\PP^4$.
     \item $\pi_{23} \colon X \to \PP^3 \times \PP^4$ is a small contraction of two projective planes to a fourfold $Y^\circ$, singular at two points.
    \item Rationality:  yes.
\end{itemize}

\subsubsection*{Semiorthogonal decompositions}
\begin{itemize}
    \item 6 exceptional objects and $\Db(\Bl_p\PP^3,\mC_0)$ from $\pi_{12}$.
    \item 11 exceptional objects and $\Db(S_6)$ from $\pi_{13}$.
\end{itemize}

\subsubsection*{Explanation}
The fourfold $X\subset\PP^2\times\PP^3\times\PP^4$ is the zero locus of sections of $\of(0,1,2)$, $\mQ_{\PP^2}(0,1,0)$ and $\mQ_{\PP^2}(0,0,1)$ defined by tensors $\alpha\in V_4^\vee\otimes S^2V_5^\vee$, $\beta\in \Hom(V_4,V_3)$ and $\gamma\in \Hom(V_5,V_3)$. The last two sections vanish on those triples $(x,y,z)$ of lines such that $\beta(y)\subset x$ and $\gamma(z)\subset x$. 

We start with $\pi_{12}$. For $x,y$ fixed, $z$ must belong to $\gamma^{-1}(x)$
which defines a projective plane, on which $\alpha$
traces a conic. Note that $x$ is defined by $y$ except when $\beta(y)=0$. The kernel of $\beta$ defines a point $y_0\in\PP^3$ and we conclude that $\pi_{12}$ is a conic bundle over $\Bl_{y_0}\PP^3$.
To understand its discriminant locus, notice (in the flag interpretation) that the fiber of $(U_1\subset V_2)$ in $P$ 
is the set of lines $l\in \beta^{-1}(V_2)$ such that $\alpha(l)(U_1)=0$, which defines a conic
in a projective plane. So the projection 
$X\rightarrow P$ is a conic bundle,  defined by a morphism 
$$U_1\rightarrow S^2(V_2\oplus\of_P)^\vee.$$
Its discriminant locus $\Delta$ is defined by a morphism $U_1^3\rightarrow (\det(V_2)^\vee)^2$ so a section of $\of_P(3,2)$. 
Since $K_P=\of_P(-2,-3)$, we get $K_\Delta=\of_\Delta(1,-1)$. 

\smallskip 
Now we turn to $\pi_{13}$. The image $Y$ of $X$ in $\Gr(2,4)\times\PP^4$ is in fact contained in $\PP^2 \times \PP^4$ (since $\mZ(\Fl(1,2,4),\mQ_2)$ base changes the second projection to $\PP^2$) and is the set of pairs $(V_2,l)$ such that $V_2$ contains  $L_1$ and $\beta(l)$. The fibers of the projection $Y\rightarrow\PP^4$ are non-trivial when $\beta(l)\subset L_1$, so we get the blow up of the line $d=\PP(\beta^{-1}(L_1))$. The fibers  of the projection $X\rightarrow Y$ are non-trivial when 
$\alpha(l)$ vanishes on $V_2$. This condition defines a surface $S$ whose image in $\PP^4$ is defined by the conditions that 
$$\alpha(l)(L_1)=\alpha(l)(\beta(l))=0.$$
We get the intersection $S_6$ of a quadric and
a cubic, hence a K3 surface of degree $6$. Note that the line $d$ is bisecant to $S_6$, so $S=S_6^{(2)}$ is the 
blow up of  $S_6$ at these two points.

 Finally we describe $\pi_{23}$. 
One can understand $Y^\circ$ as follows. Given $y,z$ such that $\alpha(y,z,z)=0$ and $\beta(y)$ and $\gamma(z)$ are collinear, $x$ is uniquely defined except if $\beta(y)$ and $\gamma(z)$ both vanish, which happens for exactly two points 
 $q_1,q_2$ in $P=\PP^3\times \PP^4$, over which the fiber of $\pi_{23}$ is a projective plane. The collinearity condition is expressed by the degeneracy of the morphism 
 $$\varphi: \of_P(-1,0)\oplus \of_P(0,-1)\rightarrow U_3 \otimes \of_P$$ defined by $\beta$ and $\gamma$, restricted to the general $(1,2)$-divisor
 cut out by $\alpha$. The fourfold $Y^\circ=D_1(\phi)$ is the image of $\pi_{23}$; its singular locus is  $D_0(\phi)=\{q_1,q_2\}$. We conclude that $\pi_{23}$
 is a small contraction of two projective planes to  a fourfold $Y^\circ$ with two singular points. 
 When we 
 blow up these two points, the exceptional divisors 
 are copies of $\PP^1\times\PP^2$ and the $\PP^1$-factor can be contracted to the smooth fourfold $X$. 
 
 Finally, for a given $x$, $y$ has to vary in a projective line and $z$ in a projective plane, and $\alpha$ defines a divisor of bidegree $(1,2)$ in the corresponding $\PP^1\times\PP^2$. This means that 
 the projection to $\PP^2$ is a $\dP_5$-fibration. The projections to $\PP^3$ and $\PP^4$ have already been described. 
\end{fano}

\medskip\begin{fano}\fanoid{3-37-150-2} %59
\label[mystyle]{37-150-3-1-22-18-1-2-1-5-1-6} $\mZ(\PP^1 \times \PP^4 \times \PP^5,\mQ_{\PP^4}(0,0,1) \oplus \of(0,2,0) \oplus \of(1,1,1))$.
\subsubsection*{Invariants}  $h^0(-K)=37, \ (-K)^4=150, \  h^{1,1}=3, \ h^{3,1}=1, \ h^{2,2}=22, \ -\chi(T)=18$.
\subsubsection*{Description} 
\begin{itemize}
    \item $\pi_{12} \colon X \to \PP^1 \times \PP^4$ is a blow up $\Bl_{S_{14}}(\PP^1 \times \QQ^3)$ along  a K3 surface $S_{14}$ of genus eight.
    \item $\pi_{13} \colon X \to \PP^1 \times \PP^5$ is a divisorial contraction to a complete 
    intersection $Y^\circ$ of divisors of bidegrees $(0,2)$ and $(1,2)$, singular along a line. 
     \item $\pi_{23} \colon X \to \PP^4 \times \PP^5$ is $\Bl_{S_{30}}Y$, where $S_{30}$ is a K3 surface of      degree 30 and $Y$ is a $\PP^1$-bundle over $\QQ^3$.
    \item Rationality: yes.
\end{itemize}

\subsubsection*{Semiorthogonal decompositions}
\begin{itemize}
    \item 8 exceptional objects and $\Db(S_{14})$ from $\pi_{12}$.
    \item 8 exceptional objects and $\Db(S_{30})$ from $\pi_{23}$.
    \item We deduce that $S_{14}$ and $S_{30}$ are derived equivalent (see \cref{lem:dereq}).
\end{itemize}

\subsubsection*{Explanation}
There are two line bundles involved, plus the rank four bundle $\mQ_{\PP^4}(0,0,1)$ whose space
of global sections is $\Hom(V_6,V_5)$. Generically,  a morphism $\theta
\in \Hom(V_6,V_5)$ can be reduced to 
the projection from $V_6=K_1\oplus V_5$ to $V_5$, where $K_1$ denotes the kernel. Moreover the zero locus $Z$ of the corresponding 
section is the blow up of $\PP^5$ at the point $v=[K_1]$, which is also $\PP_{\PP^4}(\of\oplus \of(-1))$. This implies that the image of $\pi_{23}$ is just $Y=\PP_Q(\of\oplus \of(-1))$, for $Q\subset\PP^4$
a smooth quadric. Moreover $\pi_{23}$ blows up a smooth surface $S$ in $Y$ obtained as a double section 
of $\of_Y(1,1)$. Since by adjunction, $\omega_Y=\of_Y(-2,-2)$, this must be a K3 surface. 
If we denote by $b,c$ the hyperplane classes from $\PP^4$ and $\PP^5$, the fundamental class
of $Z$ is $c_4(\mQ(0,1))=b^4+b^3c+b^2c^2+bc^3+c^4$. We readily deduce that 
$$\deg(S)=\int_S (b+c)^2=\int_Z 2b(b+c)^4=30, \qquad \int_S b^2 =\int_Z 2b^2(b+c)^2=6.$$
So $S$ is mapped to a special K3 surface of degree $6$ in $\PP^4$.

\smallskip
Now we turn to $\pi_{12}$. Its image is obviously $P=\PP^1\times \QQ^3$, while the fibers are defined
by the image of the morphism $\mQ\oplus \of(-1,-1)\rightarrow V_6^\vee \otimes \of$ (where we simply 
denote by $\mQ$ the restriction of $\mQ_{\PP^4}$ to the quadric 
$Q\subset \PP^4$. This morphism degenerates 
over a smooth surface $T$, whose structure sheaf is resolved by the Eagon--Northcott complex
$$0 \longrightarrow \mQ^\vee(-1,-2) \oplus \of_P(-2,-3) \longrightarrow V_6^\vee\otimes\of_P(-1,-2) 
\longrightarrow \of_P \longrightarrow \of_T \longrightarrow 0,$$
from which we calculate the Hilbert polynomial $\chi(\of_T(k,k)) = 7k^2+2$. Dualizing 
the complex, we get 
$$0 \longrightarrow \of_P(-2,-3) \longrightarrow V_6\otimes \of_P(-1,-1) \longrightarrow  \mQ(-1,-1)\oplus 
\of_P\longrightarrow \omega_T \longrightarrow 0.$$
So $\omega_T$ admits a canonical section whose zero locus is the degeneracy locus of the morphism
$V_6\otimes \of(-1,-1) \longrightarrow \mQ(-1,-1)$, which is just a twist of the surjection $V_6\rightarrow V_5\rightarrow \mQ$. So $T$ is a K3 surface of degree $14$, projecting in $\QQ^3$ to a special K3 
of degree $6$ in $\PP^4$ (its Picard rank is two, generically, and the projection from $T$ to 
 $\PP^1$ is an elliptic fibration). 

\smallskip
Finally, consider the projection $\pi_{13}$ to $R=\PP^1\times\PP^5$. The fiber is defined by a quadratic
form on $V_5$ and two morphisms $\of_R(0,-1)\ra V_5 \otimes \of_R$ and $\of_R(-1,-1)\ra V_5^\vee \otimes \of_R$. When the first 
morphism is non-zero, it defines a point in $\PP(V_5)$ which has to be the unique point of the 
fiber, in case it is compatible with the other sections. The compatibility with the quadratic
form is verified on $\PP^1\times\QQ^4_0$, where $\QQ^4_0$ is a corank one quadric in $\PP^5$;
let $v$ denote its vertex. The 
compatibility with the second morphism holds true over a divisor of type $(1,2)$ containing 
the line $d=\PP^1\times v$. The image of $\pi_{13}$ is therefore a complete intersection of 
hypersurfaces of bidegree $(0,2)$ and $(1,2)$, singular along $d$. 

Non-trivial fibers can only occur when the morphism $\of(0,-1)\ra V_5 \otimes \of$ is zero, that is, 
over $d$. The fibers are then defined by a linear form and a quadratic form on $\PP(V_5)$. 
So $\pi_{13}$ is finally a divisorial contraction, the exceptional divisor being a quadric
fibration over $d$. 
\end{fano}

\medskip\begin{fano}\fanoid{4-35-140-2} %42
\label[mystyle]{35-140-4-1-24-17-1-2-1-2-1-3-1-4} $\mZ(\PP^1 \times \PP^1 \times \PP^2 \times \PP^3,\mQ_{\PP^2}(0,0,0,1) \oplus \of(1,1,1,1)).$

\subsubsection*{Invariants}  $h^0(-K)=35, \ (-K)^4=140, \  h^{1,1}=4, \ h^{3,1}=1, \ h^{2,2}=24, \ -\chi(T)=17$.
\subsubsection*{Description} 
\begin{itemize}
    \item $\pi_{123} \colon X \to \PP^1 \times \PP^1 \times \PP^2$ is the blow up of a K3 surface obtained by intersecting divisors of multidegrees $(1,1,1)$ and $(1,1,2)$.
    \item $\pi_{124} \colon X \to \PP^1 \times \PP^1 \times \PP^3$ is a blow up of 
    \cref{39-160-3-1-22-18-1-2-1-2-1-4} along a line.
      \item $\pi_{134}=\pi_{234} \colon X \to \PP^1 \times \PP^2 \times \PP^3$ is a blow up $\Bl_{S_{28}}(\PP^1 \times \Bl_p \PP^3)$, where $S_{28}$ is a degree 28 K3 surface with Picard rank $3$.
    \item Rationality: yes.
\end{itemize}

\subsubsection*{Semiorthogonal decompositions}
\begin{itemize}
    \item 12 exceptional objects and a $\Db(S)$ from each  map.
\end{itemize}

\subsubsection*{Explanation} In order to understand the last two projections, we first apply \cref{lem:blowhighercod}, identifying $\mZ(\PP^2 \times \PP^3,\mQ_{\PP^2}(0,1))$ with $\Bl_p \PP^3$. We then apply in two different ways \cref{lem:blowupcodim2} again. For the second projection, the line is $\PP^1 \times p$, with the point $p$ contained in the K3 surface. For the third and fourth, we only need to check that the intersection of two divisors of degree $\of(1,1,1)$ gives a K3 surfaces $S$ such that $p \in S$. The generic Picard rank of $S$ is 3, with degree 28 with respect to $\of(1,1,1)$.
\end{fano}

\medskip\begin{fano}\fanoid{4-34-134-2} %60
\label[mystyle]{34-134-4-1-24-18-1-2-1-3-6} $\mZ(\PP^1 \times \Fl(1,3,6),\mQ_2^{\oplus 2} \oplus \of(0;0,2) \oplus \of(1;2,0))$.
\subsubsection*{Invariants}  $h^0(-K)=34, \ (-K)^4=134, \  h^{1,1}=4, \ h^{3,1}=1, \ h^{2,2}=24, \ -\chi(T)=18$.
\subsubsection*{Description} 
\begin{itemize}
    \item $\pi_{12} \colon X \to \PP^1 \times \PP^5$ is a divisorial contraction, whose image is 
    a complete intersection of bidegrees $(1,2)$ and $(0,2)$, singular along a twisted cubic.
    \item $\pi_{13} \colon X \to \PP^1 \times \Gr(3,6)$ is a conic bundle over $\PP^1 \times \PP^1 \times \PP^1$ with discriminant locus a divisor of multidegree $(3,2,2)$.
     \item $\pi_{23} \colon X \to \Fl(1,3,6)$ is a blow up $\Bl_{{S}_8^{(2)}}Y$, where ${S}_8^{(2)}$ is an octic K3 surface blown up at two points $p,q$ and $Y=\Bl_{\PP^1}CQ$, the latter being a four dimensional quadric cone with $\PP^1$ as a vertex.
    \item Rationality: yes.
\end{itemize}

\subsubsection*{Semiorthogonal decompositions}
\begin{itemize}
    \item 8 exceptional objects and $\Db(\PP^1 \times \PP^1 \times \PP^1,\mC_0)$ from $\pi_{13}$.
    \item 12 exceptional objects and $\Db(S_8)$ from $\pi_{23}$.
\end{itemize}

\subsubsection*{Explanation}
To describe $\pi_{23}$, we apply \cref{lem:blowupcodim2}. We need to understand $Y:=\mZ(\Fl(1,3,6),\mQ_2^{\oplus 2} \oplus \of(0,2))$. The latter by \cite[Corollary 2.7]{DFT} is  a special quadratic section of $\PP_{\PP^3}(\of^{\oplus 2} \oplus \of(-1,-1))$, which is of course the blow up of $\PP^5$ in a line $l \cong \PP(U_2)$. The quadratic section in this blow up is given by an element in $\Sym^2(V_6/U_2)^{\vee}$, hence describing a quadric cone with vertex $l$, of which $Y$ is the blow up.
%or equivalently $\Bl_{p,q} \Gr(2,4)$. Finally, the blown up locus is defined by a $\of(2,0)$-section, that is a quadric in $\Gr(2,4)$ passing through the two points.

\smallskip
The above description gives us $\pi_{13}$ as well. We start by identifying $X$ with a $(1,2)$ divisor in the product $\PP^1 \times \PP_{\PP^1 \times \PP^1}(\of^{\oplus 2} \oplus \of(-1,-1))$. In turn this can be rewritten as  $\mZ(\PP_{\PP^1\times\PP^1\times\PP^1}(\of(-1,0,0)^{\oplus 2} \oplus \of(-1,-1, -1)),\mL^2)$.
Over $\PP^1\times T$ where $T=\PP^1\times \PP^1$ we get a conic bundle structure given by a morphism 
$\of(-1,0)\to \Sym^2V_3^{\vee}$.
So the discriminant is a map $\of(-3,0)\to\det(V^{\vee}_3)^2$, and the
discriminant
locus is a divisor of multidegree $(3,2,2)$ in $\PP^1\times\PP^1\times\PP^1$.

\smallskip
Finally consider the projection $\pi_{12}$ to $\PP^1\times\PP^5$. When the line $U_1\subset V_6$ is fixed outside
$V_2$, then $U_3=U_1\oplus V_2$ is uniquely determined. So the section of $\det(U_3^\vee)^{\otimes 2}$ yields a section of $(U_1^\vee)^{\otimes 2}$ defining a quadric whose singular locus is the line $\PP(V_2)$. Cutting 
we the hypersurface of bidegree $(1,2)$, we get a twisted cubic for singular locus. 
\end{fano}

\medskip\begin{fano}\fanoid{4-28-104-2} %53
\label[mystyle]{28-104-4-1-28-22-1-2-1-2-1-2-1-4} $\mZ(\PP^1_1 \times \PP^1_2 \times \PP^1_3 \times \PP^3,\of(0,0,1,1) \oplus \of(1,1,0,2)).$
\subsubsection*{Invariants}  $h^0(-K)=28, \ (-K)^4=104, \  h^{1,1}=4, \ h^{3,1}=1, \ h^{2,2}=28, \ -\chi(T)=22$.
\subsubsection*{Description} 
\begin{itemize}
    \item $\pi_{123} \colon X \to \PP^1_1 \times \PP^1_2 \times \PP^1_3$ is a conic bundle over $\PP^1\times\PP^1\times\PP^1$, with discriminant a $(3,3,2)$-divisor. 
    \item $\pi_{124} \colon X \to \PP^1_1 \times \PP^1_2 \times \PP^3$ is a blow up $\Bl_{\dP_4}Y$, where $Y$ is a blow up of $\PP^1\times\PP^3$ along a bielliptic K3 surface $S$.
     \item $\pi_{134},\pi_{234} \colon X \to \PP^1_{1,2} \times \PP^2_3 \times \PP^3$ are blow ups $\Bl_SY$, where $S$ is the same bielliptic K3 surface and $Y$ is $\PP^1 \times \Bl_{\PP^1}\PP^3$.
    \item Rationality: yes.
\end{itemize}

\subsubsection*{Semiorthogonal decompositions}
\begin{itemize}
    \item 8 exceptional objects and $\Db(\PP^1_1 \times \PP^1_2 \times \PP^1_3, \mC_0)$ from $\pi_{123}$.
    \item 16 exceptional objects and $\Db(S)$ from $\pi_{124}$ and from $\pi_{134}$ or  $\pi_{234}$.
\end{itemize}

\subsubsection*{Explanation}
We consider vector spaces $U_2, V_2, W_2, U_4$, and tensors $\alpha\in W_2^\vee\otimes U_4^\vee$ and 
$\beta\in U_2^\vee\otimes V_2^\vee\otimes S^2W_2^\vee$. The fourfold $X$ parametrizes the fourtuples
of lines $(a,b,c,d)$ such that $\alpha(c,d)=0$ and $\beta(a,b,d,d)=0$. 

For a given triple $(a,b,c)$, the condition $\alpha(c,d)=0$ defines a projective plane 
in which $\beta$ cuts a conic, so the projection of $X$ to $\PP^1\times\PP^1\times\PP^1$ is a  
conic bundle defined by a morphism $\of(-1,-1,0)\rightarrow S^2(\of^{\oplus 2}
\oplus \of(0,0,-1))^\vee$. Its discriminant is given by the induced morphism 
$\of(-3,-3,0)\rightarrow S_{222}(\of^{\oplus 2}
\oplus \of(0,0,-1))^\vee= \of(0,0,2)$, hence by a section of $ \of(3,3,2)$,
which has nef canonical divisor. 

The set of triples $(a,b,d)$ such that $\beta(a,b,d,d)=0$ is a divisor $Y$ of multidegree $(1,1,2)$
in $\PP^1\times \PP^1\times \PP^3$, whose projection to each $\PP^1\times \PP^3$ is the blowup 
of a bielliptic K3 surface $S$. The extra line $c$ is then uniquely determined by the 
condition that $\alpha(c,d)=0$, except if $\alpha(W_2,d)=0$. This
condition defines a line $\delta$ 
in $\PP^3$, whose preimage in $Y$ is a divisor of multidegree $(1,1,2)$
in $\PP^1\times \PP^1\times \PP^1$, hence a $\dP_4$. 

The set of pairs $(c,d)$ such that $\alpha(c,d)=0$ is the blow up $P$ of $\PP^3$ along the line
$\delta$. The triple $(a,c,d)$ being given in $Y=\PP^1\times P$, the extra line $b$ is uniquely
determined except if $\beta(a,V_2,d,d)=0$, hence on a surface defined as the 
intersection of two divisors
of multidegree $(1,0,2)$ in $Y$, whose image in $\PP^3$ is again the bielliptic quartic surface $S$. 
\end{fano}

\medskip\begin{fano}\fanoid{4-31-120-2} %54
\label[mystyle]{31-120-4-1-24-17-1-2-1-2-1-3-1-3} $\mZ(\PP^1_1 \times \PP^1_2 \times \PP^2_1 \times \PP^2_2,\of(0,0,1,1) \oplus \of(1,1,1,1)).$
\subsubsection*{Invariants}  $h^0(-K)=31, \ (-K)^4=120, \  h^{1,1}=4, \ h^{3,1}=1, \ h^{2,2}=24$, $-\chi(T)=17$.
\subsubsection*{Description} 
\begin{itemize}
    \item $\pi_{123}, \pi_{124} \colon X \to \PP^1 \times \PP^1 \times \PP^2$ are blow ups $\Bl_{S_{20}}(\PP^1\times\PP^1\times \PP^2)$, where $S_{20}$ is a degree 20 K3 surface of 
    Picard rank 3.
    \item $\pi_{134},\pi_{234} \colon X \to \PP^1 \times \PP^2 \times \PP^2$ are blow ups $\Bl_S (\PP^1\times F)$, where $F \subset \PP^2\times \PP^2$ is the complete flag manifold (a Fano threefold of Picard rank 2, index 2 and degree 6)
along a K3 surface $S$ isomorphic to $S_{20}$.
    \item Rationality: yes.
\end{itemize}

\subsubsection*{Semiorthogonal decompositions}
\begin{itemize}
    \item 12 exceptional objects and $\Db(S_{20})$ from $\pi_{123}=\pi_{124}$ and $\pi_{134}=\pi_{234}$.
\end{itemize}

\subsubsection*{Explanation} Here we consider vector spaces $U_2, V_2, U_3, V_3$ and tensors
$\alpha\in U_3^\vee\otimes V_3^\vee$ and $\beta\in U_2^\vee\otimes V_2^\vee\otimes 
U_3^\vee\otimes V_3^\vee$, that define the fourfold $X$ in $\PP^1\times\PP^1\times \PP^2\times\PP^2$. Obviously the projection to $\PP^1\times\PP^1$ is a $\dP_6$-fibration, and the projection to $\PP^2\times\PP^2$ is a conic bundle over a hyperplane section of the latter, which is known to be isomorphic to the complete flag $F=\PP (T_{\PP^2})$. 

The fibers of the projection $\pi_{123}$ of $X$ to  $Y=\PP^1\times\PP^1\times \PP^2$ are non-trivial
when the natural morphism $\of_Y(0,0,-1)\oplus \of_Y(-1,-1,-1)\rightarrow V_3^\vee \otimes \of_Y$
drops rank, which happens along a surface $S$ whose structure sheaf is resolved by the Eagon--Northcott complex 
$$0\longrightarrow \of_Y(-1,-1,-3)\oplus \of_Y(-2,-2,-3) \longrightarrow  V_3^\vee \otimes \of_Y(-1,-1,-2)\longrightarrow \of_Y \longrightarrow  \of_S\longrightarrow 0.  $$
The 
Hilbert polynomial of $S$ is $\chi(\of_S(k))= 10k^2+2$. Dualizing the complex we get 
$$0\longrightarrow \of_Y(-2,-2,-3) \longrightarrow  V_3 \otimes \of_Y(-1,-1,-1)
\longrightarrow \of_Y(-1,-1,0)\oplus\of_Y \longrightarrow  \omega_S\longrightarrow 0.  $$
So $\omega_S$ admits a canonical section, vanishing on the locus where the 
morphism $V_3 \otimes \of_Y(-1,-1,-1)\longrightarrow \of_Y(-1,-1,0)$ is not surjective, 
hence on the common zero locus of three sections of $\of_Y(0,0,1)$. Clearly this locus 
is empty, so $\omega_S$ is trivial and we can conclude that 
$S$ must be a K3 surface of degree 20. Moreover $X$ must be the blow up of $Y$ along $S$. Note that the class of $S$ in the Chow ring of 
$Y$ can be obtained as the degree two part of $ch(\of_S)$. If we denote by $a,b,c$ the hyperplane classes of the three projective spaces, the previous complex yields 
$[S]= 2ab+3ac+3bc+3c^2$. We deduce that the Picard group of $S$ contains a copy of the lattice
$$\begin{pmatrix} 0&3&3 \\ 3&0&3 \\ 3&3&2
\end{pmatrix}. $$

The projection $\pi_{234}$ to  $\PP^1\times\PP^2\times \PP^2$ maps $X$ to $\PP^1\times F$. 
The fibers are non-trivial over the intersection of two divisors of multidegree $(1,1,1)$,
which is again a K3 surface $T$. Here the  Picard group of $T$ contains a copy of the lattice
$$\begin{pmatrix} 0&3&3 \\ 3&2&4 \\ 3&4&2
\end{pmatrix}. $$
\end{fano}

\medskip\begin{fano}\fanoid{5-31-120-2} %43
\label[mystyle]{31-120-5-1-26-16-1-2-1-2-1-2-1-2-1-2} $\mZ((\PP^1)^5,\of(1,1,1,1,1))$.
\subsubsection*{Invariants}  $h^0(-K)=31, \ (-K)^4=120, \  h^{1,1}=5, \ h^{3,1}=1, \ h^{2,2}=26, \ -\chi(T)=16$.
\subsubsection*{Description} 
\begin{itemize}
    \item $X \to (\PP^1)^4$ is a blow up $\Bl_{S_{24}}(\PP^1)^4$ in five different ways, where $S_{24}$ is a K3 surface of degree 24 with  Picard rank $4$. 
    \item Rationality: yes.
\end{itemize}

\subsubsection*{Semiorthogonal decompositions}
\begin{itemize}
    \item 16 exceptional objects and $\Db(S_{24})$ from each map.
\end{itemize}

\subsubsection*{Explanation} The projection to each partial product $\PP^1\times \PP^1\times \PP^1\times \PP^1$ is a blow up of a K3 surface defined as the intersection of two 
divisors of multidegree $(1,1,1,1)$. 
This surface has degree $24$ in the Segre embedding.\end{fano}

\begin{rmk}
This fourfold was already considered in \cite{kuchle} and labeled as (d3), and its birational structure was described in \cite{kuznetsovKuchlePicard}.
\end{rmk}

\section{Rogue Fano fourfolds}\label{sect:rogue}
\renewcommand\thefano{R--\arabic{fano}}

This section contains the remaining four families of Fano fourfolds of K3 type that we obtained, which do not belong to the lists of the three previous sections. The origin of their K3 structures is therefore a bit more mysterious. 

\medskip\begin{fano}\fanoid{2-27-99-2} %61
\label[mystyle]{27-99-2-1-25-25-1-3-2-6} $\mZ(\PP^2 \times \Gr(2,6),\mU^{\vee}_{\Gr(2,6)}(0,1) \oplus \mQ_{\PP^2} \boxtimes \mU^{\vee}_{\Gr(2,6)})$.

\subsubsection*{Invariants}  $h^0(-K)=27, \ (-K)^4=99, \  h^{1,1}=2,  \ h^{3,1}=1, \ h^{2,2}=25, \ -\chi(T)=25$.
\subsubsection*{Description} 
\begin{itemize}
    \item $\pi_1 \colon X \to \PP^2$ is a $\dP_5$ fibration.
    \item $\pi_2 \colon X \to \Gr(2,6)$ is birational to a singular fourfold, contracting three projective planes.
    \item Rationality: yes.
   
\end{itemize}

\subsubsection*{Semiorthogonal decompositions}
\begin{itemize}
    \item 6 exceptional objects and $\Db(Z)$, for $Z \to \PP^2$ a degree $5$ flat map, from $\pi_1$. One can expect $Z$ to be the blow up of a K3 in 3 points.
\end{itemize}

\subsubsection*{Explanation} The fourfold $X$ is defined by two sections $\alpha\in \Hom(V_6,V_3)$ and $\beta\in S_{2,1}V_6^\vee\subset V_6^\vee\otimes \wedge^2V_6^\vee$. It parametrizes the pairs $(l,P)\in \PP^2\times \Gr(2,6)$ such that $\alpha(P)\subset l$ and $\beta(p,p_1\wedge p_2)=0$ for all $p\in P=\langle p_1, p_2\rangle$.  
The first condition implies that $P$ meets the kernel 
$K_3$ of $\alpha$. (Hence a projection to $\PP(K_3)$ which will induce a rational quadric fibration on $X$.)

Projecting to $\PP^2$, the fiber over $l$ is the set of planes in $\Gr(2,\alpha^{-1}(l))$ where a section of $\mQ(1)$ vanishes, that is a $\dP_5$. Notice that this description implies the rationality of this family,  since we already
mentioned that every del Pezzo of degree $5$ is rational over its field of definition, hence $X$ is $k(\PP^2)$-rational.

Finally, projecting to $\Gr(2,6)$, we get non-trivial fibers when $\alpha(P)=0$. Since a section of $\mQ(1)$ on a projective plane vanishes at three points, we conclude that this projection is a birational morphism contracting three projective planes -- hence a small contraction. 
\end{fano}

\medskip\begin{fano}\fanoid{2-33-129-2} %62
\label[mystyle]{33-129-2-1-23-23-1-4-2-7} $\mZ(\PP^3 \times \Gr(2,7),\of(0,1)^ {\oplus 2} \oplus \of(1,1) \oplus \mQ_{\PP^3}\boxtimes \mU^{\vee}_{\Gr(2,7)})$.

\subsubsection*{Invariants}  $h^0(-K)=33, \ (-K)^4=129, \  h^{1,1}=2, \ h^{3,1}=,1 \ h^{2,2}=23, \  -\chi(T)=23$.
\subsubsection*{Description} 
\begin{itemize}
    \item $\pi_1 \colon X \to \PP^3$ is a conic bundle with discriminant a quintic surface, singular in 16 points.
    \item $\pi_2 \colon X \to \Gr(2,7)$ contracts a $\PP^2$ to a point of a singular fourfold  $Y^\circ$.
    \item rationality: unknown. However, see \cref{rmkKuz} for a connection with a (possibly special) family of \cref{33-129-2-1-23-23-1-4-2-7}.
\end{itemize}

\subsubsection*{Semiorthogonal decompositions}
\begin{itemize}
    \item 4 exceptional objects and $\Db(\PP^3,\mC_0)$ from $\pi_1$.
    \item There is no obvious hint about the K3 structure (but compare with \cref{32-124-2-1-28-28-1-2-8} or \cref{31-120-3-1-2-22-20-1-6-1-8} and the possible correspondence between singular intersections of three quadrics and singular $(2,2)$ divisors in $\PP^2 \times \PP^3$; see also \cref{rmk11,rmk12}).
\end{itemize}

\subsubsection*{Explanation}
The fourfold $X$ is defined by three line bundles and a rank six bundle.
First consider the sevenfold $Z$ defined by a section of the latter, which is just a morphism
$\theta\in V_4\otimes V_7^\vee=\Hom(V_7,V_4)$. Generically, one can suppose that $V_7=K_3\oplus 
V_4$ and that $\theta$ is the projection to $V_4$. Our $Z$ is just the set of pairs $(l, U_2)$
such that $\theta(U_2)\subset l$. In particular $U_2$ has to meet $K_3$ non-trivially. This 
suggests to define 
$$\tilde{Z}=\{U_1\subset U_2\subset U_4\subset V_7, \quad  U_1\subset K_3\subset U_4\},$$
a $\PP^2$-bundle over $\PP(K_3)\times \PP(V_7/K_3)=\PP^2\times \PP^3$. 
Sending the flag $U_1\subset U_2\subset U_4$ to
$(U_4/K_3, U_2)$, one obtains a birational isomorphism $\eta$ onto $Z$, since $U_1$ can be 
recovered as $U_1=U_2\cap K_3$ when $U_2$ is not contained in $K_3$. The exceptional locus
is $\Fl(1,2,K_3)\times \PP^3$, which is contracted to the fivefold $\Gr(2,K_3)\times\PP^3\subset \Gr(2,V_7)\times \PP^3$ by forgetting $U_1$. 

Now we take into account the sections of the line bundles, which are defined by two 
skew-symmetric forms $\omega_1$, $\omega_2$ on $V_7$, and a tensor $\alpha\in V_4^\vee\otimes\wedge^2V_7^\vee$. They define a fourfold $X$ in $Z$, whose preimage
in $\tilde{Z}$ is the subvariety $\tilde{X}$ defined by the conditions 
$$\omega_1(U_1,U_2)=\omega_2(U_1,U_2)=\alpha(U_4/K_3, U_1,U_2)=0.$$
Note that $U_4$ is not completely determined by $U_2$ exactly when $U_2\subset K_3$. Under this condition, 
$U_2$ is uniquely determined by $\omega_1$ and $\omega_2$, and $\alpha$ cuts out a projective plane
in $\PP^3$. We conclude that $\pi_2$ is a birational morphism that contracts a plane $\Pi$ to a 
singular point $v$ of the image. 

Let us give a description of this image. 
Over $\PP(K_3)\times \PP(V_7/K_3)$, the conditions above cut out a linear space whose 
dimension is governed by the rank of the induced map 
$$U_1^{\oplus 2}\oplus U_1\otimes U_4/K_3\longrightarrow (U_4/U_1)^\vee.$$
The projection from $\tilde{X}$ to $\PP(K_3)\times \PP(V_7/K_3)$ has non-trivial 
fibers when this morphism degenerates, and this condition defines a determinantal hypersurface $H$
of bidegree $(2,2)$, birationally isomorphic to $X$. Moreover there exist 
positive dimensional fibers, parametrized by the curve where the morphism degenerates further, 
and the morphism from $\tilde{X}$  to $H$ is a small resolution of singularities. Note also 
that the projection $\tilde{X}\rightarrow X$ has positive dimensional fibers over the locus of pairs $(U_2,U_4)$ over which the conditions on $U_1$ are empty, which exactly means that $U_2\subset K_3$;
so $\tilde{X}$ is the blow up of $X$ along the projective plane $\Pi$. 

\smallskip Let us now turn to $\pi_1$. When $U_4$ is fixed, $U_2\subset U_4$ has to verify
three linear conditions, which defines a conic bundle over $\PP^3$. More precisely,  $\omega_1$,  $\omega_2$ and $\alpha$ induce over $\PP^3$ a morphism 
$$\of_{\PP^3}^{\oplus 2}\oplus \of_{\PP^3}(-1) \longrightarrow \W^2(K_3^\vee\oplus \of_{\PP^3}(-1))^\vee,$$
which degenerates in codimension $6-3+1=4$, hence nowhere. The orthogonal to the image is then 
a rank three vector bundle $\mE\subset  \W^2(K_3^\vee\oplus \of_{\PP^3}(-1))$, with a natural map
$S^2\mE\longrightarrow \mK=\of_{\PP^3}(-1)$ that defines our conic bundle inside $\PP(\mE)$.

\begin{lemma}
The discriminant $\Delta$ is a quintic surface with $16$ double points.
\end{lemma}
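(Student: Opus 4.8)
The plan is to reduce everything to the Chern-class formulae of \cref{lem:conicbundle}, so the first task is to make the conic bundle data $(\mE,\mK)$ completely explicit. Since $K_3$ is a \emph{fixed} vector space, $K_3^\vee$ is the trivial rank three bundle, and a direct computation of the second exterior power gives
$$\W^2(K_3^\vee\oplus \of_{\PP^3}(-1)) \cong \of_{\PP^3}^{\oplus 3}\oplus \of_{\PP^3}(-1)^{\oplus 3}=:W,$$
a rank six bundle with $c(W)=(1-h)^3$, where $h$ is the hyperplane class of $\PP^3$. The morphism $\of_{\PP^3}^{\oplus 2}\oplus\of_{\PP^3}(-1)\to W^\vee$ is a subbundle inclusion (its degeneracy locus has the expected codimension $6-3+1=4$, hence is empty on $\PP^3$), so the orthogonal bundle $\mE$ sits in the exact sequence
$$0\to \mE\to W\to (\of_{\PP^3}^{\oplus 2}\oplus\of_{\PP^3}(-1))^\vee\to 0 .$$
From $c(\mE)=(1-h)^3/(1+h)$ I would read off $c_1(\mE)=-4h$, $c_2(\mE)=7h^2$, $c_3(\mE)=-8h^3$.

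Next I would apply \cref{lem:conicbundle} with $\mK=\of_{\PP^3}(-1)$, so $k=-h$. Exactly as in the parallel treatment of \cref{40-163-2-1-23-24-1-4-1-6}, the Chern classes to be inserted in the formulae are those of the bundle dual to the one whose projectivization carries the conic bundle, i.e.\ $c_i=c_i(\mE^\vee)$, namely $c_1=4h$, $c_2=7h^2$, $c_3=8h^3$. (Equivalently, and convention-free, $\Delta$ is the vanishing of $\det\varphi$ for $\varphi\colon\mE\to\mE^\vee\otimes\mK$, whose class is $-2c_1(\mE)+3k$.) This yields
$$[\Delta]=2c_1+3k=8h-3h=5h,$$
so $\Delta$ is a quintic surface, together with
$$[\Delta_{sing}]=4\bigl(k^3+2k^2c_1+kc_1^2+kc_2+c_1c_2-c_3\bigr)=4\cdot 4\,h^3=16\,h^3,$$
the bracket evaluating to $-1+8-16-7+28-8=4$ in $H^6(\PP^3)$. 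This accounts for the sixteen points.

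Finally I would justify that these sixteen points are ordinary double points: for a general choice of the defining data $\omega_1,\omega_2,\alpha$ the morphism $\varphi$ has corank one in codimension three and corank two only at the finitely many points counted above, where the fibre conic degenerates to a pair of distinct lines rather than a double line; this is precisely the genericity input underlying \cref{lem:conicbundle}, and it forces $\Delta$ to have sixteen nodes. This is the same $16$-nodal quintic already met in \cref{40-163-2-1-23-24-1-4-1-6} (compare \cref{rmkKuz}), so the genericity argument can be imported from there and from \cref{25-90-2-1-24-23-2-4-2-4}. The only genuinely delicate point is bookkeeping: correctly identifying $\mE$ as the orthogonal of the image and tracking the single dualization, since feeding $c_1(\mE)=-4h$ instead of $c_1(\mE^\vee)=4h$ into the formula would wrongly produce a negative class $-11h$ rather than the quintic $5h$.
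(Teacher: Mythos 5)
Your proof is correct and follows essentially the same route as the paper: identify $\mE$ as the orthogonal (kernel) bundle inside $\W^2(K_3^\vee\oplus\of_{\PP^3}(-1))$, compute its Chern classes from the defining exact sequence, and feed them into \cref{lem:conicbundle} with $k=-h$ to obtain $[\Delta]=5h$ and $[\Delta_{sing}]=16h^3$. Your explicit bookkeeping of the dualization is in fact a welcome clarification rather than a deviation: the paper states $\det\mE=\of_{\PP^3}(-4)$ yet computes $c(\mE)=\frac{(1+h)^3}{1-h}=1+4h+7h^2+8h^3$, i.e.\ it silently inserts the Chern classes of $\mE^\vee$ into the formula, exactly as you do (and as the parallel case \cref{40-163-2-1-23-24-1-4-1-6} requires).
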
 

\begin{proof}
The discriminant is given by the induced map $(\det\mE)^2\rightarrow \of_{\PP^3}(-3)$. Since 
$\det\mE=\of_{\PP^3}(-4)$, we get a quintic surface in $\PP^3$, with a number of double points given by \cref{lem:conicbundle}. 
In order to get it, we simply compute 
$$c(\mE)=\frac{(1+h)^3}{1-h}=1+4h+7h^2+8h^3.  $$
Since $k=c_1(\mK)=-1$, we finally get $[\Delta_{sing}]=16h^3$.
\let\oldqedbox\qedsymbol
\newcommand{\twoqedbox}{$\square$ \, \oldqedbox}
\renewcommand{\qedsymbol}{\twoqedbox}
\end{proof}
\let\qed\relax
\end{fano}

We recall that the link between \cref{33-129-2-1-23-23-1-4-2-7} and \cref{40-163-2-1-23-24-1-4-1-6}, see \cref{rmkKuz}, is investigated in the recent This is investigated in the recent \cite{nodalK3}.

\begin{rmk}
\label{rmk11}
We point out the probable connection with the recent work by Huybrechts \cite{huybrechts2021nodal}, where a link between 16-nodal quintic surfaces and cubic fourfolds is made explicit. In particular, it is explained how a double cover of a 16-nodal quintic is a surface of general type $F$, whose anti-invariant part $H^2(F, \Z)^{-}$ of the cohomology is a K3 structure of rank 23, that in turn could explain the K3 structure in our Fano  fourfold.\end{rmk}

\begin{rmk}
\label{rmk12}
At the level of rationality, there is an interesting link between this Fano variety and a recent work of Hassett, Pirutka and Tschinkel \cite{HPT-22}. In fact, they showed how a specific family of smooth complete intersections of bidegree $(2,2)$ in $\PP^2 \times \PP^3$ has the very general fiber which is not stably rational, whereas some special fibers are rational. In our case, we have seen how our Fano variety is birational to a \emph{singular} complete intersection of the same type. It would be very interesting to have an analogy with \cite{HPT-22}, and produce some special subfamilies of this family of Fano,
for which not all the fibers are rational.  This  would be a counterexample to the generalization of the Kuznetsov conjecture to all FK3 fourfolds. However, we have not been able to understand this yet.
\end{rmk}

\medskip\begin{fano}\fanoid{3-31-120-2-B} %63
\label[mystyle]{31-120-3-1-2-22-20-1-6-1-8} $\mZ(\PP^5 \times \PP^7,\mQ_{\PP^5}(0,1) \oplus \of(0,2) \oplus \of(2,0)^{\oplus 2}).$

\subsubsection*{Invariants}  $h^0(-K)=31, \ (-K)^4=120, \  h^{1,1}=3, \ h^{3,1}=1, \ h^{2,1}=2, \ h^{2,2}=22, \ -\chi(T)=20$.
\subsubsection*{Description} 
\begin{itemize}
    \item $\pi_1 \colon X \to \PP^5$ is a conic bundle over $Z$, the complete intersection of two quadrics in $\PP^5$, with discriminant locus $\Delta=S_8$ a smooth K3 surface of genus 5.  
    \item $\pi_2 \colon X \to \PP^7$ is the blow up $\Bl_{p,q} Y^\circ$, where $Y^\circ \subset \PP^7$ is the singular complete intersection of three quadrics, $\Q_1, \Q_2^{\circ}, \Q_3^{\circ}$, where $\Q_1$ is of maximal rank, and $\Q_2^{\circ}$ and $\Q_3^{\circ}$ are double cones over quadrics in $\PP^5$. In particular, $\Sing(\Q_2^{\circ})=\Sing(\Q_3^{\circ}) \cong \PP^1$, and $\Sing(Y^\circ)= \Q_1 \cap \PP^1$ is made of the two points $p$ and $q$.
    \item Rationality: yes.
\end{itemize}

\subsubsection*{Semiorthogonal decompositions}
\begin{itemize}
    \item 4 exceptional objects, 2 genus 2 curves and $\Db(S_8)$ from $\pi_1$, see \cref{subsub-conic}.
\end{itemize}

\subsubsection*{Explanation}

Here we have a quadric $\QQ^6$ in $\PP^7$, the intersection $Z$ of two quadrics in $\PP^5$, 
and $X$ is defined in $Z\times \QQ^6$ by the vanishing of a section of $\mQ_5(0,1)$. This section 
is defined by a morphism $\theta\in \Hom(V_8,V_6)$, with a two-dimensional kernel $K_2$, 
and it vanishes at $(l,m)$ if 
$m\subset \theta^{-1}(l)$. Note that if $K_2$ is the kernel of $\theta$ that we identify
with the projection from $V_8=K_2\oplus V_6$ to $V_6$, then $ \theta^{-1}(l)=K_2\oplus l$.
We get the conic bundle structure on $X$ by restricting $\QQ^6$ to the planes $\PP(K_2\oplus l)$,
for $l \in Z$.  The discriminant 
locus $\Delta$ is given by a section of $\det ( \theta^{-1}(l))^{-2}\simeq l^{-2}$, which means 
that $\Delta$ is a quadric section of $Z$, that is, a K3 surface of genus $5$. Note that contrary to 
the general situation the discriminant is smooth: if the restriction of $\mQ$ to $K_2$ is non-degenerate, the rank does never drop to one on $K_2\oplus l$.
The above description lets us construct a section (actually, two) of the conic bundle $\pi_1$ : just note that $\PP(K_2)$ is a trivial $\PP^1$-bundle over $Z$, and we are taking a quadratic section.
This gives the required semiorthogonal decomposition.
This variety appears also in  \cite{secci}, where it is labelled as $X_4^{1,2}$. The given description realizes $X$ as a conic bundle over $Z$ factoring through the blow up of $\PP_Z(\of_Z \oplus \of_Z(-1))$ with $\Delta$ as center.

\smallskip
What is the image of $X$ in $\QQ^6$? It consists of the lines $m$ such that $\theta(m)$, when non-zero,
is contained in $Z$. This condition defines in $\PP^7$ an intersection $Z'$ of two quadrics, singular
along the line $\PP(K_2)$. The intersection $Z'\cap \QQ^6$ has two singular points, and 
the projection from $X$ to $Z'\cap \QQ^6$ is a birational morphism that contracts exactly two 
copies of $Z$ to these two singular points.
\end{fano}

\medskip\begin{fano}\fanoid{3-25-90-2} %64
\label[mystyle]{25-90-3-5-1-22-23-1-5-1-7} $\mZ(\PP^4 \times \PP^6,\mQ_{\PP^4}(0,1) \oplus \of(3,0) \oplus \of(0,2))$.
\subsubsection*{Invariants}  $h^0(-K)=25, \ (-K)^4=90, \  h^{1,1}=3, \ h^{2,1}=5, \ h^{3,1}=1, \ h^{2,2}=22, \ -\chi(T)=23$.
\subsubsection*{Description} 
\begin{itemize}
    \item $\pi_1 \colon X \to \PP^4$ is a conic bundle over $W_3$, with discriminant a   K3 surface 
    of degree $6$.
    \item $\pi_2 \colon X \to \PP^6$ contracts two copies of $W_3$ to the two singular points of $Y^\circ=\mathcal{CC}(W_3) \cap \QQ^5$, with $\mathcal{CC}(W_3)$ the double projective cone over a cubic threefold $W_3$.
    \item Rationality: unknown.
\end{itemize}

\subsubsection*{Semiorthogonal decompositions}
\begin{itemize}
    \item 4 exceptional objects, 2 $[5/3]$-CY categories and $\Db(S_6)$ from $\pi_1$.
\end{itemize}

\subsubsection*{Explanation}
Our fourfold $X$ parametrizes the pairs $(x,y)$, for $x\in W_3\subset\PP^4$ and $y\in\QQ^5\subset\PP^6$,
such that $\alpha(y)\subset x$ for some given morphism $\alpha\in \Hom(V_7,V_5)$.
Denote by $K_2$ the kernel of $\alpha$. 

We first describe $\pi_1$. For $x$ given, $y$ must be contained in $\alpha^{-1}(x)\simeq K_2\oplus y$, 
which gives a projective plane on which $\QQ^5$ cuts a conic. So $\pi_1$ is a conic bundle over
$W_3$, whose discriminant is defined by a section of $\of(2)$, which gives a K3 surface of degree 6. 
(Note that the discriminant is smooth, the quadratic form being non-degenerate on $K_2$). Moreover, $\PP (K_2)$ is a trivial $\PP^1$-bundle over $W_3$, hence the fibration has two sections given by the intersection of the fibers with this constant line. The above description of the semiorthogonal decomposition of $\Db(X)$ follows (see \cref{subsub-conic}).

Now we turn to $\pi_2$. Observe that the cubic condition on $x$ implies that the image of $\pi_2$ is 
$\mathcal{CC}(W_3)\cap \QQ^5$, which is singular at the two points of $\PP(K_2)\cap \QQ^5$. For $y$ given, 
$x$ is uniquely defined by the condition $\alpha(y)\subset x$,
except when  $\alpha(y)=0$, in which case the fiber is the whole 
$W_3$. 

This variety appears also in  \cite{secci}, where it is labelled as $X_3^{1,2}$. The given description realizes $X$ as a conic bundle over $W_3$ factoring through the blow up of $\PP_{W_3}(\of_Z \oplus \of_Z(-1))$ with $\Delta$, the K3 surface of degree 6 above, as center. 
%Since $X_3^{1,2}$ is rational, so is our $X$.
\end{fano}

\appendix \section{Some blow ups of FK3}\label{sect:blowups}

\renewcommand\thefano{A--\arabic{fano}}
In this appendix we collect a few families of  Fano fourfolds which are not FK3 themselves, but whose Hodge structure in middle dimension contains a proper K3 substructure. A typical example is the blow up of a cubic fourfold in a K3 surface.

\medskip\begin{fano}\fanoid{2-12-27-2} %65
\label[mystyle]{12-27-2-7-79-58-1-2-1-6} $\mZ(\PP^1 \times \PP^5,\of(0,3) \oplus \of(1,2))$. 
\subsubsection*{Invariants}  $h^0(-K)=12, \ (-K)^4=27, \  h^{1,1}=2, \ h^{3,1}=7, \ h^{2,2}=79, \ -\chi(T)=58$.
\subsubsection*{Description} 
\begin{itemize}
    \item $\pi_1 \colon X \to \PP^1$ is a fibration in $W_6$, the latter being a $(2,3)$ complete intersection in $\PP^5$. 
    \item $\pi_2 \colon X \to \PP^5$ is the blow up $\Bl_{S_{12}} X_3$ of a cubic fourfold, where $S_{12}$ is a degree 12 canonical surface given by a $(2,2,3)$ complete intersection.
    \item Rationality: unknown. Same as the general cubic fourfold.
\end{itemize}

\subsubsection*{Semiorthogonal decompositions}
\begin{itemize}
    \item 2 exceptional objects and an unknown category from $\pi_1$.
    \item 3 exceptional objects, the K3 category of $X_3$ ans $\Db(S_{12})$ from $\pi_2$.
\end{itemize}

\subsubsection*{Explanation} It suffices to apply \cref{lem:blowupcodim2}.
\end{fano}

\medskip\begin{fano}\fanoid{2-16-45-2} %66
\label[mystyle]{16-45-2-3-51-44-1-3-1-6} $\mZ(\PP^2 \times \PP^5,\of(0,3) \oplus \of(1,1) \oplus \of(1,1)).$

\subsubsection*{Invariants}  $h^0(-K)=16, \ (-K)^4=45, \  h^{1,1}=2, \ h^{3,1}=3, \ h^{2,2}=51, \ -\chi(T)=44$.
\subsubsection*{Description} 
\begin{itemize}
    \item $\pi_1 \colon X \to \PP^2$ is a fibration in cubic surfaces.
    \item $\pi_2 \colon X \to \PP^5$ is a blow up $\Bl_{T_9}X_3$ of a cubic fourfold along an elliptic surface of degree $9$ and Kodaira dimension one. 
    \item Rationality: unknown, the cubic fourfold being general. 
\end{itemize}

\subsubsection*{Semiorthogonal decompositions}
\begin{itemize}
    \item 3 exceptional objects and an unknown category from $\pi_1$.
    \item 3 exceptional objects, the K3 category of $X_3$ and $\Db(T_9)$ from $\pi_2$.
\end{itemize}

\subsubsection*{Explanation} We have two sections of $\of(1,1)$ over $\PP^2\times X_3$,
where $X_3$ is a cubic fourfold. The projection $\pi: X\rightarrow X_3$ has fibers given by two linear conditions, hence points in general. So $\pi$ is birational and its
base locus is the degeneracy locus of the induced morphism $\of_{X_3}(-1)^2\rightarrow V_3^\vee\otimes \of_{X_3}$. This is a smooth surface $S$, with a morphism to $\PP^1$ defined by the kernel of the morphism. In fact the degeneracy locus of the corresponding morphism 
 $\of_{\PP^5}(-1)^2\rightarrow V_3^\vee\otimes \of_{\PP^5}$ is a copy of $\PP^1\times \PP^2$, so 
 $S$ is the intersection of the latter with $X_3$. The projection to $\PP^1$ shows 
 that $S$ is an elliptic surface. Its canonical bundle is 
 $\of_{\PP^1\times \PP^2}(1,0)$, which explains that $h^2(\of_S)=2$, and 
 since it is globally generated, we must have $\kappa(S)=1$. 
\end{fano}

\medskip\begin{fano}\fanoid{2-17-51-2} %67
\label[mystyle]{17-51-2-2-41-36-1-3-1-6} $\mZ(\PP^2 \times \PP^5,\of(1,1) \oplus \mQ_{\PP^2}(0,2)).$
\subsubsection*{Invariants}  $h^0(-K)=17, \ (-K)^4=51, \  h^{1,1}=2, \ h^{3,1}=2, \ h^{2,2}=41, \ -\chi(T)=36$.
\subsubsection*{Description} 
\begin{itemize}
    \item $\pi_1 \colon X \to \PP^2$ is a fibration in $\dP_4$.
    \item $\pi_2 \colon X \to \PP^5$ is a blow up $\Bl_{S_8} X_3$ with $S_8$ a octic K3 surface.
    \item Rationality: depends on the cubic fourfold. A cubic fourfold contains an $S_8$ if and only if it  contains a plane, see \cref{subsubC8}.
\end{itemize}

\subsubsection*{Semiorthogonal decompositions}
\begin{itemize}
    \item 3 exceptional objects and $\Db(Z,\mathcal{C}_0)$, for $Z \to \PP^2$ a $\PP^1$-bundle, from $\pi_1$.
    \item 3 exceptional objects, $\Db(S_2,\alpha)$ and $\Db(S_8)$ from $\pi_2$.
\end{itemize}

\subsubsection*{Explanation}
 The description of  $\pi_1$ is obvious.
 
 In order to describe $\pi_2$  we first apply \cref{lem:higherpeco} and identify $\mZ(\PP^2 \times \PP^5,\mQ_{\PP^2}(0,2))$ with the blow up of $\PP^5$ with center the intersection $S_8$ of three quadrics. This is a Fano fivefold $Y$ of index $\iota_Y=2$. By definition our Fano fourfold $X$ is a half-anticanonical section of $Y$. Via the blow up map the anticanonical class is identified as $6H-2S_{8}$; therefore a $(1,1)$ section on $Y$ cuts a (Hodge-special) cubic fourfold containing $S_8$, see also \cite[4.11]{kuznetsovKuchlePicard}, and the result follows. 
\end{fano}

 \begin{rmk}
 Observe that the \emph{double K3 structure} is in line with the prediction of \cite[2.1]{eg2}. In fact $Y$ is an index $2$ Fano fivefold with $H^5(Y)=0$, but with $H^4(Y)$ having a K3 structure coming $H^2(S_8)$, which is inherited by $Z$ by the Lefschetz Theorem on hyperplane sections.
\end{rmk}

\medskip\begin{fano}\fanoid{2-26-93-2} %68
\label[mystyle]{26-93-2-2-41-39-1-6-1-7} $\mZ(\PP^5 \times \PP^6,\of(1,1) \oplus \mQ_{\PP^5}(0,1) \oplus \of(3,0))$.

\subsubsection*{Invariants}  $h^0(-K)=26, \ (-K)^4=93, \  h^{1,1}=2, \ h^{3,1}=2, \ h^{2,2}=41, \ -\chi(T)=39$. 
\subsubsection*{Description} 
\begin{itemize}
    \item $\pi_1 \colon X \to \PP^5$ is a blow up $\Bl_{S_6} X_3$, where $S_6$ is a sextic K3 surface.
    \item $\pi_2 \colon X \to \PP^6$ is birational onto a singular fourfold, the intersection $C(X_3) \cap \QQ^5$ of the cone over $X_3$ with a smooth quadric passing through; moreover it contracts a cubic threefold to the
    the (singular) vertex. 
    \item Rationality: unknown. Same as the general cubic fourfold, which always contains a sextic K3 as an intersection with a quadric and a hyperplane.
\end{itemize}

\subsubsection*{Semiorthogonal decompositions}
\begin{itemize}
    \item 3 exceptional objects, the K3 category of $X_3$ and $\Db(S_6)$ from $\pi_1$.
\end{itemize}

\subsubsection*{Explanation}
The zero locus $\mZ(\PP^5 \times \PP^6,\mQ_{\PP^5}(0,1))$ is the blow up $\Bl_v \PP^6$ of $\PP^6$ at one point. This is enough to explain both projections. In fact if we consider the projection to $\PP^6$ we have to consider the intersection between a general cubic equation in six variables (hence, a projective cone over a smooth cubic fourfold) and a smooth quadric, both passing through $v$.

On the other hand if we consider the projection to $\PP^5$ this fourfold becomes $\mZ(\PP_{X_3}(\of \oplus \of(-1)),\mL \boxtimes \of_{X_3}(1))$. We conclude using the Cayley trick, with the blown up locus $\mZ(X_3,\of(2) \oplus \of(1))$ being a K3 of degree 6.
\end{fano}

\medskip\begin{fano}\fanoid{2-17-50-2} %69
\label[mystyle]{17-50-2-2-42-38-1-2-2-5}$\mZ(\PP^1 \times \Gr(2,5),\of(0,1) \oplus \of(0,2) \oplus \of(1,1)).$

\subsubsection*{Invariants}  $h^0(-K)=17, \ (-K)^4=50, \  h^{1,1}=2, \ h^{3,1}=2, \ h^{2,2}=42, \ -\chi(T)=38$.
\subsubsection*{Description} 
\begin{itemize}
    \item $\pi_1 \colon X \to \PP^1$ is a fibration in Gushel--Mukai threefolds.
    \item $\pi_2 \colon X \to \Gr(2,5)$ is a blow up $\Bl_{
    S_{10}}X_{10}$ of a degree 10 K3 surface in a Gushel--Mukai fourfold.
    \item Rationality: unknown. Same as the general Gushel--Mukai fourfold, since it contains degree 10 K3 surfaces  as double hyperplane sections.
\end{itemize}

\subsubsection*{Semiorthogonal decompositions}
\begin{itemize}
    \item 4 exceptional objects and an unknown category from $\pi_1$.
    \item 4 exceptional objects, the K3 category of $X_{10}$ and $\Db(S_{10})$ from $\pi_2$.
\end{itemize}

\subsubsection*{Explanation} Simple application of  \cref{lem:blowgrass}.
\end{fano}

\section{Some semiorthogonal decompositions}
\label{sec:SOD}

In this appendix we explain how to obtain semiorthogonal decompositions of our examples by describing those of Fano varieties that are involved in the geometric constructions. In particular, we always used maps $\pi:X \to M$ which are either smooth blow ups or Mori fiber spaces. 
In the first case, we appeal to the blow up formula for derived categories \cite{orlovprojbund}. The case of Mori fiber spaces such as projective bundles and quadric, conic, del Pezzo of degree 4 and 5 fibrations is detailed in this appendix, and resumed in \cref{tab:semiorth2}. Other cases are unknown.

We also summarize in \cref{tab:semiorth1} semiorthogonal decompositions of all possible $M$ and centers of blow ups appearing in the examples, giving details in the case of special cubic and Gushel--Mukai fourfolds.

\TOCstop

\subsection{The case of cubic and GM fourfolds}
\label{appB}
If $X$ is either a cubic fourfold or a Gushel--Mukai fourfold, then $\Db(X)$ is generated by a natural exceptional collection of length 3 (cubic) or 4 (GM) whose complementary Kuznetsov component is a K3 category, which is in general not equivalent to the derived category of a (twisted) K3 surface. However, this can happen for special such fourfolds containing specific surfaces, hence lying on some particular Noether--Lefschetz divisor. Here is a list of cases of interest for this paper, but more are known.

\subsubsection{$\mC_6$: Nodal cubics \cite{kuznetcubicfourfold}} If $X \subset \PP^5$ is a cubic fourfold with a single double point, then the (categorical resolution) of its Kuznetsov component is $\Db(S_6)$, where the degree 6 K3 surface $S_6$ is described as follows. Projecting from the node, one gets a birational map $X \dashrightarrow \PP^4$, and blowing up the node this gives a birational morphism $\widetilde{X} \to \PP^4$ which can be  shown to be the blow up of $\PP^4$ along $S_6$ ($\widetilde{X}$ is the Fano fourfold \cref{49-211-2-1-21-19-1-5-1-6}). 

\subsubsection{$\mC_8$: Cubics containing a plane}\label{subsubC8} If $X \subset \PP^5$ is a smooth cubic fourfold containing a plane, then its Kuznetsov component is $\Db(S_2,\alpha)$ \cite{kuznetcubicfourfold}, where the degree 2 K3 surface $S_2$ and the Brauer class $\alpha$ are described as follows. Projecting from the plane, one gets a birational map $X \dashrightarrow \PP^2$, and blowing up the plane gives a quadric surface fibration $\widetilde{X} \to \PP^2$ degenerating along a smooth sextic 
($\widetilde{X}$ is the Fano fourfold \cref{45-192-2-1-22-19-1-3-1-6}). Note that if $X$ is general in this divisor, its Kuznetsov component cannot be equivalent to the derived category of a K3 surface.

We notice that any cubic fourfold containing a plane has equation of the form $\sum l_i q_i$, where $l_i$ are linear and $q_i$ are quadratic polynomials for $i=1,2,3$, the plane being given by the vanishing of the $l_i$'s. In particular such a cubic fourfold also contains quadric surfaces (where one quadratic and two linear forms vanish), 
del Pezzo surfaces $\dP_4$ (where one linear and two quadratic forms vanish), and an octic K3 surface $S_8$ (where the three quadratic forms vanish).

\subsubsection{$\mC_{14}$: Pfaffian cubics}\label{subsubC14} If $X \subset \PP^5$ is Pfaffian, then its Kuznetsov component is $\Db(S_{14})$, where the degree 14 K3 surface $S_{14}$ is the HP dual of $X$ \cite{kuznetcubicfourfold}.
Note that cubic fourfolds in $\mC_{14}$ contain quintic del Pezzo surfaces and rational quartic scrolls.

\subsubsection{GM fourfolds containing a del Pezzo quintic}\label{subsub:GMdp5} If $X \subset \Gr(2,5)$ is a GM fourfold containing a $\dP_5$, then its Kuznetsov component is $\Db(S_{10})$ \cite{Kuz-Perry-GM}, and $X$ is rational. The surface $S_{10}$ is also a GM variety.

Note moreover that any such $X$ contains a $\sigma$-plane (i.e.\ a plane in $\Gr(2,5)$ whose Schubert class is $\sigma_{3,1}$), but the converse is true only in general \cite[\S 7.5]{Deb-Ili-Man}. A general GM fourfold containing such a plane is rational.

\subsubsection{GM fourfolds containing a $\tau$-quadric} If $X \subset \Gr(2,5)$ is a GM fourfold containing a $\tau$-quadric (i.e.\ a quadratic surface which is a  linear section of some $\Gr(2,4)$) then $X$ is rational and has degree 10 discriminant, but the Kuznetsov component of $X$ has not been proven yet to be equivalent to some $\Db(S_{10})$.

\begin{table}[!hbtp]
    \centering
    \begin{tabular}{||c||c|c||}\hline\hline
        $\mathrm{dim}(X)$ & $X$ & Description of $\Db(X)$ \\  
         \hline \hline
        \multirow{2}{*}{any} & $\PP^n$ & $n+1$ exceptional objects \cite{beilinson} \\ \cline{2-3}
       &  $\Gr(k,n)$ & ${\binom{n}{k}}$ exceptional objects \cite{kapranovquadric}\\
        \hline
        odd & \multirow{2}{*}{$\QQ^n$}  & $n+1$ exceptional objects \cite{kapranovquadric} \\
        even & & $n+2$ exceptional objects \cite{kapranovquadric}  \\ \hline
        2 & Rational surface & $\rho(X)+2$ exceptional objects \\
        \hline
        \multirow{4}{*}{3} &
        Cubic in $\PP^4$ & 2 exceptional objects and a $[5/3]$-CY category \cite{kuznetv14} \\ \cline{2-3}
        & Int.\ of 2 quadrics in $\PP^5$ & 2 exceptional objects and a genus 2 curve \cite{bondalorlov} \\  \cline{2-3}
        & $X_5^3$ & 4 exceptional objects \cite{orlov-v5} \\ \cline{2-3}
        & $(1,1)$ divisor in $\PP^2 \times \PP^2$ & 6 exceptional objects (use HPD \cite{berna-bolo-faenzi}) \\ \hline
        
        \multirow{14}{*}{4} &
        Cubic in $\PP^5$ & 3 exceptional objects and a K3 category \cite{kuznetcubicfourfold} \\ \cline{2-3}
        & GM fourfold in $\Gr(2,5)$ & 4 exceptional objects and a K3 category \cite{Kuz-Perry-GM}\\ \cline{2-3}
        & Int .of 2 quadrics in $\PP^6$ & 12 exceptional objects (see, e.g., \cite[\S 6]{marcello-goncalo-chowgroups}) \\ \cline{2-3}
        & del Pezzo 4fold of degree 5 & 6 exceptional objects (use HPD for $\Gr(2,5)$ \cite{Kuz:ICM2014}) \\ \cline{2-3}
        & deg 12 index 2 & 16 exceptional objects \cite[\S 6.2]{KuzHyperplane} \\ \cline{2-3}
        & deg 14 index 2 & 12 exceptional objects \cite[Cor .10.1]{kuznetgrasslines} \\ \cline{2-3}
        & deg 16 index 2 & 8 exceptional objects (use HPD \cite{KuzHyperplane}) \\ \cline{2-3}
        & deg 18 index 2 & 6 exceptional objects (use HPD \cite{KuzHyperplane}) \\ \cline{2-3}
        & Codim 2 linear sect .of $\PP^3 \times \PP^3$ & 12 exceptional objects (use HPD \cite{berna-bolo-faenzi}) \\ \cline{2-3}
        & $(1,1)$ divisor in $\PP^2 \times \PP^3$ & 9 exceptional objects (use HPD \cite{berna-bolo-faenzi}) \\ \cline{2-3}
        & $(1,2)$ divisor in $\PP^2 \times \PP^3$ & 8 exceptional objects and an unknown category (not trivial!) \\ \cline{2-3}
         & $(1,1)$ divisor in $\PP^2 \times \QQ^3$ & 10 exceptional objects, see \cref{27-100-3-1-24-20-1-2-1-3-1-5} \\ \cline{2-3}
         & $\PP^2$-bundle over $\PP^2$ & 9 exceptional objects \cite{orlovprojbund} \\ 
         \cline{2-3}
         & $\PP^1$-bundle over a $\QQ^3$ & 8 exceptional objects \cite{orlovprojbund} \\
         \cline{2-3}
         & $\PP^1$-bundle over a cubic 3fold & 4 exceptional objects and 2 $[5/3]$-CY categories \cite{orlovprojbund} \\
        \hline \hline
    \end{tabular}
    \caption{\centering Semiorthogonal decompositions for (Fano) of dimension at most $4$ that appear in the examples.}
    \label{tab:semiorth1}
\end{table}

{\small
\begin{table}[!hbtp]
\frenchspacing
    \centering
    \begin{tabular}{||c|c||c|c||}
    \hline\hline
        fiber of $X \to M$  & $M$ & exceptional part & Kuznetsov component \\  
         \hline \hline
         
        \multirow{7}{*}{conic} & $\PP^3$ & 4 exceptional objects &  \\
        \cline{2-3}
        & $\PP^1 \times \PP^2$ & 6 exceptional objects& \\
        \cline{2-3}
        & $\PP^1 \times \PP^1 \times \PP^1$ & 8 exceptional objects & $\Db(M,\mC_0)$,\\
        \cline{2-3}
        & $\Bl_p\PP^3$ & 6 exceptional objects & see \cref{subsub-conic} \\
        \cline{2-3}
        & $\QQ^3$ & 4 exceptional objects & \\
        \cline{2-3}
        & $(1,1)$ in $\PP^2 \times \PP^2$ & 6 exceptional objects & \\
        \cline{2-3}
        & 2 quadrics in $\PP^5$ & 2 exc.\ objects, a genus 2 curve$^\star$ & \\
        \hline
        
        \multirow{2}{*}{quadric surface} & $\PP^2$ with sextic discr. & 6 exceptional objects & twisted K3 of degree 2 \\ \cline{2-4}
        & $\PP^1 \times \PP^1$ with $(4,4)$ discr. & 8 exceptional objects
        & twisted K3 of degree 8 \\
        \hline
        
        \multirow{2}{*}{$\dP_5$} & $\PP^2$ & 6 exceptional objects & $\Db(Z)$ for $Z \to M$ \\ \cline{2-3}
        & $\PP^1 \times \PP^1$ & 8 exceptional objects
        & of degree five, see \cref{subsub-dp5} \\
        \hline
        
        \multirow{2}{*}{$\dP_4$} & $\PP^2$ & 3 exceptional objects & $\Db(Z,\mC_0)$ for $Z \to M$ \\ \cline{2-3}
        & $\PP^1 \times \PP^1$ & 4 exceptional objects
        & a $\PP^1$-bundle, see \cref{subsub-intersections} \\
        \hline
        
        \multirow{2}{*}{$\dP_3$} & $\PP^2$ & 3 exceptional objects & \multirow{2}{*}{unknown} \\ \cline{2-3}
        & $\PP^1 \times \PP^1$ & 4 exceptional objects
        & \\
        
        \hline 
        
        \multirow{2}{*}{Int.\ of 2 quadrics} & \multirow{9}{*}{$\PP^1$} & \multirow{8}{*}{4 exceptional objects} & $\Db(S,\mC_0)$ for $S$ a \\
        & & & Hirz.\ surface, see \cref{subsub-intersections} \\ \cline{1-1}\cline{4-4}
        Cubic 3folds &  &  & \multirow{7}{*}{unknown}\\
        \cline{1-1}             GM 3 folds & & &\\
        \cline{1-1}        $X^3_{12}$ & & & \\
       \cline{1-1}         $X^3_{14}$ & & & \\
       \cline{1-1}         $X^3_{18}$ & & & \\
        \cline{1-1}
       $X^3_{16}$ & &  & \\
       \cline{1-1} \cline{3-3}
       $2,3$ c.\ int.\ in $\PP^5$ & & 2 exceptional objects &\\
    
    \hline\hline    
    \end{tabular}
    \caption{\normalsize \centering Semiorthogonal decompositions for fourfolds with a Mori fiber space structure.
        In the case marked with $^\star$ the category of a genus 2 curve comes from the base, so one should not consider it as part of the Kuznetsov component.}
    \label{tab:semiorth2}
\end{table}
}
\nonfrenchspacing

\subsection{Quadric fibrations}
Let $\pi:X \to M$ be a quadric bundle of relative dimension $n$. This means that there exists a rank $n+2$ vector bundle $\mE$ and a line bundle $\mK$ on $M$ such that $X$ is the zero locus of a quadratic form $q: \mK^\vee \to S^2 \mE^\vee$. We assume that $q$ is general so that the generic fiber of $\pi$ is a smooth quadric. The locus $\Delta\subset M$ where the fiber is singular of corank 1 is a divisor, called the \emph{discriminant divisor}, which is in general singular in codimension $2$, and whose singularities correspond to corank 2 fibers.

In this case, one can associate to $q$ the sheaf $\mC_0$ of the even parts of the Clifford algebra, and show that the Lefschetz part of $\Db(X)$ is given by $n$ copies of $\Db(M)$, while the relative Kuznetsov component can be described as $\Db(M,\mC_0)$, the derived category of complexes of $\mC_0$-algebras on $M$ \cite{kuznetconicbundles}.

\subsubsection{The case of conic bundles with a section}\label{subsub-conic}
In the case of conic bundles, there is a structure of $\Z/2\Z$-root stack $\hat{M}$ on $M$, and $\mC_0$ lifts to an algebra $\mB_0$ on $\hat{M}$ such that $\Db(M,\mC_0) \simeq \Db(\hat{M},\mB_0)$. The algebra $\mB_0$ is Azumaya outside the singularities of the discriminant divisor \cite{kuznetconicbundles} and its triviality is equivalent to $\pi: X \to M$ having a section (this is in turn equivalent to $X$ being $k(M)$-rational).
The dg category of perfect complexes on the root stack $\hat{M}$ admits a semiorthogonal decomposition \cite{ishii-ueda,bergh-lunts-schnu}:
$$\mathrm{Perf}(\hat{M})= \sod{\Db(M),\mathrm{Perf}(\Delta)},$$
and the category of perfect complexes $\mathrm{Perf}(\Delta)$ is equivalent to $\Db(\Delta)$ if and only if $\Delta$ is smooth. It follows, that in the case where $\Delta$ is smooth and $\mB_0$ is trivial, we have a semiorthogonal decomposition of $\Db(X)$ by two copies of $\Db(M)$ and one copy of $\Db(\Delta)$. This explains the K3 structure in examples \cref{31-120-3-1-2-22-20-1-6-1-8} and \cref{25-90-3-5-1-22-23-1-5-1-7}.
In the case where $\Delta$ is not smooth one can wonder whether the copy of $\Db(\Delta)$ can be replaced by a categorical resolution of singularities. We can ask the following question.

\begin{question}
Let $X$ be a fourfold and $\pi: X \to M$ be a conic bundle with discriminant divisor $\Delta$ with finitely many singular points. Assume that $\pi:X  \to M$ has a section. Is there a categorical resolution of singularities $\mD$ of $\Delta$ such that
$$\Db(X)=\sod{\mD,\Db(M),\Db(M)}$$
is a semiorthogonal decomposition?
\end{question}
 
Particularly relevant to our study is the case where $\Delta$ is resolved by a K3, for example, when $\Delta$ is a singular quartic in $\PP^3$. 
 
\subsubsection{The case of quadric surface bundles}\label{subsub:quad-surf-fib}
In the case where $\pi: X \to M$ is a quadric surface bundle, the situation is much simpler. Indeed, there is a degree 2 map $T \to M$ ramified along $\Delta$ such that $\mC_0$ lifts to an algebra $\mB_0$ which is Azumaya outside the singularities of $\Delta$. In particular, if $\Delta$ is smooth (and this is in general the case when $X$ is a fourfold), then $\Db(M,\mC_0) \simeq \Db(T,\mB_0)$ is the derived category of twisted sheaves on the variety $T$.

In the examples considered in this paper, there are two families
of fourfolds $X$ with a quadric surface bundle $\pi: X \to M$ and they both give rise to twisted K3 surfaces:

\begin{enumerate}
    \item $M$ is $\PP^2$ and $\Delta$ is a smooth sextic, so that $T$ is a degree 2 K3 surface.
    \item $M$ is $\PP^1  \times \PP^1$ and $\Delta$ is a $(4,4)$ divisor, so that $T$ is a degree 8 K3 surface of Picard rank 2.
\end{enumerate}

\subsubsection{The case of fibrations in intersections of quadrics}\label{subsub-intersections}
Let $\pi:X \to M$ be a Mori fiber space whose general fiber is the intersection of two quadrics. As explained in \cite{auel-berna-bolo}, one can construct a $\PP^1$-bundle $Z \to M$ and a quadric fibration on $Y \to Z$ with associated sheaf $\mC_0$ of even parts of Clifford algebras, such that the Kuznetsov component of the fibration is equivalent to $\Db(Z,\mC_0)$. The considerations on conic and quadric surface bundles may then be carried over these cases as well.

\subsection{\texorpdfstring{$\dP_5$}{dP5} fibrations}\label{subsub-dp5}
If $X \to M$ is a $\dP_5$ fibration, the Kuznetsov component is equivalent to $\Db(Z)$ for $Z \to M$ a flat degree 5 cover \cite{xie-dp5}. In our cases, $M$ is either $\PP^2$ or a quadric, and the question of $Z$ being (related to) a K3 surface remains open. Note that if $X$ is relatively minimal over $M$, then $Z$ cannot be disconnected (this would increase the Picard rank over $k(M)$, see \cite{auel-berna-dp}).

\TOCstart

\section{Tables}
\label{Tabless}
\renewcommand\thefano{F--\arabic{fano}}

In the tables below we follow the notation of the paper, together with some additions. In particular, as usual  $X_3$ denotes a cubic fourfold and $X_{10}$ a Gushel--Mukai fourfold. More generally, $X_d$ denotes a Fano fourfold of index 2 and degree d and $Y_d$ denotes a Fano fourfold of index 3 and degree d. If needed, the superscript $Y^n_d$ will denote Fano manifolds obtained as hyperplane sections of $Y_d$ of dimension $n$. Recall that $W_3$ denotes a cubic threefold. By $D \subset \PP^n \times \PP^m$ (resp.\ $D^c$) we denote a divisor (resp.\ the complete intersection of $c$ divisors), whose bidegree is specified in the detailed description of each Fano fourfold. 
Also $Z^\circ$ denotes a singular fourfold, whose description is expanded in the appropriate section.

In terms of invariants, $h^{3,1}$ is always 1 for fourfolds of K3 type, so it is not listed.  Under the column ``Bl'' are listed the varieties $Y$ from our lists such that $X \cong \Bl_S Y$, where $X$ is the FK3 in question. In the last column, ``+'' means that every variety in the deformation class is rational, ``*'' that some special member are rational, ``?'' means that rationality properties are not known.

\begin{longtable}{ccccccccccc}
\caption{FK3 with $\rho=1$}\label{tab:fk3rho1}\\
\toprule
\multicolumn{1}{c}{ID}&\multicolumn{1}{c}{$\rho$}& \multicolumn{1}{c}{$h^{2,2}$} & \multicolumn{1}{c}{$h^{1,2}$} &\multicolumn{1}{c}{$h^0(-K)$}& \multicolumn{1}{c}{$K^4$}& \multicolumn{1}{c}{$-\chi(T)$}& \multicolumn{1}{c}{$G$}& \multicolumn{1}{c}{$\mF$}& \multicolumn{1}{c}{Bl}& \multicolumn{1}{c}{Rat} \\
\cmidrule(lr){1-1}\cmidrule(lr){2-2}\cmidrule(lr){3-3} \cmidrule(lr){4-4} \cmidrule(lr){5-5} \cmidrule(lr){6-6} \cmidrule(lr){7-7} \cmidrule(lr){8-8} \cmidrule(lr){9-9} \cmidrule(lr){10-10}\cmidrule(lr){11-11}
\endfirsthead
\multicolumn{5}{l}{\vspace{-0.25em}\scriptsize\emph{\tablename\ \thetable{} continued from previous page}}\\
\midrule
\endhead
\multicolumn{5}{r}{\scriptsize\emph{Continued on next page}}\\
\endfoot
\bottomrule
\endlastfoot
\evnrow $X_3$&1&21&0&55&243&20&$\PP^5$& $\of(3)$&-&* \\
$X_{10}$ & $1$& $22$&0& 39&160&24&$\Gr(2,5)$&$\of(1) \oplus \of(2)$& - &*  \\
\evnrow $Z_{66}$&1&24&0&20&66&25&$\Gr(3,7)$& $\W^2\mU^{\vee} \oplus \W^3\mQ \oplus \of(1)$&-&? \\

\end{longtable}

\begin{longtable}{ccccccccccc}
\caption{FK3 from the cubic fourfold}\label{tab:fk3cubic}\\
\toprule
\multicolumn{1}{c}{ID}&\multicolumn{1}{c}{$\rho$}& \multicolumn{1}{c}{$h^{2,2}$} & \multicolumn{1}{c}{$h^{1,2}$} &\multicolumn{1}{c}{$h^0(-K)$}& \multicolumn{1}{c}{$K^4$}& \multicolumn{1}{c}{$-\chi(T)$}& \multicolumn{1}{c}{$G$}& \multicolumn{1}{c}{$\mF$}& \multicolumn{1}{c}{Bl}& \multicolumn{1}{c}{Rat} \\
%& \multicolumn{1}{c}{Notes} \\
%&\multicolumn{1}{c}{$e(X)$}&\multicolumn{1}{c}{$h^1(T_X)$}\\
\cmidrule(lr){1-1}\cmidrule(lr){2-2}\cmidrule(lr){3-3} \cmidrule(lr){4-4} \cmidrule(lr){5-5} \cmidrule(lr){6-6} \cmidrule(lr){7-7} \cmidrule(lr){8-8} \cmidrule(lr){9-9} \cmidrule(lr){10-10}\cmidrule(lr){11-11}
\endfirsthead
\multicolumn{5}{l}{\vspace{-0.25em}\scriptsize\emph{\tablename\ \thetable{} continued from previous page}}\\
\midrule
\endhead
\multicolumn{5}{r}{\scriptsize\emph{Continued on next page}}\\
\endfoot
\bottomrule
\endlastfoot
\evnrow \cref{36-144-2-1-28-28-1-2-1-6}&2&28&0&36&144&28&$\PP^1 \times \PP^5$& $\of(0,3) \oplus \of(1,1)$&$X_3$&* \\ %3
\cref{28-99-2-1-1-23-29-1-3-1-6}&2&23&1&28&99&29&$\PP^2 \times \PP^5$&$\mQ_{\PP^2}(0,1) \oplus \of(0,3)$&$X_3$&*\\ %12
\evnrow \cref{45-192-2-1-22-19-1-3-1-6} &2&22&0&45&192&19&$\PP^2 \times \PP^5$&$\of(1,2) \oplus \mQ_{\PP^2}(0,1)$&$X_3$&*\\ %7
\cref{40-163-2-1-23-24-1-4-1-6}&2&23&0&40&163&24&$ \PP^3 \times \PP^5$&$\of(1,2) \oplus \mQ_{\PP^3}(0,1)$ &$X_3$&* \\ %6

\evnrow \cref{39-161-2-1-23-21-1-6-2-8} &2&23&0&39&161&21&$\PP^5 \times \Gr(2,8)$&$\mU_{\Gr(2,8)}^{\vee}(1,0) \oplus {} $&$X_3, Z^{\circ}$&?\\ 
  \evnrow & &&&&&&&$  \of(1,1) \oplus \mQ_{\PP^5} \boxtimes \mU^{\vee}_{\Gr(2,8)} $& & \\ %5
%\evnrow & &&&&&&& $\mU_{\Gr(2,8)}^{\vee}(1,0) \oplus {} $&& \\   \evnrow \multirow{-2}{*}{\cref{39-161-2-1-23-21-1-6-2-8}} &\multirow{-2}{*}{2}&\multirow{-2}{*}{23}&\multirow{-2}{*}{0}&\multirow{-2}{*}{39}&\multirow{-2}{*}{161}&\multirow{-2}{*}{21}&\multirow{-2}{*}{$\PP^5 \times \Gr(2,8)$}&$  \of(1,1) \oplus \mQ_{\PP^5} \boxtimes \mU^{\vee}_{\Gr(2,8)} $&\multirow{-2}{*}{$X_3, Z^{\circ}$}&\multirow{-2}{*}{?}\\ %5centered  
\cref{49-211-2-1-21-19-1-5-1-6}&2&21&0&49&211&19&$\PP^4 \times \PP^5$&$\mQ_{\PP^4}(0,1) \oplus \of(2,1)$& $X_3^{\circ}, \PP^4$& +\\ %8
\evnrow \cref{24-86-2-1-26-24-1-5-1-6}&2&26&0&24&86&24&$\PP^4 \times \PP^5$& $\of(1,1) \oplus \W^3 \mQ_{\PP^4}(0,1)$&$\PP^4, X_3$&+ \\ %1
\cref{30-115-2-1-27-26-1-3-8}&2&27&0&30&115&26&$\Fl(1,3,8)$&$\mQ_2^{\oplus 2} \oplus \of(1,1) \oplus \mR_2^{\vee}(1,1) $& $X_3, Z^{\circ}$& ?\\ %4
\evnrow \cref{27-101-2-1-23-21-1-6-2-4} & $2$& $23$&0& 27&101&21&$\PP^5 \times \Gr(2,4)$&$(\mU_{\Gr(2,4)}^{\vee}(1,0))^
{\oplus 2} \oplus \of(1,1)$& $\Gr(2,4), X_3$ &+  \\ %2
\cref{20-63-3-1-38-36-1-2-1-2-1-6}&3&38&0&20&63&28&$ \PP^1 \times \PP^1 \times \PP^5$&$\of(0,0,3) \oplus {} $&\cref{36-144-2-1-28-28-1-2-1-6}&*\\
  & &&&&&&&$  \of(0,1,1) \oplus \of(1,0,1) $& & \\ %9
\evnrow \cref{24-81-3-1-1-30-31-1-2-1-3-1-6}&3&30&1&24&81&31& $\PP^1 \times \PP^2 \times \PP^5$& $\of(0,0,3) \oplus {} $& \cref{28-99-2-1-1-23-29-1-3-1-6}&*\\ 
  \evnrow & &&&&&&&$ \mQ_{\PP^2}(0,0,1) \oplus \of(1,1,0)  $& & \\ %13
\cref{27-99-3-1-30-27-1-2-1-3-1-6}&3&30&0&27&99&27&$\PP^1 \times \PP^2 \times \PP^5$&$\of(0,1,2) \oplus {}$&  \cref{45-192-2-1-22-19-1-3-1-6} &* \\ 
  & &&&&&&&$  \mQ_{\PP^2}(0,0,1) \oplus \of(1,0,1) $& & \\ %10
\evnrow \cref{34-134-3-1-28-25-1-2-1-2-6}&3&28&0&34&134&25& $\PP^1 \times \Fl(1,2,6)$ & $\mQ_2 \oplus {}$ & \cref{49-211-2-1-21-19-1-5-1-6}, & +\\
  \evnrow & &&&&&&&$\of(0;1,2) \oplus \of(1;0,1)  $& $\Bl_{\PP^2}\PP^4$ & \\ %17
\cref{30-113-3-1-30-28-1-2-1-4-1-6}&3&30&0&30&113&28&$\PP^1 \times \PP^3 \times \PP^5$&$\of(0,1,2) \oplus {}$&\cref{40-163-2-1-23-24-1-4-1-6},& * \\
  & &&&&&&&$ \mQ_{\PP^3}(0,0,1) \oplus \of(1,1,0) $&\cref{36-144-2-1-28-28-1-2-1-6} & \\ %14
\evnrow \cref{35-141-3-1-23-18-1-3-1-3-1-6}&3&23&0&35&141&18&$ \PP^2_1 \times \PP^2_2 \times \PP^5$&$\mQ_{\PP^2_1}(0,0,1) \oplus {} $& \cref{45-192-2-1-22-19-1-3-1-6},&+ \\
\evnrow & &&&&&&&$ \mQ_{\PP^2_2}(0,0,1) \oplus \of(1,1,1) $& $\PP^2 \times \PP^2$ & \\ %11
\cref{42-176-3-1-23-18-1-3-1-5-1-6}& 3 &23&0&42&176&18&$\PP^2 \times \PP^4 \times \PP^5$&$\mQ_{\PP^4}(0,0,1) \oplus {} $& \cref{49-211-2-1-21-19-1-5-1-6}, &+\\
  & &&&&&&&$\of(1,1,1) \oplus \mQ_{\PP^2}(0,1,0) $&$\Bl_{\PP^1}\PP^4$ & \\ %16
\evnrow \cref{37-149-3-1-24-21-1-3-1-4-1-6}&3&24&0&37&149&21& $\PP^2 \times \PP^3 \times \PP^5$& $\mQ_{\PP^3}(0,0,1) \oplus {}$ & \cref{45-192-2-1-22-19-1-3-1-6},&? \\ 
\evnrow & &&&&&&&$ \of(1,0,2) \oplus \mQ_{\PP^2}(0,1,0) $& \cref{40-163-2-1-23-24-1-4-1-6} & \\ %15
\end{longtable}

\begin{longtable}{ccccccccccc}
\caption{FK3 from the GM fourfold}\label{tab:fk3gm}\\
\toprule
\multicolumn{1}{c}{ID}&\multicolumn{1}{c}{$\rho$}& \multicolumn{1}{c}{$h^{2,2}$} & \multicolumn{1}{c}{$h^{1,2}$} &\multicolumn{1}{c}{$h^0(-K)$}& \multicolumn{1}{c}{$K^4$}& \multicolumn{1}{c}{$-\chi(T)$}& \multicolumn{1}{c}{$G$}& \multicolumn{1}{c}{$\mF$}& \multicolumn{1}{c}{Bl}& \multicolumn{1}{c}{Rat} \\
%& \multicolumn{1}{c}{Notes} \\
%&\multicolumn{1}{c}{$e(X)$}&\multicolumn{1}{c}{$h^1(T_X)$}\\
\cmidrule(lr){1-1}\cmidrule(lr){2-2}\cmidrule(lr){3-3} \cmidrule(lr){4-4} \cmidrule(lr){5-5} \cmidrule(lr){6-6} \cmidrule(lr){7-7} \cmidrule(lr){8-8} \cmidrule(lr){9-9} \cmidrule(lr){10-10}\cmidrule(lr){11-11}
\endfirsthead
\multicolumn{5}{l}{\vspace{-0.25em}\scriptsize\emph{\tablename\ \thetable{} continued from previous page}}\\
\midrule
\endhead
\multicolumn{5}{r}{\scriptsize\emph{Continued on next page}}\\
\endfoot
\bottomrule
\endlastfoot
\evnrow \cref{22-74-2-1-30-30-1-3-2-5}&2&30&0&22&74&30&$\PP^2 \times \Gr(2,5)$& $\mU^{\vee}_{\Gr(2,5)}(1,0) \oplus {} $&$X_{10}$&* \\
\evnrow  & &&&&&&&$ \of(0,1) \oplus \of(0,2) $& & \\ %18
\cref{23-80-2-1-27-26-1-3-2-5}&2&27&0&23&80&26&$\PP^2 \times \Gr(2,5)$&$\of(0,1) \oplus {} $& $X_{10}$& + \\
& &&&&&&&$\of(1,1) \oplus \mQ_{\PP^2}(0,1) $&  & \\ %19
\evnrow \cref{26-94-2-1-28-28-1-2-5} &2&28&0&26&94&28 &$\Fl(1,2,5)$&$\of(1,0) \oplus {}$&$X_{10}$&* \\
\evnrow  & &&&&&&&$ \of(0, 1) \oplus \of(0,2) $& & \\ %20
\cref{30-114-2-1-24-24-2-4-2-5}&2&24&0&30&114&24&$\Gr(2,4) \times \Gr(2,5)$&$\mQ_{\Gr(2,4)} \boxtimes \mU^\vee_{\Gr(2,5)} \oplus {}$& $X_{10}$& ? \\
 & &&&&&&&$ \of(0,2) \oplus  \of(1,0)$&  & \\ %21
 \evnrow \cref{33-130-2-1-23-22-2-4-2-5}&2&23&0&33&130&22&$ \Gr(2,4) \times \Gr(2,5)$&$\mQ_{\Gr(2,4)} \boxtimes \mU^\vee_{\Gr(2,5)} \oplus {} $&$X_{10}, $& + \\
 \evnrow  & &&&&&&&$\of(0,1) \oplus \of(1,1)$&$\Gr(2,4)$  & \\ %22
 \cref{30-115-2-1-24-23-1-3-5}&2&24&0&30&115&23&$\Fl(1,3,5)$&$\of(0,1)\oplus {} $&$X_{10},$&+\\
  & &&&&&&&$\of(1,1) \oplus \mR_2(0,1) $&$\PP^4 $  & \\ %23
\end{longtable}

{\small
\begin{longtable}{ccccccccccc}
\caption{FK3 from the K3 surfaces}\label{tab:ffromk3}\\
\toprule
\multicolumn{1}{c}{ID}&\multicolumn{1}{c}{$\rho$}& \multicolumn{1}{c}{$h^{2,2}$} & \multicolumn{1}{c}{$h^{1,2}$} &\multicolumn{1}{c}{$h^0(-K)$}& \multicolumn{1}{c}{$K^4$}& \multicolumn{1}{c}{$-\chi(T)$}& \multicolumn{1}{c}{$G$}& \multicolumn{1}{c}{$\mF$}& \multicolumn{1}{c}{Bl}& \multicolumn{1}{c}{Rat} \\
%& \multicolumn{1}{c}{Notes} \\
%&\multicolumn{1}{c}{$e(X)$}&\multicolumn{1}{c}{$h^1(T_X)$}\\
\cmidrule(lr){1-1}\cmidrule(lr){2-2}\cmidrule(lr){3-3} \cmidrule(lr){4-4} \cmidrule(lr){5-5} \cmidrule(lr){6-6} \cmidrule(lr){7-7} \cmidrule(lr){8-8} \cmidrule(lr){9-9} \cmidrule(lr){10-10}\cmidrule(lr){11-11}
\endfirsthead
\multicolumn{5}{l}{\vspace{-0.25em}\scriptsize\emph{\tablename\ \thetable{} continued from previous page}}\\
\midrule
\endhead
\multicolumn{5}{r}{\scriptsize\emph{Continued on next page}}\\
\endfoot
\bottomrule
\endlastfoot
\evnrow \cref{19-60-2-1-32-31-1-2-2-5} &2&32&0&19&60&31& $ \PP^1 \times \Gr(2,5)$& $\of(1,1) \oplus \mU_{\Gr(2,5)}^{\vee}(0,1)$& $X_{12}$& + \\ %24
\cref{21-70-2-1-28-27}&2&28&0&21&70&27&$\PP^1 \times \Gr(2,6)$& $\of(1,1) \oplus \of(0,1)^{\oplus 4}$ & $X_{14}$& + \\ %25
\evnrow \cref{23-80-2-1-24-23-1-2-3-6} &2&24&0&23&80&23& $\PP^1 \times \Gr(3,6)$& $\of(1,1) \oplus {} $& $X_{16}$& ? \\
\evnrow  & &&&&&&&$ \of(0,1)^{\oplus 2} \oplus \W^2\mU_{\Gr(3,6)}^{\vee}$&  & \\ %26
\cref{32-124-2-1-28-28-1-2-8}&2&28&0&32&124&28&$\Fl(1,2,8)$& $\mQ_2 \oplus \of(1,1) \oplus \of(0,2)^{\oplus 2}$& $Y_4, Z^{\circ}$&+\\ %31
\evnrow \cref{25-90-2-1-22-21-1-2-2-7}&2&22&0&25&90&21&$\PP^1 \times \Gr(2,7)$&$\of(0,1) \oplus {} $& $X_{18}$&+\\
\evnrow  & &&&&&&&$\of(1,1) \oplus \mQ_{\Gr(2,7)}^{\vee}(0,1) $&  & \\ %28
\cref{39-160-2-1-22-21-1-3-1-5}&2&22&0&39&160&21&$\PP^2 \times \PP^4$&$\of(1,1)\oplus \of(1,2) $& $\PP^4$& + \\ %33
\evnrow \cref{39-160-2-1-22-21-1-2-2-4} &2&22&0&39&160&21&$\PP^1 \times \Gr(2,4)$&$\of(1,2)$& $\Gr(2,4)$&+ \\ %32
\cref{23-80-2-1-28-27-1-4-1-7}&2&28&0&23&80&27& $\PP^3 \times \PP^6$&$\of(0,2) \oplus {} $& $Y_4$& + \\
 & &&&&&&&$\of(1,1) \oplus \W^2 \mQ_{\PP^3}(0,1) $& & \\ %27
\evnrow \cref{29-110-2-1-22-21-1-2-5}&2&22&0&29&110&21&$\Fl(1,2,5)$&$\of(1,1) \oplus \of(0,1)^{\oplus 2}$& $Y_5, \PP^4$& +\\ %30
\cref{27-100-2-1-24-23-1-6-2-4} &2&24&0&27&100&23&$\PP^5 \times \Gr(2,4)$&$\mU_{\Gr(2,4)}^{\vee}(1,0) \oplus {} $& $\Gr(2,4)$&+ \\
& &&&&&&&$\of(1,1) \oplus \mQ_{\Gr(2,4)}(1,0) $&  & \\ %29
\evnrow \cref{26-95-2-1-23-22-1-4-2-5}&2&23&0&26&95&22&$ \PP^3 \times \Gr(2,5)$&$\of(0,1)^{\oplus 2} \oplus {} $& $Y_5$& +\\
\evnrow  & &&&&&&&$ \mU^{\vee}_{\Gr(2,5)}(1,0) \oplus \of(1,1)$&  & \\ %34
 \cref{25-90-2-1-24-23-2-4-2-4}&2&24&0&25&90&23&$(\Gr(2,4))^2 $& $\of(0,1) \oplus {}$ & $\Gr(2,4)$& + \\
    & &&&&&&&$ \mU^{\vee}_{\Gr(2,4)_2}(1,0) \oplus \of(1,1)$&  & \\ %50
\evnrow \cref{19-60-3-5-1-22-23-1-2-1-5} &3&22&5&19&60&23&$\PP^1 \times \PP^1 \times \PP^4$&$\of(0,0,3) \oplus \of(1,1,1)$& $\PP^1 \times W_3$&? \\ %35
 \cref{23-80-3-2-1-33-20-1-2-1-2-1-6}&3&33&2&23&80&20&$\PP^1 \times \PP^1 \times \PP^5$& $\of(0,0,2)^{\oplus 2} \oplus \of(1,1,1)$&$\PP^1 \times Y^3_{4}$&+\\ %36
\evnrow \cref{27-100-3-1-22-18-1-2-1-2-2-5}&3&22&0&27&100&18&$(\PP^1)^2 \times  \Gr(2,5)$&$\of(0,0,1)^{\oplus 3} \oplus \of(1,1,1)$&$\PP^1 \times Y^3_5$& + \\ %37
 \cref{29-110-3-1-24-20-1-2-1-3-2-4}&3&24&0&29&110&20&$ \PP^1 \times \PP^2 \times \Gr(2,4)$&$\mQ_{\PP^2}(0,0,1) \oplus {} $& \cref{39-160-2-1-22-21-1-2-2-4}, & + \\
  & &&&&&&&$\of(1,1,1) $&$\Bl_C \Gr(2,4) $  & \\ %46
\evnrow \cref{22-74-3-1-32-29-1-2-1-2-1-6}&3&32&0&22&74&29&$ \PP^1_1 \times \PP^1_2 \times \PP^5$&$\of(0,0,2) \oplus {} $& \cref{39-160-2-1-22-21-1-2-2-4},&+ \\
\evnrow & &&&&&&&$\of(0,1,1) \oplus \of(1,0,2) $&$  \Bl_{\PP^1 \times \PP^1}\Gr(2,4)$  & \\ %39
   \cref{39-160-3-1-22-18-1-2-1-2-1-4} &3&22&0&39&160&18&$(\PP^1)^2 \times  \PP^3$ &$\of(1,1,2)$& $\PP^1 \times \PP^3$& + \\ %48
\evnrow \cref{28-104-3-1-27-24-1-2-1-3-2-4}&3&27&0&28&104&24&$\PP^1 \times \PP^2 \times \Gr(2,4)$&$\mU^{\vee}_{\Gr(2,4)}(0,1,0) \oplus {} $& $\Bl_{\PP^2} \Gr(2,4),$& +\\
\evnrow & &&&&&&&$\of(1,0,2) $&$ \PP_{\PP^2}(\mQ^{\vee} \oplus \of(-1)) $  & \\ %41
  \cref{32-125-3-1-24-20-1-3-1-3-1-5}&3&24&0&32&125&20&$(\PP^2)^2 \times \PP^4$&$\of(1,0,1) \oplus {} $& \cref{39-160-2-1-22-21-1-3-1-5} &+ \\
  & &&&&&&&$ \of(1,1,1) \oplus \mQ_{\PP^2_2}(0,0,1)$&$\Bl_{\PP^1}\PP^4$  & \\ %58
\evnrow \cref{25-89-3-1-30-27-1-2-1-3-1-5}&3&30&0&25&89&27& $\PP^1 \times \PP^2 \times \PP^4$&$\of(0,1,1)\oplus {}$& \cref{39-160-2-1-22-21-1-3-1-5},  & + \\
\evnrow  & &&&&&&&$ \of(0,1,2)\oplus \of(1,0,1)$&$\Bl_{\PP^2}\PP^4 $  & \\ %45
   \cref{31-119-3-1-24-21-1-3-1-3-1-5}&3&24&0&31&119&21&$(\PP^2)^2 \times \PP^4$ & $\of(0,1,2) \oplus {}$ & \cref{39-160-2-1-22-21-1-3-1-5}, & + \\
      & &&&&&&&$\of(1,1,0) \oplus \mQ_{\PP^2_1}(0,0,1) $&$\Bl_{\PP^1} \PP^4 $  & \\ %56
\evnrow  \cref{33-130-3-1-24-20-1-2-1-4-1-6}&3&24&0&33&130&20& $\PP^1 \times \PP^3 \times \PP^5$& $\mQ_{\PP^3}(0,0,1) \oplus {} $& \cref{39-160-2-1-22-21-1-2-2-4}, $\PP^1 \times \PP^3$ & +\\
\evnrow    & &&&&&&&$ \of(0,1,1) \oplus \of(1,1,1)$&$\Bl_{\PP^1}\Gr(2,4) $  & \\ %47
 \cref{27-100-3-1-26-22-1-2-1-4-1-4}&3&26&0&27&100&22& $ \PP^1 \times \PP^3 \times \PP^3$& $\of(0,1,1)^{\oplus 2} \oplus {} $& $\PP^1 \times \PP^3,$& +\\
   & &&&&&&&$ \of(1,1,1)$&$  D^2 \subset \PP^3 \times \PP^3$  & \\ %40
\evnrow \cref{31-120-3-1-22-18-1-2-1-2-4}&3&22&0&31&120&18& $\PP^1 \times \Fl(1,2,4)$&$\of(0;0,1) \oplus \of(1;1,1)$&$\PP^1 \times \Q_3, \PP_{\Q_3}(\mU),$& + \\
\evnrow      & &&&&&&&&$ \PP^1 \times \PP^3 $  & \\ %49
 \cref{27-100-3-1-24-20-1-2-1-3-1-5} &3&24&0&27&100&20&$\PP^1 \times \PP^2 \times \PP^4$&$\of(0,0,2) \oplus {} $& $\PP^1 \times \Q^3,$&+ \\
 & &&&&&&&$\of(0,1,1) \oplus \of(1,1,1) $&$  D \subset \PP^2 \times \Q^3$  & \\ %38
 \evnrow \cref{43-180-3-1-22-18-1-4-1-5-1-5}& 3&22&0&43&180&18&$\PP^3 \times (\PP^4)^2$&$\of(1,1,1) \oplus {} $& $Z^{\circ},$&+ \\
  \evnrow     & &&&&&&&$\mQ_{\PP^3}(0,1,0) \oplus \mQ_{\PP^3}(0,0,1) $&$ \Bl_{p}\PP^4 $  & \\ %57
  \cref{23-80-3-1-30-26-1-2-1-3-1-4} &3&30&0&23&80&26& $\PP^1 \times \PP^2 \times \PP^3$&$\of(0,1,2)\oplus \of(1,1,1)$& $D \subset \PP^2 \times \PP^3, $&+ \\
  & &&&&&&&$ $&$\PP^1 \times \PP^3 $  & \\ %44
  \evnrow    \cref{29-110-3-1-24-20-1-3-1-3-1-4} &3&24&0&29&110&20& $(\PP^2)^2 \times \PP^3$ & $\of(0,1,1) \oplus {}$ & $\PP^2 \times \PP^2,$ &+ \\
  \evnrow      & &&&&&&&$\of(1,0,1) \oplus \of(1,1,1) $&$  D \subset \PP^2 \times \PP^3$  & \\ %55
    \cref{36-145-3-1-23-19-1-3-1-4-1-5} &3&23&0&36&145&19&$ \PP^2 \times \PP^3 \times \PP^4$& $\mQ_{\PP^3}(0,0,1) \oplus {}$ & \cref{39-160-2-1-22-21-1-3-1-5}, $\Bl_{p}\PP^4, $& +\\
     & &&&&&&&$\of(1,1,0) \oplus \of(1,1,1) $&$D \subset \PP^2 \times \PP^3 $  & \\ %52
 \evnrow    \cref{34-133-3-1-25-23-1-3-1-4-1-5}&3&25&0&34&133&23& $\PP^2 \times \PP^3 \times \PP^4$& $\of(0,1,2) \oplus {}$ & $\Bl_S\PP^4,  $& + \\
 \evnrow      & &&&&&&&$\mQ_{\PP^2}(0,0,1) \oplus \mQ_{\PP^2}(0,1,0) $&$\Bl_{\PP^1}\PP^4 $  & \\ %51
     \cref{37-150-3-1-22-18-1-2-1-5-1-6} &3&22&0&37&150&18&$\PP^1 \times \PP^4 \times \PP^5$&$\mQ_{\PP^4}(0,0,1) \oplus {} $& $Z^{\circ}$&+ \\
       & &&&&&&&$\of(0,2,0) \oplus \of(1,1,1) $&$ \PP^1 \times \Q_3 $  & \\ %59
\evnrow \cref{35-140-4-1-24-17-1-2-1-2-1-3-1-4}&4&24&0&35&140&17& $(\PP^1)^2 \times \PP^2 \times \PP^3$ & $\mQ_{\PP^2}(0,0,0,1) \oplus {} $& \cref{39-160-3-1-22-18-1-2-1-2-1-4},  & + \\
 \evnrow  & &&&&&&&$\of(1,1,1,1) $&$\PP^1 \times \Bl_p \PP^3 $  & \\ %42
  \cref{34-134-4-1-24-18-1-2-1-3-6}&4&24&0&34&134&18&$\PP^1 \times \Fl(1,3,6)$&$\mQ_2^{\oplus 2} \oplus {} $& $\Bl_{2p}\Gr(2,4)$&+\\
    & &&&&&&&$\of(0;0,2) \oplus \of(1;2,0) $&  & \\ %60
  \evnrow       \cref{28-104-4-1-28-22-1-2-1-2-1-2-1-4}&4&28&0&28&104&22&$(\PP^1)^3 \times \PP^3$& $\of(0,0,1,1) \oplus \of(1,1,0,2)$& $ \PP^1 \times \PP^3,$&+ \\
  \evnrow        & &&&&&&&$ $&$  \PP^1 \times \Bl_p \PP^3$  & \\ %53
  \cref{31-120-4-1-24-17-1-2-1-2-1-3-1-3}&4&24&0&31&120&17&$(\PP^1)^2 \times (\PP^2)^2$&$\of(0,0,1,1) \oplus \of(1,1,1,1)$& $D \subset \PP^1 \times (\PP^2)^2,$&+ \\
   & &&&&&&&&$  (\PP^1)^2 \times \PP^2$  & \\ %54 
\evnrow   \cref{31-120-5-1-26-16-1-2-1-2-1-2-1-2-1-2}&5&26&0&31&120&16& $(\PP^1)^5$& $\of(1,1,1,1,1)$& $(\PP^1)^4$&+ \\ %43
\end{longtable}
}

{\small
\begin{longtable}{ccccccccccc}
\caption{Rogue FK3}\label{tab:rogue}\\
\toprule
\multicolumn{1}{c}{ID}&\multicolumn{1}{c}{$\rho$}& \multicolumn{1}{c}{$h^{2,2}$} & \multicolumn{1}{c}{$h^{1,2}$} &\multicolumn{1}{c}{$h^0(-K)$}& \multicolumn{1}{c}{$K^4$}& \multicolumn{1}{c}{$-\chi(T)$}& \multicolumn{1}{c}{$G$}& \multicolumn{1}{c}{$\mF$}& \multicolumn{1}{c}{Bl}& \multicolumn{1}{c}{Rat} \\
%& \multicolumn{1}{c}{Notes} \\
%&\multicolumn{1}{c}{$e(X)$}&\multicolumn{1}{c}{$h^1(T_X)$}\\
\cmidrule(lr){1-1}\cmidrule(lr){2-2}\cmidrule(lr){3-3} \cmidrule(lr){4-4} \cmidrule(lr){5-5} \cmidrule(lr){6-6} \cmidrule(lr){7-7} \cmidrule(lr){8-8} \cmidrule(lr){9-9} \cmidrule(lr){10-10}\cmidrule(lr){11-11}
\endfirsthead
\multicolumn{5}{l}{\vspace{-0.25em}\scriptsize\emph{\tablename\ \thetable{} continued from previous page}}\\
\midrule
\endhead
\multicolumn{5}{r}{\scriptsize\emph{Continued on next page}}\\
\endfoot
\bottomrule
\endlastfoot
\evnrow \cref{27-99-2-1-25-25-1-3-2-6}&2&25&0&27&99&25&$\PP^2 \times \Gr(2,6)$&$\mU^{\vee}_{\Gr(2,6)}(0,1) \oplus \mQ_{\PP^2} \boxtimes \mU^{\vee}_{\Gr(2,6)}$& $Z^{\circ}$& +\\
\cref{33-129-2-1-23-23-1-4-2-7}&2&23&0&33&129&23&$\PP^3 \times \Gr(2,7)$&$\of(0,1)^ {\oplus 2} \oplus \of(1,1) \oplus \mQ_{\PP^3}\boxtimes \mU^{\vee}_{\Gr(2,7)}$& $Z^{\circ}$& ?\\
\evnrow \cref{31-120-3-1-2-22-20-1-6-1-8}&3&22&2&31&120&20&$ \PP^5 \times \PP^7$&$\mQ_{\PP^5}(0,1) \oplus \of(0,2) \oplus \of(2,0)^{\oplus 2}$& $Z^{\circ}$&+\\
\cref{25-90-3-5-1-22-23-1-5-1-7}&3&22&5&25&90&23&$\PP^4 \times \PP^6$&$\mQ_{\PP^4}(0,1) \oplus {} $& $Z^{\circ}, $& ? \\
    & &&&&&&&$\of(3,0) \oplus \of(0,2) $&  & \\
\end{longtable}
}

%\evnrow  & &&&&&&&$ $&$ $  & \\

\begin{longtable}{ccccccccccc}
\caption{Blow ups of FK3}\label{tab:appendix}\\
\toprule
\multicolumn{1}{c}{ID}&\multicolumn{1}{c}{$\rho$}& \multicolumn{1}{c}{$h^{2,2}$} & \multicolumn{1}{c}{$h^{1,3}$} &\multicolumn{1}{c}{$h^0(-K)$}& \multicolumn{1}{c}{$K^4$}& \multicolumn{1}{c}{$-\chi(T)$}& \multicolumn{1}{c}{$G$}& \multicolumn{1}{c}{$\mF$}& \multicolumn{1}{c}{Bl}& \multicolumn{1}{c}{Rat} \\
%& \multicolumn{1}{c}{Notes} \\
%&\multicolumn{1}{c}{$e(X)$}&\multicolumn{1}{c}{$h^1(T_X)$}\\
\cmidrule(lr){1-1}\cmidrule(lr){2-2}\cmidrule(lr){3-3} \cmidrule(lr){4-4} \cmidrule(lr){5-5} \cmidrule(lr){6-6} \cmidrule(lr){7-7} \cmidrule(lr){8-8} \cmidrule(lr){9-9} \cmidrule(lr){10-10}\cmidrule(lr){11-11}
\endfirsthead
\multicolumn{5}{l}{\vspace{-0.25em}\scriptsize\emph{\tablename\ \thetable{} continued from previous page}}\\
\midrule
\endhead
\multicolumn{5}{r}{\scriptsize\emph{Continued on next page}}\\
\endfoot
\bottomrule
\endlastfoot
\evnrow \cref{12-27-2-7-79-58-1-2-1-6}& 2&79&7&12&27&58&$ \PP^1 \times \PP^5$&$\of(0,3) \oplus \of(1,2)$& $X_3$& * \\
\cref{16-45-2-3-51-44-1-3-1-6}&2&51&3&16&45&44&$\PP^2 \times \PP^5$&$\of(0,3) \oplus \of(1,1) \oplus \of(1,1)$&$X_3$&*\\
\evnrow \cref{17-51-2-2-41-36-1-3-1-6}&2&41&2&17&51&36&$\PP^2 \times \PP^5$& $\of(1,1) \oplus \mQ_{\PP^2}(0,2)$& $X_3$& ? \\
\cref{26-93-2-2-41-39-1-6-1-7}&2&41&2&26&93&39&$ \PP^5 \times \PP^6$& $\of(1,1) \oplus \mQ_{\PP^5}(0,1) \oplus \of(3,0)$&$X_3, Z^{\circ}$&?\\
\evnrow \cref{17-50-2-2-42-38-1-2-2-5}&2&42&2&17&50&38&$ \PP^1 \times \Gr(2,5)$& $\of(0,1) \oplus \of(0,2) \oplus \of(1,1)$& $X_{10}$& * \\
\end{longtable}

\TOCstop
              
\normalsize

%{
%\hypersetup{urlcolor=black}
%\bibliographystyle{alphaabbr}
%\bibliography{razbib.bib}
%
%}

\newcommand{\etalchar}[1]{$^{#1}$}

\TOCstart

\end{document}